\newcommand{\ov}[1]{\overline{#1}}
\newcommand{\Ad}{\mathrm{ad}\,}
\newcommand\RR{\mathbb{R}}
\newcommand\CC{\mathbb{C}}
\newcommand\ZZ{\mathbb{Z}}
\newcommand\QQ{\mathbb{Q}}
\newtheorem{theorem}{Theorem}[section]
\newtheorem{lemma}[theorem]{Lemma}
\newtheorem{corollary}[theorem]{Corollary}
\theoremstyle{remark}
\newtheorem{remark}[theorem]{Remark}
\newtheorem{definition}[theorem]{Definition}  
\newtheorem{XxmpX}[theorem]{Example} 
\newenvironment{example}    
{%
   \pushQED{\qed}\begin{XxmpX}}
  {\popQED\end{XxmpX}}
\numberwithin{equation}{section}
\begin{document}
\title[Functional identities and their applications]{Functional identities and their applications}

\author{Matej Bre\v sar}

\address{Faculty of Mathematics and Physics,  University of Ljubljana, Slovenia}
\address{Faculty of Natural Sciences and Mathematics, University 
of Maribor, Slovenia}
\address{ Institute of Mathematics, Physics, and Mechanics, Ljubljana, Slovenia} \email{matej.bresar@fmf.uni-lj.si}
\begin{abstract}
The paper surveys the
 theory of functional identities and its applications. 
No prior knowledge of the theory  is required to follow the paper.
	\end{abstract}

\keywords{Functional identity, standard solution, $d$-free set, commuting function, biderivation, quasi-polynomial,  polynomial identity, generalized polynomial identity, generalized functional identity, Cayley-Hamilton polynomial, prime ring,  simple ring,  rings of quotients,
symmetric fractional degree, ring with involution, Lie ideal,  Lie homomorphism, Lie derivation,  group grading, Poisson algebra, Lie superhomomorphism,  Lie-admissible algebra, Jordan homomorphism, Jordan $*$-derivation, $f$-homomorphism, near-derivation, linear preserver problems,  zero product determined algebra.}
\thanks{2020 {\em Math. SubJ.\ Class.} Primary: 16R60. Secondary: 15A86, 16N60, 16R20, 16R50, 16S50, 16S85, 16W10, 16W20, 16W25 16W50,  17B40, 17B60, 17C50.}

\thanks{Supported by ARRS Grant P1-0288.}

\maketitle 
\tableofcontents

\section{Introduction}

A functional identity (FI) can be roughly defined as an identical relation on a subset of a ring which involves arbitrary  functions  that are considered as unknowns.
Although the concept of an FI  extends the concept  of a polynomial identity (PI), the FI theory can be viewed as a complement, rather than a generalization  of the PI theory.
It was initiated in the author's PhD Thesis in 1990 and has been   constantly developed since then.   The main motivation for this development has been its applicability to a variety of 
problems in different areas of mathematics. The term “functional identities”  appeared as a new item (16R60) in the 2010 Mathematics Subject Classification.

The fundamentals of the FI theory  are presented in the 2007 book \cite{FIbook} by the  author, M. A. Chebotar, and W. S. Martindale 3rd. Our aim in this  paper is to give a survey of the most important results obtained both before and after the publication of the book. Since enough time has passed, it is now  clear which of the old results should not  be missed  in the exposition. The selected old results along with the more recent results that open new areas of investigation
will  hopefully give a complete picture of the theory.

With a few illustrative exceptions, detailed proofs will be omitted. We will, however,  sketch the main ideas of several proofs.

The paper is organized as follows. In Section \ref{s2}, we will present various concrete examples of FIs that will help the reader to grasp the general theory. The latter will be considered in Section 3. Therein, we will first present the theory of $d$-free sets, introduced by K. I. Beidar and M. A. Chebotar, and then proceed to the more recent topic of nonstandard solutions of FIs in low dimensional algebras.  Section 4 is devoted to applications. We will start with the oldest, i.e., solutions to Herstein's Lie map conjectures, and end with the newest, i.e., involving FIs in the study of zero product determined algebras.

\section{Examples of FIs} \label{s2}

By a ring we will mean an associative ring, not necessarily with unity and not necessarily 
commutative. The FI theory is actually rather empty for commutative rings, so we are in fact interested in rings that are not commutative.
We will sometimes assume that our ring has a unity, in which case we will refer to it as a unital ring. 

Throughout this section we assume that $R$ is a nonzero subring of a unital ring $Q$. The reason for the latter notation is that, in practice,
$Q$ is often a  ring of quotients of $R$. Until further notice, however, $Q$ can be any unital ring. General FIs consider the situation where $R$ is just a subset, not necessarily a subring of $Q$. The assumption that  $R$ is a subring is added  to make examples in this section  simpler.  

\begin{example} Let $F:R\to Q$ be an arbitrary function. An extremely simple example of an FI is
\begin{equation} \label{e1}
xF(y)=0
\end{equation}
for all $x,y\in R$. 
What is the form of $F$ if \eqref{e1} holds? Under mild assumptions, in particular if  $R$ is a unital subring (by this we mean that it contains the unity of $Q$),  $F$ must be zero, as we see by simply taking $x=1$. We therefore call $F=0$ the {\em standard solution} of the FI \eqref{e1}. Every element $q$ in the range of $F$  satisfies $Rq=\{0\}$. In fact, the existence of a nonstandard (i.e., nonzero) solution is  equivalent to the condition that $Q$ contains 
a nonzero element $q$ with this property.  Thus, either our FI has only the standard solution or  $R$ has a certain special property.  This is a typical conclusion when studying an FI, which is the reason why we started with such a trivial example.
\end{example} 

\begin{example}\label{ex2}
Let $F_1,F_2:R\to Q$ be  functions. Consider the FI
\begin{equation} \label{e2}xF_1(y) + yF_2(x)=0
\end{equation}
for all $x,y\in R$. This is obviously a generalization of  \eqref{e1}. It is still a very simple FI, but, unlike \eqref{e1}, not exactly trivial. For reasons that will become clear soon, we define  the standard solution of this FI to be $$F_1=F_2=0.$$ Observe that  nonstandard solutions exist whenever $R$ is commutative; indeed, just take, for example, $$F_1(y)=y\quad\mbox{and}\quad
F_2(x)=-x.$$ Assume now that $R$ is not commutative and let us show that, under reasonable additional assumptions, the only solution is the standard one. 
Take any $x,y,z,w\in R$ and note that \eqref{e2} implies that
\begin{align*}x(yzF_1(w))&= - xwF_2(yz)\\ &= y(zF_1(xw)) \\ &= - yx(w F_2(z))\\ &=yxzF_1(w).
\end{align*}
As usual, we write $[x,y]$ for the {\em commutator} (also called the {\em Lie product}) of the ring elements $x$ and $y$, that is,
$$[x,y]=xy-yx.$$
Observe that the above identity can be written as
$$[x,y]z F_1(w) =0$$
for all $x,y,z,w\in R$. This means that  every element $q$ in the range of $F_1$ satisfies
$Kq=\{0\}$ where $K$ is the commutator ideal of $R$ (i.e., the ideal generated by all commutators in $R$). Thus, if, for example, $Q$ is  a noncommutative domain or $R$ is a noncommutative simple unital ring,
then $F_1=0$ (and similarly, $F_2=0$).
\end{example}

Let $F$ be a field. By $F\langle X_1, X_2,\dots\rangle$ we denote the free algebra on indeterminates $X_i$, i.e., the algebra of noncommutative polynomials in $X_i$. Let $R$ be an algebra over $F$. A noncommutative polynomial
$$f(X_1,\dots,X_n)\in F\langle X_1, X_2,\dots\rangle$$ is said to be a {\em polynomial identity} of $R$ if $$f(a_1,\dots,a_n)=0$$ for all $a_i\in R$. If some nonzero noncommutative polynomial is a polynomial identity of 
$R$, then we say that $R$ is a PI-algebra.  These algebras will play an important role in this paper, and facts 
from the PI theory
will be frequently used.
We refer the reader to the books \cite{AGPR, GZ, Rowen} for a thorough treatment, and to \cite[Sections 6 and 7]{INCA} for an elementary introduction.

\begin{example}\label{ex3} Assume that 
 $F_1,F_2:R\to Q$ satisfy \begin{equation} \label{e3}[xF_1(y) + yF_2(x),z]=0
\end{equation}
for all $x,y,z\in R$. That is, $$xF_1(y)+ yF_2(x) \in C_Q(R)$$
for all $x,y\in R$, where
$C_Q(R)$
 stands for the {\em centralizer} of $R$ in $Q$, i.e.,
 $$C_Q(R)=\{c\in Q\,|\, cz=zc\,\,\,\mbox{for every $z\in R$}\}. $$
  In the basic case where $R=Q$ this means that $xF_1(y)+ yF_2(x)$ belongs to the center of $Q$. Centers of standard noncommutative rings are usually  relatively small subsets.  It is therefore natural to expect that  \eqref{e3} can be handled in a similar fashion as  \eqref{e2}, which is obviously its special case. However, this is true only to some extent. We have noticed above that 
 if $R$ is a noncommutative simple unital ring, then 
 \eqref{e2} has only standard  solutions. Let us show that this does not hold for 
 \eqref{e3}. 
 
We, of course, define that the standard solution of \eqref{e3} is  $F_1=F_2=0$. 
Consider the $2\times 2$ matrix ring $R=M_2(Z)$ where
 $Z$ is a field.  The Cayley-Hamilton Theorem states  that, for every $x\in R$,
 $x^2 - {\rm tr}(x)x $ is the scalar matrix $-\det(x)$, and hence
\begin{equation}
    \label{linthis}
[xF(x),z]=0\end{equation}
 for all $x,z\in R$, where $F:R\to R(=Q)$ is defined by
 $$F(x)=x-{\rm tr}(x)$$
 (here, we have identified the scalar matrix $\lambda 1$, where $1$ is the identity matrix, by the scalar $\lambda$).
 Linearizing \eqref{linthis} (i.e., considering the identity obtained by replacing $x$ by $x+y$ in \eqref{linthis}), we  arrive at
 $$[xF(y) + yF(x),z]=0$$
 for all $x,y,z\in R$. Thus, we have arrived at \eqref{e3} with $$F_1=F_2=F\ne 0.$$ This means that
 \eqref{e3} has a nonstandard solution in the noncommutative simple unital ring $R=M_2(Z)$.
 
 Our goal now is to show  that $M_2(Z)$, along with some related rings,  is in fact the only noncommutative simple unital ring in which \eqref{e3} has nonstandard solutions. Thus, assume from now on
that $R$ is a simple unital ring which is not a field (equivalently, it is not commutative).

 We start by replacing $y$ by $zy$ in \eqref{e3}, which gives
 $$[xF_1(zy),z] + [zyF_2(x),z]=0.$$
 Since
 $$[zyF_2(x),z] = z[yF_2(x),z]
 = - z[xF_1(y),z],
 $$
 it follows that
 $$[xF_1(zy),z] - z[xF_1(y),z]=0.
 $$
 We can rewrite this as
\begin{equation}
    \label{lddd}
xF_1(zy)z -z x\big(F_1(zy)+F_1(y)z \big)   + z^2xF_1(y)=0.
 \end{equation}
 Assume that there exists 
 a $z\in R$ such that the elements $1,z,z^2$ are linearly independent over the center $Z$ of $R$ (which is a field since $R$ is simple). Then  we can use the Artin-Whaples Theorem
 (see \cite[Corollary 5.24]{INCA}) to obtain elements $a_i,b_i\in R$ such that 
 $$\sum_i a_ib_i = 0,\quad \sum_i a_izb_i = 0,\quad \sum_i a_iz^2b_i =1.$$
 Substituting $b_i$ for $x$ 
in \eqref{lddd}, and then multiplying by $a_i$ from the left,  we obtain by summing up
$$\sum_i a_ib_iF_1(zy)z -\sum_ia_iz b_i\big(F_1(y)z+ F_1(zy)\big)   + \sum_i a_iz^2b_iF_1(y)=0,
 $$
 which, by the choice of $a_i,b_i$, reduces to $$F_1(y)=0$$
 for every $y\in R$. Similarly we see that $F_2(x)=0$ for every $x\in R$. Thus, 
 the FI \eqref{e3} has only standard solutions in the case
under consideration. 

We may therefore assume that $1,z,z^2$ are linearly dependent over $Z$ for every $z\in R$. Then $R$ is a PI-algebra since, for example,
$$\big[[X_1^2,X_2],[X_1,X_2]\big]$$ is a polynomial identity of $R$. It is a standard fact from the PI theory that a simple
unital PI-algebra is finite-dimensional over its center $Z$. However, more can be said. Since 
our algebra satisfies a polynomial identity of degree $5$ and is not commutative, the only possibility is that $[R:Z]$, the dimension of $R$ over $Z$, is $4$. The 
 classical Wedderburn's theorem
 then tells us that either $R\cong M_2(Z)$ or $R$
 is a $4$-dimensional division algebra. In the latter case, we can find nonstandard solutions of \eqref{e3} in much the same way as in the $2\times 2$ matrix case (see \cite[Corollary C.3]{FIbook}). 
 
 We can  summarize our findings as follows. If $R$ is a simple unital ring, then the FI \eqref{e3} has nonstandard solutions if and only if $R$ is either of dimension $1$ or of dimension $4$ over its center,
so, more specifically, $R$ satisfies one of following three conditions: (a) $R$ is commutative, (b) $R\cong M_2(Z)$ where $Z$ is a field, (c) $R$ is a $4$-dimensional division algebra over its center.
 \end{example}

A simple unital ring $R$ with center $Z$ is either infinite-dimensional over  $Z$ or $[R:Z]=n^2$ for some positive integer $n$. 
We saw that the FI  \eqref{e3} (resp.\ \eqref{e2}) gives rise to a characterization of simple rings of dimension 
at most $4$ (resp.\ $1$) over their centers among all simple unital rings (not necessarily finite-dimensional over their centers). How to characterize simple unital rings of higher finite dimensions over their centers?

\begin{example} \label{ex4}To answer the question just posed, it is natural to consider the $n\times n$ matrix algebra
$R=M_n(Z)$ with $Z$ a field. Motivated by the preceding example, we invoke the Cayley-Hamilton Theorem which
in particular states that there is a function $\phi:R\to R$ such that
\begin{equation}
\label{chid}    
x\phi(x) = \det(x)
\end{equation}
for every 
$x\in R$ (as above, $\det(x)$ stands for $\det(x)1$). 
We  call \eqref{chid} the {\em Cayley-Hamilton identity}. 
Of course, $$\phi(x) = {\rm adj}(x),$$ the adjugate matrix of $x$. 
Its form is well known, i.e., it
 can be expressed through the powers of $x$ and the trace. For our purposes, however, it is enough to observe that $\phi$ is the {\em trace of an $(n-1)$-additive function}. By this we mean that  it  can be presented as 
$$\phi(x)= \Phi(x,\dots,x)$$
where $\Phi:R^{n-1}\to R$ is a function that is additive in each variable. Since $x\phi(x)$ is always a scalar matrix, we have
$$ [x\Phi(x,\dots,x),z]=0$$
for all $x,z\in R$. Linearizing this identity, we see that
$F:R^{n-1}\to R$, defined by
$$F(x_1,\dots, x_{n-1})=\sum_{\sigma \in S_{n-1}} \Phi(x_{\sigma(1)},\dots, x_{\sigma(n-1)}),$$
satisfies
$$ [x_1F(x_2,\dots,x_n) + x_2F(x_1,x_3,\dots,x_n) + \dots + x_n F(x_1,\dots,x_{n-1}),z] =0 $$
for all $x_1,\dots,x_n,z\in R$.
What is important for us here is that  the ring $R=M_n(Z)$
satisfies this FI for some  nonzero function $F$.  

Consider now a more general FI
\begin{equation}\label{e4} [x_1F_1(x_2,\dots,x_n) + x_2F_2(x_1,x_3,\dots,x_n) + \dots + x_n F_n(x_1,\dots,x_{n-1}),z] =0
\end{equation}
for all $x_1,\dots,x_n,z\in R$,
where $F_1,\dots,F_n:R^{n-1}\to Q$ are any functions. Let us also write \eqref{e4} in an equivalent, but  more readable form: 
$$x_1F_1(x_2,\dots,x_n) + x_2F_2(x_1,x_3,\dots,x_n) + \dots + x_n F_n(x_1,\dots,x_{n-1})\in C_Q(R)$$
for all $x_1,\dots,x_n\in R$. We define  the standard solution of \eqref{e4} to be
$$F_1=\dots=F_n=0.$$ 
We have shown  that nonstandard solutions exist if $R=Q=M_n(Z)$. Now, it can be proved that if $R=Q$ is any simple unital ring with center $Z$, then nonstandard solutions exist if and only if $[R:Z]\le n^2$. The proof for a general $n$ is of course technically more involved than the one
for $n=2$ that was given 
in the preceding example. The main ideas, however, are the same. For now we omit details, but in the next section we will consider \eqref{e4} in a more general framework.\end{example}

Standard solutions of all of the above FIs were just zero functions. We continue with an example of a very simple  FI for which this is not the case.

\begin{example}\label{ex5}
Let $E,F:R\to Q$ satisfy the FI
\begin{equation}\label{e5} E(x)y=xF(y)
\end{equation}
for all $x,y\in R$. An obvious
possibility for \eqref{e5} to hold is that there exists a 
$q\in Q$ such that
$$E(x)=xq\quad\mbox{and}\quad F(y)=qy $$
for all $x,y\in R$. If $E$ and $F$ are of such a form then we say that  they present a standard solution of \eqref{e5}. If $R$ is unital, then nonstandard solutions cannot exist as we see by first setting $y=1$ and then $x=1$ in \eqref{e5}. The nonunital case can be different. For example, 
if $R=2\ZZ$, $Q=\ZZ$ and 
$E, F$
 are given by
 $$E(x)=\frac{1}{2} x \quad\mbox{and}\quad
 F(y)=\frac{1}{2} y,$$
 then they form a nonstandard solution of \eqref{e5}. However, if we take $Q=\QQ$ instead of $Q=\ZZ$, then all solutions are standard. Indeed, we define $q=\frac{F(2)}{2}$.
\end{example}

It may now be a little bit clearer what are standard solutions of FIs. There is no general definition that would cover all possible cases, but one may think of them as the ``obvious" solutions which make sense in any ring, regardless of its special features. 

Example \ref{ex5} suggests that rings of quotients may be useful in  proving that all solutions are standard. Indeed,
they occur throughout the FI theory. We  will  now introduce  one of them, and will deal with some  others later. As we will see, the definition is closely related to the FI \eqref{e5}.


Recall first  that $R$ is a {\em prime ring} if the product of any of its two nonzero ideals is nonzero. Equivalently, for any
$a,b\in R$, $aRb=\{0\}$ implies  $a=0$ or $b=0$. Prime rings form a fairly large class of rings, which in particular includes simple rings, primitive rings, and domains. The FI theory is not limited to prime rings, but they present the most natural setting for its study.  It should be mentioned that in Examples \ref{ex3} and \ref{ex4} we have restricted ourselves to simple unital rings only for simplicity. Dealing with considerably more general prime rings  would not make the discussion much longer, but we would have to refer to somewhat less elementary results. In particular,   involving 
some  rings of quotients when studying FIs in prime rings is  unavoidable. The one from the next definition  actually is not the most appropriate one for developing the general theory, but is somewhat simpler than others and suitable for our current purposes.

We assume that $R$ is a prime ring.

\begin{definition}\label{def1}
 A ring $Q=Q_s(R)$ is called a {\em symmetric Martindale ring of quotients} of $R$  if it satisfies the  following conditions:
\begin{enumerate}
\item[(a)] $R$ is a subring of $Q$. 
\item[(b)] For every  $q\in Q$ there exists a nonzero ideal $I$ of $R$ such that $Iq + qI \subseteq R$.
\item[(c)]  For every nonzero $q\in Q$ and every nonzero ideal $J$ of $R$,  $Jq\ne \{0\}$ and $qJ\ne \{0\}$.
\item[(d)] If $I, J$ are nonzero ideals of $R$ and functions $E:I\to R$, $F:J\to R$ satisfy $E(x)y=xF(y)$ (i.e.,
\eqref{e5}) for all $x\in I$ and all $y\in J$,
 then there exists  a $q\in Q$ such that $E(x)=xq$ for all $x\in I$ and $F(y)=qy$ for all $y\in J$.
\end{enumerate}
\end{definition}

\begin{remark}
\label{rinthel}
The functions $E$ and $F$ from condition (d) are usually assumed to be left and right $R$-module homomorphisms, respectively.
  We have avoided this unnecessary assumption in order   to point out  the connection with FIs which involve arbitrary functions. 
  The proof of the unnecessity
 is easy.  Indeed, taking $r\in R$, $x\in I$ and
 replacing $x$ with $rx$ in
  $E(x)y=xF(y)$ it follows that
$$E(rx)y= rxF(y) = rE(x)y,$$
and so $$\bigl(E(rx) - rE(x)\bigr)y=0,$$ which, by (c),
implies that $E(rx) = rE(x)$. Similarly we see that $E$ is additive, so it is a left $R$-module homomorphism. The proof that $F$ is a right $R$-module homomorphism is analogous.

\end{remark} 
 
 For properties and examples of $Q_s(R)$  we refer the reader to the books \cite{BMMb, Lam, Pass}. The basic property is that 
 $Q_s(R)$ exists for every prime ring $R$ and is unique up to isomorphism. 
 We  therefore usually speak about {\em the} symmetric Martindale ring of quotients.
 
 The center of $Q_s(R)$ is called the {\em extended centroid} of $R$ and is traditionally denoted by $C$. 
 The extended centroid is a field that contains the center of $R$, but is not always its field of fractions. Among its many nice properties, we point out the following one: If $a,b$ are elements in $R$ that satisfy \begin{equation}
   atb=bta \label{atb}  
 \end{equation}
 for every $t\in R$, then they are linearly dependent over $C$.
 
 Before proceeding to the next example, we recall that an additive map $d$ from $R$
 to  $R$ (or, more generally, from $R$ to an $R$-bimodule) is called a {\em derivation}
 if 
 $$d(xy) =d(x)y+xd(y)$$ for all $x,y\in R$. For example, for any element $a$,
 $d(x)=[a,x]$ is a derivation. Such a derivation is called {\em inner}.
 
 \begin{example}
 \label{ex6}  A function $F:R\to R $ is said to be {\em commuting} if
 \begin{equation}
 \label{comm} F(x)x=xF(x)
 \end{equation}
 for every $x\in R$. 
 These functions are of great importance in the FI theory.

 Without imposing some additional condition, not much can be said about the function $F$ satisfying \eqref{comm}. A long time ago,    functions such as automorphisms or   derivations  were shown to be commuting only in some trivial situations \cite{Div, Pos}.
 These functions, however, are too special from the point of view of the FI theory. 
 
 Let us only assume that 
 $F$ is additive. Linearizing
 \eqref{comm}
 we obtain
  \begin{equation}
 \label{comm2} F(x)y + F(y)x=xF(y) + yF(x)
 \end{equation}
 for all $x,y\in R$. This is actually the only time that the  assumption that $F$ is additive is needed, that is, from now on $F$ can be just any function from $R$ to $R$ that satisfies \eqref{comm2}. We can therefore view \eqref{comm2} as an FI.
 
We approach \eqref{comm2} 
by defining 
the function $D:R\times R\to R$
by
$$D(x,y)=[F(x),y].$$
Observe that $D$ is an (inner) derivation in the second variable. On the other hand, 
since, by \eqref{comm2}, 
 $$D(x,y) = [x,F(y)],$$
 $D$ is a derivation also in the first variable. Until further notice, we consider
 any function $D:R\times R\to R$ that is a derivation in each variable. We call such a function  a {\em biderivation}.
 
 In order to describe the form of a biderivation $D$, we take any $x,y,z,w\in R$ and
  consider $D(xz,yw)$. Since $D$ is a derivation in the first variable, we have
  $$D(xz,yw)=D(x,yw)z+ xD(z,yw).$$
  Using that $D$ is a derivation in the second variable, it follows that
  $$D(xz,yw)=D(x,y)wz+ yD(x,w)z + xD(z,y)w+xyD(z,w).$$
  On the other hand, if we reverse the order and first  use the condition that $D$ is a derivation in the second variable, we obtain
  \begin{align*}
  D(xz,yw)&= D(xz,y)w+ 
      yD(xz,w)\\ &= D(x,y)zw+xD(z,y)w + yD(x,w)z + yxD(z,w).
  \end{align*}
 Comparing both expressions we obtain
 $$
 D(x,y)[z,w] = [x,y]D(z,w)
 $$
for all $x,y,z,w\in R$. 
 Replacing $z$ by $tz$ and using $[tz,w]= [t,w]z + t[z,w]$ and 
 $D(tz,w)= D(t,w)z+tD(z,w)$ it follows that 
\begin{equation}\label{bid}
 D(x,y)t[z,w] = [x,y]tD(z,w)
 \end{equation}
 for all $x,y,z,w,t\in R$. 
 
 We are now in a position to use the aforementioned result that concerns the identity \eqref{atb} and  the extended centroid. Assume, therefore, that
 $R$ is a prime ring, and, moreover, that it is not commutative. We remark that the commutative case 
 is indeed different; for example, if
 $R=\RR[X]$, then $$D(f,g)=f'g'$$ is an example of a biderivation that is not of the form that we are about to derive under the assumption that $R$ is noncommutative. The study of biderivations in commutative rings has different goals, see \cite{LM}.
 
 Take any $z,w\in R$ such that $[z,w]\ne 0$. Using \eqref{bid} for 
 $x=z$ and $y=w$ it follows that there exists an element $\lambda$ from the extended centroid $C$ of $R$ such that
 $D(z,w)=\lambda[z,w]$. Hence, \eqref{bid} can be written as
 $$\big(D(x,y) - \lambda[x,y]\big)t[z,w]=0$$
 for all $x,y\in R$. That is, 
  $$\big(D(x,y) - \lambda[x,y]\big)J=0,$$ where $J$ is the ideal of $R$ generated by $[z,w]$. Using condition (c) from Definition \ref{def1} it follows that
  \begin{equation} \label{d} D(x,y) = \lambda[x,y]\end{equation}
  for all $x,y\in R$. Thus, every biderivation of $R$ is simply the Lie product  of the variables followed by multiplication by an element from the extended centroid. 
  
  We can now return to the FI \eqref{comm2}. Since $(x,y)\mapsto[F(x),y]$ is a biderivation, it follows that there exists a $\lambda\in C$ such that
  $$[F(x),y]=\lambda[x,y]$$
  for all $x,y\in R$, that is,
  \begin{equation}\label{fla}
  [F(x)-\lambda x,y]=0.\end{equation}
  Now, if $q$ is an element in
  $Q_s(R)$ that commutes with any element $y$ in $R$, then $q$ lies in $C$, that is, it commutes with any element $p$ in $Q_s(R)$. This is because by (b) (from Definition \ref{def1}) we can choose a nonzero ideal $I$ of $R$ such that $pI\subseteq R$,
  hence $$0=[q,pu] = [q,p]u+p[q,u]=[q,p]u$$ for every $
  u\in I$, and therefore
  $[q,p]=0$ by (c). Consequently,
  \eqref{fla} implies that 
  $F(x)-\lambda x \in C$. Thus,
  $F$ is of the form
  \begin{equation}\label{fla2}
  F(x)=\lambda x  + \mu(x) \end{equation}
  for some $\lambda\in C$ and $\mu:R\to C$. Finally, if $F$ is additive, then obviously $\mu$ is additive too. 
  
  Going back to the very beginning, we can now say  that if $R$ is a  prime ring, then
  every commuting additive function
  $F:R\to R$ is  of the form
  \eqref{fla2} with $\lambda$ an element in $C$ and $\mu$ an additive function from $R$ to $C$. Observe that 
  this is trivially true if $R$ is commutative, so the noncommutativity assumption is no longer necessary.
  
  It is interesting to add here that even if $R$ is unital and its center $Z$ is a field, $\lambda$ and $\mu(x)$
  do not necessarily lie  in  $Z$. Indeed, consider the following example. Let $V$ be an infinite-dimensional complex vector space, let $R_0$
be the algebra of all  $\CC$-linear operators from $V$ to $V$ that have finite rank, and let $R$ be the algebra 
of all operators of the form
$a_0 + t 1$, where $a_0\in R_0$, $t\in \RR$, and $1$ is the identity operator on $V$.
Observe that $R$ is a prime ring
with center $\RR\, (=\RR 1)$. It is not difficult to see that the extended centroid  of $R$ is $\CC$. Define $F:R\to R$ by
$$F(a_0 + t1)= ia_0. $$ Clearly, $F$ is an additive commuting function. Note that
$F$ is of the form \eqref{fla2} with $\lambda =i$ and $  \mu(a_0+ t1) = -it1.$ However, 
$F$ cannot be presented  in the form \eqref{fla2} with $\lambda\in\RR$.
\end{example}
 
 \begin{example}\label{ex8}
 In this final example we again consider a commuting function $F$, but instead of additivity we assume that $F$ is the trace of a biadditive function, i.e., there exists a biadditive function $B: R\times 
  R\to R$ such that
  $F(x)=B(x,x)$ for every $x\in R$. Our assumption thus reads as 
  \begin{equation}\label{b22}
      [B(x,x),x] =0 
  \end{equation} 
  for all $x\in R$.  What is the form of $B(x,x)$? Assume again that $R$ is a prime ring. In light of the previous example, it seems natural to expect that 
\begin{equation}\label{b22a}B(x,x)= \lambda x^2 + \mu(x) x + \nu(x,x)\end{equation}
  for all $x\in R$, where $\lambda$ is an element
  in $C$, $\mu$ is an additive function from $R$ to $C$, and $\nu$
  is a biadditive function from $R\times R$ to $C$. Under a mild assumption that  char$(R)\ne 2$, this turns out to be true. While conjecturing the result was easy in view of the result from the preceding example, the proof is much more involved and will not be given at this point. Let us just mention that the first easy step is linearizing \eqref{b22} to obtain
\begin{equation}\label{b22b}[B(x,y)+B(y,x),z] + [B(z,x) + B(x,z),y] + [B(y,z) + B(z,y),x] =0 \end{equation}
  for all $x,y,z\in R$. This is an FI that fits into a general theory which will be discussed in the next section.
  
  One can proceed and consider a commuting function $F$ that is the trace of an $n$-additive function, i.e., is  of the form $$F(x)=B(x,\dots,x)$$ where $B$ is a $n$-additive function. As one may conjecture, under suitable mild assumptions  it can be shown that $F(x)$ is the sum of expressions of the form
  $$\lambda_i(x,\dots,x)x^i$$
  where $\lambda_i$ is an $(n-i)$-additive function from $R^{n-i}$ to $C$.  
\end{example}

The FI theory actually  originated from the study of commuting functions. The above presented results on commuting additive functions and commuting traces of biadditive functions are contained already in the author's  PhD Thesis from 1990 and were published a few years later in \cite{B3} and \cite{B5}. The approach via biderivations, however, was noticed in the subsequent paper \cite{BMM} (and later, but independently, in \cite{FL}). The original result on commuting traces of biadditive functions required an additional  technical  assumption that was  later removed. For a slightly outdated, but still quite thorough survey on commuting functions see 
\cite{surveycomm}. 
See also \cite{BreSem, Shitov} for results on commuting functions that do not necessarily arise from multiadditive functions, and \cite{Choiet, Fr1, Fr2, LiuYang} for results on traces of multiadditive functions that are commuting on sets that are not closed under addition.  
We also mention that biderivations 
 are nowadays  studied mostly in Lie algebras and often independently of the FI theory (see \cite{BZ, E, KK, LGZ, Tang, WY, YL} and references therein).

\section{The general theory}

In this section, we will survey the most important topics  of the general FI theory. We  divide it into several subsections.

\subsection{The definition of a $d$-free set}

The general theory is based on the concept of a $d$-free set, so we start with its definition. First we introduce some notation.

Let $Q$ be a unital ring with center $C$, and let $R$
be a nonempty subset of $Q$.
The situation from the preceding examples where $R$ is a subring is particularly important, but the general theory considers arbitrary subsets. 

  For any positive integer $k$, we write $R^k$ for the Cartesian product of $k$ copies of $R$, and for convenience we also write $R^0 = \{0\}$.
 Let $m$ be a positive integer, let
 $x_1,\dots, x_m \in R$, and, if $m >1$, let $1\le i < j \le m$. 
We write
\begin{eqnarray*}
\ov{x}_m &=& (x_1,\ldots,x_m)\in{R}^m,\\
\ov{x}_{m}^{i}&=& (x_1,\ldots,x_{i-1},x_{i+1},\ldots,x_m)\in{R}^{m-1},\\
\ov{x}_{m}^{ij}= \ov{x}_{m}^{ji} &=& (x_1,\ldots,x_{i-1},x_{i+1},\ldots,
x_{j-1},x_{j+1}\ldots,x_m)\in{R}^{m-2}.
\end{eqnarray*}

Let $I,J\subseteq \{1,2,\ldots,m\}$. For each $i\in I$ and $j\in J$,
let  
$$E_i:R^{m-1}\to Q\quad\mbox{and}\quad F_j:R^{m-1}\to Q$$
be arbitrary functions.  If $m=1$,  $E_i$ and 
$F_j$ may be regarded as elements in $Q$.

There are two fundamental FIs on which the general theory is based.
The first one is
\begin{equation}
\sum_{i\in I}E_i(\ov{x}_m^i)x_i+ \sum_{j\in J}x_jF_j(\ov{x}_m^j)
 = 
0\label{3S1}\end{equation}
for all $\ov{x}_m\in R^m$,
and the second one is
\begin{equation}
\sum_{i\in I}E_i(\ov{x}_m^i)x_i+ \sum_{j\in J}x_jF_j(\ov{x}_m^j)
 \in  C\label{3S2}
\end{equation}
for all $\ov{x}_m\in R^m$.
Obviously,
 (\ref{3S1})  implies (\ref{3S2}). One should thus
 consider  (\ref{3S1}) and  (\ref{3S2}) as independent FIs, that is, the functions satisfying \eqref{3S1} are not the same as the functions  satisfying \eqref{3S2}. 
   Examples \ref{ex2} and \ref{ex3} may give a clue of why both apparently almost identical FIs deserve to be treated separately.
   
   A natural possibility when  (\ref{3S1}), and hence also (\ref{3S2}), is satisfied  is 
   that there exist functions 
\begin{eqnarray*}
&  & p_{ij}:R^{m-2}\to Q,\;\;
i\in I,\; j\in J,\; i\not=j,\nonumber\\
&  & \lambda_k:R^{m-1}\to C,\;\; k\in I\cup J,
\end{eqnarray*}
such that 
\begin{eqnarray}\label{3S3}
E_i(\ov{x}_m^i) & = & \sum_{j\in J,\atop
j\not=i}x_jp_{ij}(\ov{x}_m^{ij})
+\lambda_i(\ov{x}_m^i),\quad i\in I,\nonumber\\
F_j(\ov{x}_m^j) & = & -\sum_{i\in I,\atop
i\not=j}p_{ij}(\ov{x}_m^{ij})x_i
-\lambda_j(\ov{x}_m^j),\quad j\in J,\\
&   & \lambda_k=0\quad\mbox{if}\quad k\not\in I\cap J\nonumber.
\end{eqnarray}
 Indeed, one immediately checks that  \eqref{3S3}  implies  (\ref{3S1}) (and therefore it also implies   (\ref{3S2})).
We call \eqref{3S3} a {\em standard solution} of the FIs (\ref{3S1}) and (\ref{3S2}).

\begin{remark}\label{rema31}
We  follow the convention that the sum over
$\emptyset$ is $0$. If  $m=1$, it should thus be understood   that $p_{ij} =0$ and 
$\lambda_k$ is an element in $C$.
\end{remark}


\begin{remark}
Observe that if $E_i$ and $F_j$ are multilinear noncommutative polynomials, \eqref{3S1} becomes a multilinear polynomial identity. As the study of general polynomial identities can be often reduced to the multilinear ones,  functional identities can thus be, at least formally,  viewed as generalizations of polynomial identities.
\end{remark}

Let us give two examples that consider special cases of \eqref{3S1} and
\eqref{3S2}, and their standard solutions \eqref{3S3}.  

\begin{example}\label{exx4}
Consider the case where $m=3$,
$I=\{1,2\}$, and $J=\{1,3\}$.  Then \eqref{3S1} reads as 
\begin{equation}\label{eq34} E_1(x_2,x_3)x_1 + E_2(x_1,x_3)x_2 + x_1 F_1(x_2,x_3) + x_3F_3(x_1,x_2) =0\end{equation}
for all
$x_1,x_2,x_3\in R$. Standard solutions of \eqref{eq34} are of the form
\begin{align*}
E_1(x_2,x_3) &= x_3p_{13}(x_2) + \lambda_1(x_2,x_3),\\
E_2(x_1,x_3) &= x_1p_{21}(x_3) + x_3p_{23}(x_1) ,\\
F_1(x_2,x_3) &= -p_{21}(x_3)x_2 - \lambda_1(x_2,x_3) ,\\
F_3(x_1,x_2) &=-p_{13}(x_2)x_1- p_{23}(x_1)x_2,
\end{align*}
where $p_{13},p_{21},p_{23}$ are arbitrary functions from $R$ to $Q$ and $\lambda_1$ is an arbitrary function from $R^2$ to the center $C$.
\end{example}

\begin{example}\label{exx0}If $J=\emptyset$,  (\ref{3S1}) and 
(\ref{3S2})  become
$$
 \sum_{i\in I}E_i(\ov{x}_m^i)x_i =0
\quad\mbox{and}\quad \sum_{i\in I}E_i(\ov{x}_m^i)x_i \in C,
$$
and, according to the convention mentioned in Remark \ref{rema31}, the standard solution (\ref{3S3}) of each of these two FIs is simply
 $E_i(\ov{x}_m^i)=0.$  Similarly, the standard solution of
$$\sum_{j\in J}x_jF_j(\ov{x}_m^j) =0 \quad\mbox{and}\quad \sum_{j\in J}x_jF_j(\ov{x}_m^j) \in C 
$$
 is $F_j(\ov{x}_m^i)=0$ (compare Example \ref{ex4}).
\end{example}

We can now give our fundamental definition, originally introduced in \cite{BC1}.

\begin{definition}\label{defd}Let $d$ be a positive integer. The set $R$ is  a {\em
$d$-free} subset of $Q$
if  
the following two conditions hold for all $m\ge 1$ and all
 $I,J\subseteq \{1,2,\ldots,m\}$:
\begin{enumerate}
\item[(a)]If
$\max\{|I|,|J|\}\le d$, then (\ref{3S1}) implies (\ref{3S3}).
\item[(b)] If
$\max\{|I|,|J|\}\le d-1$, then (\ref{3S2}) implies (\ref{3S3}).
\end{enumerate}
\end{definition}



Thus,
$R$ is a $d$-free subset of $Q$ if the fundamental FI \eqref{3S1} (resp.\ \eqref{3S2}) has only standard solutions provided that it involves  at most $d$ (resp.\ $d-1$) functions $E_i$, as well as at most $d$ (resp.\ $d-1$) functions $F_j$. In the basic case where $I=J=\{1,\dots,m\}$ we can speak about the number of variables instead of the number of functions.

It is usually clear which ring $Q$ we have in mind. In this case we simply say that $R$ is a {\em $d$-free set}. 
The case where  $R=Q$ is of special interest.

\begin{definition}
 A unital ring $R$ is said to be a 
 {\em $d$-free ring}  if $R$ is a $d$-free subset of itself.
\end{definition}

\begin{remark}\label{remdd}It is immediate from the definition that 
``$d$-free" implies ``$d'$-free" for every $d'<d$.
\end{remark}

\begin{remark}
The definition implies that the standard solutions \eqref{3S3} on $d$-free subsets are unique, provided of course that the condition on $\max\{|I|,|J|\}$ is fulfilled. Indeed,
if 
\begin{equation*}
E_i(\ov{x}_m^i) = \sum_{j\in J,\atop
j\not=i}x_jp_{ij}(\ov{x}_m^{ij})
+\lambda_i(\ov{x}_m^i)
\end{equation*}
and also
\begin{equation*}
E_i(\ov{x}_m^i) = \sum_{j\in J,\atop
j\not=i}x_jp_{ij}'(\ov{x}_m^{ij})
+\lambda_i'(\ov{x}_m^i)
\end{equation*}
for some functions
\begin{eqnarray*}
&  & p_{ij},p_{ij}':R^{m-2}\to Q,\;\;
i\in I,\; j\in J,\; i\not=j,\nonumber\\
&  & \lambda_k,\lambda_k':R^{m-1}\to C,\;\; k\in I\cup J
\end{eqnarray*}
such that $\lambda_k=\lambda_k'=0$ if
$k\notin I\cap J$,  then
$$
\sum_{j\in J,\atop
j\not=i}x_j(p_{ij}-p_{ij}')(\ov{x}_m^{ij})
=(\lambda_i'-\lambda_i)(\ov{x}_m^i)\in C.
$$
This is an FI of the type \eqref{3S2} with $I=\emptyset$
and so its standard solution consists of the zero functions.  This easily implies that, under an appropriate assumption on $\max\{|I|,|J|\}$,
$p_{ij}=p_{ij}'$ and
$\lambda_i=\lambda_i'$.
One just has to be a bit careful and consider the case where $i\notin J$ separately. We leave the details to the reader. 
\end{remark}

\begin{remark}
To prove that a set $R$ is $1$-free one
only has to consider condition (a) with
both $I$ and $J$ containing at most one element. For example, if $R$ contains the unity $1$, then it is automatically $1$-free. The problem of proving that $R$ is $2$-free is much more interesting. When speaking about $d$-free sets, we usually have in mind that $d\ge 2$. However, there is no reason to exclude the $d=1$ case when establishing the general theory.
\end{remark}

\begin{remark}\label{remcom}
A commutative ring  is never $2$-free. This is evident from Example \ref{ex2}  (along with Example \ref{exx0}). 
\end{remark}

\begin{remark}
Both conditions, (a) and (b), are truly needed because of applications. They seem very similar, so one may wonder whether they are really independent. This question  actually is not easy to answer. However, it has turned out that they are,  that is, examples showing that neither (a) implies (b) nor (b) implies (a)  were constructed, see \cite{Bremdfree}.
\end{remark}

\subsection{The symmetric fractional degree} 
 So far, we only know that
 commutative rings are never $d$-free for $d\ge 2$ (Remarks \ref{remdd} and \ref{remcom}) and
 that the ring of $n\times n$ matrices over a field is not
 $d$-free for $d > n$ (Examples \ref{ex4} and \ref{exx0}). 
It is not clear at this point  whether $d$-free sets with $d\ge 2$ exist at all. In this subsection, we will not yet give concrete examples, but present  a technical condition under which  a subring is  $d$-free.

We keep the notation from the preceding subsection, but additionally assume that $R$ is a subring. Thus, $Q$ will stand for a unital ring with center  $C$ and $R$ will be its (not necessarily unital) subring.
The definition of the aforementioned condition reads as follows.

\begin{definition}\label{def2}
The {\em symmetric fractional degree} of an element $t\in R$, denoted
$$\mbox{sf-$\deg(t)$,}$$
is said to be greater than a nonnegative integer $n$
(sf-$\deg(t) > n$) if 
there exist $a_{k},b_{k} \in R$ such that
$$\sum_{k}a_{k}t^ib_{k}=0,\,\,\,i=0,1,\dots,n-1,$$
and
$$ a = \sum_{k}a_{k}t^nb_{k}
$$
satisfies the following conditions:
\begin{enumerate}
\item[(SF1)] If $q\in Q$ is such that  $aRq = \{0\}$ or $qRa=\{0\}$, then $q=0$.

\item[(SF2)] If 
$U,V:R\to Q$ are  functions satisfying
$$ U(x)ya= axV(y)$$ for all $x,y\in R$,
then there exists  a $q\in Q$ such that 
$$U(x) = axq,\,\,\,V(y)=qya$$
for all $x,y\in R$.\end{enumerate}
We  define  $\mbox{sf-$\deg(t)= n$}$ if
sf-$\deg(t) > n-1$ but sf-$\deg(t) \not> n$.
If
 sf-$\deg(t) > n$ for every positive integer $n$, then we write  sf-$\deg(t) =\infty$.
\end{definition}

\begin{remark}
It should be pointed out that 
sf-$\deg(t) > n$ 
implies sf-$\deg(t) > n-1$ (otherwise the definition would not make sense). Indeed, if $a_k,b_k$ are elements from the definition that correspond to the sf-$\deg(t) > n$ case, then $a_kt,b_k$ are suitable for  the sf-$\deg(t) > n-1$ case.
\end{remark}

\begin{remark}
The symmetric fractional degree of $t$ depends on $R$ and $Q$, so it would be more appropriate to write something like sf-$\deg(t;R,Q)$ rather than only sf-$\deg(t)$. However, it will always be clear from the context which $R$ and $Q$ we have in mind.
\end{remark}

The concept of the symmetric fractional degree is a variation of that of a {\em fractional degree} which was used as the basic tool in the book \cite{FIbook}. It was introduced (in a slightly more general framework) in the more recent paper \cite{B16} in order to cover the situation where
$Q$ is one of the symmetric rings of quotients. The methods based on the fractional degree yield only results on left  rings of quotients. 
Since FIs are left-right symmetric, involving 
symmetric rings of quotients seems more natural. Moreover,
using this approach we will obtain a definitive result for symmetrically closed prime rings (Theorem \ref{tbei2}). 

The definition of the symmetric fractional degree may seem
rather complicated. To get some feeling, consider the case where $R=Q$ is a division ring. Then every nonzero element $a$ satisfies (SF1)
and (SF2) (the latter easily follows by taking $y=a^{-1}$ in $U(x)ya= axV(y)$), and 
the existence of $a_k,b_k$
having the desired properties follows from the Artin-Whaples Theorem (which we used already in Example \ref{ex3}), provided that $1,t,\dots,t^n$ are linearly independent over the center. This was just to indicate that Definition
\ref{def2} is not so unapproachable as it may seem at first glance. We will consider more general situations in the next subsection.

We now focus on showing that 
if $R$ is such that $C_Q(R)=C$ (i.e., the elements from the center $C$ of $Q$ are the only elements in $Q$ that commute with all elements in $R$), then
the existence of
an element $t\in R$ with sf-$\deg(t)\ge d$ implies that $R$ is a $d$-free subset of $Q$. We will give a complete proof since this result is of central importance in the FI theory.

We have to introduce some 
technical definitions and additional notation.

Given $t\in R$ and   $H:R^p \to Q$, we  write 
\begin{align*}H(x_it)\,\,\,&\mbox{for}\,\,\,H(x_1,\dots,x_{i-1},x_i t, x_{i+1},\dots,x_p),\\
H(x_it,x_jt)\,\,\,&\mbox{for}\,\,\,H(x_1,\dots,x_{i-1},x_i t, x_{i+1},\dots,x_{j-1},x_jt,x_{j+1},\dots,x_p), \,\,\,\mbox{etc.,}\end{align*}
and similarly,
\begin{align*}H(tx_i)\,\,\,&\mbox{for}\,\,\,H(x_1,\dots,x_{i-1},tx_i , x_{i+1},\dots,x_p),\\
H(tx_i,tx_j)\,\,\,&\mbox{for}\,\,\,H(x_1,\dots,x_{i-1},tx_i , x_{i+1},\dots,x_{j-1},tx_j,x_{j+1},\dots,x_p), \,\,\,\mbox{etc.}
\end{align*}
Let $1\le r\le p$.  We define $\mathcal{L}_{r,t}(H):R^p \to Q$ by
\begin{align*}
\mathcal{L}_{r,t}(H)(\ov{x}_p) 
=& t^{p-1} H(\ov{x}_p) - \sum_{1\le i\le p,\atop i\ne r} t^{p-2} H(tx_i) \\ & +   \sum_{1\le i< j \le  p,\atop i,j\ne r} t^{p-3}  H(tx_i,tx_j) \\
&
- \sum_{1\le i< j < k \le  p,\atop i,j,k\ne r} t^{p-4} H(tx_i,tx_j,tx_k) \\  &+ \dots + (-1)^{p-1} H(tx_1,\dots,tx_{r -1},tx_{r +1},\dots,tx_p).
\end{align*}

Similarly we define $\mathcal{R}_{s,t}(H):R^p \to Q$, where $1\le s\le p$,
 by
\begin{align*}
\mathcal{R}_{s,t}(H)(\ov{x}_p) 
=& H(\ov{x}_p)t^{p-1} - \sum_{1\le i\le p,\atop i\ne s} H(x_it) t^{p-2}\\ & +   \sum_{1\le i< j \le  p,\atop i,j\ne s} H(x_it,x_jt) t^{p-3} \\
&
- \sum_{1\le i< j < k \le  p,\atop i,j,k\ne s} H(x_it,x_jt,x_kt) t^{p-4}\\  &+ \dots + (-1)^{p-1} H(x_1t,\dots,x_{s -1}t,x_{s +1}t,\dots,x_pt).
\end{align*}
For example, if $s =2$ and $p=4$, then
\begin{align*}
\mathcal{R}_{2,t}(H)&(x_1,x_2,x_3,x_4) \\
=& H(x_1,x_2,x_3,x_4)t^3 \\
 -& H(x_1t,x_2,x_3,x_4)t^2 - H(x_1,x_2,x_3t,x_4)t^2 - H(x_1,x_2,x_3,x_4t)t^2\\
  + &H(x_1t,x_2,x_3t,x_4)t  + H(x_1t,x_2,x_3,x_4t)t +   H(x_1,x_2,x_3t,x_4t)t\\
-& H(x_1t,x_2,x_3t,x_4t). 
 \end{align*}
 
Note that
$$\mathcal{L}_{r,t}(H_1 + H_2) =  \mathcal{L}_{r,t}( H_1) + \mathcal{L}_{r,t}( H_2)$$
and$$\mathcal{R}_{s,t}(H_1 + H_2) =  \mathcal{R}_{s,t}( H_1) + \mathcal{R}_{s,t}( H_2). $$

We will call $H:R^p\to Q$   a {\em right $j$-function} if there exists a function $F:R^{p-1}\to Q$ such that
$$
H(\ov{x}_p)= x_jF(\ov{x}_p^j)
$$
for all $\ov{x}_p\in R^p$. Any function that is a sum of right $j$-functions will be called a {\em right function}. Similarly,
$K:R^p\to Q$  is  a {\em left $i$-function} if there exists a function $E:R^{p-1}\to Q$ such that
$$
K(\ov{x}_p)= E(\ov{x}_p^i)x_i
$$
for all $\ov{x}_p\in R^p$, and 
a {\em left function} is a sum of 
left $i$-functions. We remark that the basic FI \eqref{3S1} considers the situation where a left function is equal to a  right function.

We continue with a few simple lemmas. The first two need no proof.

\begin{lemma}\label{lr1}  
If $H$ is a right $j$-function, then so is $\mathcal{R}_{s,t}(H)$.  Therefore,
if  $H$ is a right function, then so is   $\mathcal{R}_{s,t}(H)$.
\end{lemma}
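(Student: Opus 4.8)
The plan is to read off the claim directly from the definition of $\mathcal{R}_{s,t}$; the only real content is a bookkeeping observation about which argument carries the left factor $x_j$. Suppose $H$ is a right $j$-function, so that there is a function $F:R^{p-1}\to Q$ with $H(\ov{x}_p)=x_jF(\ov{x}_p^j)$ for all $\ov{x}_p\in R^p$. Recall that $\mathcal{R}_{s,t}(H)(\ov{x}_p)$ is a signed sum whose generic term is indexed by a subset $S\subseteq\{1,\dots,p\}\setminus\{s\}$ and is obtained by multiplying each argument $x_\ell$ with $\ell\in S$ on the right by $t$, evaluating $H$, and then multiplying the result on the right by $t^{\,p-1-|S|}$, with sign $(-1)^{|S|}$. (The summand with $S=\emptyset$ is $H(\ov{x}_p)t^{p-1}$, those with $|S|=1$ give the terms $-H(x_it)t^{p-2}$, and so on.)

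First I would examine a single such term. Since $H$ is a right $j$-function, its value is the $j$-th argument times $F$ of the remaining arguments. In the term indexed by $S$, the $j$-th argument equals $x_j$ when $j\notin S$ and $x_jt$ when $j\in S$; in either case $x_j$ sits as a left factor. Thus the term equals $(-1)^{|S|}x_jF(\cdots)t^{\,p-1-|S|}$ when $j\notin S$ and $(-1)^{|S|}x_j\,tF(\cdots)t^{\,p-1-|S|}$ when $j\in S$, where in both cases the arguments fed to $F$ are drawn from $\ov{x}_p^j$ (some possibly right-multiplied by $t$) and therefore do not depend on $x_j$. The essential point is that right-multiplying the $j$-th slot by $t$ does not disturb the left factor $x_j$.

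Summing over all $S$, the common left factor $x_j$ pulls out, yielding $\mathcal{R}_{s,t}(H)(\ov{x}_p)=x_jG(\ov{x}_p^j)$, where $G$ collects the remaining factors; as none of those involves $x_j$, the map $G$ is a genuine function on $R^{p-1}$, and hence $\mathcal{R}_{s,t}(H)$ is a right $j$-function. The second assertion follows immediately from the additivity of $\mathcal{R}_{s,t}$ observed just above the lemma: if $H=\sum_k H_k$ is a sum of right $j_k$-functions, then $\mathcal{R}_{s,t}(H)=\sum_k\mathcal{R}_{s,t}(H_k)$ is again a sum of right functions. There is essentially no obstacle here—the lemma is purely computational—the only thing to watch being the harmless split into the cases $j\in S$ and $j\notin S$; the companion statement for $\mathcal{L}_{r,t}$ and left functions is entirely symmetric, with $x_j$ playing the role of a right rather than a left factor.
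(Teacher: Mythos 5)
Your argument is correct and is exactly the routine verification the paper has in mind; the paper in fact omits the proof entirely, remarking only that Lemmas \ref{lr1} and \ref{ll1} ``need no proof.'' Your bookkeeping (each summand of $\mathcal{R}_{s,t}(H)$ retains $x_j$ as a left factor whether or not the $j$-th slot is hit by $t$, and the remaining data depends only on $\ov{x}_p^j$, so the sum is again $x_jG(\ov{x}_p^j)$; the general case then follows from additivity of $\mathcal{R}_{s,t}$) is precisely the intended justification.
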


\begin{lemma} \label{ll1}  
If $H$ is a left $i$-function, then so is $\mathcal{L}_{r,t}(H)$. Therefore,
if
  $H$ is a left function, then so is   $\mathcal{L}_{r,t}(H)$. 
\end{lemma}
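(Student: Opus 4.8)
The plan is to establish the first assertion for a single left $i$-function and then obtain the statement about arbitrary left functions at once, using the additivity of $\mathcal{L}_{r,t}$ already recorded above. So I would suppose that $H$ is a left $i$-function, say $H(\ov{x}_p)=E(\ov{x}_p^i)x_i$ for some $E:R^{p-1}\to Q$, fix $1\le r\le p$, substitute this form of $H$ into the defining expression for $\mathcal{L}_{r,t}(H)$, and inspect each summand.

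A typical summand is, up to sign, $t^{p-1-k}H(tx_{j_1},\dots,tx_{j_k})$ with $j_1<\dots<j_k$ all different from $r$. The key observation is that replacing some arguments $x_j$ by $tx_j$ never disturbs the trailing factor $x_i$ coming from the left $i$-function structure, and that left-multiplying by powers of $t$ likewise leaves that factor on the right. To make this precise I would distinguish whether or not $i$ belongs to $\{j_1,\dots,j_k\}$. If it does not, then $H(tx_{j_1},\dots,tx_{j_k})=E(\dots)x_i$, where the argument of $E$ is $\ov{x}_p^i$ with some entries left-multiplied by $t$; if it does, then the $i$-th slot becomes $tx_i$, so $H(tx_{j_1},\dots,tx_{j_k})=E(\dots)(tx_i)=(E(\dots)t)x_i$. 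In either case the summand equals a function of $\ov{x}_p^i$ multiplied on the right by $x_i$.

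Summing over all terms, each of which carries the same rightmost factor $x_i$, I would conclude that $\mathcal{L}_{r,t}(H)(\ov{x}_p)=E'(\ov{x}_p^i)x_i$ for a suitable $E':R^{p-1}\to Q$, so that $\mathcal{L}_{r,t}(H)$ is again a left $i$-function, with the same index $i$. For the ``therefore'' part I would write a general left function as a finite sum $H=\sum_i H_i$ of left $i$-functions and apply additivity, $\mathcal{L}_{r,t}(H)=\sum_i\mathcal{L}_{r,t}(H_i)$, so that $\mathcal{L}_{r,t}(H)$ is a sum of left functions and hence a left function.

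The computation is entirely routine and I expect no real obstacle beyond index bookkeeping. The one point deserving a moment's care is the verification that, after substituting $tx_i$ for $x_i$, the coefficient $E(\dots)t$ still depends only on $\ov{x}_p^i$ and not covertly on $x_i$; but this is immediate, since $E$ takes $p-1$ arguments and its $i$-th slot is simply absent. This is exactly why the lemma needs no proof: $\mathcal{L}_{r,t}$ builds its terms purely by left multiplications by powers of $t$ and by the left-substitutions $x_j\mapsto tx_j$, whereas a left $i$-function is characterized by having $x_i$ as its rightmost factor, a feature that such left operations cannot disturb.
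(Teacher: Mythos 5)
Your argument is correct and is exactly the routine verification that the paper omits (it states that Lemmas \ref{lr1} and \ref{ll1} ``need no proof''): each term of $\mathcal{L}_{r,t}(H)$ arises from $H$ by left multiplications by powers of $t$ and by substitutions $x_j\mapsto tx_j$, none of which disturb the trailing factor $x_i$, so each term is again a left $i$-function and the sum is one too. The additivity of $\mathcal{L}_{r,t}$ then gives the statement for general left functions, just as you say.
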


\begin{lemma}\label{lr2}  If $H$ is a left function, that is,   $$H(\ov{x}_p)= \sum_{i=1}^p E_i(\ov{x}_p^i)x_i,$$ then there exist functions
$G_i:R^{p-1}\to Q$, $i=0,1,\dots,p-2$, such that
  $$\mathcal{R}_{s,t}(H)(\ov{x}_p)=  \sum_{i=0}^{p-2} G_i (\ov{x}_p^s)x_s t^{i} + E_s(\ov{x}_p^s)x_s t^{p-1} 
	$$
 for all $\ov{x}_p\in R^p$.
\end{lemma}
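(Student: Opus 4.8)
The plan is to exploit the additivity of $\mathcal{R}_{s,t}$ recorded just above the lemma and treat each summand of $H$ separately, so that it suffices to analyze $\mathcal{R}_{s,t}(K)$ for a single left $i$-function $K(\ov{x}_p)=E(\ov{x}_p^i)x_i$. Reading the definition of $\mathcal{R}_{s,t}$, I would first repackage $\mathcal{R}_{s,t}(K)(\ov{x}_p)$ as the signed sum $\sum_{S}(-1)^{|S|}K_S(\ov{x}_p)\,t^{p-1-|S|}$, where $S$ ranges over all subsets of $\{1,\dots,p\}\setminus\{s\}$ and $K_S$ denotes $K$ with $x_j$ replaced by $x_jt$ for every $j\in S$. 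This turns the nested alternating sums in the definition into a single sum over subsets, which is the convenient bookkeeping device for the cancellation to come.

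The heart of the argument is the case $i\ne s$, where I expect to prove that $\mathcal{R}_{s,t}(K)=0$. The key observation is that $E$ does not involve the variable $x_i$, so replacing $x_i$ by $x_it$ affects only the trailing factor $x_i$, turning it into $x_it$, while leaving the argument of $E$ untouched. I would therefore pair each subset $S$ with $i\notin S$ with the subset $S\cup\{i\}$. The two corresponding terms share the same value of $E$ and differ only in that the second carries an extra factor $t$ on $x_i$, one lower power of $t$ on the right, and an opposite sign; hence they cancel. Since every subset of $\{1,\dots,p\}\setminus\{s\}$ lies in exactly one such pair, the whole expression vanishes. I expect the main obstacle to be precisely getting this pairing and the power-of-$t$ bookkeeping exactly right, since an off-by-one in the exponent or a sign slip would destroy the cancellation.

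It then remains to handle $i=s$, where $K(\ov{x}_p)=E_s(\ov{x}_p^s)x_s$. Because $s$ is excluded from every index set $S$, the factor $x_s$ is never replaced by $x_st$, so each term retains $x_s$ on the right while $E_s$ is evaluated only at a $t$-shifted version of $\ov{x}_p^s$. The subset $S=\emptyset$ contributes exactly $E_s(\ov{x}_p^s)x_s t^{p-1}$, the desired leading term. Grouping the remaining terms according to the power $t^{i}$ they carry on the right, for $0\le i\le p-2$, defines the functions $G_i(\ov{x}_p^s)$ as the appropriate signed sums of $E_s$ evaluated at the $t$-shifted arguments. Summing the contributions of all the left $i$-functions making up $H$ then yields the claimed formula, the summands with $i\ne s$ contributing nothing.
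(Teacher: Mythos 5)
Your proposal is correct and follows essentially the same route as the paper: reduce to a single left $i$-function by additivity of $\mathcal{R}_{s,t}$, read off the $i=s$ case directly, and show $\mathcal{R}_{s,t}$ kills a left $i$-function with $i\ne s$ by cancelling each term against the one obtained by additionally shifting $x_i$ to $x_it$. Your subset-indexed formulation of the alternating sum is just a cleaner bookkeeping of the layer-by-layer cancellation the paper describes (and carries out only for $s=1$, $i=2$), so there is nothing to object to.
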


\begin{proof}  If $H(\ov{x}_p)=  E_s(\ov{x}_p^s)x_s$, that is,  if
 $H$ is a   left $s$-function,
 the lemma is clear.
Therefore, it suffices to show that 
$\mathcal{R}_{s,t}(H)=0$ if $H$ is
a   left $i$-function where $i\ne s$. We will prove this for $s =1$ and $i=2$. The other cases can be handled similarly.

We are thus assuming that  $H(\ov{x}_p)= E(\ov{x}_p^2)x_2$. First observe that the first two terms from the definition of $\mathcal{R}_{1,t}(H)(\ov{x}_p) $ cancel out. The next terms, $- H(x_it) t^{p-2}$ with $i\ge 3$, cancel out with the terms $H(x_2t,x_it) t^{p-3}$ from the next summation. Next, the terms
 $H(x_it,x_jt) t^{p-3}$ with $3\le i < j\le p$ cancel out with the terms $-H(x_2t,x_it,x_jt) t^{p-4}$. Proceeding in this way, we finally observe that the terms $(-1)^{p-2}H(x_3t,\dots,x_pt)t$ and
$(-1)^{p-1}H(x_2t,x_3t,\dots,x_pt)$ cancel out.
\end{proof}

The following lemma can be proved similarly.

\begin{lemma}\label{ll2}  If $H$ is a right function, that is,   $$H(\ov{x}_p)= \sum_{j=1}^p x_jF_j(\ov{x}_p^j),$$ then there exist functions
$K_j:R^{p-1}\to Q$, $j=0,1,\dots,p-2$, such that
  $$\mathcal{L}_{r,t}(H)(\ov{x}_p)=  \sum_{j=0}^{p-2}t^j x_r  K_j (\ov{x}_p^r) + t^{p-1}x_r F_r(\ov{x}_p^r) 
	$$
 for all $\ov{x}_p\in R^p$.
\end{lemma}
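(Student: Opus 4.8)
The plan is to mirror the proof of Lemma \ref{lr2} under the left-right symmetry that interchanges $\mathcal{R}_{s,t}$ with $\mathcal{L}_{r,t}$, left functions with right functions, and the distinguished index $s$ with $r$. Since $\mathcal{L}_{r,t}$ is additive, I would first write $H=\sum_{j=1}^p H_j$, where $H_j(\ov{x}_p)=x_jF_j(\ov{x}_p^j)$ is a right $j$-function, and treat each summand separately. The structural observation that drives everything is that the operator $\mathcal{L}_{r,t}$ never multiplies the variable $x_r$ by $t$ on the left: the substitutions $x_i\mapsto tx_i$ run only over indices $i\ne r$. Consequently I expect $\mathcal{L}_{r,t}$ to preserve the right $r$-function and to annihilate every right $j$-function with $j\ne r$.

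For the surviving case $j=r$, I would expand $\mathcal{L}_{r,t}(H_r)$ directly. Every term of $\mathcal{L}_{r,t}$ carries a left prefactor $t^k$ with $k$ running from $p-1$ down to $0$, and since $x_r$ is left untouched, each term factors as $t^k x_r(\cdots)$, where the expression in parentheses depends only on $\ov{x}_p^r$. The top term ($k=p-1$) is exactly $t^{p-1}x_rF_r(\ov{x}_p^r)$, and collecting the remaining terms according to their power of $t$ produces functions $K_0,\dots,K_{p-2}\colon R^{p-1}\to Q$ with
$$\mathcal{L}_{r,t}(H_r)(\ov{x}_p)=\sum_{j=0}^{p-2}t^j x_r K_j(\ov{x}_p^r)+t^{p-1}x_r F_r(\ov{x}_p^r),$$
which is the asserted form.

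It then remains to show $\mathcal{L}_{r,t}(H_j)=0$ whenever $j\ne r$, and as in Lemma \ref{lr2} it suffices to verify this in one representative case, say $r=1$ and $j=2$ with $H(\ov{x}_p)=x_2F(\ov{x}_p^2)$; the general case follows by relabeling indices. The idea is to pair each term in which $x_2$ is \emph{not} substituted by $tx_2$ with the term in which it \emph{is}. A term indexed by a substitution set $S\subseteq\{2,\dots,p\}$ of size $\ell$ carries the prefactor $(-1)^\ell t^{p-1-\ell}$. If $2\notin S$, the term equals $(-1)^\ell t^{p-1-\ell}x_2 F_S$, where $F_S$ denotes $F$ with the substitutions indexed by $S$; its partner, indexed by $S\cup\{2\}$, equals $(-1)^{\ell+1}t^{p-2-\ell}(tx_2)F_S=(-1)^{\ell+1}t^{p-1-\ell}x_2 F_S$, so the two cancel. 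In particular the leading term $t^{p-1}x_2F(\ov{x}_p^2)$ cancels against the $i=2$ summand $-t^{p-2}H(tx_2)$, exactly paralleling the cancellation of the first two terms in Lemma \ref{lr2}. Since these pairs partition all the terms, the whole expression vanishes.

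The main obstacle I anticipate is purely the bookkeeping in this last step: confirming that the sign $(-1)^\ell$ and the prefactor $t^{p-1-\ell}$ attached to each multi-substitution term align with those of its partner across consecutive summations of $\mathcal{L}_{r,t}$. This is entirely dual to the telescoping already spelled out for $\mathcal{R}_{s,t}$ in Lemma \ref{lr2}, so once the pairing $S\leftrightarrow S\cup\{2\}$ is set up the verification proceeds term by term with no new ideas.
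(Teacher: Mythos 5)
Your proposal is correct and takes essentially the same route as the paper, which proves Lemma \ref{ll2} by dualizing the argument for Lemma \ref{lr2}: the $j=r$ summand is handled by direct expansion, and each right $j$-function with $j\ne r$ is annihilated by the same telescoping cancellation you organize via the pairing $S\leftrightarrow S\cup\{j\}$. No gaps.
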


We are now ready to prove   the theorem that we announced.

\begin{theorem}\label{mt} Let $Q$
be a unital ring with center $C$ and let $R$ 
 be a subring of  $Q$ such that $C_Q(R)=C$. Let $d\ge 1$. If $R$ contains an element  $t$ with {\rm sf}-$\deg(t) \ge d$, then  
 $R$ is a $d$-free subset of $Q$.
\end{theorem}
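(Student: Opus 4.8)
The plan is to verify the two requirements of Definition \ref{defd} in turn, treating condition (a) as the main case and deducing condition (b) from it. For (a) I would argue by induction on the number of variables $m$ in the fundamental identity \eqref{3S1}. The operators $\mathcal{L}_{r,t}$ and $\mathcal{R}_{s,t}$, together with Lemmas \ref{lr1}--\ref{ll2}, are precisely the devices that \emph{isolate} a single unknown: reading \eqref{3S1} as an equality ``left function $=$ (minus) right function'', an application of $\mathcal{R}_{s,t}$ turns the left side, by Lemma \ref{lr2}, into $\sum_{i=0}^{m-2} G_i(\ov{x}_m^s)x_s t^i + E_s(\ov{x}_m^s)x_s t^{m-1}$ while, by Lemma \ref{lr1}, it keeps the right side a right function; dually, $\mathcal{L}_{r,t}$ exposes $F_r$ as the leading coefficient of $t^{m-1}$. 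Thus the target function is displayed as the coefficient of the top power of $t$, with every remaining contribution carrying a strictly lower power of $t$ or belonging to the space of right (resp.\ left) functions.

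The engine driving the isolated identity to a conclusion is the symmetric fractional degree. After first freezing the inert variables $x_k$ with $k\notin I\cup J$ as parameters (dropping the arity to $m-1$) and peeling off the asymmetric indices of $I\setminus J$ and $J\setminus I$ one at a time --- once $E_i$ with $i\in I\setminus J$ is shown to be standard, the term $E_i(\ov{x}_m^i)x_i$ is absorbed into the right functions and $x_i$ itself becomes inert --- one is reduced to the symmetric situation $I=J=\{1,\dots,m\}$, where $m=|I|\le d$. In this range $\text{sf-}\deg(t)>m-1$, so Definition \ref{def2} supplies $a_k,b_k\in R$ with $\sum_k a_k t^i b_k=0$ for $i=0,\dots,m-2$ and with $a=\sum_k a_k t^{m-1}b_k$ obeying (SF1) and (SF2). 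Substituting these witnesses into the argument variables of the isolated identity and combining with the $a_k$ on the appropriate side annihilates the contributions of the lower powers $t^0,\dots,t^{m-2}$ (this is exactly where the relations $\sum_k a_k t^i b_k=0$ are consumed) and leaves the leading term multiplied by $a$, yielding a relation of the shape $U(x)ya=axV(y)$. Condition (SF2) then produces the quotient element $q$ witnessing that $E_s$ (and, symmetrically, each $F_r$) has the standard form \eqref{3S3}; feeding this back into \eqref{3S1} cancels the solved function against the matching $F_j$'s, lowers the number of active functions, and closes the induction. The hypothesis $C_Q(R)=C$ is used throughout to relocate the surviving commuting elements into $C$, thereby producing the central maps $\lambda_k$. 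The base case $m=1$ is immediate: substituting $x\mapsto xy$ and $x\mapsto yx$ into \eqref{3S1} and differencing forces, via (SF1) and $C_Q(R)=C$, each coefficient into $C$ and then to cancel as in \eqref{3S3}.

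Condition (b) I would obtain from (a) by the standard commutator reduction. If the left-hand side of \eqref{3S2} is central, commuting it with a fresh variable $x_{m+1}$ produces an instance of \eqref{3S1} in $m+1$ variables in which the relevant index sets grow by one and so have size at most $(d-1)+1=d$; part (a) therefore applies, and reading off its standard solution gives \eqref{3S3} for \eqref{3S2}. This one-variable cost is exactly why (b) is required only for $\max\{|I|,|J|\}\le d-1$, while (a) is available up to $d$.

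The step I expect to be the main obstacle is the middle one: organizing the substitution of the witnesses $a_k,b_k$ so that the lower powers of $t$ cancel cleanly and the leading coefficient emerges multiplied by $a$ in a form eligible for (SF2), while simultaneously controlling the reduction (freezing inert variables and peeling asymmetric indices) so that the top power $t^{m-1}$ produced by the isolation operator never outruns what $\text{sf-}\deg(t)\ge d$ can handle. Reconciling the arity $m$ with the bound $\max\{|I|,|J|\}\le d$ on the number of functions is the delicate accounting at the heart of the argument; the attendant uniqueness of the standard solution (as in the remark following Definition \ref{defd}) is needed to keep the inductive bookkeeping consistent but is otherwise routine.
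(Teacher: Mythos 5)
Your argument for condition (a) follows the paper's proof in all essentials: isolate $E_s$ (resp.\ $F_r$) as the coefficient of the top power of $t$ via $\mathcal{R}_{s,t}$ (resp.\ $\mathcal{L}_{r,t}$), use the witnesses $a_k,b_k$ to annihilate the lower powers and produce the element $a$, and then invoke (SF1)/(SF2) together with $C_Q(R)=C$; the reorganization by induction and reduction to $I=J$ is workable, though the paper avoids it by treating a general $s\in I$ directly and disposing of the $F_j$'s at the end via the $I=\emptyset$ case.

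The treatment of condition (b), however, has a genuine gap: the commutator reduction you propose cannot work. Your argument derives (b) purely from (a), but the two conditions in Definition \ref{defd} are logically independent --- there exist sets satisfying (a) but not (b) (see the remark following Definition \ref{defd} and \cite{Bremdfree}) --- so no formal deduction of (b) from (a) alone can be valid. Concretely, commuting \eqref{3S2} with a fresh variable $x_{m+1}$ does yield an instance of \eqref{3S1} with index sets $I\cup\{m+1\}$, $J\cup\{m+1\}$ of size at most $d$, and (a) gives a standard solution of \emph{that} identity; but its coefficient functions $p_{ij}$ and $\lambda_k$ (for $i,j,k\ne m+1$) still depend on $x_{m+1}$, and ``reading off'' \eqref{3S3} for the original identity from them is exactly the missing step. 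Attempting to unwind, say, the relation $-x_{m+1}E_i(\ov{x}_m^i)=\sum_{j}x_jp_{ij}+x_{m+1}p_{i,m+1}+\lambda_i$ produces a new identity of type \eqref{3S2} whose index set has size up to $|J|+1\le d$ --- one more than condition (b) covers, and circular in any case. The paper instead proves (b) directly: writing $\mu(\ov{x}_m)$ for the central value, it applies $\mathcal{R}_{s,t}$, substitutes $x_st$ for $x_s$ to raise the power of $t$ multiplying $\mu$, applies $\mathcal{L}_{r,t}$, exploits the fact that the central left-hand side commutes with $t$, and then uses the witnesses and (SF1) twice to conclude first that the right-hand side vanishes and finally that $\mu(\ov{x}_m)a=0$, whence $\mu=0$; only then does (a) finish the job. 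This second use of the symmetric fractional degree hypothesis is indispensable and is absent from your proposal.
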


\begin{proof} 
Our goal is to show that conditions (a) and (b) of Definition \ref{defd} are fulfilled.  We will thus be interested in FIs \eqref{3S1} and \eqref{3S2}. We remark that we will consider the operator  $\mathcal{R}_{s,t}$  (resp.\ $\mathcal{L}_{r,t}$) with respect to the variables $x_i$ with $i\in I$ (resp.\ $x_j$ with $j\in J$); thus, the role of $p$ will be played by $|I|$ (resp.\ $|J|$). Indeed the number of variables $m$ may be greater than $p$, but the additional variables will be considered fixed when dealing with $\mathcal{R}_{s,t}$ and $\mathcal{L}_{r,t}$.

 Let us first prove (a). Assume, therefore, that the functions $E_i,F_j$ satisfy  (\ref{3S1})  with
$\max\{|I|,|J|\}\le d$. We have to show that they are of the form  \eqref{3S3}. 

Assume that  $I\ne \emptyset$. Choose $s\in I$ and apply the operator 
  $\mathcal{R}_{s,t}$ 
 to the FI \eqref{3S1}.
 Using  Lemmas \ref{lr1} and \ref{lr2} we see that there exist functions $G_i,H_j:R^{m-1}\to Q$ such that
\begin{equation}\label{eqr}
\sum_{i=0}^{|I|-2} G_i (\ov{x}_m^s)x_s t^{i} + E_s(\ov{x}_m^s)x_s t^{|I|-1} + \sum_{j\in J }  x_j H_j(\ov{x}_m^j)=0	
\end{equation}
for  all $\ov{x}_m\in R^m$.

By assumption,  there exist $t\in R$ and $a_k,b_k\in R$ such that $$\sum_{k}a_{k}t^ib_{k}=0,\,\,\, i=0,1,\dots,  d-2,$$ and $$a=\sum_{k}a_{k}t^{d-1}b_{k}$$
satisfies conditions (SF1)
and (SF2) from Definition \ref{def2}.
Replace $x_s$ by $x_s a_k$ in \eqref{eqr} and multiply the identity so obtained   from the right by $t^{d-|I|}b_k$ (here we use that $|I|\le d$). Summing up over  $k$ we obtain
\begin{equation}\label{eqk}E_s(\ov{x}_m^s)x_s a+ \sum_{j\in j} x_j L_j(\ov{x}_m^j)   =0
\end{equation}
for some functions $L_j:R^{m-1}\to Q$. 

If $J=\emptyset$, then  \eqref{eqk} shows that $E_s=0$ since $a$ satisfies  (SF1).  This means that \eqref{3S3} holds in this case.  We may therefore assume that $J\neq \emptyset$, and, analogously, that
$I\neq \emptyset$. Furthermore, this also show that
 it is now enough    to prove  that the functions $E_i$ are of the  form \eqref{3S3}. Indeed, assuming this is true, we can write \eqref{3S1} as
$$\sum_{j\in J} x_j\Bigl(F_j(\ov{x}_m^j) + \sum_{i\in I,\atop
i\not=j}p_{ij}(\ov{x}_m^{ij})x_i
+\lambda_j(\ov{x}_m^j)\Bigr) =0,$$
which is an FI  of the type \eqref{3S1} with $I=\emptyset$, implying that 
$$F_j(\ov{x}_m^j) + \sum_{i\in I,\atop
i\not=j}p_{ij}(\ov{x}_m^{ij})x_i
+\lambda_j(\ov{x}_m^j) =0;$$
this means that the functions $F_j$ are of the desired form.

Take $r\in J$ and 
apply  $\mathcal{L}_{r,t}$ to \eqref{eqk}. Using  Lemmas \ref{ll1} and \ref{ll2} it follows that there are functions  $\widehat{E},K_j:R^{m-1}\to Q$ such that 
$$
\widehat{E}(\ov{x}_m^s)x_s a + \sum_{j=0}^{|J|-2}t^j x_{r}  K_j (\ov{x}_m^{r}) + t^{|J|-1}x_{r} L_{r}(\ov{x}_m^{r})  =0.
$$
 Replacing $x_{r}$ by $b_kx_{r}$, multiplying from the left by $a_kt^{d-|J|}$, and summing up over $k$ we arrive at
\begin{equation}\label{edvec}
\widetilde{E}(\ov{x}_m^s)x_s a =- ax_{r} L_{r}(\ov{x}_m^{r}) 
\end{equation}
for some function $\widetilde{E}:R^{m-1}\to Q$. If $r\ne s$, then
we can use (SF2) (where all variables except $x_s$ and $x_{r}$ are considered fixed). Hence, there exists a $p_{s r}(\ov{x}_m^{s r})\in Q$ such that
$$-L_{r}(\ov{x}_m^{r}) = p_{s r}(\ov{x}_m^{s r})x_s a.$$
If $s\in J$ and  $r =s$, then  we substitute $y_s x_s$ for $x_s$  in   \eqref{edvec} to obtain
$$\bigl(\widetilde{E}(\ov{x}_m^s)y_s\bigr)x_s a = ay_s\bigl(-x_s L_s(\ov{x}_m^s)\bigr).$$
Observe that again we may  use (SF2). Thus, in particular there exists a 
$\lambda_s(\ov{x}_m^s)\in Q$ such that $$-x_s L_s(\ov{x}_m^s) = \lambda_s(\ov{x}_m^s)x_s a.$$
Hence, $$y_s\lambda_s(\ov{x}_m^s)x_s a = -y_s x_s L_s(\ov{x}_m^s)
= \lambda_s(\ov{x}_m^s)(y_s x_s)a.$$ 
This means that  $$[y_s,\lambda_s(\ov{x}_m^s)]x_sa =0.$$ Using (SF1) we see that $\lambda_s(\ov{x}_m^s)$ belongs to $C_Q(R)$, which is equal to $C$ by our assumption.  Setting $\lambda_s(\ov{x}_m^s) =0$ if $s\notin J$, we now see that  \eqref{eqk} can be written as 
$$\Bigl(E_s(\ov{x}_m^s) -  \sum_{j\in J,\atop
j\not=s}x_jp_{s j}(\ov{x}_m^{s j})
-\lambda_s(\ov{x}_m^s)\Bigr) x_sa=0.$$
Applying (SF1) once again it follows  that $E_s$ is of the form    \eqref{3S3}. This completes the proof of (a).

Let us prove (b). Assume, therefore, that
\begin{equation}\label{ec}\mu(\ov{x}_m) = \sum_{i\in I}E_i(\ov{x}_m^i)x_i+ \sum_{j\in J}x_jF_j(\ov{x}_m^j)
 \in  C\end{equation}
and $\max\{|I|,|J|\}\le d-1$. Since (a) holds, it is enough to prove that $\mu(\ov{x}_m)=0$.

Take $s\in I$ and  apply  $\mathcal{R}_{s,t}$ 
 to \eqref{ec}. Using Lemmas \ref{lr1} and \ref{lr2} we see that there are functions $G_i,H_j:R^{m-1}\to Q$ and $\mu_i:R^m\to C$ satisfying
\begin{equation}\label{ec2}\mu(\ov{x}_m) t^{|I|-1} +\sum_{i=0}^{|I|-2} \mu_{i} (\ov{x}_m) t^{i} =
\sum_{i=0}^{|I|-1} G_i (\ov{x}_m^s)x_s t^{i} +  \sum_{j\in J }  x_j H_j(\ov{x}_m^j).\end{equation}
Let $t$ be as above and replace $x_s$ by $x_s t$.
This gives
$$\sum_{i=0}^{|I|-1} \mu_{i}' (\ov{x}_m) t^{i} =
\Bigl(\sum_{i=0}^{|I|-1} G_i (\ov{x}_m^s)x_s t^{i} \Bigr) t+  \sum_{j\in J }  x_j H_j'(\ov{x}_m^j)$$ for some
functions $H_j':R^{m-1}\to Q$ and $\mu_i':R^m\to C$.
Using \eqref{ec2} we can  
rewrite the first summation on the right-hand side, and hence arrive at
\begin{equation}\label{ec3}\mu(\ov{x}_m) t^{|I|}+\sum_{i=0}^{|I|-1} \mu_i''(\ov{x}_m)t^i =  \sum_{j\in J }  x_j L_j(\ov{x}_m^j)\end{equation}
with $L_j:R^{m-1}\to Q$ and $\mu_i'':R^m\to C$.

Take $r\in J$ and
apply  $\mathcal{L}_{r,t}$ to \eqref{ec3}.  By Lemma \ref{ll2}, the right-hand side then becomes
\begin{equation}\label{egh}\sum_{j=0}^{|J|-2}t^j x_{r}  K_j (\ov{x}_m^{r}) + t^{|J|-1}x_{r} L_{r}(\ov{x}_m^{r}),
\end{equation}
while the left-hand side consists of terms from $Ct^i$. Therefore,
the expression \eqref{egh} commutes with $t$, which gives
$$ t^{|J|}x_{r} L_{r}(\ov{x}_m^{r}) + \sum_{j=0}^{|J|-1}t^j x_{r}  M_j (\ov{x}_m^{r})  =0$$
for some functions $M_j:R^{m-1}\to Q$.  
Let $a_k,b_k$ be the elements from the definition of {\rm sf}-$\deg(t)$.
Replacing $x_{r}$ by $b_kx_{r}$, multiplying from the left by $a_kt^{d-|J|-1}$ (here we use that $d-|J|-1\ge 0$), and summing up over  $k$ we obtain $$ax_rL_{r}(\ov{x}_m^{r})=0,$$
which by (SF1) yields $L_{r}(\ov{x}_m^{r})=0$. Therefore, the right-hand side of \eqref{ec3} is zero, and hence so is the left-hand side.
Multiplying 
$$\mu(\ov{x}_m) t^{|I|}+\sum_{i=0}^{|I|-1} \mu_i''(\ov{x}_m)t^i=0$$
 from the left by  $a_kt^{d-|I|-1}$, from the right by $b_k$, and summing up over $k$  we obtain $\mu(\ov{x}_m) a=0$. Using (SF1) once again we finally arrive at $\mu(\ov{x}_m)=0$. 
\end{proof}

\subsection{$d$-free prime rings} Our goal now is to show that Theorem \ref{mt} is applicable to prime rings. Starting with a prime ring $R$, one of course has to find a suitable ring $Q$ so that the symmetric fractionable degree of elements in $R$ can be computed.  For this purpose, we will define the maximal symmetric ring of quotients, introduced and studied in \cite{Lann}. 
 
We first recall that
 a left ideal $L$ of  a ring $R$ is {\em  dense} if for any
 $x_1,x_2\in R$ with $x_1\ne 0$ there exists an $r\in R$ such that $rx_1\ne 0$ and $rx_2\in L$.
A dense right ideal $T$ is defined analogously, i.e., 
for any
 $x_1,x_2\in R$ with $x_1\ne 0$ there exists an $s\in R$ such that $x_1s\ne 0$ and $x_2s\in T$.


\begin{definition}
Let $R$ be a ring.
A ring $Q=Q_{ms}(R)$ is called a {\em maximal symmetric  ring of quotients of $R$} if it satisfies the following conditions:
\begin{enumerate}
\item[(a')] $R$ is a subring of $Q$. 
\item[(b')] For every $q\in Q$ there exist a dense left ideal $L$ of $R$ and a dense right ideal $T$ of $R$  such that $Lq \subseteq R$ and $qT\subseteq R$.
\item[(c')]  For every nonzero $q\in Q$, every
 dense left  ideal $L$ of $R$, and every dense right ideal $T$ of $R$,  $Lq\ne \{0\}$ and $qT\ne \{0\}$.
\item[(d')] If $L$ is a dense left ideal of $R$, $T$  is dense right ideal of $R$, and  
 $E:L\to R$, $F:T\to R$ are functions satisfying 
$E(x)y= xF(y)$ for all $x\in L$ and all $y\in T$,
 then there exists a $q\in Q$ such that $E(x)=xq$ for all $x\in L$ and $F(y)=qy$ for all $y\in T$.
\end{enumerate}
\end{definition}

Like in Definition \ref{def1}, we have avoided the  assumption that $E$ and $F$ in (d) are 
left and right
 $R$-module homomorphisms, respectively. The proof that this assumption is unnecessary is essentially the same as that given in Remark \ref{rinthel}.
 
 We are interested only in the case where $R$ is a prime ring, so let us assume this.
  Then the maximal symmetric  ring of quotients $Q_{ms}(R)$ exists, is unique up to isomorphism, and contains the symmetric Martindale ring of quotients $Q_s(R)$ as a subring. On the other hand,  $Q_{ms}(R)$ is contained in the 
  {\em maximal left  ring of quotients} $Q_{ml}(R)$; for the definition and properties of the latter see \cite{BMMb} or \cite{Lam}.
  All the rings
  $Q_s(R)$, $Q_{ms}(R)$, and 
  $Q_{ml}(R)$ have the same center, i.e., the extended centroid $C$.  Moreover, if $Q$ is any of these three rings,
  then \begin{equation}
      \label{cqrjec}C_Q(R)=C
  \end{equation}(see \cite[Remark 2.3.1]{BMMb}). Another important property of $Q_{ms}(R)$  is that, unlike $Q_{s}(\,\cdot\,)$, $Q_{ms}(\,\cdot\,)$ is a closure operation \cite{Lann}, that is,
\begin{equation}\label{qclos}Q_{ms}(Q_{ms}(R))= Q_{ms}(R).\end{equation}
  
  The following lemma is crucial for computing the symmetric fractional degree
of elements in a prime ring $R$ with respect to $Q=Q_{ms}(R)$.

\begin{lemma}\label{lt}
Let $R$ be a prime ring.  If $a,b$ are nonzero elements in $R$ and $U,V:R\to Q_{ms}(R)$ are  functions satisfying 
\begin{equation}\label{fixy2}U(x)ya= bxV(y)\end{equation} for all $x,y\in R$,
 then there exists a $q\in Q_{ms}(R)$ such that $$U(x)=bxq,\,\,\,V(y)=qya$$ for all $x,y\in R$.
\end{lemma}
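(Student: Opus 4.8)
The plan is to reduce the twisted relation \eqref{fixy2} to the defining property (d') of $Q_{ms}(R)$, which corresponds to the untwisted case $a=b=1$. The obstruction is that the factors $a$ and $b$ break the symmetry of the model relation $E(x)y=xF(y)$, so I cannot feed $U$ and $V$ into (d') directly. Instead I would manufacture two genuine one-sided module homomorphisms, one on the dense left ideal generated by $b$ and one on the dense right ideal generated by $a$, and then verify that \eqref{fixy2} is precisely the \emph{compatibility} needed to glue them into a single $q$.

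First I would define $f$ on the left ideal $L=bR+RbR$ (which contains $bR$ and the nonzero ideal $RbR$, hence is dense) by
$$f\Bigl(bx_0+\sum_i r_ibx_i\Bigr)=U(x_0)+\sum_i r_iU(x_i).$$
To see this is well defined I would multiply on the right by $ya$ and use \eqref{fixy2} in the form $U(x)ya=bxV(y)$: if $w$ denotes the proposed value, then $wya=\bigl(bx_0+\sum_i r_ibx_i\bigr)V(y)$, so that $wRa=\{0\}$; since $a\neq 0$ this forces $w=0$ (concretely $wRaR=\{0\}$ with $RaR$ a nonzero, hence dense, right ideal, so condition (c') applies). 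The same computation shows $f$ is additive and a left $R$-module map, and by construction $f(bx)=U(x)$. Symmetrically I would define $g$ on the dense right ideal $J=Ra+RaR$ by $g\bigl(y_0a+\sum_j y_jas_j\bigr)=V(y_0)+\sum_j V(y_j)s_j$, whose well-definedness follows by multiplying on the left by $bx$ and using $bxV(y)=U(x)ya$ to get $bRw'=\{0\}$, hence $RbRw'=\{0\}$ and $w'=0$ by (c'); here $g(ya)=V(y)$.

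The key computation is the compatibility of $f$ and $g$: for $z=\sum_i r_ibx_i\in L$ and $w=\sum_j y_jas_j\in J$, applying \eqref{fixy2} termwise gives
$$f(z)w=\sum_{i,j}r_iU(x_i)y_jas_j=\sum_{i,j}r_ibx_iV(y_j)s_j=z\,g(w),$$
so $f(z)w=zg(w)$ on these dense ideals. At this point I would invoke property (d') to obtain $q\in Q_{ms}(R)$ with $f(z)=zq$ on $L$ and $g(w)=qw$ on $J$; evaluating at $z=bx$ and $w=ya$ then yields $U(x)=bxq$ and $V(y)=qya$, as desired.

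The main obstacle I anticipate is bookkeeping rather than conceptual: the functions $f,g$ take values in $Q=Q_{ms}(R)$ rather than in $R$, so property (d') as literally stated does not apply. I would circumvent this using the closure property \eqref{qclos}, viewing $Q$ as its own maximal symmetric ring of quotients and noting that a dense ideal of $R$ generates a dense ideal of $Q$, so that the $Q$-valued maps $f,g$ represent an element of $Q_{ms}(Q)=Q_{ms}(R)$. The second delicate point is the non-unital case, where one must take the left (resp.\ right) ideal \emph{generated} by $b$ (resp.\ $a$), not merely $bR$ (resp.\ $Ra$), to secure density while still capturing the values $U(x)=f(bx)$ and $V(y)=g(ya)$.
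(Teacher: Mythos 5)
Your proposal is correct and follows essentially the same route as the paper: the paper likewise builds left/right module maps on the ideals generated by $b$ and $a$ (it works with $QbR$ and $RaQ$ directly), proves well-definedness by the same trick of multiplying by $ya$ (resp.\ $bx$) and invoking \eqref{fixy2} together with density and (c'), verifies the same compatibility identity, and then applies (d') at the level of $Q$ followed by the closure property \eqref{qclos}. The ``bookkeeping'' step you defer --- extending $f,g$ from your dense ideals of $R$ to the corresponding dense ideals $QbR$ and $RaQ$ of $Q$ --- is precisely the computation the paper spells out, and it goes through using the additivity and left/right $R$-linearity of $f,g$ that your well-definedness argument already establishes.
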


\begin{proof}
   By substituting  $x+x'$ for $x$
in  \eqref{fixy2} we infer that $$\bigl(U(x+x') - U(x)-U(x')\bigr)ya=0$$
for all $x,x',y\in R$.
That is, 
$$\bigl(U(x+x') - U(x)-U(x')\bigr)I=\{0\}$$
where $I$ is the ideal of $R$ generated by $a$. Note that every nonzero ideal of a prime ring is dense as a right (or left) ideal.
Therefore, (c') implies that  $$U(x+x') = U(x)+U(x')$$ for all $x,x'\in R$.  

Next, for all $x,y,z\in R$
 we have
  $$bxU(y)za= bx(byV(z))= b(xby)V(z) =
 U(xby)za.$$ Thus,  $$\bigl(U(xby) - bxU(y)\bigr)za=0,$$ which, as above, implies that
\begin{equation}\label{exbyy}
U(xby)=bxU(y)\end{equation} for all $x,y\in R$.

Write $Q$ for $Q_{ms}(R)$.
We claim that $L=QbR$  is a dense left ideal of $Q$. To prove this, let $y_1,y_2\in Q$ with $y_1\ne 0$. According to (b'), $L'y_2\subseteq R$  holds for some  dense left ideal $L'$ of $R$. By (c'), there is a $u'\in L'$ such that 
$u'y_1\ne 0$.
 Since $L$ contains $RbR$, which is a  nonzero ideal of $R$ and hence a dense left ideal of $R$, (c') also implies that $u(u'y_1)\ne 0$
 for some $u\in L$. The element $q=uu'$ therefore satisfies $qy_1\ne 0$ and $qy_2\in L$. This proves our claim.

Let 
$E:L\to Q$ be given by
$$E\Bigl(\sum_i q_iby_i\Bigr) = \sum_i q_iU(y_i).$$
We must prove  that $E$ is well-defined. Assume, therefore, that $\sum_i q_iby_i =0$. In view of (b'), for every $i$ we can choose a dense left ideal $L_i$ of $R$ such that $L_iq_i\subseteq R$. Observe that
$\widehat{L}=\bigcap_i L_i$ is also a dense left ideal of $R$, and that
$\widehat{L}q_i\subseteq R$ for every $i$. Take $y\in \widehat{L}$. Since
$yq_i\in R$ and $U$ is an additive function, we have
\begin{equation}\label{dof} \sum_iU(yq_iby_i) = U\Bigl( y\cdot \sum_i q_iby_i\Bigr) =
0.\end{equation}
By \eqref{exbyy}, 
$U (yq_iby_i) = byq_iU(y_i).$
Therefore it follows from \eqref{dof}  that
$$I\widehat{L} \Bigl(\sum_i q_iU(y_i)\Bigr) = \{0\},$$
where $I$ is the ideal of $R$ generated by $b$.
Since both $I$ and $\widehat{L}$ are dense left ideals,
it follows from (c') that 
 $\sum_i q_iU(y_i)=0$. This proves that $E$ is well-defined.

Making obvious modifications in the above arguments one shows that $T=RaQ$ is a dense right ideal of $Q$ and 
that $V$ is an additive function that satisfies 
\begin{equation*}\label{eyaz}
V(xay)=V(x)ya\end{equation*}
for all $x,y\in R$, from which we deduce that
$F:T\to Q$ given by
$$ F\Bigl(\sum_j z_jas_j\Bigr) = \sum_j V(z_j)s_j$$
is a well-defined function.

Using \eqref{fixy2}, we see that the functions $E$ and $F$ are connected as follows:
\begin{align*}
E\Bigl(\sum_i q_iby_i\Bigr)\Bigl(\sum_j z_jas_j\Bigr) &= \sum_{i,j}q_iU(y_i)z_jas_j\\ &= \sum_{i,j} q_i by_iV(z_j)s_j =  \Bigl(\sum_i q_iby_i\Bigr) F\Bigl(\sum_j z_jas_j\Bigr).
\end{align*}
We may therefore use (d') to obtain an element  $q\in Q_{ms}(Q)$ such that $E(u)=uq$ for all $u\in L$ and $F(v)=qv$
for all $v\in T$. Accordingly, $$U(x)= E(bx)= bxq$$ for all  $x\in R$, and 
$$V(y)=F(ya)= qya$$ for all $y\in R$. 
Finally, 
   \eqref{qclos} tells us that $q$ actually lies in $Q$. This completes the proof.
\end{proof}

We are now able to relate the strong fractional degree to the following more familiar notion.
For an element $t$ in a prime ring $R$ and a positive integer $n$, we write $$\deg(t)=n$$ if $t$ is algebraic of degree $n$ over the extended centroid $C$. If $t$ is not algebraic over $C$, we write $\deg(t)=\infty$. 

\begin{lemma}\label{lemqr}
Let $R$ be a prime ring and let 
$Q=Q_{ms}(R)$. Then {\rm sf}-$\deg(t) =\deg(t)$ for every $t\in R$.
\end{lemma}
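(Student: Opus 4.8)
\emph{The plan.} The argument splits into a reduction step and two inequalities. The reduction observes that, for $R$ prime and $Q=Q_{ms}(R)$, conditions (SF1) and (SF2) of Definition~\ref{def2} are automatic once the relevant element $a=\sum_k a_k t^n b_k$ is nonzero. Indeed, if $a\neq 0$ and $q\in Q$ satisfies $aRq=\{0\}$, then choosing by (b') a dense right ideal $T$ with $qT\subseteq R$ gives $aR(qT)=\{0\}$; primeness of $R$ together with $a\neq 0$ forces $qT=\{0\}$, whence $q=0$ by (c'). The case $qRa=\{0\}$ is symmetric, so (SF1) holds. For (SF2) I would simply invoke Lemma~\ref{lt} with $b=a$: the relation $U(x)ya=axV(y)$ is exactly \eqref{fixy2}, and the lemma produces the required $q\in Q$. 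Since (SF1) visibly fails when $a=0$, this shows that $\mathrm{sf}$-$\deg(t)>n$ holds precisely when there exist $a_k,b_k\in R$ with $\sum_k a_k t^i b_k=0$ for $0\le i\le n-1$ and $\sum_k a_k t^n b_k\neq 0$.

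\emph{The easy inequality} $\mathrm{sf}$-$\deg(t)\le\deg(t)$. If $\deg(t)=\infty$ there is nothing to prove, so suppose $\deg(t)=m$ and write $t^m=\sum_{i=0}^{m-1}c_i t^i$ with $c_i\in C$. Given any $a_k,b_k\in R$ with $\sum_k a_k t^i b_k=0$ for $i<m$, the centrality of the $c_i$ in $Q$ yields $\sum_k a_k t^m b_k=\sum_{i=0}^{m-1}c_i\sum_k a_k t^i b_k=0$. By the reduction, no admissible family exists at level $n=m$, so $\mathrm{sf}$-$\deg(t)\not>m$ and hence $\mathrm{sf}$-$\deg(t)\le m=\deg(t)$.

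\emph{The hard inequality} $\mathrm{sf}$-$\deg(t)\ge\deg(t)$, which is the main obstacle. Fix $n<\deg(t)$, so that $1,t,\dots,t^n$ are linearly independent over $C$. I must produce $a_k,b_k\in R$ with $\sum_k a_k t^i b_k=0$ for $i<n$ and $\sum_k a_k t^n b_k\neq 0$; in other words, the two-sided multiplication action of $R$ must \emph{separate} $t^n$ from the lower powers. This is the prime-ring counterpart of the Artin--Whaples theorem already used in Example~\ref{ex3}, and I would deduce it from the standard theory of the Martindale ring of quotients and generalized polynomial identities \cite{BMMb}, which guarantees that $C$-independent elements of $Q$ can be separated by finitely many expressions $\sum_k a_k(\,\cdot\,)b_k$ with $a_k,b_k\in R$ (one may even arrange $\sum_k a_k t^n b_k=1$). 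Granting this, the reduction gives $\mathrm{sf}$-$\deg(t)>n$ for every $n<\deg(t)$, i.e.\ $\mathrm{sf}$-$\deg(t)\ge\deg(t)$. Combining the two inequalities yields $\mathrm{sf}$-$\deg(t)=\deg(t)$. The only genuinely nontrivial ingredient is the separation result invoked in this last step; everything else is a direct manipulation of the defining conditions.
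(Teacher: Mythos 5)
Your proof is correct and takes essentially the same route as the paper's: the separation of $t^n$ from the lower powers is precisely the appeal to \cite[Theorem 2.3.3]{BMMb} that the paper makes, (SF2) is obtained from Lemma \ref{lt} with $b=a$, (SF1) from primeness (which you spell out via (b') and (c'), a detail the paper leaves implicit), and the inequality sf-$\deg(t)\le\deg(t)$ from the same centrality computation. The only blemish is the parenthetical claim that one may arrange $\sum_k a_k t^n b_k=1$, which fails in general prime rings (e.g.\ non-unital ones, such as the ring of finite-rank operators), but since your argument only uses that this element is nonzero, nothing is affected.
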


\begin{proof}
Assume that $\deg(t)> n$. Then
  $1,t,\dots,t^n$ are linearly independent over $C$,
  and hence there exist 
   $a_k,b_k\in R$ such that $\sum_{k}a_{k}t^ib_{k}=0$, $i=0,1\dots,  n-1$, and $a=\sum_{k}a_{k}t^nb_{k} \ne 0$ \cite[Theorem 2.3.3]{BMMb}. Lemma \ref{lt} (for $b=a$)  shows that $a$ satisfies condition (SF2). The primeness of $R$ implies that $a$ also satisfies condition (SF1). Therefore, sf-$\deg(t) > n$.
   
Conversely, sf-$\deg(t) > n$  implies $\deg(t) > n$. This is because
$ \sum_{k}a_{k}t^ib_{k}=0$,  $i=0,1,\dots,n-1$, along with $ \sum_{k}a_{k}t^nb_{k}\ne 0$ can hold only if $t^n$ does not lie in the linear span of $1,t,\dots,t^{n-1}$.
\end{proof}

 For any prime ring $R$, we set $$\deg(R)=\sup\{\deg(t)\,|\,t\in R\}.$$ It is well known that 
$\deg(R) \le n < \infty$ if and only if $R$ satisfies $s_{2n}$,
the standard polynomial identity of degree $2n$.  Equivalently, $R$ can be 
embedded to the ring of $n\times n$ matrices over a field (or, more precisely, the ring of central quotients of $R$ is a central simple algebra of dimension at most $n^2$).

We are now in a position to prove the fundamental theorem.

\begin{theorem}\label{tbei}
Let $R$ be a prime ring and let $d\ge 1$. Then $R$  is a $d$-free subset of $Q=Q_{ms}(R)$ if and only if
  $\deg(R)\ge d$.
\end{theorem}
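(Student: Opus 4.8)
The plan is to prove Theorem \ref{tbei} by deriving it as a near-immediate consequence of the two main tools already assembled: Theorem \ref{mt}, which converts a fractional-degree hypothesis into $d$-freeness, and Lemma \ref{lemqr}, which identifies the symmetric fractional degree with the ordinary degree over the extended centroid $C$ when $Q=Q_{ms}(R)$. The theorem is an equivalence, so I would treat the two implications separately, and I expect the forward direction ($\deg(R)\ge d$ implies $d$-freeness) to be the straightforward one, with the converse requiring a little more care.

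For the forward implication, assume $\deg(R)\ge d$. By definition of $\deg(R)$ as a supremum of the degrees $\deg(t)$, there exists an element $t\in R$ with $\deg(t)\ge d$. (Strictly speaking one should note that the supremum is attained or, if $\deg(R)=\infty$, that elements of arbitrarily large degree exist; in either case an element $t$ with $\deg(t)\ge d$ can be chosen.) Lemma \ref{lemqr} then gives sf-$\deg(t)=\deg(t)\ge d$. Finally I would invoke Theorem \ref{mt} with this $Q=Q_{ms}(R)$: its hypothesis $C_Q(R)=C$ is exactly the property \eqref{cqrjec} recorded for $Q_{ms}(R)$, so the theorem applies and yields that $R$ is a $d$-free subset of $Q$. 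That closes the first direction cleanly.

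For the converse, suppose $R$ is $d$-free but, toward a contradiction, that $\deg(R)=n<d$, i.e.\ $\deg(R)\le d-1$. The idea is that a bounded degree forces $R$ to satisfy a polynomial identity (as noted before the theorem, $\deg(R)\le n$ is equivalent to $R$ satisfying the standard identity $s_{2n}$), and a ring satisfying a suitable polynomial identity admits a nonstandard solution of the fundamental FI with too few functions, contradicting $d$-freeness. Concretely, I would produce a nonstandard solution of an FI of type \eqref{3S1} or \eqref{3S2} involving at most $d$ functions, built from the Cayley--Hamilton–type mechanism illustrated in Examples \ref{ex3} and \ref{ex4}: when $[R:C]\le (d-1)^2$ one obtains, via the trace of an $(n-1)$-additive adjugate-type function, a nontrivial relation of exactly the shape \eqref{3S2} with $\max\{|I|,|J|\}\le d-1$, whose existence is incompatible with condition (b) of Definition \ref{defd}. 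This shows $d$-freeness fails whenever $\deg(R)<d$, giving the contrapositive.

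The main obstacle is the converse direction, and specifically making the passage from ``$\deg(R)\le d-1$'' to ``an explicit nonstandard solution with at most $d-1$ functions $E_i$ and $F_j$'' fully rigorous for a \emph{general} prime ring rather than just a matrix algebra or division ring. In the examples this was done concretely for $M_2(Z)$ and sketched for general $M_n(Z)$; here one must run the argument at the level of the central closure (the central simple algebra of dimension at most $(d-1)^2$ obtained from $R$), produce the adjugate function there, and descend the resulting identity back to $R$ and its centralizer condition $C_Q(R)=C$. Controlling the exact count of functions so that it lands at $\max\{|I|,|J|\}\le d-1$ (triggering clause (b) rather than clause (a)) is the delicate bookkeeping step; once that count is verified, the contradiction with $d$-freeness is immediate and the equivalence follows.
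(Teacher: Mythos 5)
Your proposal is correct and follows essentially the same route as the paper: the forward direction is exactly the paper's one-line combination of Lemma \ref{lemqr} (sf-$\deg = \deg$ over $Q_{ms}(R)$) with Theorem \ref{mt} (using \eqref{cqrjec} for the hypothesis $C_Q(R)=C$), and for the converse the paper likewise embeds $R$ into $M_{d-1}(Z)$ and reruns the Cayley--Hamilton construction of Example \ref{ex4} to produce a nonstandard solution of type \eqref{3S2} with $|J|=d-1$, deferring the remaining details to \cite[Corollary 4.21 and Theorem C.2]{FIbook} just as you defer the bookkeeping.
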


\begin{proof}
Suppose $\deg(R) \ge d$. Take $t\in R$ with $\deg(t)\ge d$. Then {\rm sf}-$\deg(t)\ge d$ by Lemma \ref{lemqr}, and hence Theorem \ref{mt} shows that $R$ is a $d$-free subset of $Q$.

If $\deg(R) < d$, then $R$ can be embedded to $M_{d-1}(Z)$ with $Z$ a field, and  we can essentially repeat the argument from Example \ref{ex4} to show that $R$ is not a $d$-free subset of $Q$; see \cite[Corollary 4.21 and Theorem C.2]{FIbook} for details. \end{proof}

Theorem \ref{tbei} is taken from  the present author's paper \cite{B16}, but should  nevertheless be  attributed to  Beidar   \cite{Bei}
(who generalized a version of the $d=2$
case obtained earlier by the author \cite{Bbideg2}). Indeed
Beidar's theorem involves $Q_{ml}(A)$ 
rather than
$Q_{ms}(A)$, but this is a  technical issue. 

One cannot substitute $Q_s(R)$ for $Q_{ms}(R)$ in Theorem \ref{tbei}, see \cite[Corollary 4.3]{B16} for a counterexample. The 
main obstacle is that
$Q_s(\,\cdot\,)$ is not a closure operation. It is therefore natural to restrict ourselves to
prime rings $R$ such that $Q_s(R)=R$.  They are  called  {\em symmetrically closed} prime rings. 

Making some rather obvious modifications (actually simplifications) in the proof of Lemma \ref{lt} one proves the following.

\begin{lemma}\label{lt2}
Let $R$ be a symmetrically closed prime ring.  If $a,b$ are nonzero elements in $R$ and $U,V:R\to R$ are  functions satisfying 
\begin{equation*}\label{fixy22}U(x)ya= bxV(y)\end{equation*} for all $x,y\in R$,
 then there exists a $q\in R$ such that $$U(x)=bxq,\,\,\,V(y)=qya$$ for all $x,y\in R$.
\end{lemma}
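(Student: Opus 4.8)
The plan is to imitate the proof of Lemma \ref{lt}, but exploit the hypothesis $Q_s(R)=R$ to avoid passing to any ring of quotients at all, which is where the simplifications come from. The statement to prove is Lemma \ref{lt2}: for a symmetrically closed prime ring $R$ with nonzero $a,b\in R$ and $U,V:R\to R$ satisfying $U(x)ya=bxV(y)$ for all $x,y\in R$, there is a $q\in R$ with $U(x)=bxq$ and $V(y)=qya$. Since $R=Q_s(R)$, the defining property (d) of $Q_s(R)$ in Definition \ref{def1} now applies \emph{internally} to $R$ itself, with ideals of $R$ playing the role of the dense one-sided ideals; this is the crucial structural advantage over Lemma \ref{lt}.

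First I would extract additivity and a twisted multiplicativity for $U$ exactly as in Lemma \ref{lt}: substituting $x+x'$ for $x$ gives $\bigl(U(x+x')-U(x)-U(x')\bigr)ya=0$, so that this element annihilates the nonzero ideal generated by $a$ on the right; by primeness (condition (c) of Definition \ref{def1}, or directly $aRb=\{0\}\Rightarrow a=0$ or $b=0$) it must vanish, giving additivity of $U$. The same manipulation $bxU(y)za=bx\bigl(byV(z)\bigr)=b(xby)V(z)=U(xby)za$ yields $\bigl(U(xby)-bxU(y)\bigr)za=0$ for all $z$, hence $U(xby)=bxU(y)$. Symmetrically one obtains that $V$ is additive and satisfies $V(xay)=V(x)ya$. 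The point is that, because we stay inside $R$, these annihilation-of-an-ideal arguments invoke only primeness rather than the denseness machinery and property (c') of $Q_{ms}$.

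Next I would set $E:\,RbR\to R$ by $E\bigl(\sum_i r_iby_i\bigr)=\sum_i r_iU(y_i)$ and $F:\,RaR\to R$ by $F\bigl(\sum_j z_jas_j\bigr)=\sum_j V(z_j)s_j$, and check these are well defined. Well-definedness of $E$ follows from the twisted multiplicativity: if $\sum_i r_iby_i=0$, then for any $w\in R$ one computes $\sum_i U(wr_iby_i)=U\bigl(w\sum_i r_iby_i\bigr)=0$, and $U(wr_iby_i)=bwr_iU(y_i)$, so $bw\bigl(\sum_i r_iU(y_i)\bigr)=0$ for every $w$, whence $b R\bigl(\sum_i r_iU(y_i)\bigr)=\{0\}$ and primeness ($b\neq0$) forces $\sum_i r_iU(y_i)=0$. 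The computation $E(u)v=uF(v)$ for $u\in RbR$, $v\in RaR$ is then the same chain of equalities as in Lemma \ref{lt}, using $U(y)za=byV(z)$. Since $RbR$ and $RaR$ are nonzero ideals of $R$, condition (d) of Definition \ref{def1} applied to $R=Q_s(R)$ produces a single $q\in R$ with $E(u)=uq$ and $F(v)=qv$; specializing $u=bx$ gives $U(x)=E(bx)=bxq$ and $v=ya$ gives $V(y)=F(ya)=qya$, as required.

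The step I expect to require the most care is verifying that $E$ (and dually $F$) is genuinely well defined, since an element of $RbR$ has many representations $\sum_i r_iby_i$; everything hinges on the twisted identity $U(xby)=bxU(y)$ together with primeness to kill the ambiguity, and one must be slightly attentive that the left-multiplier $w$ ranges over all of $R$ so that the annihilator argument applies. The genuine simplification over Lemma \ref{lt} is that no density argument, no intersection $\widehat{L}=\bigcap_i L_i$ of dense left ideals, and no appeal to the closure property \eqref{qclos} is needed: the ambient ring already \emph{is} its own symmetric Martindale ring of quotients, so the element $q$ supplied by (d) lands in $R$ automatically and the proof terminates at once.
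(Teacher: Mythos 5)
Your proposal is correct and is precisely the simplification the paper has in mind: the text states that Lemma \ref{lt2} is proved by making "rather obvious modifications (actually simplifications)" in the proof of Lemma \ref{lt}, and your argument carries out exactly that — the same additivity and twisted-multiplicativity steps, the same construction of $E$ on $RbR$ and $F$ on $RaR$, with the density/intersection machinery and the closure property replaced by direct primeness and by condition (d) of Definition \ref{def1} applied internally to $R=Q_s(R)$. No gaps; the identification of where the simplifications occur (no dense ideals, no $\widehat{L}=\bigcap_i L_i$, no appeal to \eqref{qclos}) matches the paper's intent.
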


The following version of Lemma \ref{lemqr} now readily follows.

\begin{lemma}\label{lemqr2}
Let $R=Q$ be a symmetrically closed prime ring. Then {\rm sf}-$\deg(t) =\deg(t)$ for every $t\in R$.
\end{lemma}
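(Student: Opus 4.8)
The plan is to derive Lemma \ref{lemqr2} as an immediate consequence of Lemma \ref{lt2}, mirroring exactly the proof of Lemma \ref{lemqr} but with the symmetrically closed hypothesis $R=Q=Q_s(R)$ replacing the passage to $Q_{ms}(R)$. The point is that the two directions of the equality $\mbox{sf-}\deg(t)=\deg(t)$ are established by essentially the same arguments as before, since all the structural facts needed (the linear algebra over $C$, the primeness of $R$, and the solvability of the equation $U(x)ya=bxV(y)$) are available in this setting.

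First I would prove the inequality $\mbox{sf-}\deg(t)>n$ whenever $\deg(t)>n$. Assuming $\deg(t)>n$, the elements $1,t,\dots,t^n$ are linearly independent over the extended centroid $C$, so by \cite[Theorem 2.3.3]{BMMb} there exist $a_k,b_k\in R$ with $\sum_k a_k t^i b_k=0$ for $i=0,\dots,n-1$ and $a=\sum_k a_k t^n b_k\ne 0$. To verify that this $a$ witnesses $\mbox{sf-}\deg(t)>n$, I must check conditions (SF1) and (SF2) from Definition \ref{def2}. For (SF1), if $q\in R$ satisfies $aRq=\{0\}$ or $qRa=\{0\}$, then primeness of $R$ together with $a\ne 0$ forces $q=0$. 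For (SF2), I apply Lemma \ref{lt2} with $b=a$: any functions $U,V:R\to R$ satisfying $U(x)ya=axV(y)$ yield a $q\in R$ with $U(x)=axq$ and $V(y)=qya$, which is precisely (SF2). Hence $\mbox{sf-}\deg(t)>n$.

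For the converse direction I would argue that $\mbox{sf-}\deg(t)>n$ implies $\deg(t)>n$. This is the purely computational observation already used in Lemma \ref{lemqr}: the existence of $a_k,b_k$ with $\sum_k a_k t^i b_k=0$ for $i=0,\dots,n-1$ but $\sum_k a_k t^n b_k\ne 0$ is impossible if $t^n$ lies in the $C$-linear span of $1,t,\dots,t^{n-1}$, since substituting such a linear relation would force the last sum to vanish as well. Combining both directions gives $\mbox{sf-}\deg(t)>n \iff \deg(t)>n$ for every nonnegative integer $n$, and therefore $\mbox{sf-}\deg(t)=\deg(t)$.

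I do not anticipate a genuine obstacle here, since the lemma is explicitly flagged as following ``readily'' once Lemma \ref{lt2} is in hand; the entire content has been offloaded into the harder Lemma \ref{lt}/\ref{lt2}. The only point demanding minor care is the application of Lemma \ref{lt2} in the symmetrically closed setting: one must confirm that the element $q$ produced genuinely lies in $R$ rather than in some larger ring of quotients. In the $Q_{ms}(R)$ case this required invoking the closure property \eqref{qclos}, but here $R=Q_s(R)$ is already symmetrically closed, so Lemma \ref{lt2} directly delivers $q\in R$ and no such closure step is needed. This is exactly the ``obvious modification (actually simplification)'' the text refers to, and it is what makes the proof of Lemma \ref{lemqr2} shorter than that of Lemma \ref{lemqr}.
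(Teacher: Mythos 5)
Your proposal is correct and follows exactly the route the paper intends: the paper gives no separate proof of Lemma \ref{lemqr2}, stating only that it ``readily follows'' by repeating the proof of Lemma \ref{lemqr} with Lemma \ref{lt2} in place of Lemma \ref{lt}, which is precisely what you do (including the use of \cite[Theorem 2.3.3]{BMMb} for the forward direction, primeness for (SF1), Lemma \ref{lt2} with $b=a$ for (SF2), and the linear-span observation for the converse). Your closing remark about not needing the closure step \eqref{qclos} correctly identifies the simplification the symmetrically closed hypothesis provides.
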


Repeating the proof of Theorem \ref{tbei}, but referring to Lemma \ref{lemqr2} instead of Lemma
\ref{lemqr}, we obtain the following theorem.

\begin{theorem}\label{tbei2}
Let $R$ be a symmetrically closed prime ring and let $d\ge 1$. Then $R$  is a $d$-free ring  if and only if
  $\deg(R)\ge d$.
\end{theorem}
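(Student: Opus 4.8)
The plan is to mirror the proof of Theorem \ref{tbei} almost verbatim, substituting the symmetrically-closed analogues of the two supporting lemmas at the appropriate points. The statement splits into the two implications of the biconditional, and in both directions the only difference from Theorem \ref{tbei} is that here $R=Q$, so that $Q_{ms}(R)=R$ and the element $q$ produced by the various module-homomorphism extension arguments already lives in $R$ rather than in some larger ring of quotients.

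For the forward implication, suppose $\deg(R)\ge d$ and pick $t\in R$ with $\deg(t)\ge d$. First I would invoke Lemma \ref{lemqr2}, which asserts that for a symmetrically closed prime ring $R=Q$ one has $\mbox{sf-}\deg(t)=\deg(t)$ for every $t$; hence $\mbox{sf-}\deg(t)\ge d$. The remaining ingredient is the hypothesis of Theorem \ref{mt} that $C_Q(R)=C$. Since $R=Q_{ms}(R)$, this is exactly the identity \eqref{cqrjec}, so the condition $C_Q(R)=C$ holds automatically. With an element of sufficiently large symmetric fractional degree in hand and the centralizer condition verified, Theorem \ref{mt} applies directly and yields that $R$ is a $d$-free subset of itself, i.e.\ a $d$-free ring. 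This direction is therefore essentially a citation of Theorem \ref{mt} together with Lemma \ref{lemqr2}, and presents no real difficulty.

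For the converse, suppose $\deg(R)<d$. Then $\deg(R)\le d-1$, so by the standard PI-theoretic fact recalled just before Theorem \ref{tbei}, $R$ satisfies the standard identity $s_{2(d-1)}$ and can be embedded into $M_{d-1}(Z)$ for a field $Z$. I would then reproduce the construction of Example \ref{ex4}: using the Cayley--Hamilton identity for $(d-1)\times(d-1)$ matrices, one builds a nonzero trace $F$ of a $(d-2)$-additive function satisfying the centralizer-valued FI \eqref{e4} with $n=d-1$, which is an instance of the fundamental FI \eqref{3S2} involving $d-1$ functions but admitting a nonstandard solution. This exhibits a violation of condition (b) of Definition \ref{defd} at the level $\max\{|I|,|J|\}\le d-1$, so $R$ fails to be $d$-free. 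The passage from the embedding into $M_{d-1}(Z)$ back to a genuine nonstandard solution on $R$ itself is the only delicate point, and I would handle it exactly as indicated in the proof of Theorem \ref{tbei} by referring to \cite[Corollary 4.21 and Theorem C.2]{FIbook}.

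The main obstacle, such as it is, lies entirely in the converse direction: one must be careful that the nonstandard solution produced via the matrix embedding descends to a genuine nonstandard solution of the FI over $R$ (and not merely over its central quotients), which is why the cited results from \cite{FIbook} are needed rather than a bare-hands repetition of Example \ref{ex4}. The forward direction requires no new work beyond assembling Lemma \ref{lemqr2}, property \eqref{cqrjec}, and Theorem \ref{mt}. In short, the proof is obtained by repeating the proof of Theorem \ref{tbei} while replacing Lemma \ref{lemqr} by Lemma \ref{lemqr2}, exactly as the surrounding text signals.
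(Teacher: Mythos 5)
Your proposal is correct and follows essentially the same route as the paper, which proves Theorem \ref{tbei2} precisely by repeating the proof of Theorem \ref{tbei} with Lemma \ref{lemqr2} in place of Lemma \ref{lemqr} (forward direction via Theorem \ref{mt}, converse via the embedding into $M_{d-1}(Z)$ and the Cayley--Hamilton construction of Example \ref{ex4}). One small remark: ``symmetrically closed'' means $Q_s(R)=R$ rather than $Q_{ms}(R)=R$, but the hypothesis $C_Q(R)=C$ of Theorem \ref{mt} is automatic here anyway, since with $R=Q$ the centralizer of $R$ in itself is just its center.
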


We remark that \cite[Corollary 4.2]{B16} shows that we cannot replace 
``symmetrically closed" by 
``centrally closed" in Theorem \ref{tbei2}.

The obvious advantage of Theorem \ref{tbei2} compared to Theorem \ref{tbei} is that it does not involve larger rings than $R$. On the other hand,  it considers a considerably smaller class of rings. Anyway, this class does include  important examples. First of all, simple unital rings are obviously symmetrically closed, so the following corollary holds.

\begin{corollary}\label{ctbei2}
Let $R$ be a simple unital ring and let $d\ge 1$. Then $R$  is a $d$-free ring  if and only if
 the dimension of $R$ over its center is at least $d^2$.
\end{corollary}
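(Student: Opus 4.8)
The plan is to deduce Corollary~\ref{ctbei2} directly from Theorem~\ref{tbei2} by translating the hypothesis ``$R$ is simple unital'' and the conclusion into the language of $\deg(R)$. The two ingredients I need are: first, that a simple unital ring is symmetrically closed, so that Theorem~\ref{tbei2} applies verbatim; and second, that for such a ring the condition $\deg(R)\ge d$ is equivalent to the condition that $[R:Z]\ge d^2$, where $Z$ is the center. Granting these, the corollary is immediate: by Theorem~\ref{tbei2}, $R$ is $d$-free iff $\deg(R)\ge d$ iff $[R:Z]\ge d^2$.

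First I would note that the remark preceding the corollary already records that simple unital rings are symmetrically closed (the point being that for a simple ring every nonzero ideal is all of $R$, so the defining conditions of $Q_s(R)$ are satisfied by $R$ itself, giving $Q_s(R)=R$). Thus Theorem~\ref{tbei2} tells us that $R$ is $d$-free iff $\deg(R)\ge d$, and the whole task reduces to the equivalence
$$\deg(R)\ge d \iff [R:Z]\ge d^2.$$
Here I use that for a simple unital ring the extended centroid $C$ coincides with the center $Z$ (both are the field $Z$, since in a simple ring the central closure is $R$ itself). So $\deg(t)$ is just the degree of algebraicity of $t$ over $Z$, and $\deg(R)=\sup\{\deg(t)\mid t\in R\}$.

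The key structural input is the standard-PI fact, already invoked in the excerpt, that $\deg(R)\le n<\infty$ iff $R$ satisfies the standard identity $s_{2n}$, equivalently iff $R$ embeds into $n\times n$ matrices over a field; more precisely, for a simple ring this says $R$ is finite-dimensional over $Z$ with $[R:Z]=n^2$ exactly when $\deg(R)=n$ (one uses Wedderburn's theorem to write the central simple algebra $R$ as $M_k(D)$ and computes $\deg$ via the degree of the division algebra $D$ over $Z$, so that $[R:Z]=\deg(R)^2$). From this one gets the clean dichotomy: either $R$ is infinite-dimensional over $Z$, in which case $\deg(R)=\infty\ge d$ and $[R:Z]=\infty\ge d^2$, so both conditions hold; or $[R:Z]=n^2$ for a finite $n$ with $\deg(R)=n$, in which case $\deg(R)\ge d \iff n\ge d \iff n^2\ge d^2 \iff [R:Z]\ge d^2$. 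Either way the two conditions agree, completing the proof.

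The main obstacle is the precise identification $[R:Z]=\deg(R)^2$ for a simple unital ring (including the matching of $C$ with $Z$), which packages together the PI-theoretic characterization of $\deg(R)$ via $s_{2n}$ and the Wedderburn structure theory of finite-dimensional central simple algebras. Once that identity is in hand the corollary is a one-line consequence of Theorem~\ref{tbei2}; the subtlety is purely in recalling that this square relationship between central dimension and degree is exactly what converts the inequality $\deg(R)\ge d$ into $[R:Z]\ge d^2$.
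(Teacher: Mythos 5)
Your proof is correct and follows exactly the route the paper intends: the paper states only that simple unital rings are obviously symmetrically closed and then lets Corollary~\ref{ctbei2} drop out of Theorem~\ref{tbei2}, with the translation $\deg(R)\ge d \iff [R:Z]\ge d^2$ supplied by the standard facts (recorded just before Theorem~\ref{tbei}) that $C=Z$ for a simple unital ring and that $[R:Z]=\deg(R)^2$ or both are infinite. Your spelled-out dichotomy, using that $[R:Z]$ is either infinite or a perfect square, correctly fills in the only detail the paper leaves implicit.
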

Thus, if $R$ is infinite-dimensional over its center, then it is $d$-free for every $d\ge 1$.

 The simplest but most important case of Corollary \ref{ctbei2}  is the following.

\begin{corollary}\label{ctbei2a}
Let $F$ be a field. The ring
$M_d(F)$ is $d$-free, but is not $(d+1)$-free.
\end{corollary}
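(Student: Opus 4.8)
The plan is to derive Corollary \ref{ctbei2a} directly from Corollary \ref{ctbei2}, which in turn rests on Theorem \ref{tbei2}. The ring $M_d(F)$ is a simple unital ring whose center is $F$ (identified with the scalar matrices $F\cdot 1$), and its dimension over this center is exactly $d^2$. First I would verify that $M_d(F)$ is symmetrically closed so that Corollary \ref{ctbei2} applies: since it is simple and unital, the symmetric Martindale ring of quotients $Q_s(M_d(F))$ coincides with $M_d(F)$ itself, as already noted in the sentence preceding Corollary \ref{ctbei2}.

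Once that is in place, the argument is essentially an arithmetic comparison. For the positive assertion, $\deg(M_d(F)) = d$ because $M_d(F)$ satisfies the standard identity $s_{2d}$ (by the Amitsur--Levitzki theorem) but no identity of lower degree; equivalently, $M_d(F)$ has dimension $d^2$ over its center $F$, so by Corollary \ref{ctbei2} it is $d$-free precisely because $d^2 \ge d^2$. For the negative assertion, $M_d(F)$ is \emph{not} $(d+1)$-free because its dimension $d^2$ over its center satisfies $d^2 < (d+1)^2$, so the ``at least $(d+1)^2$'' condition of Corollary \ref{ctbei2} fails. Thus both halves follow by substituting the explicit dimension $d^2$ into the dichotomy supplied by Corollary \ref{ctbei2}.

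The only genuinely non-routine point is confirming the dimension count and the simplicity/symmetric-closedness, but these are entirely standard facts about full matrix algebras and are already invoked in the surrounding text; no new functional-identity machinery is needed. In particular, I would not re-prove Theorem \ref{tbei2} or reconstruct the nonstandard solution witnessing non-$(d+1)$-freeness, since Corollary \ref{ctbei2} has already packaged everything into the single inequality involving the central dimension.

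I do not expect any serious obstacle here: the corollary is the cleanest possible specialization, and the ``hard work'' (the equivalence between $d$-freeness and the degree bound) has been done in Theorem \ref{tbei2} and transported to the simple unital setting in Corollary \ref{ctbei2}. If anything required care, it would be making sure the reader sees that $d^2$ lies in the window $[d^2,(d+1)^2)$ so that $M_d(F)$ is $d$-free but drops out of the $(d+1)$-free class exactly at the boundary; this boundary phenomenon is precisely what makes $M_d(F)$ the canonical extremal example.
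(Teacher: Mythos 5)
Your proposal is correct and follows exactly the route the paper intends: Corollary \ref{ctbei2a} is presented as an immediate specialization of Corollary \ref{ctbei2}, obtained by noting that $M_d(F)$ is simple and unital with center $F$ and dimension $d^2$, so that $d^2\ge d^2$ gives $d$-freeness while $d^2<(d+1)^2$ rules out $(d+1)$-freeness. No further comment is needed.
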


In fact, more can be said about simple unital rings $R$. Namely, FIs involving functions that have their ranges in any unital $R$-bimodule can be treated in much the same way; see \cite[Corollary 2.21]{FIbook}. 

Another important example of a symmetrically closed prime ring is a noncommutative free algebra \cite{K, Pass0}.

\begin{corollary}\label{mt3k} Let $F$ be a field. 
The free algebra $F\langle X_1, X_2,\dots\rangle$  on at least two indeterminates  is a $d$-free ring for every $d\ge 1$.
\end{corollary}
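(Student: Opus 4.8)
The plan is to deduce Corollary \ref{mt3k} directly from Theorem \ref{tbei2}. Since the free algebra $R=F\langle X_1,X_2,\dots\rangle$ is asserted (citing \cite{K, Pass0}) to be a symmetrically closed prime ring, Theorem \ref{tbei2} applies verbatim: $R$ is $d$-free if and only if $\deg(R)\ge d$. Thus the entire task reduces to showing that $\deg(R)=\infty$, i.e., that for every positive integer $d$ the free algebra contains an element $t$ that is algebraic of degree at least $d$ over the extended centroid $C$; equivalently, $1,t,t^2,\dots,t^{d-1}$ must be linearly independent over $C$ for every $d$, which amounts to $t$ not being algebraic over $C$ at all.

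First I would pin down the extended centroid. For a noncommutative free algebra over a field, one expects $C=F$, the ground field embedded as scalars; this is the natural candidate since $R$ is a domain with trivial center beyond $F$, and the extended centroid of a prime domain coincides with its center here. Taking $C=F$ for granted (it follows from the cited structure results, or can be verified from the fact that $R$ is centrally closed with center $F$), the claim becomes purely combinatorial: exhibit a single element $t\in R$ that is transcendental over $F$. The natural choice is $t=X_1$. I would then argue that no nonzero polynomial relation $\sum_{i=0}^{n}c_i X_1^i=0$ with $c_i\in F$ can hold, since the monomials $1,X_1,X_1^2,\dots$ are part of the standard $F$-basis of the free algebra and are therefore $F$-linearly independent. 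Hence $\deg(X_1)=\infty$, so $\deg(R)=\infty\ge d$ for every $d$, and Theorem \ref{tbei2} yields that $R$ is $d$-free for every $d\ge 1$.

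The main obstacle I anticipate is not the transcendence of $X_1$, which is essentially immediate from the basis of monomials, but rather the two structural facts being imported: that the free algebra on at least two indeterminates is prime, and that it is symmetrically closed with extended centroid $F$. Primeness I would justify by a leading-term (or degree-filtration) argument: if $fRg=\{0\}$ with $f,g\ne 0$, comparing highest-degree homogeneous components of $fX_1^Ng$ for large $N$, using that the associated graded ring of $R$ is again a free (hence domain-like) object, forces $f=0$ or $g=0$; in fact the free algebra is a domain, which already gives primeness. The symmetric closedness is the genuinely delicate input, and for a survey-style proof I would simply invoke \cite{K, Pass0} rather than reprove it, noting that it guarantees $Q_s(R)=R$ and $C=F$ so that Theorem \ref{tbei2} is directly applicable. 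With these ingredients in hand, the proof is a one-line consequence.

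I should also remark on the necessity of the hypothesis ``at least two indeterminates.'' In the single-variable case $F\langle X_1\rangle=F[X_1]$ is commutative, hence by Remark \ref{remcom} never $2$-free, so the transition from transcendence of an element to $d$-freeness genuinely relies on noncommutativity, which is exactly what Theorem \ref{tbei2} encodes through the primeness and symmetric-closedness of a \emph{noncommutative} free algebra. This confirms that the corollary is not vacuous and that the reduction strategy above is the correct one.
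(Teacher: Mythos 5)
Your proposal is correct and follows exactly the route the paper intends: invoke the cited fact that the noncommutative free algebra is a symmetrically closed prime ring, observe that $\deg(R)=\infty$ (since $1,X_1,X_1^2,\dots$ are part of the monomial basis and $C=F$ because $Q_s(R)=R$ has center $F$), and apply Theorem \ref{tbei2}. The only detail worth tightening is your passing remark that ``the extended centroid of a prime domain coincides with its center here''---this is not true for general prime domains, but it does hold here precisely because of symmetric closedness, which you correctly rely on elsewhere.
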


The proofs given in this and the preceding subsection are based on  a direct method for establishing the $d$-freeness.  
The more standard method,  used everywhere in the book \cite{FIbook}, is based  on the more general notion of $(t;d)$-freeness. It is somewhat more complicated (especially notationally), so we have decided not to consider it in detail in this expository paper.
We will give only some basic information in the next few paragraphs.


We begin by introducing the necessary notation.
 As at the beginning of this section, let $Q$ be a unital ring with center $C$, let $R$ be a nonempty subset of $Q$, let
$m$ be a positive integer,
and let $I$ and $J$ be subsets of $\{1,\dots,m\}$.
Further, let $a,b$ be nonnegative integers, and for all
$i\in I$, $0\leq u\leq a$,
and $j\in
J$, $0\leq v\leq b$, let
$$E_{iu}:R^{m-1}\to Q\quad\mbox{and}\quad F_{jv}:R^{m-1}\to Q
$$
be arbitrary functions.

Fix an element $t\in Q$ and
consider the following identities:
\begin{equation}
\sum_{i\in I}\sum_{u=0}^aE_{iu}(\ov{x}_m^i)x_it^u+ \sum_{j\in
J}\sum_{v=0}^bt^vx_jF_{jv}(\ov{x}_m^j)  = 
0\label{3S21}\end{equation}
for all $\ov{x}_m\in R$, and
\begin{equation}
\sum_{i\in I}\sum_{u=0}^aE_{iu}(\ov{x}_m^i)x_it^u+ \sum_{j\in
J}\sum_{v=0}^bt^vx_jF_{jv}(\ov{x}_m^j)  \in  C\label{3S22}
\end{equation}
for all $\ov{x}_m\in R$.
Observe that \eqref{3S21}, and hence also \eqref{3S22}, holds if there exist functions
\begin{eqnarray*}
&  & p_{iujv}:R^{m-2}\to Q,\;\;
i\in I,\; j\in J,\; i\not=j,\;0\leq u\leq a,\; 0\leq v\leq b,\nonumber\\
&  & \lambda_{kuv}:R^{m-1}\to C,\;\; k\in I\cup J,\, 0\leq
u\leq a,\; 0\leq v\leq b,
\end{eqnarray*}
such that
\begin{eqnarray}\label{3S23}
E_{iu}(\ov{x}_m^i) & = & \sum_{j\in J,\atop
j\not=i}\sum_{v=0}^bt^vx_jp_{iujv}(\ov{x}_m^{ij})
+\sum_{v=0}^b\lambda_{iuv}(\ov{x}_m^i)t^v,\nonumber\\
F_{jv}(\ov{x}_m^j) & = & -\sum_{i\in I,\atop
i\not=j}\sum_{u=0}^ap_{iujv}(\ov{x}_m^{ij})x_it^u
-\sum_{u=0}^a\lambda_{juv}(\ov{x}_m^j)t^u,\\
&  & \lambda_{kuv}=0\quad\mbox{if}\quad k\not\in I\cap
J\nonumber
\end{eqnarray}
for all $\ov{x}_m\in R$, $i\in I$, $j\in J$, $0\leq u\leq
a$, $0\leq v\leq b$. We call
(\ref{3S23})  a {\it standard solution} of (\ref{3S21}) as well as of 
\eqref{3S22}.

\begin{definition}\label{3SD2} Let $d$ be a positive integer and let $t\in Q$. The set
 $R$ is said to be a  {\em $(t;d)$-free subset} of
$Q$
if the following  two
conditions hold for all $m\ge 1$, all $I,J\subseteq
\{1,2,\ldots,m\}$,  and all  $a,b\ge 0$:
\begin{enumerate}
\item[(a)] If $\max\{|I|+a,|J|+b\}\le d$, then (\ref{3S21}) implies
(\ref{3S23}).
\item[(b)] If
$\max\{|I|+a,|J|+b\}\le d-1$, then (\ref{3S22}) implies
(\ref{3S23}).
\end{enumerate}
\end{definition}

Note that in the case where
$a=b=0$, \eqref{3S21} reduces to \eqref{3S1}, \eqref{3S22} reduces to \eqref{3S2}, and \eqref{3S23} reduces to \eqref{3S3}.
Therefore, the following is true:
$$\mbox{ $R$ is a $(t;d)$-free subset of $Q$ for some $t$} \implies \mbox{$R$ is a  $d$-free subset of $Q$.}    $$
In fact, a common way to prove that a set is $d$-free is to show that it is $(t;d)$-free for some $t$. In concrete situations, there is no big difference between ``$d$-free"
and ``$(t;d)$-free for some $t$".  However, Theorems \ref{tinv}
and \ref{tza} below show that the  more general $(t;d)$-free sets can be useful for finding new examples of $d$-free sets. 

It is well known that a prime ring $R$ is a $(t;d)$-free subset of $Q_{ml}(R)$ 
whenever $t\in R$ satisfies
$\deg(t)\ge d$ \cite[Theorem 5.11]{FIbook}. In this statement, we can replace
$Q_{ml}(R)$ with $Q_{ms}(R)$. 

\begin{theorem}\label{tdbei}
Let $R$ be a prime ring and let $d\ge 1$.
If $t\in R$ is such that
$\deg(t)\ge d$, then 
$R$ is a $(t;d)$-free subset of $Q_{ms}(R)$. 
\end{theorem}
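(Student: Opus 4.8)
The goal is to prove Theorem \ref{tdbei}, which upgrades the known result that a prime ring $R$ is $(t;d)$-free inside its maximal \emph{left} ring of quotients $Q_{ml}(R)$ (when $\deg(t)\ge d$) to the corresponding statement for the maximal \emph{symmetric} ring of quotients $Q_{ms}(R)$. Since we may assume the $Q_{ml}(R)$ version as a known fact (it is cited just before the statement as \cite[Theorem 5.11]{FIbook}), the essential content is a transfer argument: $Q_{ms}(R)$ sits inside $Q_{ml}(R)$, so I would try to show that the $(t;d)$-freeness conditions, verified relative to the larger ring $Q_{ml}(R)$, automatically descend to the smaller ring $Q_{ms}(R)$.

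First I would unwind Definition \ref{3SD2} in both settings. Fix $m$, $I$, $J$, $a$, $b$ with the appropriate bound on $\max\{|I|+a,|J|+b\}$, and suppose the functions $E_{iu},F_{jv}:R^{m-1}\to Q_{ms}(R)$ satisfy \eqref{3S21} (or \eqref{3S22}). The crucial observation is that these same functions take values in $Q_{ms}(R)\subseteq Q_{ml}(R)$, and so they may equally be regarded as $Q_{ml}(R)$-valued functions satisfying the very same identity. Applying the known theorem over $Q_{ml}(R)$ produces a standard solution \eqref{3S23}, i.e.\ functions $p_{iujv}:R^{m-2}\to Q_{ml}(R)$ and central functions $\lambda_{kuv}:R^{m-1}\to C$ expressing the $E_{iu}$ and $F_{jv}$ in standard form. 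The two rings share the same center $C$ (the extended centroid), so the $\lambda_{kuv}$ already live in the right place; what remains is to show that each value $p_{iujv}(\ov{x}_m^{ij})$, a priori only in $Q_{ml}(R)$, actually lies in $Q_{ms}(R)$.

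The main obstacle, and the step I would spend the most care on, is precisely this descent of the $p$-functions from $Q_{ml}(R)$ into $Q_{ms}(R)$. Here I would exploit that the standard solution is \emph{unique} (the uniqueness remark following Definition \ref{defd} has a direct analogue for $(t;d)$-free solutions), together with the left-right symmetry that $Q_{ms}(R)$ enjoys but $Q_{ml}(R)$ does not. The natural device is to solve for a given $p_{iujv}$ by isolating it through a defining relation of the form $E_{iu}(\ov{x}_m^i)y a = a x F_{jv}(\ov{x}_m^j)$-type equation obtained by specializing variables, and then invoke a result in the spirit of Lemma \ref{lt}: any $U,V:R\to Q_{ms}(R)$ with $U(x)ya=bxV(y)$ force the mediating element into $Q_{ms}(R)$, using the closure property \eqref{qclos} that $Q_{ms}(Q_{ms}(R))=Q_{ms}(R)$. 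Because the $E_{iu}$ and $F_{jv}$ we started with are $Q_{ms}(R)$-valued, the element extracted by such a Lemma \ref{lt}-style argument is forced to be symmetric, pinning $p_{iujv}$ inside $Q_{ms}(R)$.

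Alternatively, and perhaps more cleanly, I would reprove Theorem \ref{tdbei} directly by mimicking the argument of Theorem \ref{mt}, since that proof already works entirely over $Q_{ms}(R)$ via the symmetric fractional degree and Lemma \ref{lt}. The operators $\mathcal{L}_{r,t}$ and $\mathcal{R}_{s,t}$ used there are left-right symmetric and interact well with the extra powers $t^u$ and $t^v$ appearing in \eqref{3S21}; the powers of $t$ are exactly what the $(t;d)$ bookkeeping accommodates, the bound $\max\{|I|+a,|J|+b\}\le d$ playing the role of $\max\{|I|,|J|\}\le d$. By Lemma \ref{lemqr}, $\deg(t)\ge d$ gives $\mbox{sf-}\deg(t)\ge d$, furnishing the elements $a_k,b_k$ and $a$ with properties (SF1),(SF2) that drive the whole computation, and \eqref{cqrjec} supplies $C_{Q_{ms}(R)}(R)=C$. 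The anticipated difficulty in this route is purely combinatorial: tracking how the sliding operators act on the double-indexed sums $\sum_u E_{iu}x_i t^u$ and $\sum_v t^v x_j F_{jv}$, which is notationally heavier than the $a=b=0$ case but presents no new conceptual hurdle.
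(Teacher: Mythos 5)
Your second, ``alternative'' route is essentially what the paper does: the proof given there is exactly a sketch of how to adapt the argument of \cite[Theorem 5.11]{FIbook} to $Q_{ms}(R)$, using Lemma \ref{lemqr} to convert $\deg(t)\ge d$ into sf-$\deg(t)\ge d$, \eqref{cqrjec} for $C_{Q}(R)=C$, and Lemma \ref{lt} as the engine. One caution: your remark that the extra powers of $t$ present ``no new conceptual hurdle'' slightly undersells the one point the paper singles out as non-obvious, namely the situation \eqref{enn}, where a relation $U_0(x)y+U_1(x)yt+\dots+U_n(x)yt^n=bxV(y)$ must be resolved into \eqref{karsmo}. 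The paper handles this by first killing all but the top coefficient with elements $a_k,b_k$ from \cite[Theorem 2.3.3]{BMMb} (replace $y$ by $ya_k$, multiply by $b_k$ on the right, sum), which reduces to the two-function situation $U_n(x)ya=bx\overline{V}(y)$ covered by Lemma \ref{lt}; iterating gives $U_i(x)=bxq_i$ and then (c') pins down $V$. This is exactly the kind of specialization you gesture at, so your plan is sound, but this reduction is the step that actually has to be written out.

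Your first route (prove the statement over $Q_{ml}(R)$ and descend) is genuinely different from the paper and is where the real gap sits. The known theorem over $Q_{ml}(R)$ hands you $p_{iujv}$ with values only in $Q_{ml}(R)$, and the uniqueness of the standard solution is uniqueness \emph{within a fixed ambient ring}; it does not by itself force the components of a sum such as $\sum_{j,v}t^vx_jp_{iujv}(\ov{x}_m^{ij})$ to lie in $Q_{ms}(R)$ merely because the total $E_{iu}(\ov{x}_m^i)$ does. You would still need a Lemma \ref{lt}-type extraction argument, together with the closure property \eqref{qclos}, to show each mediating element is symmetric -- and that extraction is precisely the content of the direct proof, so the transfer route does not actually save work. (That such descent issues are delicate is underscored by the paper's remark that $Q_s(R)$ cannot be substituted for $Q_{ms}(R)$ in Theorem \ref{tbei}, see \cite[Corollary 4.3]{B16}.) So: stick with your second route; it is the paper's, and it works.
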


The proof differs from that of \cite[Theorem 5.11]{FIbook} in a few details. The necessary changes can be easily made. There is one point, however, which may not be so obvious at first glance. It concerns the following situation: 
$U_0,U_1,\dots,U_n$
and $V$ are functions from $R$ to $Q=Q_{ms}(R)$
such that
\begin{equation}\label{enn}
U_0(x)y + U_1(x)yt +\dots + U_n(x)yt^n=bxV(y) \end{equation}
for all $x,y\in R$, where $b$ is a nonzero element in $R$  and $n< d$. The goal is to show that there exist $q_0,q_1,\dots,q_n\in Q$ such that
\begin{equation}\label{karsmo}
    V(y) = q_0y + q_1yt+\dots + q_nyt^n
\end{equation}for all $y\in R$. To prove this, we use
\cite[Theorem 2.3.3]{BMMb} to find
 $a_k,b_k\in R$ such that
 $\sum_{k}a_{k}t^ib_{k}=0$, $i=0,1\dots,  n-1$, and $a=\sum_{k}a_{k}t^nb_{k} \ne 0$. Write $ya_k$ for $y$, multiply the relation so obtained from the right by $b_k$, and sum up over $k$ to obtain 
 $$U_n(x) y a= bx\overline{V}(y)$$
 for all $x,y\in R$ and some new function
 $\overline{V}$. Lemma \ref{lt} tells us that there exists a $q_n\in Q$ such that, in particular,
 $U_n(x) = bxq_n$
 for all $x\in R$. Similarly we see that
  $U_i(x) = bxq_i$
 for all $x,y\in R$ and some $q_i\in Q$. Hence,
 \eqref{enn} becomes
 $$bx\bigl(V(y) -q_0y - q_1yt - \dots -q_nyt^n\bigr)=0$$
 for all $x,y\in R$. Using (c') we thus arrive at \eqref{karsmo}. 
 
With this, the interested reader 
should be able to give a complete proof  of Theorem \ref{tdbei} by following the arguments from \cite{FIbook}.

Let us finally mention that the study of $d$-freeness is not limited to prime rings. In particular,
the $d$-freeness of more general semiprime rings \cite[Section 5.3]{FIbook}, and in particular of von Neumann algebras \cite{Alam}, is also well understood. 
Further, the following result, an extension of Corollary \ref{ctbei2a}, deserves to be mentioned.

\begin{theorem}\label{matfree}
 If $S$ is any unital ring, then the matrix ring $R=M_d(S)$ is $d$-free.  
\end{theorem}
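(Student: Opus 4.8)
The plan is to deduce the result directly from Theorem \ref{mt} applied to $Q=R=M_d(S)$. The point of choosing $Q=R$ is that the hypothesis $C_Q(R)=C$ then holds for free: the centralizer of $R$ in $Q=R$ is precisely the center of $R$, which is $C$ by definition. Note that one cannot route the argument through the prime-ring machinery of Theorem \ref{tdbei}, since $S$, and hence $M_d(S)$, need not be prime; the whole advantage of the direct method behind Theorem \ref{mt} is that it requires no such hypothesis. Consequently, everything reduces to producing a single element $t\in M_d(S)$ with sf-$\deg(t)\ge d$, equivalently sf-$\deg(t)>d-1$, and then $d$-freeness follows at once.

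The natural candidate is the nilpotent shift $t=\sum_{i=1}^{d-1}e_{i,i+1}$, where $e_{ij}$ denote the matrix units of $M_d(S)$ (with entries the $0$ and $1$ of $S$). A routine calculation gives $t^l=\sum_{m=1}^{d-l}e_{m,m+l}$, whence $e_{i1}t^l=e_{i,1+l}$ for $0\le l\le d-1$: the powers of $t$ push the first column one step to the right at a time. To meet the requirements of Definition \ref{def2}, I would set $a_i=e_{i1}$ and $b_i=e_{di}$ for $i=1,\dots,d$. Then $a_it^lb_i=e_{i,1+l}e_{di}=\delta_{1+l,\,d}\,e_{ii}$, so that $\sum_i a_it^lb_i=0$ for $0\le l\le d-2$, while $\sum_i a_it^{d-1}b_i=\sum_i e_{ii}=1$. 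Thus the distinguished element produced by the construction is $a=1$.

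It remains to check that $a=1$ satisfies (SF1) and (SF2), and this is exactly where unitality does all the work. For (SF1), $Rq=\{0\}$ (or $qR=\{0\}$) forces $q=1\cdot q=0$. For (SF2), the defining relation $U(x)y=xV(y)$ is the FI \eqref{e5}, which on a unital ring has only the standard solution, just as in Example \ref{ex5}: setting $y=1$ gives $U(x)=xV(1)$, setting $x=1$ gives $V(y)=U(1)y$, and $U(1)=V(1)$, so $q=U(1)$ witnesses $U(x)=xq$ and $V(y)=qy$. Hence sf-$\deg(t)>d-1$, i.e.\ sf-$\deg(t)\ge d$, and Theorem \ref{mt} gives that $M_d(S)$ is a $d$-free subset of itself, that is, a $d$-free ring.

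The proof carries no serious obstacle once Theorem \ref{mt} is in hand; the two things to get right are recognizing that the direct method, and not the prime-ring Theorem \ref{tdbei}, is the correct tool for an arbitrary coefficient ring $S$, and carefully running the elementary matrix-unit bookkeeping that certifies sf-$\deg(t)\ge d$ with the clean choice $a=1$. Since no primeness, simplicity, or finiteness assumption on $S$ enters anywhere, the same computation works verbatim for every unital $S$, which is precisely the content the theorem is asserting.
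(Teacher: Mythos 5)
Your proof is correct. The paper itself states Theorem \ref{matfree} without proof, pointing only to \cite[Corollary 2.22]{FIbook}, so there is no in-text argument to compare against; what you have done is supply a self-contained derivation from the paper's own Theorem \ref{mt}, and every step checks out. Taking $Q=R$ makes $C_Q(R)=C$ automatic; the shift $t=\sum_{i=1}^{d-1}e_{i,i+1}$ together with $a_i=e_{i1}$, $b_i=e_{di}$ gives $\sum_i a_it^lb_i=\delta_{l,d-1}\cdot 1$ for $0\le l\le d-1$, so the distinguished element is $a=1$, for which (SF1) and (SF2) of Definition \ref{def2} are immediate in a unital ring (your $y=1$, $x=1$ substitutions are exactly the Example \ref{ex5} argument); hence sf-$\deg(t)>d-1$ and Theorem \ref{mt} applies. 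This is in fact the classical computation by which the book establishes that $M_d(S)$ has strong degree $d$ (the same $t$, $a_i$, $b_i$), here recast through the symmetric fractional degree, so your route is the expected one rather than a detour. Your observation that the prime-ring path via Theorem \ref{tdbei} is unavailable for general $S$ is also correct and matches the paper's remark that Theorem \ref{matfree} is independent of the prime and semiprime results; the only thing I would add is that this theorem is thereby a genuine strengthening of Corollary \ref{ctbei2a}, obtained at essentially no extra cost.
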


 We remark that the ring $R=M_d(S)$ is prime (resp.\ semiprime) if and only if $S$ is prime (resp.\ semiprime), so this theorem is independent of the aforementioned results. 
 Moreover, as in the case of simple unital rings, we can consider
FIs involving functions whose ranges are not necessarily in $R$ but in any unital $R$-bimodule, see \cite[Corollary 2.22]{FIbook}.

One may wonder what can be said about 
 triangular rings.
The following simple example shows that we are facing serious limitations concerning their $d$-freeness.

\begin{example}
Let $S$ be any unital ring (possibly a field) and let $R=T_n(S)$, $n\ge 2$, be the ring of all upper triangular matrices over $S$. Observe that the matrix unit $e_{1n}$ satisfies 
$$ e_{1n}[x_1,x_2] =0$$
for all $x_1,x_2\in R$.
This means 
that $$E:R\to R,\,\,
E(x)= e_{1n}x,$$ is a nonzero function satisfying
$$E(x_1)x_2 - E(x_2)x_1=0$$
for all $x_1,x_2\in R$. Therefore, $R$ is not even a $2$-free ring (regardless of $n$).
\end{example}

Nevertheless, some special FIs in the rings of upper triangular matrices, as well in more general triangular rings, were successfully studied in \cite{BE, Eret, Wfi2}.

\subsection{Constructing new  $d$-free sets from old} 
Our aim now is to present several results showing that one can construct new $d$-free sets from the old ones. Unlike in the preceding subsection, these sets are not necessarily subrings. Along with the above results, this will make it possible for us to provide various concrete examples of $d$-free sets.

We will state the results in this subsection without proofs, which are mostly self-contained but intricate. 
With the exception of Theorem \ref{ttp}, which is a more recent result from the 2016 paper \cite{Btp}, all other theorems in this subsection are presented, along with detailed proofs, in the book \cite{FIbook}.
The original sources are \cite{BBCMinvii, Onher2, BC1, BeiMart}.

We will use the by now standard notation. In particular, $Q$ will denote a unital ring with center $C$, $R$ will denote its nonempty subset, and $d$ will denote a positive integer.

The condition that $R$  is $d$-free indicates that $R$ is large in some sense. Our first result is therefore very natural.

\begin{theorem}\label{t1}
Let $R$ be a $d$-free subset of $Q$. If $T$ is any subset of $Q$ such that $R\subseteq T$, then $T$ is a $d$-free subset of $Q$ too.
\end{theorem}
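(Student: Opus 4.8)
The plan is to verify conditions (a) and (b) of Definition~\ref{defd} for $T$. Condition (b) will follow from (a) exactly as in the proof of Theorem~\ref{mt}: once (a) is available for $T$, a central surplus term is forced to be $0$. I therefore concentrate on (a). So suppose $E_i,F_j:T^{m-1}\to Q$ satisfy \eqref{3S1} for all $\ov{x}_m\in T^m$ with $\max\{|I|,|J|\}\le d$, and the task is to produce the standard form \eqref{3S3}. The guiding idea is that $R$ already supplies enough multipliers to detect the standard structure, so enlarging only the domain of the \emph{arguments} cannot destroy freeness; the real work is to extend the coefficient functions $p_{ij}$ and $\lambda_k$ from $R$-valued arguments to $T$-valued ones.

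Since $R\subseteq T$, the identity \eqref{3S1} holds in particular for all $\ov{x}_m\in R^m$. Restricting each $E_i,F_j$ to $R^{m-1}$ and invoking the $d$-freeness of $R$ yields coefficient functions $p_{ij}:R^{m-2}\to Q$ and $\lambda_k:R^{m-1}\to C$ that realise \eqref{3S3} whenever all arguments lie in $R$; by the uniqueness of standard solutions on a $d$-free set these are determined without ambiguity, which is what will let the pieces be glued later. I would then extend the relations to all of $T$ by induction on the number of coordinates of $\ov{x}_m$ that are permitted to leave $R$, the base case being exactly the statement just obtained.

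For the inductive step one promotes a single coordinate, say $x_\ell$, from $R$ to a fixed value $c\in T$. The only genuinely new phenomenon is the term in which $x_\ell$ occupies a \emph{multiplier} position: if $\ell\in I$ this is $E_\ell(\ov{x}_m^\ell)\,c$ and if $\ell\in J$ it is $c\,F_\ell(\ov{x}_m^\ell)$, neither of which is of left- or right-function shape in the remaining free variables. Here the argument $\ov{x}_m^\ell$ carries one fewer free-over-$R$ coordinate, so its standard form is already known by induction; substituting it rewrites the offending term as a sum of genuine left and right functions plus central contributions (the $\lambda$'s, being in $C$, commute past the fixed $T$-elements). What remains is then a bona fide instance of \eqref{3S1} involving one fewer free $T$-coordinate, to which the inductive hypothesis and the $d$-freeness of $R$ apply, pinning down $E_\ell,F_\ell$ at the new argument in the form \eqref{3S3}. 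The coefficient functions that surface have their arguments among the already-fixed $T$-elements, hence are defined on $T$, and uniqueness on $R$ forces them to agree with those produced at earlier stages and for other indices, so they assemble into well-defined $p_{ij}:T^{m-2}\to Q$ and $\lambda_k:T^{m-1}\to C$.

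The main obstacle is precisely the bookkeeping around these multiplier-position terms. Fixing a variable that sits in a multiplier slot always creates, once substitutions are made, contributions of the parameter-sandwiched type $x_i\,p(\cdots)\,c$, which are of neither left- nor right-function shape, and one must verify that after collecting all of them what survives really is a single FI of the form \eqref{3S1} in the reduced set of free variables. Carrying out this collection cleanly for general $m$, and simultaneously checking the cross-stage and cross-index compatibility of the coefficient functions, is the delicate technical heart of the proof; the uniqueness of standard solutions on the $d$-free set $R$ and a careful ordering of the induction are the tools that make the gluing well defined.
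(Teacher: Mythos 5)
Your strategy for condition (a) of Definition \ref{defd} is essentially the one the paper has in mind: as noted immediately after the statement of Theorem \ref{t1}, the actual proof rests on a generalization of $d$-freeness to functions defined on products $R_1\times\cdots\times R_m$ with the $R_i$ possibly different subsets of $Q$, and that generalized notion is precisely the device that formalizes your induction on the number of coordinates permitted to leave $R$, including the bookkeeping of the multiplier-position terms that you correctly single out as the technical heart.

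There is, however, a genuine gap in your treatment of condition (b). You claim that once (a) is available for $T$, condition (b) follows ``exactly as in the proof of Theorem \ref{mt}'' because the central surplus term is forced to be $0$. This fails on two counts. First, in the proof of Theorem \ref{mt} the vanishing of the central term $\mu(\ov{x}_m)$ is \emph{not} a consequence of (a): proving $\mu=0$ is the entire content of part (b) of that proof, and it uses the operators $\mathcal{L}_{r,t}$, $\mathcal{R}_{s,t}$ and the element $a$ satisfying (SF1) and (SF2), i.e.\ machinery coming from the hypothesis sf-$\deg(t)\ge d$, which has no counterpart for $T$ in the present situation. Second, and more fundamentally, the implication ``(a) implies (b)'' for a given set is false in general: the paper records, in a remark following Definition \ref{defd}, that conditions (a) and (b) are independent, with counterexamples in both directions constructed in \cite{Bremdfree}. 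Consequently, condition (b) for $T$ must be proved separately, by running your induction a second time with \eqref{3S2} in place of \eqref{3S1} and with condition (b) for $R$ as the base case. Note also that the ``uniqueness of standard solutions'' you invoke to glue the coefficient functions across stages is not a consequence of (a) alone: uniqueness is extracted from an identity of type \eqref{3S2} with $I=\emptyset$, i.e.\ from a (b)-type condition, and here it is needed on the intermediate mixed domains, not just on $R$. This is why the correct argument carries conditions (a) and (b) through the induction simultaneously rather than deriving one from the other at the end.
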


The proof is not that simple as one might expect in view of the simplicity of the statement.  It is based on a certain generalization of the notion of a $d$-free subset which involves functions $E_i,F_j$ defined on $R_1\times \dots\times R_m$ with $R_i$ possibly different subsets of $Q$.

The set of all upper triangular matrices of the form
$$
\left[ \begin{matrix} x & y \\ 0 & x  \end{matrix} \right],$$
where $x$ and $y$ are elements from the ring $Q$, is a ring under the standard matrix operations. We denote it by $\widetilde{Q}$. 

\begin{theorem}\label{thomder}
Let $R$ be a $d$-free subset of $Q$. If
$\delta:R\to Q$ is any function, then
the set of all matrices of the form
$$
\left[ \begin{matrix} x & \delta(x)  \\0 & x  \end{matrix} \right],$$
where $x\in R$, is a $d$-free subset of 
$\widetilde{Q}$. 
\end{theorem}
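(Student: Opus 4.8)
The plan is to transfer the $d$-freeness from $R\subseteq Q$ to the graph $\widetilde R=\{\,[\begin{smallmatrix} x & \delta(x)\\ 0 & x\end{smallmatrix}]\mid x\in R\,\}\subseteq\widetilde Q$ by exploiting the algebra structure of $\widetilde Q$. First I would record the basic facts about $\widetilde Q$. Writing a general element as $[\begin{smallmatrix} x & y\\ 0 & x\end{smallmatrix}]$, one checks that $\widetilde Q$ is a unital ring whose center is $\widetilde C=\{[\begin{smallmatrix} c & c'\\ 0 & c\end{smallmatrix}]\mid c,c'\in C\}$, since an element commutes with everything iff both diagonal and off-diagonal entries are central in $Q$. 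The key structural remark is that $\widetilde Q$ carries a ring homomorphism $\pi\colon\widetilde Q\to Q$, $[\begin{smallmatrix} x & y\\ 0 & x\end{smallmatrix}]\mapsto x$ (reading off the diagonal), and an additive ``off-diagonal'' projection $\tau\colon\widetilde Q\to Q$, $[\begin{smallmatrix} x & y\\ 0 & x\end{smallmatrix}]\mapsto y$, satisfying the Leibniz-type rule $\tau(PQ)=\pi(P)\tau(Q)+\tau(P)\pi(Q)$. These two maps will let me split any functional identity on $\widetilde R$ into its diagonal and off-diagonal components.

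Next I would set up the induced functions. Suppose we have a standard-form FI of type \eqref{3S1} on $\widetilde R$, say
\[
\sum_{i\in I}\widetilde E_i(\ov{X}_m^i)X_i+\sum_{j\in J}X_j\widetilde F_j(\ov{X}_m^j)=0
\]
for all $\ov X_m\in\widetilde R^m$, where each $X_i=[\begin{smallmatrix} x_i & \delta(x_i)\\ 0 & x_i\end{smallmatrix}]$ is the graph element determined by $x_i\in R$, and $\widetilde E_i,\widetilde F_j\colon\widetilde R^{m-1}\to\widetilde Q$ are arbitrary. Since each $X_i$ is determined by its diagonal $x_i$, I may regard $\widetilde E_i,\widetilde F_j$ as functions of $\ov x_m^i,\ov x_m^j\in R^{m-1}$, and decompose them via $\pi$ and $\tau$ into diagonal parts $E_i:=\pi\circ\widetilde E_i$, $F_j:=\pi\circ\widetilde F_j$ and off-diagonal parts $E_i':=\tau\circ\widetilde E_i$, $F_j':=\tau\circ\widetilde F_j$, all mapping $R^{m-1}\to Q$. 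Applying $\pi$ to the displayed identity gives exactly
\[
\sum_{i\in I}E_i(\ov x_m^i)x_i+\sum_{j\in J}x_jF_j(\ov x_m^j)=0
\]
on $R$, which is an FI of type \eqref{3S1} on the $d$-free set $R$ (with $\max\{|I|,|J|\}\le d$ by hypothesis). Hence by Definition \ref{defd} there are functions $p_{ij}\colon R^{m-2}\to Q$ and $\lambda_k\colon R^{m-1}\to C$ giving the standard solution \eqref{3S3} for the $E_i,F_j$.

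The substantive step is the off-diagonal equation. Applying $\tau$ to the identity and using the Leibniz rule together with $\tau(X_i)=\delta(x_i)$, $\pi(X_i)=x_i$, I obtain
\[
\sum_{i\in I}\bigl(E_i'(\ov x_m^i)x_i+E_i(\ov x_m^i)\delta(x_i)\bigr)
+\sum_{j\in J}\bigl(\delta(x_j)F_j(\ov x_m^j)+x_jF_j'(\ov x_m^j)\bigr)=0.
\]
Now I substitute the already-known standard form of $E_i,F_j$ into the two $\delta$-terms; every such term becomes a sum of left $i$-functions and right $j$-functions of the variables (the $\delta(x_i)$ factors and the $p_{ij}$, $\lambda_k$ pieces recombine into expressions of the shape ``coefficient times a single $x_k$'' with the remaining variables absorbed into new functions). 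This rewrites the whole display as a fresh FI of type \eqref{3S1} in the unknowns $E_i',F_j'$ (with all the $\delta$-contributions moved to the opposite side and reinterpreted as modifications of these unknowns), again with at most $d$ functions on each side. A second application of $d$-freeness of $R$ then produces the standard solution for the $E_i',F_j'$. Reassembling the diagonal and off-diagonal data into $p_{ij}^{\widetilde Q}:=[\begin{smallmatrix} p_{ij} & *\\ 0 & p_{ij}\end{smallmatrix}]$ and $\lambda_k^{\widetilde C}\in\widetilde C$, with the starred off-diagonal entries dictated by the second solution, yields the standard solution \eqref{3S3} of the original FI on $\widetilde R$. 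The condition \eqref{3S2} (part (b) of Definition \ref{defd}) is handled the same way, noting that membership in $\widetilde C$ projects under $\pi$ to membership in $C$, so that the diagonal equation is of type \eqref{3S2} on $R$ with $\max\{|I|,|J|\}\le d-1$, and the off-diagonal analysis runs as before.

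The main obstacle I anticipate is the bookkeeping in the off-diagonal step: after inserting the standard forms of $E_i,F_j$, I must verify that every resulting $\delta$-term genuinely collapses into a pure left $i$-function or right $j$-function of the variables, with no leftover term that is simultaneously ``mixed'' (i.e.\ not attributable to a single distinguished variable), so that the rewritten relation is honestly of the form \eqref{3S1} rather than something more general. This is a matter of carefully tracking which variable each factor $\delta(x_k)$, $p_{ij}$, and $\lambda_k$ belongs to, and confirming that the count of functions on each side stays within the $d$ (resp.\ $d-1$) budget; it is routine but error-prone, and is the only place where the specific multiplicative structure of $\widetilde Q$ (through the Leibniz rule for $\tau$) does real work.
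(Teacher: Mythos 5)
Your strategy is the right one, and it is essentially the proof given in \cite{FIbook} (the survey itself states Theorem \ref{thomder} without proof): read off the diagonal with the ring homomorphism $\pi$ to get an FI of type \eqref{3S1} (resp.\ \eqref{3S2}) on the $d$-free set $R$, solve it, then read off the off-diagonal entry with $\tau$, substitute the first solution, solve a second FI on $R$, and reassemble via $P_{ij}=\left[\begin{smallmatrix} p_{ij} & p_{ij}'\\ 0 & p_{ij}\end{smallmatrix}\right]$ and $\Lambda_k=\left[\begin{smallmatrix} \lambda_k & \lambda_k'\\ 0 & \lambda_k\end{smallmatrix}\right]\in\widetilde C$. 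Your identification of the center of $\widetilde Q$ and the Leibniz rule for $\tau$ are both correct, and the reassembled data do satisfy \eqref{3S3} in $\widetilde Q$ (one checks that $\tau\bigl(\sum_j X_jP_{ij}+\Lambda_i\bigr)=\sum_j\bigl(x_jp_{ij}'+\delta(x_j)p_{ij}\bigr)+\lambda_i'$, which is exactly what the second solution delivers).

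The obstacle you flag at the end is real, however, and your blanket claim that every $\delta$-term ``collapses into a pure left $i$-function or right $j$-function'' is false term by term; the proof does not close until you see why the offending terms disappear. After substituting the standard forms of $E_i,F_j$ into the off-diagonal equation, the terms $x_jp_{ij}(\ov{x}_m^{ij})\delta(x_i)$ and $-\delta(x_j)p_{ij}(\ov{x}_m^{ij})x_i$ are indeed a right $j$-function and a left $i$-function, but the contributions of the central coefficients, namely $\sum_{i\in I}\lambda_i(\ov{x}_m^i)\delta(x_i)$ and $-\sum_{j\in J}\delta(x_j)\lambda_j(\ov{x}_m^j)$, are genuinely mixed: they attach $\delta(x_k)$, not $x_k$, to the distinguished variable, so they cannot be absorbed into any $E$ or $F$. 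They vanish only because they cancel against each other: $\lambda_k$ takes values in $C$, so $\lambda_k(\ov{x}_m^k)\delta(x_k)=\delta(x_k)\lambda_k(\ov{x}_m^k)$, and $\lambda_k=0$ unless $k\in I\cap J$, so the two sums run over the same index set and their difference is zero. Once this is observed, what remains is honestly of type \eqref{3S1} (resp.\ \eqref{3S2}) in the modified unknowns $E_i'-\sum_{j\ne i}\delta(x_j)p_{ij}$ and $F_j'+\sum_{i\ne j}p_{ij}\delta(x_i)$, with the same index sets $I,J$, and the rest of your argument goes through as written.
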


The motivation behind this theorem is the observation that, under the assumption that $R$ is a subring, the map 
$\delta:R\to Q$ is a derivation if and only if the map $\varphi:R\to \widetilde{Q}$, 
$$\varphi(x)=
\left[ \begin{matrix} x & \delta(x) \\ 0 & x  \end{matrix} \right],$$
is a homomorphism. This makes the theorem a useful tool for reducing certain problems on derivations to analogous problems on homomorphisms.

By $Q^{\rm op}$ we denote the opposite ring of the ring $Q$.

\begin{theorem}\label{t3}
Let $R$ be a $d$-free subset of $Q$. 
Then the sets $\{(x,x)\,|\ x\in R\}$
and $\{(x,-x)\,|\ x\in R\}$ are $d$-free
subsets of the ring $Q\times Q^{\rm op}$.
\end{theorem}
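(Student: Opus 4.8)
The plan is to exploit the structure of the twisted product $Q\times Q^{\rm op}$: its center is $C\times C$, and for $(a,b),(c,d)\in Q\times Q^{\rm op}$ one has $(a,b)(c,d)=(ac,db)$, so that multiplication in the second coordinate is reversed. I would treat the diagonal $\Delta=\{(x,x)\mid x\in R\}$ first and then indicate the minor sign changes needed for the antidiagonal.

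First I would set up an arbitrary instance of \eqref{3S1} for $\Delta$. Writing $X_i=(x_i,x_i)$ and decomposing the unknown functions into coordinates, $E_i=(e_i,e_i')$ and $F_j=(f_j,f_j')$ where $e_i,e_i',f_j,f_j'$ are genuine functions $R^{m-1}\to Q$ (the domain $\Delta^{m-1}$ being canonically identified with $R^{m-1}$), the identity $\sum_{i\in I}E_i(\ov{x}_m^i)X_i+\sum_{j\in J}X_jF_j(\ov{x}_m^j)=0$ splits, using the reversed second coordinate, into the two identities
\[
\sum_{i\in I}e_i(\ov{x}_m^i)x_i+\sum_{j\in J}x_jf_j(\ov{x}_m^j)=0,\qquad \sum_{j\in J}f_j'(\ov{x}_m^j)x_j+\sum_{i\in I}x_ie_i'(\ov{x}_m^i)=0
\]
on $R$. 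The first is exactly an FI of the form \eqref{3S1} with index sets $I,J$, while the second is again of the form \eqref{3S1} but with the roles of the left and right index sets interchanged (the functions $f_j'$ now multiply on the left, the $e_i'$ on the right). The key observation is that this interchange leaves $\max\{|I|,|J|\}$ unchanged, so the hypothesis $\max\{|I|,|J|\}\le d$ lets me apply the $d$-freeness of $R$ to both identities.

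Applying \eqref{3S3} to the first identity yields functions $p_{ij}:R^{m-2}\to Q$ and $\lambda_k:R^{m-1}\to C$ (with $\lambda_k=0$ for $k\notin I\cap J$) of standard form, and applying it to the second yields functions $p'_{ji}$ (for $j\in J$, $i\in I$, $i\ne j$) and $\mu_k:R^{m-1}\to C$ (with $\mu_k=0$ for $k\notin I\cap J$). Since the pairs $(i,j)$ with $i\in I$, $j\in J$, $i\ne j$ are exactly the pairs indexing both families, I would then assemble a candidate standard solution for $\Delta$ by setting $P_{ij}=(p_{ij},-p'_{ji})$ and $\Lambda_k=(\lambda_k,-\mu_k)$. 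A direct coordinatewise check, again using $(a,b)(c,d)=(ac,db)$, shows that $E_i=\sum_{j\in J,\,j\ne i}X_jP_{ij}+\Lambda_i$ and $F_j=-\sum_{i\in I,\,i\ne j}P_{ij}X_i-\Lambda_j$, and that $\Lambda_k=0$ for $k\notin I\cap J$; this is precisely \eqref{3S3}, establishing condition (a). For condition (b) the argument is identical: membership in $C\times C$ splits into two instances of \eqref{3S2}, each with $\max\{|I|,|J|\}\le d-1$, and the same reassembly works.

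Finally, for the antidiagonal $\{(x,-x)\mid x\in R\}$ I would run the same computation with $X_i=(x_i,-x_i)$; the sign in the second coordinate propagates harmlessly, producing the identical pair of FIs on $R$ and requiring only that I set $P_{ij}=(p_{ij},p'_{ji})$ and $\Lambda_k=(\lambda_k,-\mu_k)$. The one point deserving care, and the main potential pitfall, is the index bookkeeping: one must track that reversing the second coordinate swaps which index set plays the left versus the right role, and then verify that a single family of coordinate functions $p'_{ji}$ simultaneously fits the $E_i$-equation and the $F_j$-equation, so that the two coordinates of each $\Lambda_k$ are mutually consistent. Once this consistency is checked, no further subtlety arises.
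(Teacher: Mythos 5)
Your proof is correct. The paper itself states Theorem \ref{t3} without proof, deferring to the book \cite{FIbook}; your argument — splitting an FI on the (anti)diagonal coordinatewise into one FI on $R$ of type \eqref{3S1} (resp.\ \eqref{3S2}) and a second one in which the reversed multiplication of $Q^{\rm op}$ interchanges the roles of $I$ and $J$ (leaving $\max\{|I|,|J|\}$ unchanged), and then reassembling the two standard solutions, with the appropriate signs, into a single standard solution over $Q\times Q^{\rm op}$ — is essentially the standard proof of this result.
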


Let us explain the motivation behind this theorem.
Recall that a map $*$ from a ring $R$ to itself is called an {\em involution} if it satisfies
$$(x+y)^* = x^* + y^*,\quad (xy)^* =y^*x^*,\quad\mbox{and}\quad (x^*)^*=x $$
for all $x,y\in R$. 
An element $x\in R$ is said to be {\em symmetric} if $x^*=x$, and is said to be
{\em skew} (or {\em skew-symmetric}) if $x^*=-x$.
If $Q$ is any ring, then we can endow the ring $Q\times Q^{\rm op}$ with the involution defined by $(x,y)^* = (y,x)$.
The first set from Theorem \ref{t3} consists of all symmetric elements with respect to this involution, and the second set consists of all skew elements. Thus, Theorem \ref{t3} states that the sets of symmetric and skew elements (with respect to this special involution) are 
$d$-free provided that the original set is $d$-free.

The next result concerns tensor products. We now assume that our ring $Q$ is an algebra over a field.

\begin{theorem}\label{ttp}
Let  $Q$ and $A$ be unital algebras. If 
 $A$ is finite-dimensional and $R$ is a $d$-free
subset of $Q$, then  $\{x \otimes a \,|\, x\in R, a \in  A\}$ is a $d$-free subset of $Q\otimes A$.
\end{theorem}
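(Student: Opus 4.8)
The plan is to reduce a functional identity on $S=\{x\otimes a\}$ inside $Q\otimes A$ to a family of functional identities on $R$ inside $Q$, where the hypothesis that $R$ is $d$-free applies directly. First I would record that $Q\otimes A$ has center $C\otimes Z(A)$, and fix a basis $a_1,\dots,a_n$ of $A$ over the ground field chosen so that an initial segment is a basis of $Z(A)$ (with $a_1=1$). Every element of $Q\otimes A$ then has unique coordinates in $Q$ relative to $1\otimes a_1,\dots,1\otimes a_n$, and the coordinate maps are $Q$-linear both on the left and on the right, since $Q\otimes A$ is free over $Q$ on this basis from either side.

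Given functions $\mathcal{E}_i,\mathcal{F}_j\colon S^{m-1}\to Q\otimes A$ satisfying \eqref{3S1} (resp.\ \eqref{3S2}) with $\max\{|I|,|J|\}\le d$ (resp.\ $\le d-1$), I would parametrize the arguments as $X_k=x_k\otimes\alpha_k$ and expand $\mathcal{E}_i=\sum_l E_i^{(l)}\otimes a_l$ and $\mathcal{F}_j=\sum_l F_j^{(l)}\otimes a_l$ in coordinates. Writing $a_l\alpha_i=\sum_p\gamma^{(\alpha_i)}_{lp}a_p$ and $\alpha_j a_l=\sum_p\delta^{(\alpha_j)}_{lp}a_p$, the coefficient of each basis element $a_p$ in \eqref{3S1} is a functional identity on $R$ of exactly the form \eqref{3S1}, with the same index sets $I,J$ and with coefficient functions $\sum_l\gamma^{(\alpha_i)}_{lp}E_i^{(l)}$ and $\sum_l\delta^{(\alpha_j)}_{lp}F_j^{(l)}$ of the variables $\ov{x}_m^{i}$, the $A$-parts $\alpha_k$ being treated as fixed parameters. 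For the centralizing identity \eqref{3S2}, the $Z(A)$-adapted basis guarantees that each coordinate lands in $C$ when $a_p\in Z(A)$ and in $\{0\}$ otherwise, producing identities of type \eqref{3S2} with $\max\{|I|,|J|\}\le d-1$ and of type \eqref{3S1} with $\max\{|I|,|J|\}\le d-1\le d$. In every case the $d$-freeness of $R$ yields the standard solution \eqref{3S3} at the coordinate level, with functions $P_{ij}\colon R^{m-2}\to Q$ and $\Lambda_k\colon R^{m-1}\to C$. Multiplying the $p$-th coordinate identity by $a_p$ and summing collapses these into the single relation
$$\mathcal{E}_i(\ov{X}_m^{i})(1\otimes\alpha_i)=\sum_{j\ne i}(x_j\otimes 1)\widehat{P}_{ij}+\widehat{\Lambda}_i,$$
where $\widehat{P}_{ij}=\sum_p P_{ij}^{(p)}\otimes a_p\in Q\otimes A$ and $\widehat{\Lambda}_i=\sum_p\Lambda_i^{(p)}\otimes a_p\in C\otimes A$, and likewise for the $\mathcal{F}_j$.

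It remains to turn this relation into the genuine standard form $\mathcal{E}_i=\sum_{j\ne i}X_j\,\widehat{p}_{ij}+\widehat{\lambda}_i$ with $\widehat{p}_{ij}$ and $\widehat{\lambda}_i$ defined on $S^{m-2}$ and $S^{m-1}$ and with $\widehat{\lambda}_i$ valued in $C\otimes Z(A)$. I expect the bulk of the difficulty, and the real content of the theorem, to lie here. Three points must be settled. The factor $1\otimes\alpha_i$ on the right cannot simply be cancelled when $\alpha_i$ is not a unit of $A$; I would deal with this by exploiting that $S$ is closed under left and right multiplication by $1\otimes A$, so that substituting $X_i\mapsto X_i(1\otimes\beta)$ and letting $\beta$ range over the basis, together with the uniqueness of standard solutions on the $d$-free subset $R$, forces the assembled functions to be independent of the excluded parameters $\alpha_i,\alpha_j$ and hence well defined on $S^{m-2}$ and $S^{m-1}$. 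One must also replace the spurious factor $x_j\otimes 1$ by $X_j=x_j\otimes\alpha_j$, which is where the $A$-multiplication bookkeeping is most delicate and where the finiteness of the basis (i.e.\ finite-dimensionality of $A$) is essential for all the sums to be legitimate.

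The third point is the centrality of $\widehat{\lambda}_i$: it must lie in $C\otimes Z(A)$ rather than merely in $C\otimes A$. This I would extract from the coordinatewise analysis of the centralizing identity \eqref{3S2} together with the choice of a $Z(A)$-adapted basis, exactly as in the reduction above. Once these three issues are handled, conditions (a) and (b) of Definition \ref{defd} hold for $S$, completing the proof. The main obstacle, to reiterate, is the reassembly step and in particular the correct treatment of non-invertible elements of $A$ via the closure of $S$ under multiplication by $1\otimes A$ and the uniqueness of standard solutions; the reduction to coordinate identities on $R$ and the invocation of the $d$-freeness of $R$ are, by contrast, routine.
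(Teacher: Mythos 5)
The paper does not actually prove Theorem \ref{ttp}: it is stated without proof and attributed to the 2016 paper \cite{Btp}, so there is no in-text argument to compare yours against. Judged on its own terms, your proposal sets up a sensible reduction but stops short of the theorem. The coordinate decomposition is fine: $Z(Q\otimes A)=C\otimes Z(A)$, the coefficient of each basis vector $a_p$ is indeed an FI on $R$ of type \eqref{3S1} or \eqref{3S2} with the same $I,J$, and $d$-freeness of $R$ applies to each. But everything you label as ``the real content of the theorem'' is exactly that, and your sketch of it does not withstand scrutiny. The coordinatewise standard solutions produce data $p_{ij}^{(p)}$, $\lambda_k^{(p)}$ that depend on the entire tuple $(\alpha_1,\dots,\alpha_m)$, and uniqueness of standard solutions only pins them down for each \emph{fixed} tuple; relating the solutions across different tuples, so that $\widehat{p}_{ij}$ ends up independent of $\alpha_i,\alpha_j$ and the factor $x_j\otimes 1$ can be traded for $X_j=x_j\otimes\alpha_j$, is precisely the argument that is missing, and ``substituting $X_i\mapsto X_i(1\otimes\beta)$'' is an assertion that it can be done, not a proof.

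Two concrete warnings. First, your treatment of the centrality of $\widehat{\lambda}_i$ is misdirected: you propose to extract it ``from the coordinatewise analysis of the centralizing identity \eqref{3S2}'', but condition (a) of Definition \ref{defd} concerns \eqref{3S1}, whose standard solution \emph{also} requires the $\lambda$'s to be valued in the center $C\otimes Z(A)$. The coordinate solutions only give values in $C\otimes A$, and the passage to $C\otimes Z(A)$ must come from comparing the left-hand data ($E$-side) with the right-hand data ($F$-side) as the $A$-parts vary --- already for $m=1$, the identity $\mathcal{E}X+X\mathcal{F}=0$ on $S$ yields $\mathcal{E}\in C\otimes A$ and $(1\otimes\alpha)\mathcal{E}=\mathcal{E}(1\otimes\alpha)$ for all $\alpha\in A$, and only the second relation forces $\mathcal{E}\in C\otimes Z(A)$. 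That this is unavoidable is shown by the subset $R\otimes 1\subseteq S$: for noncommutative $A$ the constants $\mathcal{E}=1\otimes a$, $\mathcal{F}=-(1\otimes a)$ with $a\notin Z(A)$ give a nonstandard solution on $R\otimes 1$, so no argument that fixes the $A$-parts (or specializes them to $1$) can succeed; the full variation over $\alpha\in A$ carries the entire burden. Second, the factor $1\otimes\alpha_i$ genuinely cannot be cancelled pointwise (right multiplication by $\alpha_i$ on $A$ need not be injective, and $\alpha_i=0$ is allowed), so $\mathcal{E}_i(\ov{X}_m^i)$ must be reconstructed from the family of its products $\mathcal{E}_i(\ov{X}_m^i)(x_i\otimes\alpha_i)$ over all $x_i\in R$, $\alpha_i\in A$; this again requires an argument you have not supplied. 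In short, the reduction you call routine is routine, but the reassembly you defer is the theorem, and as written the proof is incomplete.
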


If $R$ is a linear subspace of $Q$, then 
Theorem \ref{ttp}, together with Theorem \ref{t1}, implies that
$R\otimes A$ is a $d$-free subset of $Q\otimes A$. In particular, if  $R=Q$ is a $d$-free ring, then $R\otimes A$ is
a $d$-free ring for every finite-dimensional algebra $A$.

It was shown by an example 
that the assumption that $A$ is finite-dimensional is necessary. However, some  important FIs can be handled in $Q\otimes A$ even when $A$ is infinite-dimensional. See
\cite[Section 5]{Btp}.

 We also mention the papers \cite{Ere, Wangqa} which give some further insight into the special case where $A$ is the algebra of upper triangular matrices.

The next two theorems involve $(t;d)$-free sets.
The first one concerns rings with involution. We will assume that the subset $R$ of $Q$ is a subring endowed with involution $*$. By $S$ we denote the set of all symmetric elements in  $R$, and by $K$ the set of all skew elements in $R$.

\begin{theorem}\label{tinv}
Let $R$ be a ring with involution. If $R$ is a  $(t;2d+1)$-free subset of $Q$ for some $t\in S\cup K$, then
both $S$ and $K$ are $d$-free subsets of $Q$.
\end{theorem}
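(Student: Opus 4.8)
The plan is to reduce a functional identity over $S$ (or $K$) to a $(t;2d+1)$-free identity over the ambient subring $R$, exploiting the decomposition $R = S \oplus K$ that is available once we exclude characteristic $2$ (which I would assume, or circumvent via the Jordan/Lie product structure). The key mechanism is that membership of an element of $R$ in $S$ or in $K$ can be encoded by powers of the fixed element $t \in S \cup K$: since $t$ is either symmetric or skew, conjugation-type expressions $x \mapsto x + x^*$ and $x \mapsto x - x^*$, together with multiplication by $t$, let us convert a symmetric (or skew) variable into a pair of free variables in $R$ at the cost of inserting powers of $t$. This is exactly why the hypothesis is phrased in terms of $(t;d)$-freeness rather than plain $d$-freeness: the extra powers $t^u, t^v$ appearing in \eqref{3S21}--\eqref{3S23} are precisely what the substitution generates.

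Concretely, I would start from a standard FI over $S$ with at most $d$ functions $E_i$ and at most $d$ functions $F_j$, of the form \eqref{3S1} (and separately the centralizing version \eqref{3S2}), where now all variables range over $S$. First I would extend each $E_i, F_j$ to functions on $R^{m-1}$ by composing with the symmetrizing projection $x \mapsto \tfrac{1}{2}(x+x^*)$ in each slot, so that the identity, originally valid for symmetric arguments, becomes an identity over all of $R$ after the substitution is carried out. The heart of the argument is the substitution: replacing each symmetric variable $s_i$ by an expression built from a free variable $x_i \in R$ and the element $t$ — for instance using that $s \mapsto ts + (ts)^* = ts \pm s t$ (sign depending on whether $t \in S$ or $t\in K$) produces symmetric or skew elements, and that iterating with powers of $t$ spans enough of $S$ — converts the FI over $S$ into an FI over $R$ of the shape \eqref{3S21} or \eqref{3S22}. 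The bookkeeping shows that each original function contributes at most $d$ terms while the powers of $t$ introduced push the relevant count $|I|+a$ (resp.\ $|J|+b$) up to at most $2d+1$ (resp.\ $2d$), which is exactly why the hypothesis requires $(t;2d+1)$-freeness.

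Once the identity is cast in the form \eqref{3S21}/\eqref{3S22} over $R$ with the degree bounds $\max\{|I|+a,|J|+b\}\le 2d+1$ (resp.\ $\le 2d$), the $(t;2d+1)$-freeness of $R$ forces the standard solution \eqref{3S23}. The final step is to \emph{descend} this standard solution back to $S$: one restricts the functions $p_{iujv}$ and $\lambda_{kuv}$ to symmetric arguments, collects the powers of $t$ (again symmetrizing, since $t^u$ is symmetric when $t\in S$ and has controlled symmetry type when $t\in K$), and checks that the resulting $p_{ij}$ and $\lambda_k$ witness the standard solution \eqref{3S3} of the original FI over $S$. The skew case $K$ runs in parallel, with the antisymmetrizing projection $x \mapsto \tfrac{1}{2}(x-x^*)$ replacing the symmetrizing one and the signs adjusted accordingly.

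I expect the main obstacle to be the substitution step together with its degree accounting: one must choose the substitution so that (i) the image genuinely lands in $S$ (resp.\ $K$), (ii) enough of $S$ is reached that vanishing on the image forces vanishing of the original functions — this is where the algebraicity hypothesis hidden in $\deg(t)\ge d$, propagated through $(t;2d+1)$-freeness, does the work — and (iii) the number of distinct functions and the maximal power of $t$ stay within the budget $2d+1$. The constant $2d+1$ is sharp and unforgiving, so the most delicate part of a full proof is verifying that the symmetrization does not secretly double a function count past the allowed threshold; handling the interaction between the two summands ($E$-type and $F$-type) under the involution, which swaps left and right multiplication via $(xy)^*=y^*x^*$, is where the argument is genuinely intricate rather than routine.
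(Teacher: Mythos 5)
Your outline — extend the functions from $S$ to $R$, substitute so that each symmetric variable becomes an expression in a free variable of $R$, invoke the $(t;2d+1)$-freeness, and descend the standard solution back to $S$ — points in the same general direction as the actual derivation (the paper does not prove this theorem itself; it derives it from a general result on functional identities \emph{with involution} in \cite[Section 3.5]{FIbook}). But there is a genuine gap at the central step. Once you substitute $s_i = x_i + x_i^*$ (or $x_i - x_i^*$ for $K$), the resulting identity over $R$ is \emph{not} of the shape \eqref{3S21} or \eqref{3S22}: each left term splits as $E_i(\cdots)x_i + E_i(\cdots)x_i^*$ and each right term as $x_jF_j(\cdots) + x_j^*F_j(\cdots)$, so the identity now carries coefficients of the starred variables $x_i^*$ and $x_j^*$, which the format \eqref{3S21}--\eqref{3S22} simply does not accommodate (it allows only $x_it^u$ and $t^vx_j$). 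Your proposed insertions of $t$, e.g.\ $s\mapsto ts\pm st$, do not remove these starred terms; eliminating them is precisely the content of the ``more general result'' the paper alludes to, namely the theorem on $*$-FIs of the form $\sum E_{iu}x_it^u+\sum t^vx_jF_{jv}+\sum G_{kw}x_k^*t^w+\sum t^zx_l^*H_{lz}=0$, whose (long) proof is where the constant $2d+1$ actually arises: each variable occurs in two roles, $x_i$ and $x_i^*$, and separating the roles roughly doubles the function count and costs one extra power of $t$. Your bookkeeping sentence (``each original function contributes at most $d$ terms while the powers of $t$ push the count to $2d+1$'') asserts this outcome without supplying the mechanism, and no mechanism that stays inside the ordinary $(t;d)$-free format can work, because the starred terms never disappear under substitutions of the form $x\mapsto xt^u$ alone — they only get conjugated into $t^{*u}x^*$.

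The peripheral parts of your plan are fine: precomposing $E_i,F_j$ with $x\mapsto\tfrac12(x+x^*)$ (or working with the trace set $\{x+x^*\,|\,x\in R\}\subseteq S$ and then invoking Theorem \ref{t1} to pass to $S$, which sidesteps the characteristic-$2$ worry), and the final descent, which in the correct setup amounts to reading off the standard $*$-FI solution at symmetric arguments and summing the $p$- and $q$-type coefficients of $x_j$ and $x_j^*$. To make the proof complete you would need to state and prove the intermediate theorem on functional identities with involution on $(t;2d+1)$-free sets; without it, the reduction you describe lands on an identity that the hypothesis, as you have used it, cannot process.
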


Theorem \ref{tinv} is derived from a more general result treating FIs of the form
\begin{eqnarray*}
\sum_{i\in I}\sum_{u=0}^aE_{iu}(\ov{x}_m^i)^ix_it^u
&+&\sum_{j\in J}\sum_{v=0}^bt^v x_jF_{jv}^j(\ov{x}_m^j)\\
&+&\sum_{k\in K}\sum_{w=0}^{c}G_{kw}^k(\ov{x}_m^k)x_k^*t^w +\sum_{l\in
L}\sum_{z=0}^{d}t^z x_l^*H_{lz}^l(\ov{x}_m^l) =0.
\end{eqnarray*}
However, Theorem \ref{tinv} is sufficient for most applications, so we shall not state this more general result here. The interested reader can find it in \cite[Section 3.5]{FIbook}. As one would expect, its proof is rather long.

We have already dealt with the Lie product $[x,y]=xy-yx$. We also need the {\em Jordan product}
$$x\circ y = xy+yx$$
of ring elements $x$ and $y$. Of course, an associative ring endowed with the Lie product becomes a Lie ring (meaning that $[x,x]=0$ and $[[x,y],z] + [[z,x],y] + [[y,z],x]=0$ for all $x,y,z$), and endowed with the Jordan product becomes a Jordan ring (meaning that $x\circ y=y\circ x$ and $((x\circ x)\circ y)\circ x)= (x\circ x)\circ (y\circ x)$ for all $x$ and $y$).

\begin{theorem}\label{tza}
Let $R$ be a $(t;d+1)$-free subset
of $Q$, where $t\in Q$ is not algebraic of degree $1$ or $2$ over $C$. If $R'$ is a nonempty subset of $R$ such that either $[t,R]\subseteq R'$
or $t\circ R\subseteq R'$, then $R'$ is a $d$-free subset of $Q$.
\end{theorem}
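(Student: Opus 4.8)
The plan is to reduce to the extremal set and then turn a functional identity on $R'$ into one of $(t;\cdot)$-type on $R$. First I would apply Theorem \ref{t1}: since $[t,R]\subseteq R'$ in the first case (and $t\circ R\subseteq R'$ in the second), it suffices to prove that the smaller set $R_0:=[t,R]$ (resp.\ $R_0:=t\circ R$) is $d$-free, because then every superset inside $Q$, in particular $R'$, is $d$-free as well. This reduction is what makes the substitution below exhaustive: every element of $R_0$ really is of the form $[t,y]$ (resp.\ $t\circ y$) with $y\in R$.

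Next I would test $d$-freeness of $R_0$ directly. Suppose $E_i,F_j\colon R_0^{m-1}\to Q$ satisfy the fundamental identity \eqref{3S1} (for condition (a)) or \eqref{3S2} (for condition (b)) with $\max\{|I|,|J|\}\le d$ (resp.\ $\le d-1$). Substituting $x_k=[t,y_k]=ty_k-y_kt$ with $y_k\in R$ and letting $\tilde E_i,\tilde F_j\colon R^{m-1}\to Q$ be the functions obtained by plugging these commutators into the arguments of $E_i,F_j$, the identity \eqref{3S1} expands into
$$\sum_{i\in I}\bigl(\tilde E_i\,t\bigr)y_i-\sum_{i\in I}\tilde E_i\,y_it+\sum_{j\in J}t\,y_j\tilde F_j-\sum_{j\in J}y_j\bigl(t\,\tilde F_j\bigr)=0,$$
which is precisely an identity of type \eqref{3S21} with $a=b=1$: the first two sums are left functions carrying $t^0$ and $t^1$, the last two are right functions carrying $t^1$ and $t^0$. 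Since $\max\{|I|+1,|J|+1\}=\max\{|I|,|J|\}+1\le d+1$, the assumption that $R$ is $(t;d+1)$-free applies (condition (a) of Definition \ref{3SD2} for part (a); condition (b), whose bound $\le d$ matches $\max\{|I|,|J|\}\le d-1$, for part (b)), forcing the standard solution \eqref{3S23}. The Jordan case is identical up to signs, with $ty_k-y_kt$ replaced by $ty_k+y_kt$.

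The real work is the last step: converting the $(t;d+1)$-free standard form of $\tilde E_i,\tilde F_j$—expressed through $t$-weighted terms $t^v y_j p_{iujv}(\cdots)t^u$ together with central $\lambda_{kuv}$—back into a genuine standard solution \eqref{3S3} of the original identity on $R_0$, that is, one in which the dependence on each variable enters only through $x_j=[t,y_j]$ and the scalar coefficients are actually central. Comparing the $u=0$ and $u=1$ components attached to each $\tilde E_i$ (and symmetrically for $\tilde F_j$) yields an auxiliary identity in the remaining variables which is again of $(t;\cdot)$-type; solving it by the freeness of $R$ lets one collect the coefficients and discard the spurious central terms. Here the hypothesis that $t$ is \emph{not} algebraic of degree $1$ or $2$ over $C$ is decisive, since it makes $1,t,t^2$ linearly independent over $C$: this is exactly what allows coefficients of distinct powers of $t$ to be equated, the cross terms to be reassembled into the commutator (resp.\ Jordan) pattern, and the residual scalar part to be forced into $C$. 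I expect this coefficient matching, together with checking that the recovered $p_{ij}$ descend to well-defined functions of the $x_j$ rather than of the $y_j$, to be the main obstacle; conditions (a) and (b) for the auxiliary identities are precisely what one needs to keep the extra central contributions under control.
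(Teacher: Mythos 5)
The paper itself states Theorem \ref{tza} without proof (deferring to \cite{FIbook} and \cite{BC1}), so I am assessing your argument on its own terms; its skeleton does match the known strategy. The reduction to $R_0=[t,R]$ (resp.\ $t\circ R$) via Theorem \ref{t1} is valid, the substitution $x_k=[t,y_k]$ really does turn \eqref{3S1} into an identity of type \eqref{3S21} with $a=b=1$ and coefficient functions $E_{i0}=\tilde E_i t$, $E_{i1}=-\tilde E_i$, $F_{j1}=\tilde F_j$, $F_{j0}=-t\tilde F_j$, and your degree counts $\max\{|I|,|J|\}+1\le d+1$ (resp.\ $\le d$) line up exactly with conditions (a) and (b) of Definition \ref{3SD2}. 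The descent worry you flag at the end is not a real obstacle: either compose with a set-theoretic section of $y\mapsto[t,y]$, or, more cleanly, prove that the pair $(R;\alpha)$ with $y^\alpha=[t,y]$ is $d$-free and invoke Theorem \ref{tsal}.

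The genuine gap is in the step you call coefficient matching. From \eqref{3S23} and the forced relations $E_{i0}+E_{i1}t=0$ and $F_{j0}+tF_{j1}=0$ one obtains, for each $i$ (and a mirror identity for each $j$),
\[
\sum_{j\in J,\,j\ne i}\Bigl(y_j\bigl(p_{i0j0}+p_{i1j0}t\bigr)+ty_j\bigl(p_{i0j1}+p_{i1j1}t\bigr)\Bigr)+\lambda_{i00}+(\lambda_{i01}+\lambda_{i10})t+\lambda_{i11}t^2=0,
\]
and it is from these that the relations $p_{i0j1}+p_{i1j1}t=0$, $p_{i1j0}+tp_{i1j1}=0$, $\lambda_{i11}=0$, $\lambda_{i01}+\lambda_{i10}=0$ needed to reassemble \eqref{3S3} must be extracted. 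But this identity is of neither admissible shape: its constant part lies in $C+Ct+Ct^2$ rather than being $0$ (so \eqref{3S21} does not apply) or an element of $C$ (so \eqref{3S22} does not apply); and in case (a) the relevant count $|J\setminus\{i\}|+1$ can equal $d+1$, so only condition (a) of $(t;d+1)$-freeness, which demands an exact zero, would be in range anyway. Hence the bare Definition \ref{3SD2} together with the linear independence of $1,t,t^2$ cannot simply be "applied again": linear independence over $C$ only lets you compare the central terms \emph{after} the variable-dependent terms $y_j(\cdots)$ and $ty_j(\cdots)$ have been shown to vanish, and killing those terms is precisely what requires a further lemma (uniqueness of standard solutions in the $(t;\cdot)$-setting under the given bounds, or a strengthened condition (b) tolerating right-hand sides in $C+Ct+\cdots+Ct^k$). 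That auxiliary machinery is where the bulk of the actual proof lives, and it is absent from your argument.
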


To explain the meaning of this theorem, we recall that an additive subgroup $M$ of a ring $A$  is called is called a {\em Lie subring} of $A$ if $[M,M]\subseteq M$, and that
an additive subgroup $L$ of a 
Lie subring $M$ is called 
 a {\em Lie ideal} of $M$ if $[L,M]\subseteq L$. Similarly we define {\em Jordan subrings} and their {\em Jordan ideals}. Note that Theorem \ref{tza} is applicable to Lie and Jordan ideals.

Combining the last two theorems with Theorem 
\ref{tdbei} and some standard facts on Lie ideals (see \cite[Section 5.2]{FIbook} for details) we obtain the following corollaries showing that some important subsets of prime rings are $d$-free. We remark that by a {\em noncentral} Lie ideal  we mean  a Lie ideal that is not contained in the center of the ring  considered.

\begin{corollary}
Let $R$ be a prime ring  and let $L$ be a noncentral Lie ideal of $R$. If  {\rm char}$(R)\ne 2$ and $\deg(R)\ge d+1$, then $L$ is a $d$-free subset of $Q_{ms}(R)$.
\end{corollary}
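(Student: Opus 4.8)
The plan is to deduce this corollary by applying Theorem \ref{tza} to the situation where $R'=L$ is a noncentral Lie ideal of a prime ring $R$, using the $(t;d+1)$-freeness guaranteed by Theorem \ref{tdbei}. First I would observe that Theorem \ref{tza} with $R'=L$ requires two ingredients: an element $t\in Q=Q_{ms}(R)$ that is $(t;d+1)$-free and is \emph{not} algebraic of degree $1$ or $2$ over $C$, and the containment $[t,R]\subseteq L$. The natural candidate for $t$ is an element of $R$ itself with $\deg(t)\ge d+1$; since $\deg(R)\ge d+1$ by hypothesis, such a $t$ exists, and $\deg(t)\ge d+1\ge 3$ forces $t$ not to be algebraic of degree $1$ or $2$. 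Moreover, by Theorem \ref{tdbei}, this $t$ makes $R$ a $(t;d+1)$-free subset of $Q_{ms}(R)$, so the freeness hypothesis of Theorem \ref{tza} (with $d$ replaced by $d$, i.e.\ $(t;d+1)$-free) is automatically met.

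The substantive point is to secure the containment $[t,R]\subseteq L$ by choosing $t$ inside $L$ rather than merely inside $R$. If $t\in L$, then since $L$ is a Lie ideal we have $[t,R]\subseteq[L,R]\subseteq L$, which is exactly the hypothesis $[t,R]\subseteq R'$ of Theorem \ref{tza}. So the real task is to show that a noncentral Lie ideal $L$ of a prime ring of characteristic not $2$ contains an element $t$ with $\deg(t)\ge d+1$. This is where I would invoke the standard structure theory of Lie ideals alluded to in the excerpt (the reference to \cite[Section 5.2]{FIbook} and ``some standard facts on Lie ideals''). The key classical fact is that for a prime ring $R$ with $\mathrm{char}(R)\ne 2$, a noncentral Lie ideal $L$ contains a noncentral ideal of $R$, or at least ``fills up'' $R$ enough that $\deg(L)=\deg(R)$; in particular $\sup\{\deg(t)\mid t\in L\}=\deg(R)\ge d+1$. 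Hence one can pick $t\in L$ with $\deg(t)\ge d+1$.

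With such a $t$ secured, the proof assembles cleanly: $t\in L\subseteq R$ gives $(t;d+1)$-freeness of $R$ via Theorem \ref{tdbei}; $\deg(t)\ge 3$ gives the non-algebraicity-of-degree-$1$-or-$2$ condition; and $t\in L$ together with the Lie ideal property gives $[t,R]\subseteq L$. Theorem \ref{tza} then yields that $L=R'$ is a $d$-free subset of $Q=Q_{ms}(R)$, which is the claim.

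The main obstacle, and the only genuinely nontrivial step, is establishing that a noncentral Lie ideal of a prime ring contains an element of the same degree as $R$ (equivalently, that $L$ is ``large'' in the degree sense). The characteristic-not-$2$ hypothesis is essential precisely here: it is needed for the standard Lie ideal machinery (results in the spirit of Herstein's theory, showing a noncentral Lie ideal contains a noncentral ideal or a nonzero subring generated by enough commutators) to apply. I would treat this as the cited ``standard fact'' rather than reprove it, since the excerpt explicitly defers to \cite[Section 5.2]{FIbook}; the contribution of the corollary is the \emph{combination} of this structural input with the $(t;d+1)$-freeness theorem, not the Lie-theoretic input itself.
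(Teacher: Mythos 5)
Your overall architecture is the intended one: use Theorem \ref{tdbei} to get $(t;d+1)$-freeness of $R$ from an element $t\in R$ with $\deg(t)\ge d+1$, and then pass from $R$ to the subset $L$ via Theorem \ref{tza} and the containment $[t,R]\subseteq L$. The gap is in how you secure that containment. You insist on finding $t$ inside $L$ itself, and to justify its existence you assert that a noncentral Lie ideal ``contains a noncentral ideal of $R$'' --- this is false in general (take $L=[R,R]$ in $R=M_n(F)$: it is a noncentral Lie ideal containing no nonzero associative ideal, since $R$ is simple and $L\ne R$). Your fallback claim that $\deg(L)=\deg(R)$ is true but is not the standard fact, is appreciably harder, and is left entirely unargued; since the whole corollary rests on it, this is a real gap. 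The classical input that \cite[Section 5.2]{FIbook} actually supplies (Herstein, Lanski--Montgomery) is that, for ${\rm char}(R)\ne 2$, a noncentral Lie ideal $L$ of a prime ring contains $[I,R]$ for some nonzero ideal $I$ of $R$. With this in hand there is no need to place $t$ inside $L$ at all: Theorem \ref{tza} only requires $[t,R]\subseteq L$, so one picks $t\in I$ with $\deg(t)\ge d+1$, which exists because a nonzero ideal of a prime ring satisfies exactly the same polynomial identities as $R$, whence $\deg(I)=\deg(R)\ge d+1$ with no loss in the bound. Then $[t,R]\subseteq[I,R]\subseteq L$, and the rest of your assembly goes through verbatim. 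This is why the hypothesis $\deg(R)\ge d+1$ is sharp as stated, whereas your route would need a quantitative version of ``$L$ is large'' that you have not established.

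A smaller point: your inequality $\deg(t)\ge d+1\ge 3$ silently assumes $d\ge 2$. For $d=1$ the hypothesis only yields an element of degree $\ge 2$, and Theorem \ref{tza} explicitly excludes elements algebraic of degree $1$ or $2$ over $C$, so that case is not covered by this argument and would need a separate (easy) treatment or an explicit exclusion.
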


As above,  we write $S$ (resp.\ $K$) for the set of all symmetric (resp.\ skew) elements in a ring $R$ with involution.

\begin{corollary}
Let $R$ be a prime ring with involution. If  {\rm char}$(R)\ne 2$ and $\deg(R)\ge 2d+1$, then $S$ and $K$ are $d$-free subsets of $Q_{ms}(R)$.
\end{corollary}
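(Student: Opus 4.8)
The final statement to prove is the following corollary:

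\medskip

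\emph{Let $R$ be a prime ring with involution. If {\rm char}$(R)\ne 2$ and $\deg(R)\ge 2d+1$, then $S$ and $K$ are $d$-free subsets of $Q_{ms}(R)$.}

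\medskip

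The plan is to combine Theorem \ref{tdbei} with Theorem \ref{tinv}, using the hypothesis $\deg(R)\ge 2d+1$ to produce an element whose degree is large enough to trigger the $(t;2d+1)$-freeness needed. First I would choose an element $t_0\in R$ with $\deg(t_0)\ge 2d+1$; such a $t_0$ exists because $\deg(R)=\sup\{\deg(t)\mid t\in R\}\ge 2d+1$. By Theorem \ref{tdbei}, $R$ is then a $(t_0;2d+1)$-free subset of $Q=Q_{ms}(R)$. To invoke Theorem \ref{tinv} I must arrange for the distinguished element to lie in $S\cup K$, and this is where the characteristic assumption enters: since $\mathrm{char}(R)\ne 2$, every element decomposes as $t_0 = s + k$ with $s=\tfrac{1}{2}(t_0+t_0^*)\in S$ and $k=\tfrac{1}{2}(t_0-t_0^*)\in K$.

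The main obstacle I anticipate is exactly this step: Theorem \ref{tinv} requires a single symmetric or skew element $t$ of high degree, whereas a priori only the (possibly non-symmetric) $t_0$ is known to have $\deg(t_0)\ge 2d+1$. The degree of $t_0$ does not obviously transfer to either $s$ or $k$ individually. I would resolve this by appealing to the standard facts on symmetric and skew elements of prime rings with involution (the ``Lie-ideal'' style results collected in \cite[Section 5.2]{FIbook} and alluded to in the paragraph preceding these corollaries), which guarantee that a prime ring with $\deg(R)$ large enough must contain a symmetric or a skew element of correspondingly large degree. Concretely, the facts that $S$ and $K$ each generate $R$ (in the appropriate sense) and that $\deg(R)\ge 2d+1$ force $\deg(s)\ge 2d+1$ for some $s\in S$ or $\deg(k)\ge 2d+1$ for some $k\in K$. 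Thus I may assume, after possibly replacing $t_0$, that there is an element $t\in S\cup K$ with $\deg(t)\ge 2d+1$.

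With such a $t$ in hand, Theorem \ref{tdbei} gives that $R$ is a $(t;2d+1)$-free subset of $Q_{ms}(R)$, and since $t\in S\cup K$, Theorem \ref{tinv} applies directly and yields that both $S$ and $K$ are $d$-free subsets of $Q_{ms}(R)$. This completes the argument. The only genuinely delicate point, which I would flag explicitly and defer to \cite[Section 5.2]{FIbook}, is the passage from ``$R$ has an element of high degree'' to ``$R$ has a symmetric or skew element of high degree''; everything else is a routine chaining of the two quoted theorems together with the $\mathrm{char}(R)\ne 2$ splitting into symmetric and skew parts.
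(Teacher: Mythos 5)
Your argument is exactly the one the paper intends: the corollary is obtained by chaining Theorem \ref{tdbei} (to get $(t;2d+1)$-freeness of $R$ from $\deg(t)\ge 2d+1$) with Theorem \ref{tinv}, the only nontrivial input being the standard fact that a prime ring with involution and ${\rm char}\ne 2$ of degree $\ge 2d+1$ contains an element of $S\cup K$ of degree $\ge 2d+1$, which the paper likewise defers to \cite[Section 5.2]{FIbook}. You correctly identified and flagged that deferred lemma as the one delicate point (your "$S$ and $K$ generate $R$" heuristic is not itself a proof of it, but you do not claim it is), so the proposal matches the paper's proof.
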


\begin{corollary}\label{leserab}
Let $R$ be a prime ring with involution and let $L$ be a noncentral Lie ideal of $K$. If  {\rm char}$(R)\ne 2$ and $\deg(R)\ge 2d+3$, then $L$ is a $d$-free subset of $Q_{ms}(R)$.
\end{corollary}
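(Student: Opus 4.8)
The plan is to realize $L$ as a $d$-free set by a single application of Theorem~\ref{tza}, taking the ambient set there to be $K$ and $R'=L$; the preparatory work is to promote $K$ to a $(t;d+1)$-free subset of $Q=Q_{ms}(R)$ by means of the involution machinery underlying Theorem~\ref{tinv}. I first record the only Lie-theoretic input used in the reduction step: since $L$ is a Lie ideal of $K$ we have $[L,K]\subseteq L$, whence $[t,K]\subseteq L$ for every $t\in L$. This is exactly the hypothesis $[t,R]\subseteq R'$ of Theorem~\ref{tza} in the commutator case, with $R$ replaced by $K$ and $R'$ by $L$.

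Next I would select a suitable $t\in L$. Because $L$ is noncentral and $\mathrm{char}(R)\ne 2$, the standard structure theory of skew Lie ideals (the ``standard facts on Lie ideals'' of \cite[Section 5.2]{FIbook}), together with the hypothesis $\deg(R)\ge 2d+3$, guarantees that $L$ contains elements of large degree over $C$; in particular $t\in L$ can be chosen so as \emph{not} to be algebraic of degree $1$ or $2$ over $C$. This is precisely the restriction on $t$ demanded by Theorem~\ref{tza}, and it is here that the noncentrality of $L$ is used.

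The crux is to show that $K$ is a $(t;d+1)$-free subset of $Q$ for such a $t$. Since $\deg(R)\ge 2d+3=2(d+1)+1$, Theorem~\ref{tdbei} shows that $R$ is a $(s;2(d+1)+1)$-free subset of $Q_{ms}(R)$ for some $s\in S\cup K$ with $\deg(s)\ge 2d+3$ (such a symmetric or skew element exists by the standard degree theory). I would then feed this into the involution reduction that underlies Theorem~\ref{tinv} --- its general form processes functional identities carrying the starred terms $x_k^*$ --- so as to transfer the $(s;2(d+1)+1)$-freeness of $R$ to the $(t;d+1)$-freeness of the skew set $K$; the doubling and the shift by one are exactly what the count $2d+3=2(d+1)+1$ is arranged to supply. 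I expect this to be the principal obstacle. On the one hand it requires the finer $(t;\cdot)$-free form of the involution theorem rather than the plain $d$-free statement quoted as Theorem~\ref{tinv}; on the other hand one must arrange that a \emph{single} element $t\in L$ both witnesses the $(t;d+1)$-freeness of $K$ and meets the degree restriction of Theorem~\ref{tza}. This is manageable because, just as in the prime-ring case of Theorem~\ref{tdbei}, $K$ turns out to be $(t;d+1)$-free for every skew $t$ of sufficiently large degree (the bound being governed by $2d+3$), and such a $t$ may be taken in $L$ by the previous paragraph.

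With $K$ now a $(t;d+1)$-free subset of $Q$, with $t$ not algebraic of degree $1$ or $2$ over $C$, and with $[t,K]\subseteq L$, a direct application of Theorem~\ref{tza} (ambient set $K$, subset $R'=L$) yields that $L$ is a $d$-free subset of $Q=Q_{ms}(R)$, as required.
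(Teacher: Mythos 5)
Your proposal is correct and follows essentially the same route the paper indicates for this corollary: Theorem \ref{tdbei} to get $(t;2d+3)$-freeness of $R$ for a skew $t\in L$ of large degree (supplied by the standard facts on noncentral Lie ideals), the $(t;\cdot)$-refinement of the involution reduction behind Theorem \ref{tinv} to pass to $(t;d+1)$-freeness of $K$, and Theorem \ref{tza} with $[t,K]\subseteq L$ to descend to $L$. You also correctly identify the one point the paper glosses over, namely that the plain statement of Theorem \ref{tinv} does not suffice and the finer $(t;\cdot)$-version from \cite[Section 3.5]{FIbook} is what is actually needed.
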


\subsection{A characterization of $d$-free sets}
We will now consider a more general version of basic FIs \eqref{3S1} and \eqref{3S2}.
It involves a function
$$\alpha:S\to Q$$
where  $S$ is an arbitrary nonempty set, and $Q$ is, as always, a unital ring. The role of $\alpha$ will be different from the roles of other functions occurring in FIs. We consider $\alpha$ as a fixed, given function and our goal will not be to describe its form, but to describe other functions in terms of $\alpha$.
We will write $x^\alpha$ rather than  $\alpha(x)$ and $S^\alpha$ rather than $\alpha(S)$.

Let $m$, $I$, $J$, and $C$ have the usual  meaning and let 
$$E_i:S^{m-1}\to Q\quad\mbox{and}\quad F_j:S^{m-1}\to Q$$
be arbitrary functions. The meaning of $ \ov{x}_m$, 
$\ov{x}_m^i$, etc., will be the same as above, just that now each $x_i$ belongs to $S$ rather than to $R$.
The FIs we will be  interested in are
\begin{equation}
\sum_{i\in I}E_i(\ov{x}_m^i)x_i^\alpha+ \sum_{j\in J}x_j^\alpha F_j(\ov{x}_m^j)
 = 
0\label{3S1al}\end{equation}
for all $\ov{x}_m\in S^m$,
and 
\begin{equation}
\sum_{i\in I}E_i(\ov{x}_m^i)x_i^\alpha+ \sum_{j\in J}x_j^\alpha F_j(\ov{x}_m^j)
 \in  C\label{3S2al}
\end{equation}
for all $\ov{x}_m\in S^m$. If
$S=R$ and $\alpha$ is the identity function, then these are the familiar FIs \eqref{3S1} and \eqref{3S2}. 

A standard solution of 
 both  (\ref{3S1al}) and (\ref{3S2al}) is of course defined as follows: there exist
  functions 
\begin{eqnarray*}
&  & p_{ij}:S^{m-2}\to Q,\;\;
i\in I,\; j\in J,\; i\not=j,\nonumber\\
&  & \lambda_k:S^{m-1}\to C,\;\; k\in I\cup J,
\end{eqnarray*}
such that 
\begin{eqnarray}\label{3S3al}
E_i(\ov{x}_m^i) & = & \sum_{j\in J,\atop
j\not=i}x_j^\alpha p_{ij}(\ov{x}_m^{ij})
+\lambda_i(\ov{x}_m^i),\quad i\in I,\nonumber\\
F_j(\ov{x}_m^j) & = & -\sum_{i\in I,\atop
i\not=j}p_{ij}(\ov{x}_m^{ij})x_i^\alpha
-\lambda_j(\ov{x}_m^j),\quad j\in J,\\
&   & \lambda_k=0\quad\mbox{if}\quad k\not\in I\cap J\nonumber.
\end{eqnarray}

The following is a natural generalization of 
Definition \ref{defd}.

\begin{definition}\label{defdal}Let $d$ be a positive integer. The pair $(S;\alpha)$ is  said to be  {\em
$d$-free} with respect to $Q$
if  
the following two conditions hold for all $m\ge 1$ and all
 $I,J\subseteq \{1,2,\ldots,m\}$:
\begin{enumerate}
\item[(a)]If
$\max\{|I|,|J|\}\le d$, then (\ref{3S1al}) implies (\ref{3S3al}).
\item[(b)] If
$\max\{|I|,|J|\}\le d-1$, then (\ref{3S2al}) implies (\ref{3S3al}).
\end{enumerate}
\end{definition}

If $\alpha$ is an injective function from $S$ to $Q$, and hence a bijective function from $S$ onto  $R=S^\alpha$, then  by writing $y_i^{\alpha^{-1}}$ for $x_i$ we easily see that the condition that  $(S;\alpha)$ is  $d$-free with respect to $Q$ is equivalent to the condition that $R$ is a $d$-free subset of $Q$. The point of the following theorem is that this remains true if $\alpha$ is not injective.

\begin{theorem}\label{tsal}
The pair  $(S;\alpha)$ is  $d$-free with respect to $Q$ if and only if $R=S^\alpha$ is a $d$-free subset of $Q$.
\end{theorem}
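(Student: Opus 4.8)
The plan is to fix, once and for all, a section $\beta\colon R\to S$ of $\alpha$ (which exists by the axiom of choice since $\alpha$ maps $S$ onto $R=S^\alpha$), so that $\beta(r)^{\alpha}=r$ for every $r\in R$. The remark preceding the theorem already disposes of the injective case, where $\beta$ is a genuine two-sided inverse; the whole difficulty is that in general $\beta\circ\alpha\neq\mathrm{id}_S$. I will treat the two implications separately, and only one of them is delicate.

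For the implication that $d$-freeness of $(S;\alpha)$ forces $R$ to be a $d$-free subset of $Q$, I would start from functions $\widetilde E_i,\widetilde F_j\colon R^{m-1}\to Q$ satisfying \eqref{3S1} (resp.\ \eqref{3S2}), precompose them with $\alpha$ in each slot to obtain functions on $S^{m-1}$, and observe that these satisfy \eqref{3S1al} (resp.\ \eqref{3S2al}) because every $x_k^{\alpha}$ lies in $R$. Definition \ref{defdal} then yields coefficient functions $p_{ij}\colon S^{m-2}\to Q$ and $\lambda_i\colon S^{m-1}\to C$ realizing the standard solution \eqref{3S3al}. Finally I would push these back to $R$ by precomposing with $\beta$, i.e.\ set $\widetilde p_{ij}:=p_{ij}\circ\beta^{\times(m-2)}$ and $\widetilde\lambda_i:=\lambda_i\circ\beta^{\times(m-1)}$, and evaluate \eqref{3S3al} at $x_k=\beta(r_k)$. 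This step is clean precisely because $\beta(r_k)^{\alpha}=r_k$ on the nose, so no discrepancy appears and one lands exactly on \eqref{3S3} for $\widetilde E_i,\widetilde F_j$; the condition $\widetilde\lambda_k=0$ for $k\notin I\cap J$ is inherited.

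The converse --- that $d$-freeness of $R$ forces $(S;\alpha)$ to be $d$-free --- is the heart of the matter. Given $E_i,F_j\colon S^{m-1}\to Q$ satisfying \eqref{3S1al}, I would again use $\beta$ to manufacture $R$-functions $\widetilde E_i(\ov{r}_m^i):=E_i(\overline{\beta(r)}_m^i)$ and $\widetilde F_j$ likewise; since $\beta(r_k)^{\alpha}=r_k$, these satisfy \eqref{3S1} on $R$, so the $d$-freeness of $R$ (together with the uniqueness of the standard solution noted after Definition \ref{defd}) supplies a \emph{single} family $\widetilde p_{ij},\widetilde\lambda_i$ that works simultaneously for the $\widetilde E_i$ and the $\widetilde F_j$. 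Precomposing with $\alpha$ produces $S$-functions $p_{ij},\lambda_i$ that verify \eqref{3S3al} \emph{after} each argument $x_k$ is replaced by $\beta(x_k^{\alpha})$. What is left is the ``defect'' $D_i(\ov{x}_m^i):=E_i(\ov{x}_m^i)-E_i(\overline{\beta(x^\alpha)}_m^i)$ and its $F$-analogue $D'_j$; substituting $x_k\mapsto\beta(x_k^\alpha)$ into \eqref{3S1al} and subtracting shows that the $D_i,D'_j$ again satisfy \eqref{3S1al} and that each vanishes as soon as all its arguments lie in $\beta(R)$. A short bookkeeping argument then reduces the whole implication to the claim that every $D_i$ is $C$-valued with $D'_k=-D_k$ (and $D_k=0$ once $k\notin I\cap J$), for then the defect is simply absorbed into $\lambda_k$.

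The main obstacle is exactly this last claim, i.e.\ taming the non-injectivity defect for $m\ge 2$. For $m=1$ it is immediate: the $D_i,D'_j$ are constants in $Q$, \eqref{3S1al} becomes a one-variable FI on $R$, and the $d$-freeness of $R$ forces the values into $C$ with $D'_1=-D_1$. For larger $m$ the functions $D_i$ do not factor through $\alpha$, so one cannot just substitute the section and read off the answer; the terms indexed by $i\neq i_0$ refuse to vanish because they still carry the free variables $x_k$ with $k\neq i_0$. I would therefore handle this the way Theorem \ref{t1} is handled, namely by working in the more flexible framework in which the $m$ variable slots range over different domains and converting the slots from $(S;\alpha)$ to $R$ one at a time, each conversion being legitimised by the $d$-freeness of $R$ applied to a single slot while the others are frozen in $\beta(R)$. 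This variable-by-variable reduction, rather than the final bookkeeping, is where the real work lies.
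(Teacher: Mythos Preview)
The paper does not actually prove this theorem in the text; it only cites \cite{BC1} and \cite[Section 4.2]{FIbook}. So there is no in-paper argument to compare against, and I comment solely on the soundness of your sketch.

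Your treatment of the forward implication is correct and complete.

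For the converse, your reduction hinges on the claim that each defect $D_i$ is $C$-valued (so that it can be absorbed into $\lambda_i$). This claim is \emph{false} already for $m=3$. Take $I=J=\{1,2,3\}$, choose any $p\colon S\to Q$ not factoring through $\alpha$, and set
\[
E_1(x_2,x_3)=x_2^{\alpha}\,p(x_3),\qquad F_2(x_1,x_3)=-\,p(x_3)\,x_1^{\alpha},
\]
with all other $E_i,F_j$ equal to zero. This is already a standard solution of \eqref{3S1al}, and one computes $D_1(x_2,x_3)=x_2^{\alpha}\bigl(p(x_3)-p(\beta(x_3^{\alpha}))\bigr)$, which lies outside $C$ as soon as $x_2^{\alpha}\notin C$ and $p(x_3)\neq p(\beta(x_3^{\alpha}))$. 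What is actually true---and what suffices---is only that the pair $(D_i,D'_j)$ is itself of standard form \eqref{3S3al}; its coefficient functions then add to the ones you pulled back from $R$. But showing that $(D_i,D'_j)$ is standard is precisely the assertion you are trying to prove, so passing to the defect has not reduced the problem.

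Your last paragraph gestures at the right mechanism: the proof in \cite[Section 4.2]{FIbook} does proceed through the multi-sorted framework mentioned after Theorem~\ref{t1}. However, your description of the step (``$d$-freeness of $R$ applied to a single slot while the others are frozen in $\beta(R)$'') is not what happens and would not work: freezing the other slots in $\beta(R)$ collapses everything to the $R$-case and tells you nothing about arguments outside $\beta(R)$. The actual induction is on the number of $(S;\alpha)$-slots in a mixed identity where some slots carry $R$ and others carry $(S;\alpha)$, and the inductive step uses the full $d$-freeness of $R$ in \emph{all} $R$-slots simultaneously, not in a single one. You have correctly located the tool, but neither the inductive statement nor the step is on the page; combined with the false $C$-valuedness claim, the hard direction remains unproved.
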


The proof is given in \cite{BC1} and \cite[Section 4.2]{FIbook}. In fact, some  more general versions of the theorem are given therein, but our simplified
version is sufficient for most applications. The FI theory is often applicable to the problems  of determining the forms of functions $\alpha$ between rings that share some properties with homomorphisms, and then  Theorem \ref{tsal} is of crucial importance.

\subsection{Quasi-polynomials}
One can handle more general FIs on $d$-free sets than those occurring in the definition. This is what we will show in this subsection. 
We will actually present only a small extract from the theory established in \cite{BC2} and \cite[Chapter 4]{FIbook}, focusing only on a couple of results that have turned out to be extremely useful in applications.

Our general setting is the same as in the preceding subsection. In particular, we are given a function
$\alpha:S\to Q$ which will appear in the FIs under consideration. 

Let us introduce the additional notation. For each
$i\in\{1,\dots,m\} $, we define 
$X_i:S^m\to Q$ by
$$X_i(\ov{x}_m)= x_i^\alpha, $$
and
for distinct $i_1,\dots,i_p\in \{1,\dots,m\}$,
we define a {\em monomial function} $M:S^m\to Q$ as the pointwise product
$$M=X_{i_1}\cdots X_{i_p}, $$
that is,
$$M(\ov{x}_m)= x_{i_1}^\alpha\cdots x_{i_p}^\alpha$$
for all $\ov{x}_m\in S^m$.
We write dom$(M)=\{i_1,\dots,i_p\}$ and  $\deg(M)=p$. The
 constant function 
$M(\ov{x}_m)=1$ is considered 
a monomial 
function with
dom$(M)=\emptyset$ and $\deg(M)=0$.

Fix $n < m $. 
Let $M$ and $N$ be monomial functions such that
$${\rm dom}(M)\cap {\rm dom}(N) =\emptyset\mbox{ \,\, and \,\, }
|M| + |N|=m-n,$$ and let 
$j_1 < \dots < j_n$ be such that the sets
$\{j_1,\dots,j_n\}$, 
 ${\rm dom}(M) $, and
${\rm dom}(N)  $ form a partition of $\{1,\dots,m\}$. Further, given a function $F_{M,N}:S^n\to Q$,  we define 
$MF_{M,N}N:S^m\to Q$ by
$$(MF_{M,N}N)(\ov{x}_m)
= M(\ov{x}_m) F_{M,N}(x_{j_1},\dots, x_{j_n}) N(\ov{x}_m).$$
A sum of such functions,
\begin{equation}
\label{ecore}
\sum_{M,N}  MF_{M,N}N,\end{equation}
is called a {\em core function}. 

\begin{example}\label{exco} Let $m=5$,
$n=2$, and let 
$F,G,H,K:S^2\to Q$.
The function that sends $\ov{x}_5$ to
$$x_1^\alpha x_4^\alpha F(x_2,x_5) x_3^\alpha +
x_4^\alpha x_1^\alpha G(x_2,x_5) x_3^\alpha+
 x_5^\alpha H(x_1,x_4) x_3^\alpha x_2^\alpha  +
 K(x_3,x_5) x_4^\alpha x_1^\alpha x_2^\alpha
$$
is a core function (with $F_{X_1X_4, X_3}=F$, $F_{X_4X_1, X_3}=G$, $F_{X_5,X_3 X_2}=H$, and $F_{1,X_4 X_1X_2}=K$).
\end{example}

\begin{example}\label{exco2}
The  FIs \eqref{3S1al} and \eqref{3S2al} can  be presented in terms of core functions. 
Indeed, we can write \eqref{3S1al} as
$$\sum_{i\in I} E_{1,X_i}X_i + 
\sum_{i\in I} X_jF_{X_j,1} =0.$$
The left-hand sides of  \eqref{3S1al} and \eqref{3S2al} are thus core functions corresponding to the case where $n=m-1$. \end{example}

The functions 
$F_{M, N}$ in \eqref{ecore} are called the {\em middle functions}. We say that a middle function $F_{M_0,N_0}$ is a {\em rightmost middle function} if $F_{M,N} =0$ whenever
$\deg(N) < \deg(N_0)$. Analogously we define a  {\em leftmost middle function}.

\begin{example}
The rightmost middle functions in the core function from Example  \ref{exco} are $F$ and $G$, and $K$ is the leftmost middle function.
\end{example}

\begin{example}
In Example  \ref{exco2}, 
the rightmost middle functions  are $F_{X_j,1}$
and the leftmost functions are
$E_{1,X_i}$.
\end{example}

We continue by introducing {\em quasi-polynomials},  which are of crucial importance in the FI theory. Informally, they are defined similarly as core functions, just that $n$ is not fixed and the middle functions are assumed to have values in the center $C$ of $Q$ (so they can  be written as leftmost middle functions and there is no need to involve two monomial functions $M$ and $N$ in each term but only one). 
Let us start with small $m$, for which the definition can be given in a straightforward manner. 

\begin{example}\label{exqp}
(1) If $m=1$, a quasi-polynomial
is a function $P:S\to Q$ of the form
$$P(x_1)=\lambda x_1^\alpha +\mu(x_1)$$
where $\lambda\in C$ and
$\mu:S \to C$.

(2) If $m=2$, a quasi-polynomial
is a function $P:S^2\to Q$ of the form
$$P(x_1,x_2)=\lambda_1 x_1^\alpha x_2^\alpha +
\lambda_2 x_2^\alpha x_1^\alpha+\mu_1(x_2)x_1^\alpha + \mu_2(x_1)x_2^\alpha +\nu(x_1,x_2) $$
where $\lambda_1,\lambda_2\in C$,
$\mu_1,\,\mu_2:S \to C$, and $\nu:S^2\to C$.

(3) If $m=3$,  a quasi-polynomial
is a function $P:S^3\to Q$ of the form
\begin{align*}P(x_1,x_2,x_3)=& \sum_{\sigma\in S_3}\lambda_{\sigma} x_{\sigma(1)}^\alpha x_{\sigma(2)}^\alpha x_{\sigma(3)}^\alpha + 
\sum_{\sigma\in S_3}\mu_{\sigma}(x_{\sigma(1)}) x_{\sigma(2)}^\alpha x_{\sigma(3)}^\alpha\\&+
\nu_1(x_2,x_3)x_1^\alpha + \nu_2(x_1,x_3)x_2^\alpha +
\nu_3(x_1,x_2)x_3^\alpha +\omega(x_1,x_2,x_3)\end{align*}
where $\lambda_{\sigma}\in C$, $\mu_\sigma:S\to C$,
$\nu_i: S^2\to C$, and
$\omega:S^3\to C$.
\end{example}

Observe that, using the  notation introduced above, $P$ in (1) can be written as
$$P=\lambda X_1 + \mu,$$ and $P$ in (2) can be written as
$$P=\lambda_1 X_1X_2 + \lambda_2 X_2X_1 + \mu_1 X_1+\mu_2 X_2 + \nu. $$
 Here, it should be understood that, for example, $\mu_1 X_1$ 
 is the function given
 by $(x_1,x_2)\mapsto \mu_1(x_2)x_1^\alpha$.
 
 Now, for an arbitrary $m$,
 we define a quasi-polynomial as a function of the form
 $$P=\sum_M \lambda_M M$$
 where the summation runs over all monomial functions
 $M=X_{i_1}\dots X_{i_p}$ with $p\le m$ and $\lambda_M$ is a function from $S^{m-p}$  to $C$; more precisely, $ \lambda_M M$ is defined by
 $$\ov{x}_m\mapsto \lambda_M(x_{j_1},\dots, x_{j_{m-p}}) x_{i_1}^\alpha\dots x_{i_p}^\alpha$$
 where $$\{j_1,\dots,j_{m-p}\} =
 \{1,\dots,m\}\setminus{\{i_1,\dots,i_p\}}$$ and $$j_1 < \dots < j_{m-p}$$ (if $p=m$, $\lambda_M$ is an element in $C$). The central-valued functions $\lambda_M$  are called the {\em coefficients} of the quasi-polynomial $P$.
 The coefficient $\lambda_1$ is called the {\em central coefficient} (for example,
 the central coefficient in Example \ref{exqp} is $\mu$ if $m=1$, $\nu$ if $m=2$, and $\omega$ if $m=3$). Note that the FI  \eqref{3S2al} can be written as
$$\sum_{i\in I} E_{1,X_i}X_i + 
\sum_{i\in I} X_jF_{X_j,1} =\lambda_1$$
 where $\lambda_1$ is a quasi-polynomial consisting only of its central coefficient.
 
 The following simple lemma is one of the most frequently used results 
in applications of the FI theory. 

\begin{lemma}\label{lfreq}
Let $P=\sum_M \lambda_M M:S^m\to Q$ be a quasi-polynomial. Set $R=S^\alpha$ and assume that one of the following two conditions is fulfilled:
\begin{enumerate}
    \item[{\rm (a)}] $R$ is an $(m+1)$-free subset of $Q$, or
    \item[{\rm (b)}] $\lambda_1=0$ and $R$ is an $m$-free subset of $Q$.
\end{enumerate}
Then $P=0$ only if each $\lambda_M =0$.
\end{lemma}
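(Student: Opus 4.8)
The plan is to argue by induction on the number of variables $m$, the base case $m=0$ being trivial since a quasi-polynomial on $S^0$ is just its central coefficient $\lambda_1\in C$, so that $P=0$ reads $\lambda_1=0$. For the inductive step the key observation is that every monomial function of positive degree has a unique leftmost factor: writing $M=X_iM'$ with $i\notin\mathrm{dom}(M')$, and using that $\lambda_M$ takes central values and hence commutes with $x_i^\alpha$, I would rearrange each term as $\lambda_M M=x_i^\alpha(\lambda_M M')$. Collecting terms by this leftmost index, I would rewrite
$$P=\lambda_1+\sum_{i=1}^m x_i^\alpha F_i(\ov{x}_m^i),\qquad F_i:=\sum_{M'\colon\, i\notin\mathrm{dom}(M')}\lambda_{X_iM'}\,M'.$$
The bookkeeping point to verify carefully is that each $F_i$ is genuinely a quasi-polynomial in the $m-1$ variables $\ov{x}_m^i$: the monomial $M'$ depends only on $\mathrm{dom}(M')$, the coefficient $\lambda_{X_iM'}$ depends only on the complementary indices, and together these range over exactly the indices other than $i$. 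Its coefficients are precisely the $\lambda_{X_iM'}$, and its central coefficient is $\lambda_{X_i}$.

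Granting this reorganization, the hypothesis $P=0$ becomes $\sum_{i=1}^m x_i^\alpha F_i(\ov{x}_m^i)=-\lambda_1\in C$, an instance of the fundamental FI \eqref{3S2al} with $I=\emptyset$ and $|J|=m$. In case (a), where $R$ is $(m+1)$-free, condition (b) of Definition \ref{defdal} applies, since $\max\{|I|,|J|\}=m\le(m+1)-1$; its standard solution \eqref{3S3al} with $I=\emptyset$ forces every $F_i=0$ (compare Example \ref{exx0}), whence also $\lambda_1=-\sum_i x_i^\alpha F_i=0$. In case (b), where $\lambda_1=0$ is assumed and $R$ is $m$-free, the same relation reads $\sum_i x_i^\alpha F_i=0$, an instance of \eqref{3S1al}, and condition (a) of Definition \ref{defdal} (now with $\max\{|I|,|J|\}=m\le m$) again yields $F_i=0$ for every $i$.

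It remains to pass from $F_i=0$ back to the coefficients, and here I would invoke the inductive hypothesis on each $F_i$, a quasi-polynomial in $m-1$ variables. Since $F_i$ may have a nonzero central coefficient $\lambda_{X_i}$, I would use part (a) of the inductive hypothesis at level $m-1$, which requires exactly $m$-freeness of $R$. This hypothesis is available in both cases: in case (b) it is assumed, and in case (a) it follows from $(m+1)$-freeness by Remark \ref{remdd}. The conclusion is that all coefficients of $F_i$ vanish, that is, $\lambda_{X_iM'}=0$ for all admissible $M'$ and all $i$; together with $\lambda_1=0$ this accounts for every coefficient $\lambda_M$ of $P$, completing the induction.

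The main obstacle I anticipate is organizational rather than conceptual: making the passage $M=X_iM'$ and the resulting identification of $F_i$ as a quasi-polynomial fully precise, and checking that the variable counts line up so that the two cases of the lemma feed, respectively, into conditions (b) and (a) of Definition \ref{defdal} at the top level while both feed into condition (a) of the inductive hypothesis one level down. Once this numerology is pinned down, each individual step is a direct application of the definition of $d$-freeness.
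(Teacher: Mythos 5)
Your proof is correct and follows essentially the same route as the paper's sketch: peel off the outermost factor of each monomial to rewrite $P=0$ as a one-sided instance of the fundamental FI, apply the appropriate clause of $d$-freeness (citing Theorem \ref{tsal} explicitly would be cleaner, since it is what lets you pass from $R=S^\alpha$ being $d$-free to the pair $(S;\alpha)$ being $d$-free in Definition \ref{defdal}), and induct on $m$; the only cosmetic difference is that you group by leftmost factors where the paper's $m=2$ illustration groups by rightmost ones. The bookkeeping identifying each $F_i$ as a quasi-polynomial in $m-1$ variables with central coefficient $\lambda_{X_i}$, and the numerology feeding cases (a)/(b) into conditions (b)/(a) at the top level and into case (a) one level down, all check out.
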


The proof is very easy. For example, if $m=2$ and 
$P$ is as in Example  \ref{exqp}\,(2), then
$P=0$ can be written as
$$\big(\lambda_2x_2^\alpha +\mu_1(x_2)\big) x_1^\alpha 
+ \big(\lambda_1x_1^\alpha +\mu_2(x_1)\big) x_2^\alpha =-\nu(x_1,x_2)\in C. $$
We may now use Theorem \ref{tsal} to conclude that if either (a) $R$ is 
a $3$-free subset of $Q$ or (b) $\nu=0$ and $R$ is 
a $2$-free subset of $Q$, then $\lambda_2x_2^\alpha +\mu_1(x_2)=0$ and
$\lambda_1x_1^\alpha +\mu_2(x_1)=0$. The same assumptions further imply  that $\lambda_2 = \mu_1=0$ and $\lambda_1 = \mu_2=0$.  The general case can be handled by induction on $m$ in much the same way.

 What can be said when a core function $\sum MF_{M,N}N$ is equal to a quasi-polynomial $P$? This is 
the basic problem in the study of  quasi-polynomials.
We will state only one  theorem about it, which has turned out to be the most useful and applicable one. 

In the statement of the theorem we will use the notation introduced above. 

\begin{theorem}\label{thcorquasi}Let $\sum_{M,N}  MF_{M,N}N:S^m\to Q$ be a core function (where
$F_{M,N}:S^n\to Q$, $n< m$) such that for every middle function $F_{M,N}$ there exist an element
$c\in C$, a leftmost middle function $F_{M_0,N_0}$, and a permutation $\sigma\in S_n$ such that
$$F_{M,N}(x_1,\dots,x_n)=
c F_{M_0,N_0}(x_{\sigma(1)},\dots,x_{\sigma(n)})$$
for all $x_1,\dots,x_n\in S$.
Suppose there exists a quasi-polynomial $P$ with central coefficient $\lambda_1$ such that
$$\sum_{M,N}  MF_{M,N}N =P.$$
Set $R=S^\alpha$ and assume that one of the following two conditions is fulfilled:
\begin{enumerate}
    \item[{\rm (a)}] $R$ is an $(m+1)$-free subset of $Q$,
    \item[{\rm (b)}] $\lambda_1=0$ and $R$ is an $m$-free subset of $Q$.
    \end{enumerate}
    Then all the middle functions
    $F_{M,N}$ are quasi-polynomials.
\end{theorem}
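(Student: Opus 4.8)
The plan is to argue by induction on $m$, after two reductions. First, by Theorem \ref{tsal} I may assume that $\alpha$ is the identity, so that $S=R=S^\alpha$ is a $d$-free subset of $Q$ and the variables range over $R$; this lets me treat the monomials $M,N$ as genuine products of ring elements. Second, since the class of quasi-polynomials is closed under multiplication by elements of $C$ and under permutation of variables, the hypothesis that every middle function satisfies $F_{M,N}(x_1,\dots,x_n)=cF_{M_0,N_0}(x_{\sigma(1)},\dots,x_{\sigma(n)})$ for some leftmost $F_{M_0,N_0}$ means it suffices to show that each \emph{leftmost} middle function is a quasi-polynomial; in practice the induction will in fact produce all of them at once.

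For the inductive step I would sort the terms $MF_{M,N}N$ by their outermost factors. Every term with $\deg(M)\ge 1$ begins with a variable $x_i^\alpha$ and is $x_i^\alpha\,(M'F_{M,N}N)$, where $M'F_{M,N}N$ is a core function in $\ov{x}_m^i$; collecting these gives a right function $\sum_i x_i^\alpha G_i(\ov{x}_m^i)$. Every term with $M=1$ has $N\ne 1$ (since $n<m$), hence ends in some $x_k^\alpha$ and is a left function; collecting these gives $\sum_k L_k(\ov{x}_m^k)x_k^\alpha$, where $L_k$ is a core function carrying exactly the leftmost middle functions $F_{1,N}$ with $N$ ending in $x_k$. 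Writing $P=\lambda_1+\sum_i x_i^\alpha P_i$ with each $P_i$ a quasi-polynomial in $\ov{x}_m^i$, the identity $\sum_{M,N}MF_{M,N}N=P$ becomes
\[
\sum_k L_k(\ov{x}_m^k)x_k^\alpha+\sum_i x_i^\alpha\bigl(G_i(\ov{x}_m^i)-P_i\bigr)=\lambda_1\in C.
\]
This is an instance of \eqref{3S2al} (or of \eqref{3S1al} when $\lambda_1=0$) with $\max\{|I|,|J|\}\le m$, so $d$-freeness applies: condition (b) of Definition \ref{defdal} requires $m\le d-1$, which holds in case (a) where $d=m+1$, while in case (b) we have $\lambda_1=0$ and condition (a) requires $m\le d=m$.

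The standard solution \eqref{3S3al} then supplies functions $p_{ki}$ and central functions $\lambda_i,\lambda_k$ with $G_i-P_i=-\sum_{k\ne i}p_{ki}x_k^\alpha-\lambda_i$ and $L_k=\sum_{i\ne k}x_i^\alpha p_{ki}+\lambda_k$. Rearranging, $G_i+\sum_{k\ne i}p_{ki}x_k^\alpha=P_i-\lambda_i$ and $L_k-\sum_{i\ne k}x_i^\alpha p_{ki}=\lambda_k$ are again identities of the shape ``core function equals quasi-polynomial,'' now in the $m-1$ variables $\ov{x}_m^i$, respectively $\ov{x}_m^k$. Since $R$ is $m$-free (Remark \ref{remdd}), the induction hypothesis applies and shows that every middle function of these smaller core functions is a quasi-polynomial; these middle functions comprise all $F_{M,N}$ with $\deg(M)\ge 1$ (appearing inside some $G_i$) and all $F_{1,N}$ (appearing inside some $L_k$), so ranging over $i$ and $k$ settles every original middle function. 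One checks the degree budget survives the recursion: a subproblem in $m'\le m-1$ variables needs $R$ to be $(m'+1)$-free, and $m'+1\le m$.

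The main obstacle is that invoking the induction hypothesis is only legitimate if the smaller core functions again satisfy the \emph{structural} hypothesis of the theorem, namely that each of their middle functions is a central multiple of a permuted leftmost one. This is the genuine difficulty: the auxiliary functions $p_{ki}$ produced by $d$-freeness enter as new middle functions (with left monomial $X_k$ of degree one), and one must verify---using the given relation $F_{M,N}=cF_{M_0,N_0}(x_{\sigma(\cdot)})$ together with the uniqueness of standard solutions on $d$-free sets---that these $p_{ki}$, and all inherited middle functions, are themselves central multiples of permuted leftmost functions of the reduced problem. Maintaining consistent monomial bookkeeping through this step, and confirming that the structural hypothesis is genuinely propagated rather than merely assumed, is where the real work lies; the grouping and peeling described above are otherwise routine.
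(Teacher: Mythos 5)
The paper itself only states this theorem and refers to \cite{BC2} and \cite[Chapter 4]{FIbook} for the proof, so there is no in-paper argument to match you against; judged on its own terms, your skeleton is reasonable (peel the outermost variables, recognize an instance of \eqref{3S1al}/\eqref{3S2al}, apply $d$-freeness via Theorem \ref{tsal}, recurse), and your degree accounting in cases (a) and (b) is correct. But there is a genuine gap, and it is exactly the step you flag in your final paragraph and then leave undone. Your recursion bottoms out, when $\deg(M)+\deg(N)=1$, in identities of the form $F_{X_i,1}=P_i-\lambda_i-\sum_{k\ne i}p_{ki}x_k^\alpha$ and $F_{1,X_k}=\sum_{i\ne k}x_i^\alpha p_{ki}+\lambda_k$, where the $p_{ki}$ are the arbitrary $Q$-valued functions supplied by the standard solution \eqref{3S3al}. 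These are quasi-polynomials only if the $p_{ki}$ are central-valued, and nothing in your argument forces that. Indeed, without the structural hypothesis the conclusion is false: for $m=2$, $n=1$, the core function $E(x_2)x_1^\alpha+x_2^\alpha F(x_1)$ vanishes identically for $E(x_2)=x_2^\alpha p$, $F(x_1)=-px_1^\alpha$ with any $p\in Q\setminus C$, and neither middle function is a quasi-polynomial. The relation $F_{M,N}=cF_{M_0,N_0}(x_{\sigma(\cdot)})$ is precisely what excludes this (here it would force $-px_1^\alpha=cx_1^\alpha p$, hence $p\in C$ on a $2$-free set), so it must drive the final step of the proof; in your write-up it is used only to justify restricting attention to leftmost middle functions, which is not where it is actually needed.

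The second half of the gap is the propagation problem you name but do not resolve, and it is worse than bookkeeping. The hypothesis relates every middle function to a \emph{leftmost} one of the \emph{original} core function. If some $F_{1,N}\ne 0$, the leftmost middle functions are exactly those with $M=1$, and under your splitting these all migrate into the $L_k$-subproblems, while every $F_{M,N}$ with $\deg(M)\ge 1$ migrates into some $G_i$-subproblem together with the freshly created $p_{ki}$. The subproblem $G_i+\sum_{k\ne i}p_{ki}X_k=P_i-\lambda_i$ therefore contains none of the functions that its middle functions are hypothesized to be central multiples of, and its own leftmost middle functions are the unknown $p_{ki}$; the inductive hypothesis simply does not apply to it as stated. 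Closing this requires an additional argument: one must compare the two one-sided expressions obtained for (permutations and central multiples of) the same middle function from the $G$-side and the $L$-side, and use the uniqueness of standard solutions on $d$-free sets to force the auxiliary functions to be central. That comparison is the actual content of the proof in \cite[Chapter 4]{FIbook}; your proposal defers it, so as written it only shows that the middle functions are quasi-polynomials modulo one-sided terms with possibly non-central coefficients, which is strictly weaker than the assertion of the theorem.
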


In most applications, all middle functions  differ only up to sign and permutation of variables. 
We also remark that of course one can replace ``leftmost" by ``rightmost" in the statement.

Some instances where Theorem \ref{thcorquasi} is applicable will be given in the next section. Let us, at this point, only mention the following.

\begin{remark}
 The theorem can be used to obtain the description of a commuting trace of a biadditive function $F(x)=B(x,x)$, as presented in Example \ref{ex8}. Indeed, we can regard \eqref{b22b} as a core function (corresponding to the case where $\alpha$ is the identity function), and so the theorem implies that the middle function $(x,y)\mapsto B(x,y)+B(y,x)$ is a quasi-polynomial, provided that $R$
 is a  $3$-free subset. Assuming that $R$ is a prime ring with $\deg(R)\ne 2$ and char$(R)\ne 2$, it thus follows from Theorem \ref{tbei} that $F(x)$ is of the form \eqref{b22a}. The technical assumption that $\deg(R)\ne 2$ is actually redundant, but  
 proving  this requires different techniques (see
\cite{BS4, BSp, LLWW}).

In  essentially the same way,  Theorem \ref{thcorquasi} is also applicable to the problem of describing commuting traces of $n$-additive functions for any $n$, as well as to a more general problem where the trace of an $n$-additive function $F(x)=B(x,\dots,x)$ satisfies $$[F(x),x^\alpha]=0$$
for all $x\in S$, provided of course that 
the set $S$ is  an additive group and 
the given function $\alpha$ is   additive.\end{remark}

\subsection{Generalized functional identities} A {\em generalized polynomial identity} (GPI for short) is an important generalization
of  a polynomial identity (PI). Roughly speaking, unlike PIs, GPIs involve fixed elements from rings. 

\begin{example}
Let $V$ be a vector space over a field $F$ and let $R= {\rm End}_F(V)$, the ring of all endomorphisms of $V$. Take any $a\in R$ that has rank one. Observe that for  each $x\in R$ there exists a $\lambda_x\in F$ such that $axa=\lambda_x a$. Observe that
$$axaya=ayaxa$$
for all $x,y\in R$ (indeed, both sides are equal to $\lambda_x\lambda_ya$). This is a simple example of a GPI. We   remark that $R$ satisfies no nonzero PI if $V$ is infinite-dimensional.
\end{example}

In the study of GPIs we often confine ourselves to prime rings. A GPI of a prime ring is defined as an element 
$f(X_1,\dots,X_n)$ of $ Q_s(R)\ast C\langle X_1,X_2,\dots\rangle,   $
the free product of the symmetric Martindale ring of quotients $Q_s(R)$ and the free algebra
$C\langle X_1,X_2,\dots\rangle $ over the extended centroid $C$, such that
$f(a_1,\dots,a_n)=0$
for all $a_i\in R$. If $R$ satisfies a nonzero GPI, then it is called a GPI-{\em ring}. 

We remark that
applying the standard multilinearization process one easily shows that a prime GPI-ring satisfies a multilinear GPI, meaning that
\begin{equation}\label{eqgpi2}
\sum_{\sigma\in S_n}  \sum_{i=1}^{n_\sigma} a_{0i}^\sigma x_{\sigma(1)}a_{1i}^\sigma x_{\sigma(2)}a_{2i}^\sigma\dots a_{n-1,i}^\sigma x_{\sigma(n)}a_{ni}^\sigma =0
\end{equation}
for all $x_1,\dots,x_n\in R$ and some fixed  $a_{ji}^\sigma\in Q_s(R)$, and there exists a permutation $\sigma$ such that
$$\sum_{i=1}^{n_\sigma} a_{0i}^\sigma x_{\sigma(1)}a_{1i}^\sigma x_{\sigma(2)}a_{2i}^\sigma\dots a_{n-1,i}^\sigma x_{\sigma(n)}a_{ni}^\sigma\ne 0$$
for some $x_1,\dots,x_n\in R$. 

In the seminal paper
\cite{Amitsur}, Amitsur characterized primitive GPI-rings \cite{Amitsur}.  A few years later, Martindale
\cite{Martindale} generalized  Amitsur's 
result to prime rings  as follows: A nonzero prime ring $R$ is a 
GPI-ring if and only if $A=RC$, the $C$-subalgebra of $Q_s(R)$ generated by $R$ (called the {\em central closure} of $R$), is a primitive ring containing an idempotent $e$ such that $Ae$ is a minimal left ideal of $A$ and $eAe$ is a finite-dimensional division algebra over $C$. For more information on such rings  see \cite[Section 4.3]{BMMb} or \cite[Sections 5.4 and 7.7]{INCA}.

We refer to \cite{BMMb} for a full account of the theory of generalized polynomial identities.  Let us now turn to {\em generalized  
functional identities} (GFIs) of prime rings.
They are defined in such a way that multilinear GPIs and  the fundamental FIs \eqref{3S1} and \eqref{3S2} are their special cases. Let us give this definition. 

Let $R$ be a prime ring and let $Q=Q_{ml}(R)$. As usual, 
let $m$ be a positive integer and let $I$, $J$ be subsets of $\{1,2,\ldots,m\}$.
Further, for each $i\in I$, let
$s_i$  be a positive integer and let
$\{a_{i1},a_{i2},\ldots,a_{is_i}\}$
be a subset of $Q$ that is linearly independent over the extended centroid $C$. Similarly, for each 
 $j\in J$, let $t_j$
  be a positive integer and let
  $\{b_{j1},b_{j2}\ldots,b_{j t_j}\}$ be a linearly independent subset of $Q$. 
Finally, 
let $V$ be  a finite-dimensional vector subspace of $Q$ and let
$E_{iu},F_{jv}:R^{m-1}\to Q$ be functions. The following is the basic GFI:
\begin{equation}\label{5SJ2}
\sum_{i\in I}\sum_{u=1}^{s_{i}}E_{iu} (\ov{x}_m^i)x_ia_{iu} +
\sum_{j\in J}\sum_{v=1}^{t_j} b_{jv}x_j F_{jv}(\ov{x}_m^j) \in V
\end{equation}
for all $\ov{x}_m \in R^m$. Observe that if 
$s_i=t_j=1$,
$a_{i1}=b_{j1}=1$ for all $i\in I$, $j\in J$, and $V=\{0\}$ (resp. $V=C$), \eqref{5SJ2} becomes \eqref{3S1} (resp. \eqref{3S2}).

A {\em standard solution} of \eqref{5SJ2}
 is defined as 
follows: there exist
functions
\begin{align*}
&\mbox{$p_{iujv}:R^{m-2}\to Q$, $i\in I$, $j\in J$, $1\le
u\le s_i$,
$1\le v \le t_j$,}\\
&\mbox{$\lambda_{kuv}:R^{m-1}\to C$, $k\in I\cup J$, $1\le
u\le s_i$, $1\le v \le t_j$,}
\end{align*} such that
\begin{eqnarray}\label{5SJ3}
E_{iu}(\ov{x}_m^i) &=& \sum_{j\in J\atop j\not=i}\sum_{v=1}^{t_j}
b_{jv}x_jp_{iujv}(\ov{x}_m^{ij}) +
\sum_{v=1}^{t_{i}}\lambda_{iuv}(\ov{x}_m^i)b_{iv},
\nonumber \\
F_{jv}(\ov{x}_m^j) &=& -\sum_{i\in I\atop i\not=j}\sum_{u=1}^{s_i}
p_{iujv}(\ov{x}_m^{ij})x_{i}a_{iu} -
\sum_{u=1}^{s_{j}}\lambda_{juv}(\ov{x}_m^j)a_{ju}, \\
& & \lambda_{kuv} = 0\quad\mbox{if}\quad k\not\in I\cap
J.\nonumber
\end{eqnarray}
One can easily check that \eqref{5SJ3} implies
\eqref{5SJ2} (with $V=\{0\}$).

The following theorem
is due to Chebotar \cite{C2} (see also \cite[Section 5.5]{FIbook}) who generalized an earlier result of the author \cite{Bfideg2} in which the case where $m=2$ was treated. 

\begin{theorem}
Let $R$ be a prime ring. If the GFI \eqref{5SJ2} has nonstandard solutions, then $R$ is a GPI-ring.
\end{theorem}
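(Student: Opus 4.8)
The plan is to prove the contrapositive: assuming $R$ is \emph{not} a GPI-ring, I would show that every solution of \eqref{5SJ2} is standard, i.e.\ of the form \eqref{5SJ3}. The non-GPI hypothesis supplies two tools. First, a prime ring that is not a GPI-ring is in particular not a PI-ring, so $\deg(R)=\infty$; by (the $Q_{ml}$-version of) Theorem \ref{tbei} this makes $R$ a $d$-free subset of $Q=Q_{ml}(R)$ for \emph{every} $d\ge 1$, so I may apply $d$-freeness with $d$ as large as the size of the identity at hand requires. Second, by definition $R$ satisfies no nonzero GPI; the basic consequence I would use repeatedly is the degree-one vanishing principle: if $\{c_1,\dots,c_n\}\subseteq Q$ is linearly independent over $C$ and $\sum_k d_k x c_k=0$ for all $x\in R$, then every $d_k=0$ (together with its left--right mirror), and more generally no multilinear expression of the shape \eqref{eqgpi2} can vanish on $R$ unless it is formally trivial.

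The heart of the argument is to strip the fixed coefficients $a_{iu}$ and $b_{jv}$ off the variables, turning \eqref{5SJ2} into an ordinary FI of type \eqref{3S2} to which $d$-freeness applies, and I would organize this by induction on $m$. For the base case $m=1$ the functions $E_{1u},F_{1v}$ are elements of $Q$ and \eqref{5SJ2} reads $\sum_u E_{1u}xa_{1u}+\sum_v b_{1v}xF_{1v}\in V$ for all $x\in R$; this is a degree-one generalized polynomial taking values in the finite-dimensional space $V$. Here I would invoke Martindale's structure theory \cite{Martindale} (equivalently, the vanishing principle refined by the constraint that the values lie in finite-dimensional $V$): since $R$ carries no nonzero GPI, the linear independence of $\{a_{1u}\}$ and of $\{b_{1v}\}$ forces scalars $\lambda_{1uv}\in C$ with $E_{1u}=\sum_v\lambda_{1uv}b_{1v}$ and $F_{1v}=-\sum_u\lambda_{1uv}a_{1u}$, which is exactly \eqref{5SJ3} for $m=1$. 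This is the degree-one identity already settled, for the two-variable forerunner, in \cite{Bfideg2}.

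For the inductive step I would exploit the substitutions $x_i\mapsto x_i r$ and $x_j\mapsto r x_j$ with $r\in R$: replacing $x_i$ by $x_ir$ turns the right coefficient $a_{iu}$ into $ra_{iu}$, and replacing $x_j$ by $rx_j$ turns $b_{jv}$ into $b_{jv}r$, so varying $r$ produces a whole family of GFIs whose coefficient systems are still governed by the $C$-independent sets $\{a_{iu}\}_u$ and $\{b_{jv}\}_v$. Freezing one variable and regarding the functions attached to the remaining $m-1$ variables as the unknowns, each specialization is a GFI in fewer variables, and the induction hypothesis puts those functions into the standard form. The non-GPI vanishing principle is what guarantees that no spurious relations are introduced when the families $\{x_ia_{iu}\}$ and $\{b_{jv}x_j\}$ are treated as essentially free, since any relation among them would be a nonzero GPI. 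Collating the fragments obtained for each frozen variable, and appealing once more to the $d$-freeness of $R$ to pin down the central coefficients $\lambda_{kuv}$, reassembles the full standard solution \eqref{5SJ3}.

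The main obstacle is the simultaneous presence of the right coefficients $a_{iu}$ and the left coefficients $b_{jv}$, together with the finite-dimensional target $V$. The two peeling operations interfere, because substituting into a single variable $x_k$ affects every term in which $x_k$ occurs inside some $E_{iu}(\ov{x}_m^i)$ or $F_{jv}(\ov{x}_m^j)$, and one must verify that the linear independence of each coefficient family survives throughout. Controlling $V$ is the analogue of passing from \eqref{3S1} to \eqref{3S2}: one must show that membership in the finite-dimensional space feeds only into the central coefficients and never into the genuinely noncommutative part, which is precisely where Martindale's description of generalized polynomial identities, rather than bare $d$-freeness, is indispensable. I expect the detailed bookkeeping of these interacting substitutions, rather than any single conceptual leap, to be where the real work lies, exactly as the case $m=2$ in \cite{Bfideg2} and Chebotar's general argument in \cite{C2} already suggest.
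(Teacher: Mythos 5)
The paper states this theorem without proof, citing Chebotar \cite{C2} and \cite[Section 5.5]{FIbook}, so your proposal can only be measured against the argument in those sources. Your global framing is the right one: prove the contrapositive, and use the non-GPI hypothesis both to get $\deg(R)=\infty$ and, more importantly, as the principle that no nonzero element of $Q_s(R)\ast C\langle X_1,X_2,\dots\rangle$ vanishes on $R$. Your base case $m=1$ with $V=\{0\}$ is essentially Martindale's lemma on linear independence and is fine (note it needs no GPI hypothesis at all).

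The genuine gap is in the inductive step. ``Freezing one variable'' does not produce a GFI of the form \eqref{5SJ2} in $m-1$ variables: once $x_m$ is fixed, the terms $\sum_u E_{mu}(\ov{x}_m^m)x_m a_{mu}+\sum_v b_{mv}x_m F_{mv}(\ov{x}_m^m)$ attached to it collapse into a single completely unknown function $G(\ov{x}_{m-1})$ of the remaining variables, with no variable standing outside a function, no coefficient attached, and no reason for its values to lie in any fixed finite-dimensional subspace. The specialized identity therefore reads $(\text{GFI in } m-1 \text{ variables})+G(\ov{x}_{m-1})\in V$, which is outside the template \eqref{5SJ2}, so the induction hypothesis does not apply and the ``fragments'' you propose to collate are never produced. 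The actual argument does not induct on the number of variables this way: it eliminates the functions $E_{iu}$ one family at a time by the substitutions $x_s\mapsto x_s a_k$ followed by summation against suitable $b_k$, using Martindale's density theorem \cite[Theorem 2.3.3]{BMMb} to kill all but one coefficient, exactly in the spirit of the operators $\mathcal{R}_{s,t}$ and $\mathcal{L}_{r,t}$ in the proof of Theorem \ref{mt}, with the induction running over $\max\{|I|,|J|\}$ and the sizes $s_i,t_j$ of the coefficient families. Relatedly, your treatment of the finite-dimensional target $V$ is asserted rather than argued; disposing of it again requires the density theorem (choosing $a_k,b_k$ with $\sum_k a_kVb_k=\{0\}$ but $\sum_k a_kqb_k\ne 0$ for a suitable $q$), not Martindale's structure theorem for GPI-rings, and this is precisely the bookkeeping you defer. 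Finally, the appeal to $d$-freeness is largely a red herring here: $d$-freeness governs coefficient-free FIs, and the whole content of the theorem is that the presence of the coefficients $a_{iu},b_{jv}$ forces the strictly stronger non-GPI hypothesis.
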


See also \cite{BBC} for a more general result that also involves automorphisms, antiautomorphisms, and derivations.

Another generalization of ordinary FIs, adjusted to superalgebras, was proposed by Wang in \cite{Wsuper}. Therein, the {\em $d$-superfree} subsets of superalgebras were introduced and some results analogous to those on  $d$-freeness of prime rings were established.

\subsection{Nonstandard solutions of 
FIs in one variable}
 Recall from Corollary \ref{ctbei2} that a central simple algebra of dimension less than $d^2$, in particular the matrix algebra $M_n(F)$ with $n < d$, is not $d$-free. In Example \ref{ex4} we saw how to derive a nonstandard solution from the Cayley-Hamilton identity. {\em Are all nonstandard solutions consequences of the Cayley-Hamilton identity?}
 
 This question was considered in the more recent years in
 our papers with Špenko  \cite{BSp, BSp2} and  Procesi and Špenko \cite{BPS}. In the last three subsections, we will briefly survey the main results, without explaining the methods of proofs.
 Let us just say that
 they are essentially different from the methods used elsewhere in the FI theory.

A rough summary of the paper \cite{BSp} is that our question has a positive answer for FIs in one variable. To state a simplified version of the main theorem, we first have to explain what we mean by the Cayley-Hamilton identity in a (finite-dimensional) central simple $F$-algebra $A$. 

  We first define tr$(x)$, the trace of the element $x$ in $A$, as the trace of the matrix
$1\otimes x \in K\otimes A \cong M_n(K)$ where $K$ is a splitting field for $A$. It is well known that this definition is independent of the choice of $K$
and that tr$(x)$ lies in  $F$.  
Since the coefficients
of the characteristic polynomial of a matrix can be expressed by the traces of its
powers,
we can therefore define adj$(x)\in A$,
 the {\em adjugate} of $x$, in a self-explanatory way. Also, we can naturally define $\det(x)$, the {\em determinant} of $x$, and just as for matrices  we obtain the {\em Cayley-Hamilton identity}
$$ x\, {\rm adj}(x) = \det(x) \in F$$
for every $x\in A$. Repeating the argument from Example \ref{ex4} we see that this  yields a nonstandard solution of the FI of the type
$\sum_{j\in J}x_jF_j(\ov{x}_m^j) \in C$.

The following is a special case of the main theorem of \cite{BSp}.

\begin{theorem}\label{MT} Let $F$ be a field with {\rm char}$(F)=0$, let $A$ be
a central simple $F$-algebra, let
$q_0,q_1,\ldots,q_r:A\to A$ be   traces of $d$-linear functions, and let
$q:A\to A$ be given by
$$q(x)=q_0(x)x^{r} + xq_1(x)x^{r-1} + \cdots + x^{r}q_{r}(x).$$
Assume that $q(x)\in F$ for all $x\in A$.
Then there exist  traces of $(d-1)$-linear functions $p_0,p_1,\ldots,p_{r-1}:A\to A$ and  traces of $d$-linear functions $\mu_0,\mu_1,\ldots,\mu_{r-1}:A\to F$ such that
\begin{align*}
q_0(x) &= x p_0(x) + \mu_0(x),\\
q_i(x) &= -p_{i-1}(x)x + xp_i(x) + \mu_i(x),\,\,\,i=1,\ldots,r-1,
\end{align*}
for all $x\in A$. Moreover, if
$q(x)=0 $ for all  $x\in A$,
 then  
$$
q_r(x) = -p_{r-1}(x)x  -  \sum_{i=0}^{r-1} \mu_i(x)
$$
for all $x\in A$, and 
if  $q(x)\ne 0$ for some $x\in A$, then  $\dim A = n^2$ with  $r(n-1)\le d$ and there exists the trace of a  $(d-r(n-1))$-linear function $\lambda :A\to F$ such that $\lambda\ne 0$ and
$$q_r(x)=\lambda(x){\rm adj}(x^r)-p_{r-1}(x)x  - \sum_{i=0}^{r-1} \mu_i(x)$$ 
for all $x\in A$. 
\end{theorem}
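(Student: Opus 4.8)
First I would pass to the split case. Since $\mathrm{char}(F)=0$ and every ingredient of the statement — the arities of the multilinear functions, the centrality of $q(x)$, the trace, the adjugate, and the asserted forms — is defined over $F$ and compatible with scalar extension, I may replace $F$ by its algebraic closure and $A$ by $M_n(F)$; the equations relating a solution $(q_0,\dots,q_r)$ to the sought data $(p_i,\mu_i,\lambda)$ are $F$-linear, so solvability over $\overline F$ will descend to $F$, and the dichotomy $q\equiv 0$ versus $q\not\equiv 0$ is insensitive to the base change. After this reduction each $q_i$ is simply an arbitrary homogeneous polynomial map $M_n(F)\to M_n(F)$ of degree $d$ (in characteristic $0$ these are precisely the traces of symmetric $d$-linear maps), and the hypothesis is the single central-valued functional identity
$$q(x)=\sum_{i=0}^r x^iq_i(x)x^{r-i}\in F\cdot 1\qquad(x\in A).$$
I would record that the set of solution tuples is stable under the twisted conjugation $q_i(x)\mapsto g^{-1}q_i(gxg^{-1})g$, since $g\,q(x)\,g^{-1}=q(gxg^{-1})\in F\cdot 1$; thus the solution space is a rational $GL_n$-module and the tools of invariant theory apply.

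\textbf{From centrality to trace identities.} The abstract machinery of Section 3 does not apply directly here: after full polarization the left-hand side is a genuine core function whose middle functions are the unrelated polarized $q_i$, so the hypothesis of Theorem \ref{thcorquasi} (all middle functions proportional up to permutation) fails, and the term $q_0(x)x^r$ cannot be written in the basic shape \eqref{3S2}. The plan is therefore to translate centrality into the language of \emph{trace identities}, where Cayley--Hamilton is the governing relation. A matrix lies in $F\cdot 1$ exactly when its traceless part vanishes, so the hypothesis is equivalent to
$$\mathrm{tr}\Big(\sum_{i=0}^r x^{r-i}\,z\,x^i\,q_i(x)\Big)=\tfrac1n\,\mathrm{tr}\!\big(q(x)\big)\,\mathrm{tr}(z)\qquad(x,z\in A).$$
Polarizing the $q_i$ and the powers of $x$ fully turns this into a family of multilinear trace concomitants of $M_n$. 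By Procesi's theory of trace identities — the first fundamental theorem for $GL_n$ acting by conjugation — this space is spanned by the functions attached to permutations through products of traces of monomials; by the second fundamental theorem, every linear relation valid specifically in $M_n$ is a consequence of the full antisymmetrizer on $n+1$ arguments, i.e.\ of the polarized Cayley--Hamilton identity. This is what makes Cayley--Hamilton the \emph{only} possible source of nonstandard solutions.

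\textbf{The two explicit solution families.} With the permutation-indexed spanning set in hand, the identity becomes a linear system on the coefficients of the $q_i$, and two families of solutions are visible. The first is the telescoping family: for arbitrary traces of $(d-1)$-linear functions $p_0,\dots,p_{r-1}$ and $d$-linear $F$-valued functions $\mu_0,\dots,\mu_{r-1}$, set
$$q_i(x)=-p_{i-1}(x)\,x+x\,p_i(x)+\mu_i(x),\qquad 0\le i\le r-1,\quad p_{-1}:=0,$$
together with the matching value $q_r(x)=-p_{r-1}(x)x-\sum_{i=0}^{r-1}\mu_i(x)$. Here the $p$-contributions cancel in adjacent indices and the central $\mu$-contributions collapse, giving $q\equiv 0$; these are valid for every $n$, so they always lie in the solution space. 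The second family comes from Cayley--Hamilton: since
$$x^r\,\mathrm{adj}(x^r)=\det(x^r)=\det(x)^r\in F,$$
the single term $q_r(x)=\lambda(x)\,\mathrm{adj}(x^r)$ also solves the identity, now with $q(x)=\lambda(x)\det(x)^r\neq 0$. As $\mathrm{adj}(x^r)$ is homogeneous of degree $r(n-1)$, matching it against the degree-$d$ map $q_r$ forces $\lambda$ to be the trace of a $(d-r(n-1))$-linear function and, in particular, yields the constraint $r(n-1)\le d$.

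\textbf{Completeness — the main obstacle.} What remains, and where the real work lies, is to show these are the only solutions: modulo the telescoping family the solution space is at most one-dimensional, spanned by the adjugate term, and that term is present exactly when $r(n-1)\le d$. I would argue that any solution can, by subtracting a suitable telescoping tuple and central terms, be reduced to the case $q_0=\dots=q_{r-1}=0$ — this reduction is precisely the unconditional first assertion of the theorem — leaving the identity $x^rq_r(x)\in F$. Its only non-telescoping solution must then be accounted for by the second fundamental theorem: any extra solution would produce a trace identity of $M_n$ not deducible from Cayley--Hamilton, which is impossible. The difficulty is entirely combinatorial and cohomological in flavour: one must follow how the multilinear pieces of $q_r$ distribute over the permutation-indexed spanning functions, separate the part forced to be of telescoping (gauge) type from the single genuine extra solution, and verify that the latter survives precisely under $r(n-1)\le d$ and is otherwise absent (forcing $q\equiv 0$ and the standard $q_r$). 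I expect this separation, together with the verification that no nonstandard solution escapes the Cayley--Hamilton relation, to be the main technical hurdle; the invariant-theoretic input is exactly what renders it tractable.
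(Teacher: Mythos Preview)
The paper does not contain a proof of this theorem. It is stated as a special case of the main result of \cite{BSp}, and the surrounding text says explicitly that the methods ``are essentially different from the methods used elsewhere in the FI theory'' and that the results of that subsection will be surveyed ``without explaining the methods of proofs.'' So there is nothing in the paper to compare your proposal against line by line.

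That said, your overall strategy is in the right spirit. The paper makes clear that the nonstandard solutions in low dimension are governed by Cayley--Hamilton, and the companion paper \cite{BPS} (same authors plus Procesi) uses Procesi--Razmyslov invariant theory in exactly the way you propose. Your reduction to the split case by linear descent is sound, and your identification of the two explicit solution families (telescoping and adjugate) is correct and matches the statement.

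The gap is in the completeness argument. You write that ``any solution can, by subtracting a suitable telescoping tuple and central terms, be reduced to the case $q_0=\dots=q_{r-1}=0$,'' and then note that ``this reduction is precisely the unconditional first assertion of the theorem.'' But that assertion is the heart of the matter, and you have not indicated how to prove it. The second fundamental theorem tells you that every trace relation on $M_n$ is a consequence of the polarized Cayley--Hamilton identity, but it does not by itself tell you that the particular relation $\sum_i x^i q_i(x) x^{r-i}\in F$ forces each $q_i$ individually into the telescoping form for $i<r$; one has to extract from the global relation information about each slot separately, and that extraction is where the actual work lies. Your plan to ``follow how the multilinear pieces of $q_r$ distribute over the permutation-indexed spanning functions'' is a description of the difficulty, not a resolution of it. In short: the architecture is plausible and consistent with what the paper hints about \cite{BSp}, but the key step---proving the first displayed conclusion about $q_0,\dots,q_{r-1}$---is asserted rather than argued.
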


Note that in the special case where
$r=1$, $q_0=-q_r$, and $q=0$, the condition of the theorem can be stated as that $q_0$ is a commuting trace of a multilinear function (see Examples \ref{ex6} and \ref{ex8}). It can be shown that in this case 
$q_0$ is of standard form, that is,  $q_0(x) =\sum_{i=0}^{d} \lambda_i(x)x^i$ where each $\lambda_i$ is the trace of a  $(d-i)$-linear map from $A$ to $F$. For infinite-dimensional algebras and algebras of sufficiently large dimensions this was proved earlier in 
\cite{LLWW}. We also mention the  papers \cite{Leecomm, LiuCK, LiuPu} in which   certain important special cases are treated in more general settings.

\subsection{Nonstandard solutions of quasi-identities}
In this subsection, we will give an overview of the  paper \cite{BPS}. It
considers only the case where $A=M_n(F)$, but is, nevertheless,   technically quite complex. We assume throughout this subsection that char$(F)=0$.

We begin by  modifying the definition of a quasi-polynomial and adjust it to the $n\times n$ matrix case. 
By a {\em quasi-polynomial} we will now mean a formal expression 
 $$P=\sum_M \lambda_M M$$
 where $M$ is a monomial
 in noncommuting indeterminates $X_1, X_2,\dots$, and 
 $\lambda_M$ is an ordinary
polynomial in  commutative indeterminates
$x_{ij}^{(1)}, x_{ij}^{(2)},\dots$, where $1\le i,j\le n$.
More precisely, writing
$$\mathcal C = F[x_{ij}^{(k)}\,|\,
1 \le i, j \le  n, k = 1, 2,\dots],$$ our quasi-polynomials are
elements of the free $\mathcal C$-algebra 
 $\mathcal C\langle X_1,X_2,\dots\rangle $.
 
 Further,  we define 
{\em generic matrices} $\xi_k$  
as matrices in  $M_n(\mathcal C)$  
whose entries are  $x_{ij}^{(k)}$. Define
$\Phi:\mathcal C\langle X\rangle \to M_n(\mathcal C)$ by
$$\Phi\Big(\sum \lambda_{X_{i_1}\dots X_{i_m}} X_{i_1}\dots X_{i_m}\Big)=   \lambda_{X_{i_1}\dots X_{i_m}} \xi_{i_1}\dots \xi_{i_m}. $$
For each $P\in\mathcal  C\langle X_1,X_2,\dots\rangle$,  let $\Phi(P)_{ij}$ be the $(i,j)$ entry of 
$\Phi(P)\in M_n(\mathcal C)$. By a {\em substitution} in 
$C\langle X_1,X_2,\dots\rangle$ we mean  that one replaces
$X_k$
by some $P_k\in C\langle X_1,X_2,\dots\rangle$ and simultaneously $x_{ij}^{(k)}$ by $\Phi(P_k)_{ij}$.  By a {\em T-ideal} of $C\langle X_1,X_2,\dots\rangle$ we mean an ideal that is closed under all such substitutions.

We
define the {\em evaluation} of a quasi-polynomial $P =P(X_1,\dots,X_m)$ at an $m$-tuple $a_1,\dots, a_m\in M_n(F)$, denoted $P(a_1,\dots,a_m)$, by substituting $a_k$ for $X_k$
and $a_{ij}^{(k)}$ for
 $x_{ij}^{(k)}$  where
 $a_k=  (a_{ij}^{(k)})$.
If $P(a_1,\dots,a_m)=0$
for all $a_1,\dots,a_m\in M_n(F)$, then we say that
$P$ is a {\em quasi-identity} of $M_n(F)$. The set of all quasi-identities is clearly a $T$-ideal of 
$C\langle X_1,X_2,\dots\rangle$.

As a side remark, we mention that quasi-identities are related to locally linearly dependent noncommutative polynomials considered in \cite{BWar, BKl}.

The fundamental example of a quasi-identity
 of $M_n(F)$ is the {\em Cayley-Hamilton polynomial} 
 \begin{equation}\label{CHp} Q_n = Q_n(X_1)=  X_1^n +\tau_1(X_1)X_1^{n-1} +\dots + \tau_{n-1}(X_1)X_1 + \tau_n(X_1),\end{equation}
 where the commutative polynomials 
$\tau_i(X_1)$ can be expressed as  $\mathbb Q$-linear  combinations of the products of tr$(\xi_1^j)$ (e.g.,
$Q_2= X_1^2 - {\rm tr}(\xi_1)X_1 +\frac{1}{2} ({\rm tr}(\xi_1)^2 - {\rm tr}(\xi_1^2)$).
We say that a quasi-identity $P$ of $M_n(F)$ is a {\em consequence of the Cayley–Hamilton identity} if $P$ lies in the T-ideal of $C\langle X_1,X_2,\dots\rangle$
generated by the Cayley–Hamilton polynomial $Q_n$.

One can now ask whether every  quasi-identity of $M_n(F)$ is a consequence of the Cayley–Hamilton identity. The basic motivation for this question is the famous result, obtained  independently by Procesi \cite{Procesi} and Razmyslov \cite{Raz}, which states  that the answer to such a question
is positive for trace identities, and so in particular for polynomial identities (that is,
the T-ideal of trace identities of $M_n(F)$ is generated
by the Cayley-Hamilton polynomial,  see also \cite[p.\ 444]{Probook}).

The answer to our question, however, is negative in general. Thus, the following is true.

\begin{theorem}\label{tbcp}
Not every 
quasi-identity   of $M_n(F)$, $n\ge 2$, is a   consequence of the Cayley–Hamilton identity.
\end{theorem}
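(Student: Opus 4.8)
The plan is to recast the statement in terms of the kernel of the homomorphism $\Phi$ already introduced. First I would observe that a quasi-polynomial $P$ is a quasi-identity of $M_n(F)$ if and only if $\Phi(P)=0$ in $M_n(\mathcal C)$: evaluating $P$ at a tuple $(a_1,\dots,a_m)\in M_n(F)^m$ is the same as evaluating the polynomial matrix $\Phi(P)$ at that tuple, and since ${\rm char}(F)=0$ forces $F$ infinite, vanishing at all tuples is equivalent, by Zariski density, to the vanishing of $\Phi(P)$ as an element of $M_n(\mathcal C)$. Thus the $T$-ideal of quasi-identities is exactly $\ker\Phi$. Next I would note that the Cayley--Hamilton polynomial lies in $\ker\Phi$, since $\Phi(Q_n)=\xi_1^n+\tau_1(\xi_1)\xi_1^{n-1}+\dots+\tau_n(\xi_1)=0$ is precisely the Cayley--Hamilton theorem applied to the generic matrix $\xi_1\in M_n(\mathcal C)$. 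Writing $\mathcal T$ for the $T$-ideal generated by $Q_n$, we then have $\mathcal T\subseteq\ker\Phi$, and the theorem is the assertion that this inclusion is \emph{strict}.

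To locate the gap I would bring in the simultaneous conjugation action of $GL_n$. Letting $g\in GL_n$ act by conjugation-pullback on the coefficient ring $\mathcal C$ and trivially on the $X_k$, a direct computation gives $(g\cdot P)(a_1,\dots,a_m)=g\,P(g^{-1}a_1g,\dots,g^{-1}a_mg)\,g^{-1}$ for every tuple; this shows at once that $\ker\Phi$ is $GL_n$-stable. Since the coefficients $\tau_i$ of $Q_n$ are trace polynomials, hence invariant, we have $g\cdot Q_n=Q_n$, and as the action respects the $T$-ideal operations, $\mathcal T$ is $GL_n$-stable as well. The action preserves the multidegree in the matrix variables, so both $\mathcal T$ and $\ker\Phi$ split as direct sums of finite-dimensional rational $GL_n$-modules; because $GL_n$ is reductive in characteristic $0$, to prove $\mathcal T\subsetneq\ker\Phi$ it suffices to exhibit a single multidegree (equivalently, a single isotypic component) in which they differ.

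The next step is to see that the gap cannot be invariant, which both rules out a naive argument and directs the search. A quasi-polynomial is $GL_n$-invariant precisely when each coefficient $\lambda_M$ is invariant, i.e.\ a trace polynomial by the First Fundamental Theorem of matrix invariant theory; an invariant quasi-identity is then a trace identity, and by the theorem of Procesi and Razmyslov the $T$-ideal of trace identities is generated by $Q_n$. Hence $(\ker\Phi)^{GL_n}=\mathcal T^{GL_n}$, so the separating element must sit in a non-invariant component. The crux is to produce one. I would work in a fixed multidegree, most cleanly the multilinear one, where a quasi-polynomial partitions the variables into those entering a word (each contributing a generic matrix) and those entering a coefficient (each contributing a single entry), making the relevant spaces finite-dimensional $GL_n\times S_m$-modules. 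Concretely, the goal is to find noncommutative polynomials $f_1,\dots,f_r$ whose values on $M_n(F)$ are everywhere linearly dependent, manufacture polynomial coefficients $c_i\in\mathcal C$ by Cramer's rule so that $\sum_i c_if_i\in\ker\Phi$ (the link with locally linearly dependent polynomials), and verify that this relation projects nontrivially into a non-invariant component; alternatively, to carry out a direct dimension count showing $\dim(\ker\Phi)_{\underline d}>\dim\mathcal T_{\underline d}$ for a well-chosen multidegree $\underline d$.

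The main obstacle is exactly this last step. The reformulation as $\ker\Phi$ versus $(Q_n)_T$ and the $GL_n$-decomposition are formal, and Procesi--Razmyslov pins down the invariant part for free, but \emph{controlling} $\mathcal T$ in a non-invariant multidegree is genuinely hard: it is closed under all substitutions and under two-sided multiplication by arbitrary quasi-polynomials, so bounding its components from above is delicate, and one must rule out that the chosen locally linearly dependent relation is already a consequence of Cayley--Hamilton (naive candidates, whose word-values span the full generic matrix module, tend to reduce to a single trace relation and hence to $\mathcal T$). Proving that some genuinely non-invariant relation among generic matrices escapes the Cayley--Hamilton $T$-ideal is where the real work, and the novelty beyond Procesi--Razmyslov, lies.
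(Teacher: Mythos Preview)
Your reformulation is correct and your strategic outline aligns with how one would naturally approach the problem: the identification of the $T$-ideal of quasi-identities with $\ker\Phi$, the $GL_n$-stability of both $\ker\Phi$ and $\mathcal T$, and the observation via Procesi--Razmyslov that $(\ker\Phi)^{GL_n}=\mathcal T^{GL_n}$ so that any separating element must live in a non-invariant component---all of this is sound and is essentially the conceptual framework of the source paper \cite{BPS}.

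However, what you have written is not a proof; it is a reduction followed by an honest admission that the reduction has not been completed. You yourself identify the gap: producing a concrete quasi-identity outside $\mathcal T$ and \emph{verifying} that it is not a consequence of Cayley--Hamilton. Everything prior to that step is setup, and none of it forces the inclusion $\mathcal T\subseteq\ker\Phi$ to be strict. Your two suggested endgames---building a locally-linearly-dependent relation via Cramer's rule, or a dimension count in a fixed multidegree---are plausible directions, but you have not carried out either, and as you note, bounding $\mathcal T$ from above in a non-invariant component is exactly where the difficulty concentrates.

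The paper does not prove the theorem here either; it records it as the main message of \cite{BPS} and states that the bulk of that work is the explicit description of a \emph{family} of quasi-identities not lying in the $T$-ideal generated by $Q_n$. So the actual proof proceeds by construction rather than by an abstract dimension argument, and the verification that the constructed family escapes $\mathcal T$ is substantial. Your framework is compatible with that approach, but to turn your proposal into a proof you would need to execute the construction and the exclusion argument, which is the genuine content of the result.
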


This theorem  is  just the main message of \cite{BPS}. The bulk of the paper is actually devoted to the description of a certain family of quasi-identities that does not lie in the T-ideal generated by $Q_n$.

Let us state one more result from
\cite{BPS} which shows that, on the other hand, the
quasi-identities of $M_n(F)$ are not too far from the Cayley–Hamilton identity. Recall that a {\em central polynomial} of $M_n(F)$ is a noncommutative polynomial
$c=c(X_1,\dots, X_m)$ with zero constant term  such that
$c(a_1,\dots,a_m)$ is a scalar matrix for all $a_1,\dots,a_m\in M_n(F)$ and $c$ is not a polynomial identity. 

\begin{theorem}
Let $P$ be a quasi-identity of $M_n(F)$. For every central polynomial $c$ of $M_n(F)$
there exists a positive integer
$k$ such that
$c^kP$ is a consequence of the
Cayley–Hamilton identity.
\end{theorem}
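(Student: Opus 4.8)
The plan is to push everything through the evaluation map $\Phi$. Since $F$ is infinite, a quasi-polynomial $P$ is a quasi-identity precisely when $\Phi(P)=0$ in $M_n(\mathcal C)$, where $\mathcal C=F[x_{ij}^{(k)}]$. Write $J$ for the $T$-ideal generated by $Q_n$ and $\mathcal I=\ker\Phi$ for the $T$-ideal of all quasi-identities; since $Q_n$ is itself a quasi-identity (Cayley–Hamilton), $J\subseteq\mathcal I$. Setting $\mathcal U=\mathcal C\langle X_1,X_2,\dots\rangle/J$, one gets a surjection $\mathcal U\twoheadrightarrow\mathcal G:=\Phi(\mathcal C\langle X\rangle)=\mathcal C\langle\xi_1,\xi_2,\dots\rangle$ with kernel $\mathcal I/J$. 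The desired conclusion ``$c^kP\in J$ for some $k$'' is exactly the statement that $\mathcal I/J$ is annihilated, element by element, by a power of the image $\bar c$ of the central polynomial; equivalently, that $\mathcal U\to\mathcal G$ becomes injective after inverting $\bar c$. So the whole theorem reduces to such an injectivity statement, and then feeding the given $P\in\mathcal I$ back in yields $\bar c^{\,k}\bar P=0$ in $\mathcal U$, i.e.\ $c^kP\in J$.

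First I would check that $\bar c$ is central in $\mathcal U$, so that the localization is legitimate. Indeed $[c,X_{m+1}]$ is an ordinary polynomial identity of $M_n(F)$ (as $c$ is central-valued), hence a trace identity, hence by the Procesi–Razmyslov theorem a consequence of the Cayley–Hamilton identity; thus $[c,X_{m+1}]\in J$ and $\bar c$ commutes with every generator. Writing $g=\Phi(c)\in Z$ for the invariant value of $c$, the next step is to identify the localized target: by the Artin–Procesi description of the Azumaya locus, the non-vanishing set of a central polynomial is exactly where the generic matrices generate the full matrix algebra, so $\mathcal G[\,g^{-1}]=M_n(\mathcal C[\,g^{-1}])$. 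In particular every matrix unit $e_{ij}$ becomes available after inverting $g$, i.e.\ there are quasi-polynomials $w_{ij}$ with $\Phi(w_{ij})=g^{N}e_{ij}$.

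It then remains to prove that $\mathcal U[\bar c^{-1}]\to M_n(\mathcal C[\,g^{-1}])$ is injective. Here I would adapt Procesi's theory of Cayley–Hamilton algebras: a ring satisfying the $n$-th Cayley–Hamilton identity becomes, after inverting the value of a central polynomial, an Azumaya algebra of rank $n^2$ over its center. Applied to $\mathcal U$, this would make $\mathcal U[\bar c^{-1}]$ Azumaya of rank $n^2$; since the map to $M_n(\mathcal C[\,g^{-1}])$ is a surjection of rank-$n^2$ Azumaya algebras restricting to the identity on the central subring $\mathcal C[\,g^{-1}]$, one concludes it is an isomorphism, giving the required injectivity.

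The main obstacle is precisely this last step, and within it the passage from the trace-identity world, where Procesi–Razmyslov operates and all coefficients are invariants in $Z$, to the quasi-polynomial world, whose coefficients range over the full entry ring $\mathcal C$. Controlling this gap is what forces the central-polynomial denominators and is the source of the strictness recorded in Theorem \ref{tbcp}: the coefficients $\tau_i$ of $Q_n$ are hard-coded elements of $\mathcal C$ rather than values of an abstract trace, so one must verify that $\mathcal U[\bar c^{-1}]$ really is Azumaya of the correct rank, that its center is no larger than $\mathcal C[\,g^{-1}]$, and that $J$ captures, after localization, every $\mathcal C$-coefficient relation forced by $\Phi(P)=0$. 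Making this Azumaya comparison and the accompanying center computation precise—rather than the routine bookkeeping of the exponent $k$—is where the real work lies.
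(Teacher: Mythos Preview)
The paper is a survey and does not prove this theorem; it is stated without proof as a result from \cite{BPS}, and the surrounding subsection explicitly says it will ``briefly survey the main results, without explaining the methods of proofs.'' So there is no proof in the paper to compare your proposal against.

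Evaluating your outline on its own merits: the reformulation as injectivity of $\mathcal U[\bar c^{-1}]\to M_n(\mathcal C[g^{-1}])$ is correct and is the natural way to attack such a statement, and your use of Procesi--Razmyslov to verify that $\bar c$ is central in $\mathcal U$ is the right move. Your identification of the Azumaya step as the crux is accurate, and you are honest that the argument is incomplete there. But as you yourself acknowledge, that step \emph{is} the theorem: the passage from Procesi's trace-identity framework (where the Cayley--Hamilton algebra machinery is developed, with an abstract trace and coefficients in the invariant ring) to the quasi-polynomial setting with coefficients in all of $\mathcal C$ is precisely the new content of \cite{BPS}, and your proposal does not supply it. In particular, the assertion that $\mathcal U[\bar c^{-1}]$ is Azumaya of rank $n^2$ with center exactly $\mathcal C[g^{-1}]$ cannot simply be lifted from the existing literature on Cayley--Hamilton algebras, since those results presuppose a formal trace satisfying the standard axioms, whereas here the $\tau_i$ are fixed elements of $\mathcal C$. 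What you have written is a correct and well-informed strategy sketch that locates the difficulty accurately, but it is not yet a proof.
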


Theorem \ref{tbcp} shows that quasi-identities present a genuinely new type   of identities of matrix algebras. They are still far from being fully understood.

\subsection{Nonstandard solutions of FIs when regarded as GPIs}
In this last subsection, we will present the main results of \cite{BSp2}. They consider nonstandard solutions of the basic FI
\begin{equation}\label{eqxz}\sum_{i\in I}E_i(\ov{x}_m^i)x_i+ \sum_{j\in J}x_j F_j(\ov{x}_m^j)
 = 
0\end{equation} on $M_n(F)$,
with $m,I,J$ having the usual meaning,
and
  the functions $E_i,F_j$  assumed to be multilinear. It is easy to see that every multilinear function $F:M_n(F)^{m-1}\to M_n(F)$ is a sum of functions of the form
$$(x_1,\dots,x_{m-1})\mapsto a_{i_0}x_{j_1}a_{i_1}\dots a_{i_{m-2}}x_{j_{m-1}}a_{i_{m-1}}$$ where $a_{i_u}\in M_n(F)$ and $\{j_1,\dots,j_{m-1}\} = \{1,\dots,m-1\}$. Therefore, there is no loss of generality in assuming that $E_i$ and $ F_j$ are (multilinear) generalized polynomials, i.e., elements of the free product $M_n(F)\ast F\langle X_1,X_2,\dots\rangle$. Accordingly,  \eqref{eqxz} can   be viewed as a multilinear GPI.

We will state two theorems. The first one concerns the situation where $J$ in \eqref{eqxz} is $\emptyset$, and the second one the general situation.

We will say that  $C\in M_n(F)\ast F\langle X_1,X_2,\dots\rangle$ is a {\em  central generalized polynomial} (on $M_n(F)$) if all its evaluations on $M_n(F)$
are scalar matrices. An example that immediately presents itself is obtained
by taking
the Cayley-Hamilton polynomial $Q_n(X_1)$ as given in \eqref{CHp} and subtracting the central part $\tau_n(X_1)$.  
Denote by $$\widetilde{Q}_n=\widetilde{Q}_n(X_1,\dots,X_n)$$ the quasi-polynomial obtained by the complete linearization of $Q_n(X_1)-\tau_n(X_1)$. So, for example, 
$$\widetilde{Q}_2(X_1,X_2) = X_1X_2+X_2X_2 + \tau_1(X_1)X_2 + \tau_1(X_2)X_1$$
(where $\tau_1(X_i)= -{\rm tr}(\xi_i)$). 

Since  $Q_n(X_1)-\tau_n(X_1)$ is a central generalized polynomial, so is
$\widetilde{Q}_n(X_1,\dots,X_n)$. Therefore,
for any matrices
$a_1,\dots,a_{n+1}\in M_n(F)$, we have
\begin{equation}[\widetilde{Q}_n(a_1x_1,\dots,a_nx_n),a_{n+1}x_{n+1}]=0 \label{follow}  \end{equation}
for all $x_1,\dots,x_{n+1}\in M_n(F)$. Note that this is an FI of the form \eqref{eqxz} with $J=\emptyset$. Our first theorem states that every FI of the form \eqref{eqxz} follows from those of type \eqref{follow}.

\begin{theorem}\label{FIGPI}
Let $E_i \in  M_n(F)\ast F\langle X_1,X_2,\dots\rangle$, $i\in I$,  be multilinear generalized polynomials such that $P=\sum_{i\in I} E_i(\overline{X}_m^i) X_i$ is a GPI of $M_n(F)$. Then $P$ can be written as a sum of GPIs of the form  
$$C \cdot  \big[ \widetilde{Q}_n(a_{1}X_{k_1},\dots,a_{n}X_{k_n}),a_{n+1}X_{k_{n+1}}\big],
$$
where $a_{i}\in M_n(F)$, $k_i\ne k_j$ if $i\ne j$,  and $C$ is a multilinear central generalized polynomial (in indeterminates from $\{X_1,\dots,X_m\}\setminus{\{X_{k_1},\dots,X_{k_{n+1}}\}}$).
\end{theorem}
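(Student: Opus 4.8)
The plan is to reduce the statement to the Procesi--Razmyslov description of the trace identities of $M_n(F)$ and then to translate back. Since the hypothesis is $F$-linear in the fixed matrix coefficients occurring in the $E_i$, and since rank-one matrices span $M_n(F)$, I would first assume that every such coefficient has rank one. Writing $M_n(F)=\mathrm{End}_F(V)$ with $V=F^n$ and a rank-one operator as $u\otimes\psi$ (so $(u\otimes\psi)w=\psi(w)u$), a direct computation gives $(u_0\otimes\psi_0)\,x\,(u_1\otimes\psi_1)=\mathrm{tr}\big((u_1\otimes\psi_0)x\big)\,(u_0\otimes\psi_1)$. Iterating, every monomial of $P$ with rank-one coefficients becomes a product of traces of words in the generic matrices (and fixed rank-one matrices) times a single surviving rank-one factor carrying no indeterminate. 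Pairing $P$ with a fresh test variable $X_{m+1}$ via the trace, and using that a matrix is zero precisely when its trace against every matrix vanishes, the hypothesis $P=0$ is thereby converted into the assertion that $\mathrm{tr}(P\cdot X_{m+1})$ is a multilinear pure trace identity of $M_n(F)$.

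At this stage I would invoke the Procesi--Razmyslov theorem: every trace identity of $M_n(F)$ is a consequence of the fundamental one, the complete polarization of the Cayley--Hamilton identity. Up to its purely central part, this fundamental identity is exactly the linearized Cayley--Hamilton polynomial $\widetilde{Q}_n$ appearing in and around \eqref{CHp}. Hence $\mathrm{tr}(P\cdot X_{m+1})$ is a sum of consequences of the fundamental trace identity, each obtained by substitution and by multiplication with further trace monomials.

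The final and most delicate step is to undo the trace pairing with $X_{m+1}$ and recover the matrix-valued shape asserted in the theorem. Freeing $X_{m+1}$ from a pure trace turns a consequence of the fundamental trace identity into a consequence of the Cayley--Hamilton polynomial $Q_n$, hence into an expression built from $\widetilde{Q}_n$. The commutator then arises from the two cyclic positions that the freed variable can occupy relative to the $\widetilde{Q}_n$-block: the identity $\mathrm{tr}([A,b]\,x)=\mathrm{tr}(A\,[b,x])$ shows that the difference of these placements is exactly $[\widetilde{Q}_n(a_1X_{k_1},\dots,a_nX_{k_n}),\,a_{n+1}X_{k_{n+1}}]$, which by \eqref{follow} is a GPI of the right-factored form $\sum_i E_iX_i$. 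The scalar trace factors involving the remaining indeterminates assemble into a multilinear central generalized polynomial $C$ with indeterminates disjoint from those of the commutator, and summing over all the consequences expresses $P$ in the claimed form.

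I expect this last step to be the main obstacle. One must verify that the right-factored structure of $P$ is matched precisely by these commutator generators, rather than by the more general two-sided core functions of the earlier sections, and that the surviving scalar factors genuinely constitute a \emph{central} generalized polynomial $C$ with indeterminates disjoint from $\{X_{k_1},\dots,X_{k_{n+1}}\}$ and satisfying $k_i\ne k_j$. Tracking which variables are fed into $\widetilde{Q}_n$, which becomes the free slot $X_{k_{n+1}}$, and which are absorbed into $C$, all while preserving multilinearity, is where the genuine combinatorial work resides.
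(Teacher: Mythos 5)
The survey states Theorem \ref{FIGPI} without proof (it is quoted from \cite{BSp2}, and the subsection explicitly declines to explain the methods of proof), so your proposal has to stand on its own --- and it has a genuine gap at its pivotal step. After you decompose the coefficients into rank-one matrices and pair with a test variable via the trace, what you obtain is not a pure trace identity: the fixed matrices (or their rank-one pieces) remain inside the trace factors, so $\mathrm{tr}(P\cdot X_{m+1})$ is a multilinear trace identity \emph{with coefficients} --- and a very degenerate one at that, since with rank-one coefficients every trace factor but one contains a single generic variable. The Procesi--Razmyslov theorem classifies pure trace identities only, and its naive extension to identities with coefficients is false: this is precisely the content of Theorem \ref{tbcp} in the same section, which records that quasi-identities of $M_n(F)$ --- multilinear instances of which are exactly matrix identities whose scalar coefficients are products of linear functionals $x\mapsto\mathrm{tr}(bx)$ --- need not be consequences of the Cayley--Hamilton identity. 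So the step ``invoke Procesi--Razmyslov'' is applied outside its scope, and the failure is not cosmetic; it is the very phenomenon that makes the results of this subsection delicate.

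What makes Theorem \ref{FIGPI} true is the one-sided shape of $P=\sum_i E_i(\overline{X}_m^i)X_i$ (every monomial ends in a bare variable), and a proof must exploit this structure to steer around the counterexamples behind Theorem \ref{tbcp}; your reduction, by atomizing everything into rank-one pieces and scalar traces, discards exactly that structure. The argument of \cite{BSp2} does live in the Procesi circle of ideas --- the action of $S_{m+1}$ on $V^{\otimes(m+1)}$ and the antisymmetrizer on $n+1$ letters --- but it requires a refinement of the second fundamental theorem adapted to the presence of matrix coefficients, not the trace-identity theorem itself, and identifying the generators $C\cdot\big[\widetilde{Q}_n(a_1X_{k_1},\dots,a_nX_{k_n}),a_{n+1}X_{k_{n+1}}\big]$ is where the real work lies. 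You correctly sense that ``undoing the trace pairing'' is delicate, but the obstruction sits one step earlier.
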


A similar result of course holds 
for generalized polynomials
$\sum_{j\in J}X_j F_j(\overline{X}_m^j)  $, i.e., for FIs \eqref{eqxz} with $I=\emptyset$ and $F_j$ multilinear.  We can therefore handle ``one-sided identities".

Let us turn to  ``two-sided identities".
We say that multilinear functions $E_i,F_j$ form  a {\em standard solution modulo one-sided identities} of \eqref{eqxz} if there exist multilinear functions 
\begin{eqnarray*}
&  & p_{ij}:M_n(F)^{m-2}\to M_n(F),\;\;
i\in I,\; j\in J,\; i\not=j,\nonumber\\
&  & \lambda_k:M_n(F)^{m-1}\to F,\;\; k\in I\cup J,\\
& & \widehat{E}_i:M_n(F)^{m-1}\to M_n(F),\,\, 
i\in I,\; \\
& & \widehat{F}_j:M_n(F)^{m-1}\to M_n(F),\,\, 
j\in J,
\end{eqnarray*}
such that 
\begin{eqnarray*}
E_i(\ov{x}_m^i) & = & \sum_{j\in J,\atop
j\not=i}x_jp_{ij}(\ov{x}_m^{ij})
+\lambda_i(\ov{x}_m^i) + \widehat{E}_i(\ov{x}_m^i),\quad i\in I,\nonumber\\
F_j(\ov{x}_m^j) & = & -\sum_{i\in I,\atop
i\not=j}p_{ij}(\ov{x}_m^{ij})x_i
-\lambda_j(\ov{x}_m^j) + \widehat{F}_j(\ov{x}_m^j) ,\quad j\in J,\\
&   & \lambda_k=0\quad\mbox{if}\quad k\not\in I\cap J,\nonumber
\end{eqnarray*}
and
$$\sum_{i\in I} \widehat{E}_i(\ov{x}_m^i)x_i = \sum_{j\in J} x_j\widehat{F}_j(\ov{x}_m^j) =0$$
for all $\ov{x}_m\in R^m$.

We can now state our last theorem of this section.

\begin{theorem}\label{2fi}
 If $E_i,F_j$ are multilinear functions, then every solution of the FI \eqref{eqxz} on $
 M_n(F)$
 is standard modulo one-sided identities.
\end{theorem}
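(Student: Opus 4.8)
The plan is to reduce the statement to a single clean assertion about the common value of the two sides, and then to invoke the one-sided description (Theorem \ref{FIGPI}) together with the generic-matrix formalism of the preceding subsections. Write $L=\sum_{i\in I}E_i(\ov{x}_m^i)x_i$ and $R=\sum_{j\in J}x_jF_j(\ov{x}_m^j)$, so that \eqref{eqxz} reads $L+R=0$. Thus the single function $L$ is simultaneously a left function anchored in $I$ and (up to sign) a right function anchored in $J$. First I would record the elementary observation that the theorem is \emph{equivalent} to the claim that every such common value has \emph{standard value form}
\[
L=\sum_{i\in I,\,j\in J,\,i\ne j}x_jp_{ij}(\ov{x}_m^{ij})x_i+\sum_{k\in I\cap J}\lambda_k(\ov{x}_m^k)x_k
\]
for suitable multilinear $p_{ij}$ and central-valued $\lambda_k$. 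Indeed, given such a representation, set $\widehat{E}_i:=E_i-\sum_{j\ne i}X_jp_{ij}-\lambda_i$ and $\widehat{F}_j:=F_j+\sum_{i\ne j}p_{ij}X_i+\lambda_j$ (with $\lambda_k=0$ for $k\notin I\cap J$). A direct computation gives $\sum_{i\in I}\widehat{E}_i(\ov{x}_m^i)x_i=L-\Sigma_0=0$, where $\Sigma_0$ is the displayed standard value; and $\sum_{j\in J}x_j\widehat{F}_j(\ov{x}_m^j)=R+\sum_{i\ne j}x_jp_{ij}x_i+\sum_{j\in J}\lambda_jx_j$ collapses to $0$ by the same cancellation together with $\lambda_k=0$ off $I\cap J$. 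Hence $E_i,F_j$ are standard modulo one-sided identities.

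Consequently Theorem \ref{2fi} is equivalent to the inclusion $\mathcal{L}_I\cap\mathcal{R}_J\subseteq\mathcal{S}$, where $\mathcal{L}_I$ and $\mathcal{R}_J$ denote the spaces of multilinear left/right functions $M_n(F)^m\to M_n(F)$ anchored in $I$ and $J$, and $\mathcal{S}$ is the space of standard values. The reverse inclusion $\mathcal{S}\subseteq\mathcal{L}_I\cap\mathcal{R}_J$ is immediate, since each $x_jp_{ij}x_i$ and each $\lambda_kx_k$ is visibly both left-anchored at $i$ (resp. $k$) and right-anchored at $j$ (resp. $k$). So the whole content is the nontrivial inclusion.

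To establish it I would exploit the $GL_n(F)$-symmetry: acting by simultaneous conjugation of all inputs and the output, one checks directly that $\mathcal{L}_I$, $\mathcal{R}_J$, and $\mathcal{S}$ are $GL_n(F)$-submodules of the module of all multilinear functions. This puts the problem inside the invariant theory of $n\times n$ matrices. Passing to generic matrices $\xi_1,\dots,\xi_m$ as in the quasi-identity subsection and using the Procesi--Razmyslov theorem (the T-ideal of trace identities of $M_n(F)$ is generated by the Cayley--Hamilton polynomial), I would sort the monomials of a representative of $L$ by their leading and trailing variables: a monomial with trivial leading coefficient starting in some $X_j$ ($j\in J$) and trivial trailing coefficient ending in some $X_i$ ($i\in I$), $i\ne j$, is assembled into the $x_jp_{ij}x_i$ terms, while monomials carrying a central (trace) coefficient feed the $\lambda_kx_k$ terms. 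The one-sided Theorem \ref{FIGPI} supplies the kernel directions: any element of $\mathcal{L}_I$ whose right-anchored counterpart is trivial is a one-sided identity, hence a sum of terms $C\cdot\bigl[\widetilde{Q}_n(a_1X_{k_1},\dots,a_nX_{k_n}),a_{n+1}X_{k_{n+1}}\bigr]$, all of which evaluate to $0$ and therefore do not affect the value.

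The main obstacle is exactly this last disentanglement. The difficulty is that $L$ admits a left-anchored representative and a right-anchored representative which need not agree as formal generalized polynomials: they differ by a GPI, and GPIs of $M_n(F)$ genuinely scramble the leading/trailing monomial structure through the Cayley--Hamilton relations. Showing that membership in $\mathcal{L}_I\cap\mathcal{R}_J$ nonetheless forces $L$ into the normal form $\mathcal{S}$ requires controlling precisely how $Q_n$ permits a monomial to be re-anchored, and verifying that every such re-anchoring either produces a genuine two-sided term $x_jp_{ij}x_i$ (or a central term $\lambda_kx_k$) or else lands in the T-ideal generated by the linearized Cayley--Hamilton commutators of Theorem \ref{FIGPI}. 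This is where the bulk of the argument of \cite{BSp2} lies, and it is carried out by the generic-matrix and invariant-theoretic methods rather than by the $d$-freeness techniques used earlier.
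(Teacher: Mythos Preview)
The paper is a survey and does not prove Theorem~\ref{2fi}; it only states the result and refers to \cite{BSp2} for the argument. So there is no in-text proof to compare against directly. Your reduction to the ``value problem'' --- showing that any multilinear function lying simultaneously in $\mathcal{L}_I$ and $\mathcal{R}_J$ already lies in the standard space $\mathcal{S}$ --- is correct, and the verification that this is equivalent to the stated conclusion is clean.

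However, what follows is an outline that explicitly defers the substantive step to \cite{BSp2}, not a proof. The paragraph beginning ``To establish it I would exploit the $GL_n(F)$-symmetry'' does not establish the inclusion: sorting monomials of \emph{a} formal representative of $L$ by leading and trailing variables is only a heuristic, because a monomial in a left-anchored representative has the shape $a_0X_{j_1}a_1\cdots a_{m-1}X_i$ with arbitrary matrix coefficients $a_\ell$ --- it need not begin with any $X_j$, let alone one with $j\in J$. The fact that $L$ \emph{also} has a right-anchored representative does not by itself produce a single representative that is simultaneously left- and right-anchored in the required way; that is precisely the content to be proved. Your invocation of Theorem~\ref{FIGPI} is also slightly off target: that theorem describes the \emph{kernel} of the map from formal one-sided expressions to functions, whereas here you need a statement about the \emph{range}, namely that a function in $\mathcal{L}_I\cap\mathcal{R}_J$ admits a representative of standard shape. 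Knowing the kernel does not deliver this automatically.

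In short, you have correctly located where the work lies and named the right toolkit (generic matrices, invariant theory, the Cayley--Hamilton T-ideal), and your framing is compatible with the argument in \cite{BSp2}. But the step you flag as ``the main obstacle'' is the entire theorem, and the proposal does not advance beyond identifying it.
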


Loosely speaking, Theorems \ref{FIGPI} and \ref{2fi} show that if we regard  FIs  of the form
\eqref{eqxz} as GPIs, then they all follow from the Cayley-Hamilton identity.

\section{Applications} 

The main reason for the existence
of the general FI theory are its applications. This section is devoted to presenting the
 most important ones.

\subsection{Lie maps}\label{ss41}
The early development of the FI theory is closely connected with the long-standing Herstein's conjectures on Lie maps. We therefore start with a brief survey of  their solutions.

First, 
a few words on the history.  From the 1950s to 1970s,
Herstein and his students systematically studied the Lie (and the Jordan) structure of associative rings. They have answered many natural questions,  but the problems on the structure of Lie homomorphisms and Lie derivations, as posed in Herstein's 1961``AMS Hour Talk"  \cite{Her},
remained unsolved.
More precisely, some of them were solved  by Martindale \cite{Mart1, Mart4, Mart5, Mart6}, however, under the assumption that  rings contain (enough) nontrivial idempotents. The question whether the presence of idempotents can be  removed was open until the discovery of the FIs. Using them, all problems were finally solved in a series of papers that started in 1993 and ended in 2002.

The first breakthrough was made in the author's paper \cite{B5} in which the structure of Lie isomorphisms between prime rings was described. To state it, we first give the definition and describe the problem. 

Let $M$ and $R$ be rings and let $L$ be a Lie subring
of $M$. An   additive map $\alpha:L\to R$, denoted $x\mapsto x^\alpha$, is called a {\em Lie homomorphism} if
\begin{equation}
    [x,y]^\alpha=[x^\alpha,y^\alpha] \label{eLieh}
\end{equation}
for all $x,y\in L$. That is, $\alpha $ is homomorphism from the Lie ring $(L,\,+\,, [\,\cdot\,,\,\cdot\,])$ to the Lie ring $(R,\,+\,, [\,\cdot\,,\,\cdot\,])$. 

Let us for now restrict ourselves to the simplest case where $L=M$. The obvious examples of Lie homomorphisms are then homomorphisms and the negatives of antihomomorphisms (i.e., maps $\alpha$ satisfying
$\alpha(xy)=-\alpha(y)\alpha(x)$
for all $x,y\in M$, so that $-\alpha$ is an antihomomorphism). These, however, are not the only possible examples, at least not when $M\ne [M,M]$, since if $\alpha$ is a Lie homomorphism and $\tau$ is an additive map from $M$ to the center of $R$ that vanishes on all commutators ($\tau([x,y]) =0$ for all $x,y\in M$), then $\alpha+\tau$ is again a Lie homomorphism. The question that can be asked, and was asked by Herstein,  is whether a Lie homomorphism is the sum of a homomorphism or the negative of an antihomomorphism and such a map $ \tau$.

One usually assumes that
$\alpha$ is at least surjective, if not bijective. To describe the approach from \cite{B5}, assume that  $R$ is a prime ring with char$(R)\ne 2$ and that $\alpha$ is bijective. Writing $x^2$ for $y$ in \eqref{eLieh} we obtain
$$[x^\alpha, (x^2)^\alpha] =0$$ for all $x\in M$, and hence 
$$[y,(y^{\alpha^{-1}})^2 ] =0 $$
for all $y\in R$. This can be read as that the function $F:R\to R$ defined by 
$$F(y)= (y^{\alpha^{-1}})^2$$
is a commuting trace of a biadditive function. As we said in Example \ref{ex8}, such a function is of the form
  \begin{equation}
      F(y)= \lambda y^2 + \mu(y) y + \nu(y,y)\label{eclo}
  \end{equation}
  for all $y\in R$, where $\lambda$ is an element
  from the extended centroid $C$, $\mu$ is an additive function from $R$ to $C$, and $\nu$
  is a biadditive function from $R\times R$ to $C$.
 Now, controlling the action of $\alpha$ on squares of elements, and hence on the Jordan products of elements, and simultaneously, by the very definition, controlling the action
 of $\alpha$ on the Lie product, it is not surprising that we have  control of the action of $\alpha$ on the ordinary product $xy = \frac{1}{2}(x\circ y + [x,y])$. In this way one can prove that $\alpha$ is of the expected, aforementioned form.
  
Showing that commuting traces of biadditive functions are of the form
\eqref{eclo} was the fundamental result of \cite{B5} from which all others were derived. 
This was actually established under the additional assumption that $\deg(R)\ne 2$ which
has later turned out to be
unnecessary (this assumption is indeed necessary  to establish that $R$ is a $3$-free subset of $Q_{ms}(R)$, but not for showing that commuting traces of biadditive functions are of standard form).

We made this short overview of the approach taken in \cite{B5} since its main ideas are illustrative and can be easily understood even without being exposed to FIs.  Let us now present in a somewhat greater detail a  more sophisticated approach based on the general FI theory. We follow \cite{FIbook}.

One can combine basic examples of Lie homomorphisms to obtain new ones. We will say that
$\alpha$ is the {\em direct sum} of a homomorphism and the negative of an antihomomorphism if there exists a central idempotent $e$ such that 
$x\mapsto ex^\alpha$ is a homomorphism and  
$x\mapsto (1-e)x^\alpha$ is the negative of an antihomomorphism. This is not relevant in prime rings since they do not contain central idempotents different from $0$ and $1$. However, we will now consider general rings.
Our key assumption is that the image of the Lie homomorphism $\alpha$ is a $3$-free subset.

\begin{theorem}\label{wecan}
 Let $\alpha$ be a Lie homomorphism from a ring $M$ to a unital ring $Q$ with center $C$. If $M^\alpha$ is a $3$-free subset of $Q$, then $\alpha=\varphi+\tau$ where $\alpha:M\to Q$  is the direct sum of a homomorphism and the negative of an antihomomorphism and $\tau:M\to C$ is an additive map which vanishes on commutators. 
\end{theorem}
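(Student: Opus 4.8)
The plan is to isolate the multiplicative defect of $\alpha$ and show that the $3$-freeness of $M^\alpha$ forces it into a standard (quasi-polynomial) form from which the direct-sum decomposition can be read off. First I would introduce the biadditive map $B\colon M\times M\to Q$,
\[
B(x,y)=(xy)^\alpha-x^\alpha y^\alpha .
\]
The defining relation \eqref{eLieh} gives $(xy)^\alpha-(yx)^\alpha=[x^\alpha,y^\alpha]$, and comparing this with $B(x,y)-B(y,x)$ shows immediately that $B$ is symmetric. Moreover, since $[x^2,x]=0$ and $\alpha$ is a Lie homomorphism,
\[
[B(x,x),x^\alpha]=[(x^2)^\alpha,x^\alpha]-[(x^\alpha)^2,x^\alpha]=[x^2,x]^\alpha=0 .
\]
Thus $x\mapsto B(x,x)=(x^2)^\alpha-(x^\alpha)^2$ is a commuting trace of a biadditive map, but measured through the fixed map $\alpha$, which is precisely the $\alpha$-twisted situation of Example~\ref{ex8} and of the remark following Theorem~\ref{thcorquasi}.

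Next I would feed the complete linearization of $[B(x,x),x^\alpha]=0$ into the quasi-polynomial machinery. Because $M^\alpha$ is $3$-free, Theorem~\ref{thcorquasi} (transported to the set $M^\alpha$ via Theorem~\ref{tsal}) together with Lemma~\ref{lfreq} yields $\lambda\in C$, an additive $\mu\colon M\to C$, and a symmetric biadditive $\nu\colon M\times M\to C$ with
\[
B(x,x)=\lambda (x^\alpha)^2+\mu(x)x^\alpha+\nu(x,x),
\]
so that $(x^2)^\alpha=(1+\lambda)(x^\alpha)^2+\mu(x)x^\alpha+\nu(x,x)$ and, after polarization, $B$ itself is a two-variable quasi-polynomial in $x^\alpha,y^\alpha$ with central coefficients.

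The final step is to convert the scalar $c:=1+\lambda$ into a genuine central idempotent and split $\alpha$. For this I would exploit associativity in the cheapest possible way, expanding $(x\cdot yx)^\alpha=(xy\cdot x)^\alpha$; inserting the standard form of $B$ turns this into a quasi-polynomial identity in the two arguments $x^\alpha,y^\alpha$, so Lemma~\ref{lfreq} (again only $3$-freeness is needed) forces an algebraic relation on $c$ exhibiting $e$, the central element built from $c$, as an idempotent. Peirce-splitting by $e$, one checks that $x\mapsto ex^\alpha$ agrees with a homomorphism up to a central-valued map and $x\mapsto(1-e)x^\alpha$ agrees with the negative of an antihomomorphism up to a central-valued map; assembling these corrections gives the map $\varphi$ that is the direct sum of a homomorphism and the negative of an antihomomorphism. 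Setting $\tau:=\alpha-\varphi$, the map $\tau$ is central-valued by construction, hence additive, and vanishes on commutators because the central corrections drop out of the equality $[x^\alpha,y^\alpha]=[x^\varphi,y^\varphi]$.

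The hard part will be the two places where one is tempted to divide by $2$: polarizing $B(x,x)$ to recover $B(x,y)$, and passing from the Jordan square $(x^2)^\alpha$ and the Lie product $[x,y]^\alpha$ to the full product $(xy)^\alpha$ (which formally uses $xy=\tfrac12(x\circ y+[x,y])$). In characteristic $2$ the Lie and Jordan products coincide and $xy$, $yx$ are no longer separately visible, so the theorem being stated with no restriction on $\mathrm{char}(Q)$ means the identification of $e$ and the verification that the two Peirce components are respectively multiplicative and anti-multiplicative must be carried out directly through the characteristic-free $d$-free identities of Lemma~\ref{lfreq} and Theorem~\ref{thcorquasi}, rather than through the classical commuting-trace computation. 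This is exactly where $3$-freeness, rather than mere $2$-freeness, is indispensable.
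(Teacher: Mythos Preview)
Your route through the commuting trace $[B(x,x),x^\alpha]=0$ is precisely the original argument of \cite{B5}, which the paper recalls just before the theorem and then deliberately replaces by ``a more sophisticated approach''. The reason is the one you flag yourself in the last paragraph: the commuting-trace argument needs $\tfrac12$. Concretely, the full linearization of $[B(x,x),x^\alpha]=0$ is the identity \eqref{b22b}, whose middle function is $B(x,y)+B(y,x)=2B(x,y)$; in characteristic $2$ this is vacuous, so Theorem~\ref{thcorquasi} yields nothing. Likewise, polarizing from $B(x,x)$ back to $B(x,y)$, and extracting an idempotent from the relation $c^2=1$ that your associativity comparison $(x\cdot yx)^\alpha=(xy\cdot x)^\alpha$ produces, both require halving. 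Your final paragraph acknowledges the obstruction but does not supply a workaround; there isn't one along this line.

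The paper's proof bypasses all of this by starting from a different, already multilinear identity: the ``associative Jacobi identity''
\[
[xy,z]+[zx,y]+[yz,x]=0,
\]
valid in any ring. Applying $\alpha$ gives
\[
[(xy)^\alpha,z^\alpha]+[(zx)^\alpha,y^\alpha]+[(yz)^\alpha,x^\alpha]=0,
\]
an FI with $m=3$, $n=2$, $P=0$ to which Theorem~\ref{thcorquasi} applies directly (no linearization, no factors of $2$). The conclusion is that $(xy)^\alpha$ itself, not merely $(x^2)^\alpha$, is a quasi-polynomial, with two independent central coefficients $\lambda_1,\lambda_2$ in front of $x^\alpha y^\alpha$ and $y^\alpha x^\alpha$. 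The Lie condition forces $\lambda_1-\lambda_2=1$, and comparing $((xy)z)^\alpha$ with $(x(yz))^\alpha$ (read, with $y$ fixed, as an FI in $x^\alpha,z^\alpha$ so that $3$-freeness still suffices) forces $\lambda_1\lambda_2=0$. These two relations make $e:=\lambda_1$ a central idempotent without any division, and $\varphi(x)=x^\alpha-(1-2e)\mu(x)$ gives the direct-sum decomposition. So the resolution of the characteristic-$2$ difficulty is not to push the commuting-trace computation harder, but to change the initial identity to one that is multilinear from the start.
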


\noindent
{\em Sketch of proof.}  Observe that $$[xy,z]+ [zx,y] + [yz,x]=0 $$
holds for any $x,y,z\in M$ (this is an ``associative version" of the Jacobi identity). Hence it follows that 
$$[(xy)^\alpha,z^\alpha]+ [(zx)^\alpha,y^\alpha] + [(yz)^\alpha,x^\alpha]=0. $$
Since $R=M^\alpha$ is a $3$-free subset of $Q$, we are in a position to apply Theorem \ref{thcorquasi} for the case where $m=3$, $n=2$, $P=0$, and each $c=\pm 1$. Therefore, $(xy)^\alpha$ is a quasi-polynomial, i.e.,
\begin{equation}
  \label{sube}  
(xy)^\alpha=\lambda_1 x^\alpha y^\alpha +
\lambda_2 y^\alpha x^\alpha+\mu_1(y)x^\alpha + \mu_2(x)y^\alpha +\nu(x,y) \end{equation}
for some $\lambda_1,\lambda_2\in C$, $\mu_1,\mu_2:M\to C$, and $\nu:M^2\to C$. Using this form back in \eqref{sube} it easily follows from Lemma \ref{lfreq} that $\mu_1=\mu_2$. This lemma also implies that
$\mu=\mu_1$ is additive
and $\nu$ is biadditive (just replace $x$ and $y$ by the sum of two elements  and use that $\alpha$ is additive).

Using \eqref{sube}, we can compute $(xyz)^\alpha$ in two different ways, firstly as $((xy)z)^\alpha$ and secondly as $(x(yz))^\alpha$. Comparing both expressions we obtain
\begin{equation}
   \label{ifitwas} 
\lambda_1\lambda_2 [y^\alpha, [x^\alpha,z^\alpha]] + \xi(x,y)z^\alpha -\xi(y,z)x^\alpha \in C, \end{equation}
where $\xi:M^2\to C$ is a function that can be expressed by $\lambda_i,\mu,\nu$. If $R=M^\alpha$ was $4$-free, then Lemma \ref{lfreq} would imply that $\lambda_1\lambda_2=0$. However, we are only assuming that it is $3$-free, so we need another step to reach this conclusion. Observe that by fixing $y$, we can interpret \eqref{ifitwas} as 
$$E_1(z)x^\alpha + E_2(x)z^\alpha + x^\alpha F_1(z) + z^\alpha F_2(x)\in C$$
for suitable functions $E_i,F_j$. Applying Theorem \ref{tsal} it is now easy to see (by using the exact form of $E_i,F_j$) that the $3$-freeness is sufficient for concluding that 
$\lambda_1\lambda_2=0$.

Next, applying \eqref{sube} to
$$(xy)^\alpha - (yx)^\alpha = [x^\alpha,y^\alpha] $$ we arrive at 
$$(1-\lambda_1 + \lambda_2) [x^\alpha,y^\alpha]\in C $$
for all $x,y\in M$, which, again by Lemma \ref{lfreq}, yields 
$1-\lambda_1 + \lambda_2=0$. Along with 
$\lambda_1\lambda_2=0$, this shows that $e=\lambda_1$ is a central idempotent (and $\lambda_2 = -(1-e)$).

We now define
$\varphi:M\to Q$ by
$$x^\varphi= x^\alpha - (1-2e) \mu(x). $$
Using similar methods as above one easily shows that $x\mapsto e x^\varphi$ is a homomorphism, 
$x\mapsto (1-e) x^\varphi$  is the negative of an antihomomorphism, and
$\tau(x)=(1-2e) \mu(x)$ is an additive  map that vanishes on commutators and maps to $C$.
$\hfill \qed$

\bigskip
Let us return to the situation where
 $R=M^\alpha$ is a prime ring. Setting $Q=Q_{ms}(R)$ we see from Theorem \ref{tbei}
 that we
 can apply Theorem   \ref{wecan}, provided that
 $\deg(R)\ge 3$ (and in this way obtain the result from \cite{B5}). The case where $\deg(R)=1$, i.e., $R$ is commutative, is trivial, so we are left with the $\deg(R)=2$ case. The structure of  $R$  is then well known, that is, $R$ can be nicely embedded into the ring of $2\times 2$ matrices over a field.
 On the one hand, this makes the problem  easy, but on the other hand we are no longer in a position to apply the FI machinery. However, as already mentioned above, the form of commuting traces of biadditive functions is the same  even when $\deg(R)=2$ (and, additionally, char$(R) \ne 2$). Using the approach from the beginning of this subsection one can  obtain the following corollary to
 Theorem \ref{wecan}.
 
 \begin{corollary}\label{b51}
 Let $R$ be a noncommutative prime with
 {\rm char}$(R)\ne 2$. If $ \alpha$ is a Lie isomorphism from a ring $M$ onto $R$, then 
 $\alpha =\varphi +\tau$ 
 where $\varphi$ is a homomorphism or the negative of an antihomomorphism from
 $M$ to $R+C$, where $C$ is the extended centroid of $R$, and $\tau:M\to C$ is an additive map which vanishes on commutators.
 \end{corollary}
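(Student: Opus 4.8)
The plan is to set $Q=Q_{ms}(R)$, whose center is the extended centroid $C$ (a field), and to reduce to Theorem \ref{wecan} whenever the hypotheses permit, handling the exceptional low-degree case by a separate, more hands-on argument. Since $R$ is noncommutative we have $\deg(R)\ge 2$, so only two cases arise: $\deg(R)\ge 3$ and $\deg(R)=2$.

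First suppose $\deg(R)\ge 3$. By Theorem \ref{tbei}, $R=M^\alpha$ is a $3$-free subset of $Q$, so Theorem \ref{wecan} applies and gives $\alpha=\varphi+\tau$, where $\varphi$ is the direct sum of a homomorphism and the negative of an antihomomorphism, governed by a central idempotent $e\in C$, and where $\tau=(1-2e)\mu$ with $\mu:M\to C$ additive and vanishing on commutators. The decisive point is that $C$, being the extended centroid of a prime ring, is a field, and hence its only idempotents are $0$ and $1$. Thus $\varphi$ is a plain homomorphism when $e=1$ and the negative of an antihomomorphism when $e=0$. Inspecting the formula $x^\varphi=x^\alpha-(1-2e)\mu(x)$ from the proof of Theorem \ref{wecan}, we see that $x^\varphi\in R+C$ for every $x$, since $x^\alpha\in R$ and $\mu(x)\in C$. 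This is precisely the asserted conclusion.

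It remains to treat $\deg(R)=2$, where $R$ fails to be $3$-free and Theorem \ref{wecan} is unavailable. Here I would fall back on the direct squaring argument sketched at the start of this subsection: putting $y=x^2$ in \eqref{eLieh} yields $[x^\alpha,(x^2)^\alpha]=0$, so that the induced map on $R$ given by $y\mapsto\bigl((y^{\alpha^{-1}})^2\bigr)^\alpha$ is a commuting trace of a biadditive function. The crucial input is that such traces still have the standard form \eqref{eclo} when $\deg(R)=2$, provided $\mathrm{char}(R)\ne 2$; this fact does not rely on $3$-freeness. Knowing the action of $\alpha$ on squares pins down its action on Jordan products, and combining this with the control on Lie products built into the definition of $\alpha$ via $xy=\tfrac12(x\circ y+[x,y])$ lets one solve for $(xy)^\alpha$ and extract the homomorphism/antihomomorphism dichotomy together with the central additive correction $\tau$.

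I expect the $\deg(R)=2$ case to be the main obstacle, precisely because the general FI machinery cannot be invoked and one must instead exploit the explicit structure of prime rings of PI-degree $2$, which embed nicely into $2\times 2$ matrices over a field. The delicate step is justifying that the standard form of commuting traces of biadditive functions genuinely survives at $\deg(R)=2$; once that is in hand, the bootstrapping to the full product is routine, though it must still be carried out with care to guarantee that $\varphi$ lands in $R+C$ and that the two alternatives remain mutually exclusive, which again rests on $C$ being a field.
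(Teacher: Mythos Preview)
Your proposal is correct and follows essentially the same approach as the paper: split into the cases $\deg(R)\ge 3$ (handled via Theorem~\ref{tbei} and Theorem~\ref{wecan}, using that $C$ is a field to force $e\in\{0,1\}$) and $\deg(R)=2$ (handled via the commuting-trace-of-biadditive-functions argument, which still yields the standard form \eqref{eclo} in this degree under ${\rm char}(R)\ne 2$). Your added remark that $x^\varphi=x^\alpha-(1-2e)\mu(x)\in R+C$ is exactly the justification the paper alludes to for the codomain of $\varphi$.
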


The conclusion that $\varphi$ maps to $R+C$ (rather than to $R$) may seem strange at first glance, but can be justified---see \cite[Example 6.10]{FIbook}.

The  proof of Corollary \ref{b51} that we outlined nicely represents the way FIs are applied. The general theory solves the problem at a high level of generality, but   does not cover  rings that are close to  algebras of low dimensions.  One is therefore forced  to combine FIs with more classical methods.

Corollary \ref{b51} solves the easiest among Herstein's problems on Lie homomorphisms. The others concern Lie rings of skew elements in rings with involution, and, the most difficult ones, Lie ideals of rings and Lie ideals of skew elements. 

Let now $M$ be a ring with involution and let $K$ be the set of all skew elements in $M$. Since $K$
is a Lie subring of $M$, we can speak about a Lie homomorphism $\alpha$ from $K$ to another ring $R$. The natural question here is whether $\alpha$ can be extended to a homomorphism
from the subring generated by $K$ to $R$ (now there is no need to involve the negatives of antihomomorphisms  since we can compose $\alpha$  with  the negative of the involution $\ast$ which itself is the negative of an antiisomorphism on $M$ and acts as the identity on $K$).

The method of  proof of Theorem \ref{wecan} obviously does not work since $K$ is not an (associative) subring. However, the cube of a skew element is again a skew element, so $\alpha$ satisfies $$[x^\alpha,(x^3)^\alpha]=0$$
for all $x\in K$, and hence, if $\alpha$ is injective, 
\begin{equation}
    \label{ynatri}
[y,(y^{\alpha^{-1}})^3 ] =0 \end{equation}
for all $y\in K^\alpha$. We have thus arrived 
at a commuting trace of a triadditive function. Based on this observation, Beidar, Martindale and Mikhalev \cite{BMM1}
described Lie isomorphisms between skew elements of prime rings with involution. The main idea of their proof was thus essentially the same as 
that of the proof of 
Corollary  \ref{b51},  
but the technical challenges were greater. See also \cite[Theorem 6.15]{FIbook} for an abstract version involving $d$-free sets. We remark that the proof is based on the fact that $xyz+zyx\in K$ whenever $x,y,z\in K$, and so applying $\alpha$ to the
identity 
$$[u, xyz+zyx]+     [z,uxy+yxu]+ [y,zux+xuz] + [x,yzu+uzy] =0    $$
 yields an FI which is somewhat more convenient than \eqref{ynatri}. 
 
After solving Herstein's problems for rings (in 1993) and for skew elements  (in 1994), it was still not clear for several years how to handle Lie isomorphisms of their Lie ideals. As they are not closed under some $n$th powers, the  methods described above do not work. However, Lie ideals 
of a ring $M$  (e.g., $[M,M]$) or of skew elements $K$ (e.g., $[K,K]$) are of special interest since they are often simple as Lie rings. More precisely, the classical Herstein's theorems state that
if $M$ is a simple ring with center $Z$, then
$[M,M]/Z\cap [M,M]$ is a simple Lie ring, unless 
char$(M)=2$ and $\deg(M)=2$, and similarly, if $K$ is the set of skew elements of a simple ring $M$ with involution and center $Z$, then 
$[K,K]/Z\cap [K,K]$ is a simple Lie ring, provided that char$(M)\ne 2$ and
$\deg(M) > 4$  \cite{Her, Her2} (see also \cite{BKSh} for a description of Lie ideals of more general rings). It is therefore more natural to consider Lie ideals modulo the central elements.

The problems on Lie ideals were completely solved in
the early 2000s \cite{Onher1, Onher2, Onher3, BCIV, BCIVh}. The proofs are involved and it is difficult to present their main ideas in a few lines.
Many results from the general FI theory are used in the proof. In fact, 
the problems on Lie ideals had served as a principal motivation for 
some advanced parts of the general theory.

Let us restrict ourselves to the more difficult problem on Lie ideals of skew elements. 
We will state two results from \cite{FIbook}. 
Some notation is needed first. For a unital ring $Q$ with center $C$, we write $\overline{Q}$ for the Lie ring  $Q/C$
and for each  $x\in Q$ we write $\overline{x}$ for $x+ C\in \overline{Q}$. Accordingly, for a subset $R$ of $Q$ we write
$\overline{R} = \{\overline{x}\,|\, x\in R\}$.
By $\langle L\rangle $ we denote the subring generated by the subset $L$. We say that an additive group $G$ {\em admits the operator $\frac{1}{2}$} if $x\mapsto 2x$ is an automorphism of $G$.

\begin{theorem}\label{thk}
Let $K$ be the set of skew elements of a ring $M$ with involution,
 let $L$ be a Lie ideal of $K$, let $Q$ be a unital ring with center $C$, and let $\alpha:L\to \overline{Q}$ be a Lie homomorphism. Suppose that $L$ and $Q$ admit the operator $\frac{1}{2}$ and that $C$ is a direct summand of the additive group $Q$. If there exists a 
 $9$-free subset $R$ of $Q$ such that $\overline{R}=L^\alpha$, then there exists a homomorphism $\varphi:\langle L\rangle \to Q$ such that
 $x^\alpha= \overline{x^\varphi}$ for every $x\in L$.
\end{theorem}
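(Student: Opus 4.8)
\emph{Proof strategy.} The plan is to work throughout in the associative ring $Q$, reading every occurrence of the Lie‑homomorphism relation as an identity that is valid modulo the Lie ideal $C$ of $Q$ (recall that $\overline{Q}=Q/C$ is only a Lie ring, so the quasi‑polynomial machinery has to be run inside $Q$). Using the hypothesis that $C$ is a direct summand, $Q=C\oplus W$, and the induced additive section of the quotient map, I would first produce an additive lift $f\colon L\to Q$ with $\overline{f(x)}=x^\alpha$ for all $x\in L$; since $\alpha$ is a Lie homomorphism and the bracket on $\overline{Q}$ is induced from that on $Q$, this lift satisfies $f([x,y])-[f(x),f(y)]\in C$. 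The task then becomes to construct a ring homomorphism $\varphi\colon\langle L\rangle\to Q$ with $\varphi(x)-f(x)\in C$ for every $x\in L$. All subsequent functional‑identity arguments will be applied to the given $9$‑free set $R$, exploiting $\overline{R}=L^\alpha$ so that the relations among the images $x^\alpha$, once lifted, become ``$\in C$'' functional identities to which Theorem \ref{thcorquasi} and Lemma \ref{lfreq} apply.

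Next I would manufacture the fundamental functional identity. The feature peculiar to Lie \emph{ideals} (as opposed to all of $K$) is that $L$ is closed under neither the associative product nor enough triple products, so one must bootstrap from the defining relation $[L,K]\subseteq L$: for $x\in L$, $k\in K$ one has $[[x,k],x]=2xkx-k\circ x^2\in L$, and by iterating brackets—here the operator $\tfrac{1}{2}$ is already needed to undo the factors of $2$—one arranges that the skew triple combinations $xyz+zyx$ can be realized inside $L$. Applying $f$ to the multilinear ``associative Jacobi'' identity
\begin{equation*}
[u,xyz+zyx]+[z,uxy+yxu]+[y,zux+xuz]+[x,yzu+uzy]=0
\end{equation*}
and treating the unknown $f(xyz+zyx)$ as a middle function $G$ of three arguments turns this into a core‑function equation (valid modulo $C$) in the images $f(u),f(x),f(y),f(z)$. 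Because the fully linearized identity in the Lie‑ideal setting involves substantially more free variables than in the honest‑ring case of Theorem \ref{wecan}, the hypothesis of $9$‑freeness is calibrated precisely so that Theorem \ref{thcorquasi} applies and forces $G$ to be a quasi‑polynomial in $f(x),f(y),f(z)$ with coefficients in $C$.

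From the quasi‑polynomial form of $G$ I would read off the multiplicative law. Computing a triple product in two associations, exactly as in the derivation of \eqref{sube}, and invoking Lemma \ref{lfreq} pins down the leading coefficient, so that the top‑degree part of $G$ is identified with $f(x)f(y)f(z)+f(z)f(y)f(x)$ up to a central correction; this isolates the sought multiplication. The lower‑order coefficients are central‑valued, hence invisible in $\overline{Q}$—this is exactly why $\alpha$ was taken into $Q/C$ rather than $Q$—and it is here that the splitting $Q=C\oplus W$ together with $\tfrac{1}{2}$ is used to choose representatives consistently. I would then \emph{define} $\varphi$ on products of elements of $L$ by the leading term and extend additively, verifying well‑definedness by observing that any additive relation among such products yields, after this construction, a vanishing quasi‑polynomial whose coefficients must all vanish by Lemma \ref{lfreq}; this guarantees consistency across the different representations of a single element of $\langle L\rangle$. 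Multiplicativity of $\varphi$ on $\langle L\rangle$ and the relation $\overline{\varphi(x)}=x^\alpha$ for $x\in L$ then follow.

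The main obstacle will be the two places where the Lie‑ideal hypothesis bites. First, assembling enough skew triple products \emph{inside} $L$ to write down a single functional identity whose middle function genuinely encodes the would‑be multiplication; and second, promoting the construction from $L$ to the \emph{generated ring} $\langle L\rangle$, i.e.\ proving that $\varphi$ is well defined and multiplicative there. Both steps require many free variables, which is why the hypothesis is $9$‑freeness rather than the $3$‑freeness that sufficed in Theorem \ref{wecan}, and both hinge on absorbing or dividing away the central correction terms—an operation guaranteed by the additive splitting $Q=C\oplus W$ and by the presence of the operator $\tfrac{1}{2}$ on $L$ and $Q$.
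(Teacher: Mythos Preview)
The paper is a survey and does not actually prove Theorem~\ref{thk}; it states the result and refers to \cite{FIbook} and the original sources, remarking that ``the proofs are involved and it is difficult to present their main ideas in a few lines.'' Your overall architecture---lift $\alpha$ to an additive $f:L\to Q$ via the splitting $Q=C\oplus W$, manufacture a functional identity from a polynomial identity in $M$, apply Theorem~\ref{thcorquasi} to force the middle function to be a quasi-polynomial, and read off $\varphi$ from the leading term---is indeed the shape of the argument, and the four-term identity you write down is precisely the one the paper singles out for the easier case where the domain is all of $K$.

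The genuine gap is at the step you yourself flag as ``the main obstacle'' but then wave away. To apply $f$ to the identity
\[
[u,\,xyz+zyx]+[z,\,uxy+yxu]+[y,\,zux+xuz]+[x,\,yzu+uzy]=0
\]
you need each inner argument $xyz+zyx$ (with $x,y,z\in L$) to lie in $L$, since $f$ is only defined on $L$. Your proposed bootstrap does not deliver this: the relation $[[x,k],x]=2xkx-k\circ x^2\in L$ says only that $2xkx\equiv k\circ x^2\pmod L$, and there is no mechanism forcing $k\circ x^2$ (hence $xkx$) into $L$. More generally, from $[L,K]\subseteq L$ one can show that \emph{differences} such as $\{x,y,z\}-\{x,z,y\}=[x,[y,z]]$ lie in $L$, but the individual Jordan triple products $\{x,y,z\}=xyz+zyx$ need not; the linear system relating the three triple products to double commutators has a one-dimensional kernel, so no amount of ``iterating brackets'' recovers $\{x,y,z\}$ itself.

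This is exactly the point at which the Lie-ideal case diverges from the case $L=K$. The actual proof does not assume such closure; it replaces the triple products by longer expressions built entirely from iterated Lie brackets (which \emph{are} in $L$ by the ideal property), producing an FI in correspondingly more variables. That, together with the bookkeeping needed to push $\varphi$ from $L$ to $\langle L\rangle$, is why the hypothesis is $9$-freeness rather than the smaller degree that suffices for $K$ itself. Your sketch correctly locates the difficulty but does not resolve it.
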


Combining Theorem \ref{thk} with Corollary \ref{leserab} we obtain the following corollary.

\begin{corollary}\label{6Lie1}Let
 $K$ be the set of skew elements of a ring $M$ with involution,
 let $L$ be a Lie ideal of $K$, 
 let $R$ be a prime ring with involution,
let $C$ be the extended centroid of $R$, let $T$ be the set of skew elements of $R$, let $U$ be a noncentral Lie ideal of $T$, and let
$\alpha$ be a Lie homomorphism from $L$ onto $\overline{U} = U/U\cap C$.
Suppose that $L$ admits the operator $\frac{1}{2}$  and that ${\rm char}(R)\not=2$. If  $\deg(R)\ge 21$, then there exists
a homomorphism $\varphi:\langle L\rangle\to C\langle U\rangle  + C$
such that $x^\alpha =\overline{x^\varphi}$ for all $x \in L$. 
\end{corollary}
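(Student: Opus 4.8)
The plan is to derive the corollary as a specialization of Theorem~\ref{thk}, taking $Q=Q_{ms}(R)$ and supplying the $9$-free subset required there by means of Corollary~\ref{leserab}. Here $R$ denotes the prime ring of the statement (not the $9$-free subset $R$ appearing in Theorem~\ref{thk}), $C$ is its extended centroid, and I set $Q=Q_{ms}(R)$; by \eqref{cqrjec} this $Q$ is a unital ring whose center is exactly $C$. Everything then reduces to checking that the four hypotheses of Theorem~\ref{thk}---the operator $\frac12$ on $L$ and on $Q$, the direct summand property, and the existence of a $9$-free $R''\subseteq Q$ with $\overline{R''}=L^\alpha$---are met, and afterwards refining the codomain of the resulting homomorphism.

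First I would verify the structural hypotheses for $Q=Q_{ms}(R)$. Since $\mathrm{char}(R)\ne 2$, the extended centroid $C$ is a field of characteristic $\ne 2$ and $Q$ is a $C$-algebra; thus $2$ is invertible in $Q$, so $Q$ admits the operator $\frac12$, while $L$ admits it by hypothesis. Regarding $Q$ as a $C$-vector space, the one-dimensional subspace $C=C\cdot 1$ has a linear complement, so $C$ is a direct summand of the additive group of $Q$. Next I would produce the $9$-free subset: as $U$ is a noncentral Lie ideal of the skew elements $T$ of $R$, $\mathrm{char}(R)\ne 2$, and $\deg(R)\ge 21=2\cdot 9+3$, Corollary~\ref{leserab} with $d=9$ shows that $U$ is a $9$-free subset of $Q$. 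Identifying $\overline{U}=U/(U\cap C)$ with $\{\,u+C\,|\,u\in U\,\}\subseteq\overline{Q}=Q/C$, the given $\alpha$ is a Lie homomorphism of $L$ onto $\overline{U}$, so $U$ is a $9$-free subset of $Q$ with $\overline{U}=L^\alpha$, precisely as Theorem~\ref{thk} demands. Applying that theorem yields a ring homomorphism $\varphi\colon\langle L\rangle\to Q$ with $x^\alpha=\overline{x^\varphi}$ for all $x\in L$.

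It then remains to refine the codomain from $Q$ to $C\langle U\rangle+C$. For each $x\in L$, the relation $x^\varphi+C=x^\alpha\in\overline{U}$ forces $x^\varphi\in U+C$, whence $\varphi(L)\subseteq U+C\subseteq C\langle U\rangle+C$. Because $C$ is central, a short computation shows $C\langle U\rangle+C$ is a unital subring of $Q$ (products of elements $a+c,\ a'+c'$ with $a,a'\in C\langle U\rangle$, $c,c'\in C$ land back in $C\langle U\rangle+C$). Since $\varphi$ is a ring homomorphism and $\langle L\rangle$ is the subring generated by $L$, the image $\varphi(\langle L\rangle)$ is the subring generated by $\varphi(L)$ and hence lies in $C\langle U\rangle+C$. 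Thus $\varphi$ may be regarded as a homomorphism $\langle L\rangle\to C\langle U\rangle+C$ with $x^\alpha=\overline{x^\varphi}$, which is the assertion.

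There is no new conceptual content here: the corollary is an assembly of Theorem~\ref{thk} and Corollary~\ref{leserab}, so the work is in the bookkeeping. The points demanding care are the numerology ($\deg(R)\ge 21$ is exactly the threshold $2\cdot 9+3$ producing $9$-freeness), the two verifications for $Q_{ms}(R)$ (the operator $\frac12$ and the direct summand), and the final codomain refinement; the most likely source of error is the double use of the symbol $R$, since in Theorem~\ref{thk} it names the $9$-free subset whereas here it names the ambient prime ring whose role is played by $U\subseteq Q_{ms}(R)$.
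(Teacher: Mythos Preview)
Your proposal is correct and follows exactly the approach indicated in the paper, which derives the corollary by combining Theorem~\ref{thk} with Corollary~\ref{leserab}. You have supplied the routine verifications (the $\frac12$-operator on $Q_{ms}(R)$, the direct-summand property of $C$ via the $C$-vector-space structure, the numerology $21=2\cdot 9+3$, and the codomain refinement to $C\langle U\rangle+C$) that the paper leaves implicit.
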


This corollary solves the most difficult among Herstein's problems on Lie homomorphisms. To be precise, it  does not exactly solve it but reduces it to the case where
$\deg(R)\le 20$. It turns out that some of these low degree cases are really exceptional, that is, counterexamples show that the corollary does not hold for them. A detailed analysis is given in \cite{Onher3}. The proofs therein do not use FIs but  classical methods.

We close this subsection with a  short discussion on Lie derivations. As above, let
 $L$ be a Lie subring of a ring $R$. An additive map $\delta:L\to R$ is called a {\em Lie derivation} if
\begin{equation}
    \label{efld}
[x,y]^\delta =[x^\delta,y] + [x,y^\delta]\end{equation}
for all $x,y\in L$. 
The obvious problem is 
to show that a Lie derivation is close to a derivation.

The study of Lie derivations has always been parallel to the study of Lie homomorphisms. The latter, however, is usually  more  demanding. One of the reasons is that we have no analog of an antihomomorphism among derivation-like maps. It is therefore not surprising that FIs can be used for describing Lie derivations in terms of derivations.

If $L$ is a ring, then by setting $x^2$ for $y$ in \eqref{efld} we obtain
$$[x, (x^2)^\delta - x^\delta x - xx^\delta]=0 $$
for all $x\in L$. Thus, we have again arrived at commuting traces of biadditive functions. Using this observation, one can prove the following.

\begin{theorem} \label{6derC3}
Let $R$ be a prime ring with extended centroid $C$. Then every Lie derivation $\delta:R\to R$ 
is of the form $\delta = d +\tau$ where
 $d$ is a derivation from $R$ to $CR+C$ and  $\tau: R\to C$ is an additive map vanishing on commutators, unless
 $\deg(R) = 2$ and {\rm char}$(R) =2$.
\end{theorem}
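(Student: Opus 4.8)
The plan is to imitate the proof of Theorem \ref{wecan}, replacing the Lie homomorphism by the Lie derivation $\delta$ and measuring its failure to be a derivation by
$$f(x,y):=(xy)^\delta - x^\delta y - x y^\delta.$$
A direct computation from \eqref{efld} shows that $f$ is biadditive and symmetric, $f(x,y)=f(y,x)$. Rather than passing through the commuting trace $f(x,x)$ (which lands in Example \ref{ex8} only after a polarization that introduces a factor $2$ and hence collapses when $\mathrm{char}(R)=2$), I would apply $\delta$ to the ``associative Jacobi identity''
$$[xy,z]+[zx,y]+[yz,x]=0,$$
use \eqref{efld} termwise, and substitute $(xy)^\delta=x^\delta y+xy^\delta+f(x,y)$. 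Grouping the resulting terms according to which of $x,y,z$ carries $\delta$, each of the three derivation-type groups is itself an instance of the associative Jacobi identity and so vanishes; what survives is the functional identity
$$[f(x,y),z]+[f(y,z),x]+[f(z,x),y]=0,$$
valid in every characteristic. This char-uniform production of the polarized identity, without the spurious factor $2$, is the whole point of going through the associative Jacobi identity.

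Set $Q=Q_{ms}(R)$, so that $R$ is a $3$-free subset of $Q$ whenever $\deg(R)\ge 3$ by Theorem \ref{tbei}. The displayed identity exhibits a core function (with $m=3$, $n=2$, and $\alpha$ the identity) equal to the zero quasi-polynomial, and since $f$ is symmetric every middle function coincides, up to sign, with a leftmost one. Theorem \ref{thcorquasi} (condition (b), the central coefficient being $0$) then gives that $f$ is a quasi-polynomial in two variables, and symmetry together with Lemma \ref{lfreq} merges coefficients into
$$f(x,y)=\lambda(xy+yx)+\mu(x)y+\mu(y)x+\nu(x,y),$$
with $\lambda\in C$, $\mu:R\to C$ additive, and $\nu:R^2\to C$ symmetric biadditive. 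To pin the coefficients down I would expand $(xyz)^\delta$ as $((xy)z)^\delta$ and as $(x(yz))^\delta$; comparison yields the associativity relation
$$f(xy,z)+f(x,y)z=f(x,yz)+xf(y,z),$$
whose degree-three part works out to be exactly $\lambda[[z,x],y]$, the remaining terms being central-coefficient multiples of the single variables $x$ and $z$ together with a purely central term.

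When $\deg(R)\ge 4$, Lemma \ref{lfreq} applied to this three-variable quasi-polynomial identity forces all coefficients to vanish: the degree-three coefficient gives $\lambda=0$ and the linear coefficients give $\nu(x,y)=-\mu(xy)$. When $\deg(R)=3$ exactly, $R$ is only $3$-free, so as in Theorem \ref{wecan} I would fix $y$, read the relation as a two-variable identity of the type \eqref{3S2} in $x$ and $z$, and invoke Theorem \ref{tsal} to extract $\lambda=0$ from the precise shape of the coefficients; once $\lambda=0$ the relation reduces to $[\mu(xy)+\nu(x,y)]z-[\mu(yz)+\nu(y,z)]x\in C$, a $J=\emptyset$ identity whose standard solution \eqref{3S3} again gives $\nu(x,y)=-\mu(xy)$. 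With $\lambda=0$ and $\nu(x,y)=-\mu(xy)$ one has $f(x,y)=\mu(x)y+\mu(y)x-\mu(xy)$, so setting $\tau:=-\mu:R\to C$ and $d:=\delta-\tau$ a direct check gives $d(xy)-d(x)y-xd(y)=f(x,y)-\tau(xy)+\tau(x)y+\tau(y)x=0$; thus $d$ is a derivation into $R+C\subseteq CR+C$, and $\tau$ vanishes on commutators automatically since $\delta$ is a Lie derivation, $d$ is a (hence Lie) derivation, and $\tau$ is central.

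The main obstacle is the step $\lambda=0$: it drops out freely only under $4$-freeness, and squeezing it from mere $3$-freeness requires the fixed-variable reduction borrowed from Theorem \ref{wecan}, which is the most delicate point of the argument. The boundary cases explain the hypotheses. If $\deg(R)=1$ then $R$ is commutative with $R\subseteq C$, and one takes $d=0$, $\tau=\delta$. If $\deg(R)=2$ the FI machinery is unavailable, but for $\mathrm{char}(R)\ne 2$ the commuting trace $f(x,x)$ still has standard form by the remark after Example \ref{ex8} (which is independent of $3$-freeness), and polarizing, legitimate since $2$ is invertible, recovers $f$ and hence the decomposition, exactly as Corollary \ref{b51} is obtained in the Lie-homomorphism setting. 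Precisely when $\deg(R)=2$ and $\mathrm{char}(R)=2$ do both routes fail at once --- $3$-freeness is lost and polarization collapses --- which is the single excluded case.
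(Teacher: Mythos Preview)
Your argument is correct, and its route differs from the paper's brief sketch. The paper only indicates that substituting $y=x^2$ in \eqref{efld} yields $[x,(x^2)^\delta-x^\delta x-xx^\delta]=0$, i.e., that $f(x,x)$ is a commuting trace of a biadditive map (Example~\ref{ex8}), and then says ``using this observation, one can prove'' the theorem. You instead mimic the proof of Theorem~\ref{wecan}: apply $\delta$ to the associative Jacobi identity to obtain $[f(x,y),z]+[f(y,z),x]+[f(z,x),y]=0$ directly. The two entry points are related---linearizing the commuting-trace identity gives exactly twice your identity---but your derivation produces it without the spurious factor~$2$, which is why your argument handles ${\rm char}(R)=2$ uniformly for $\deg(R)\ge 3$, whereas the commuting-trace route would need extra care there. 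Your subsequent extraction of $\lambda=0$ via the associativity relation $f(xy,z)+f(x,y)z=f(x,yz)+xf(y,z)$, including the fixed-$y$ reduction in the borderline $\deg(R)=3$ case, is the exact analog of the step around \eqref{ifitwas} in Theorem~\ref{wecan} and goes through as you describe.

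One minor caution: your treatment of $\deg(R)=2$ with ${\rm char}(R)\ne 2$ is compressed. Polarization indeed recovers the quasi-polynomial form of $f(x,y)$ from that of $f(x,x)$, but to reach $\nu(x,y)=-\mu(xy)$ you then rely on the associativity relation, and $R$ is only $2$-free here. What saves you is that when $\deg(R)=2$ the term $\lambda(xy+yx)$ is already absorbable into the lower-degree part (since $x^2\in Cx+C$ by Cayley--Hamilton, hence $xy+yx\in Cx+Cy+C$), so $\lambda$ can be taken to be $0$ from the outset; the remaining identity $(\mu(xy)+\nu(x,y))z-(\mu(yz)+\nu(y,z))x\in C$ then needs a short direct argument rather than $3$-freeness. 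The paper is equally terse here, deferring to the references for the low-degree cases, so this is not a defect peculiar to your write-up.
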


This was also proved already in \cite{B5}, however, under the additional assumption that $\deg(R)=2$. Before that, results of this kind were known for rings containing nontrivial idempotents (incidentally, in \cite{Her} Herstein mentioned 
that in an unpublished work Kaplansky proved this for rings 
containing $n\times n$ matrix units with  $n\ge 3$).

Theorem \ref{6derC3} is an analog of Corollary \ref{b51}. One can prove analogs of many other results of Lie homomorphisms. For example, the analog of Theorem \ref{thk} reads as follows.

\begin{theorem}\label{63Tder}
Let $K$ be the set of skew elements of a ring $R$ with involution, let $L$ be  Lie ideal of $K$, let $Q \supseteq R$ be a unital ring with center $C$, and let $\delta:L\to\overline{Q}$ be a Lie derivation. Suppose that $L$ and $Q$ admit the operator $\frac{1}{2}$ and that $C$ is a direct summand of the additive group $Q$.
If $L$ is a $9$-free subset of $Q$, then there exists a derivation $d:\langle L \rangle \to Q$ such that $x^\delta = \overline{x^d}$ for all $x\in L$.
\end{theorem}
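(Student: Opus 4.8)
The plan is to reduce the Lie derivation problem to the already-established Lie homomorphism result, Theorem \ref{thk}, by means of the triangular embedding from Theorem \ref{thomder}. This is exactly the reduction anticipated in the discussion following Theorem \ref{thomder}: a derivation-like map turns into a homomorphism-like map once it is recorded in the off-diagonal slot of a $2\times 2$ matrix. All the genuinely intricate analysis is thereby outsourced to Theorem \ref{thk}, and the only new work is the triangular set-up and the extraction of an honest derivation at the end.

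First I would fix an additive lift of $\delta$. Since $C$ is a direct summand of the additive group $Q$, choose an additive complement $W$ with $Q=C\oplus W$; the projection onto $W$ is an additive section of the quotient $Q\to\overline Q$, and composing $\delta$ with it produces an additive map $\Delta:L\to Q$ with $\overline{\Delta(x)}=x^\delta$ for every $x\in L$ (note $[x,y]\in L$ since $[L,L]\subseteq[L,K]\subseteq L$, so $\Delta$ is defined on all brackets). The Lie derivation identity for $\delta$ then reads $\Delta([x,y])-[\Delta(x),y]-[x,\Delta(y)]\in C$ for all $x,y\in L$.

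Next I would pass to the ring $\widetilde Q$ and define $\psi:L\to\widetilde Q$ by
$$\psi(x)=\begin{bmatrix} x & \Delta(x)\\ 0 & x\end{bmatrix}.$$
Computing the commutator of two such matrices shows that the off-diagonal entry of $\psi([x,y])-[\psi(x),\psi(y)]$ equals $\Delta([x,y])-[\Delta(x),y]-[x,\Delta(y)]\in C$, while the diagonal entries vanish. As the center of $\widetilde Q$ is precisely the set of matrices $\begin{bmatrix} z & w\\ 0 & z\end{bmatrix}$ with $z,w\in C$, this means that the induced map $\overline\psi:L\to\overline{\widetilde Q}$ (quotient by the center of $\widetilde Q$) is a genuine Lie homomorphism. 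I would then check the hypotheses of Theorem \ref{thk} with ambient ring $\widetilde Q$: the image $\psi(L)$ is a $9$-free subset of $\widetilde Q$ by Theorem \ref{thomder} (applied to the $9$-free set $L$ and the function $\Delta$), and $\overline{\psi(L)}=L^{\overline\psi}$; moreover $\widetilde Q$ inherits the operator $\frac12$ from $Q$, and $Z(\widetilde Q)\cong C\oplus C$ is a direct summand of $\widetilde Q\cong Q\oplus Q$ because $C$ is a direct summand of $Q$. Theorem \ref{thk} then supplies a ring homomorphism $\varphi:\langle L\rangle\to\widetilde Q$ with $\overline{\varphi(x)}=\overline{\psi(x)}$ for all $x\in L$.

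Finally I would extract the derivation. Writing $\varphi(w)=\begin{bmatrix} h(w) & d(w)\\ 0 & h(w)\end{bmatrix}$ (the diagonal entries are forced equal, the image lying in $\widetilde Q$), the homomorphism property makes $h:\langle L\rangle\to Q$ a ring homomorphism and gives $d(uv)=h(u)d(v)+d(u)h(v)$, while $\overline{\varphi(x)}=\overline{\psi(x)}$ yields $h(x)\equiv x$ and $\overline{d(x)}=x^\delta$ modulo $C$ on $L$. If $h$ equals the inclusion $\iota$ of $\langle L\rangle$ into $Q$, then the displayed rule is exactly the Leibniz rule, so $d$ is a derivation and $\overline{d(x)}=x^\delta$ is the desired conclusion. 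The hard part will be this normalization of the diagonal component: the homomorphism produced by Theorem \ref{thk} a priori twists the inclusion by an additive central map $\gamma=h|_L-\iota|_L:L\to C$, which one checks vanishes on $[L,L]$ (since central terms drop out of $[h(x),h(y)]$), and one must absorb this twist to force $\gamma=0$. This is where the rigidity coming from $9$-freeness, together with the operator $\frac12$ and the direct-summand hypothesis on $C$, is used; everything else is routine bookkeeping with the triangular matrices. As an alternative one could try to mirror the proof of Theorem \ref{thk} directly, building functional identities from the closure identities of the skew Lie ideal (such as $xyz+zyx\in K$ and the four-term Jacobi-type identity) and applying Theorem \ref{thcorquasi}, but because $\delta$ is defined only on $L$ and not on the required products, this route is considerably more delicate, which is exactly why the triangular reduction is preferable.
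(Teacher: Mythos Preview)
Your proposal is correct and follows exactly the route the paper indicates: reduce the Lie derivation to a Lie homomorphism via the triangular embedding of Theorem~\ref{thomder}, then invoke Theorem~\ref{thk}. You have also correctly isolated the one genuinely nontrivial residual step---normalizing the diagonal homomorphism $h$ to the inclusion---which the paper likewise does not spell out here.
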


Theorem \ref{63Tder}
is not exactly surprising in view of Theorem \ref{thk}. It is perhaps more surprising that it actually follows from Theorem \ref{thk} combined with Theorem  \ref{thomder}.

For more details on applications of FIs to Lie derivations see 
\cite{FIbook} or the original sources \cite{Onher2,  BCIVh, B5, Swain}. 

Let us finally also mention that Lie maps in algebras occurring in functional analysis have also been handled by  combining FIs with analytic methods \cite{Alam, AMb, BVi1, BVi2, BVi3, BCFV, Vil}.

 \subsection{Group gradings} Let $G$ be a group and let $A$ be a nonassociative (= not necessarily associative) algebra over a field $F$.
 We say that $A$ is {\em graded by $G$} if there exist linear subspaces $A_g$, $g\in G$, such that $$A = \oplus_{g\in G} A_g\quad\mbox{and}\quad A_gA_h\subseteq A_{gh}$$ for all $g,h\in G$. 
 
 Gradings of  Lie algebras are particularly interesting; see \cite{EK} for a general reference. 
If  a Lie algebra $L$ is graded by a group $G$,  $L = \oplus_{g\in G} L_g$, then 
the elements from the support of the grading Supp$\,L =\{g\in G\,|\, L_g\ne \{0\}\}$) always commute \cite{PZ}. It is therefore natural to restrict ourselves to gradings by Abelian groups.

 Let $L$ be a Lie subalgebra of an associative algebra $A$. Suppose that $L$ is graded (as a Lie algebra) by a group $G$. Is this grading induced from a grading of $A$, that is, is
 $A$ graded by $G$  and $L_g = A_g\cap L$? Assuming that
 gradings on $A$ can be described, a positive answer to this question implies that gradings on $L$ can be described too.

The approach to this question, proposed in our paper with Bahturin \cite{BB}, is based on the following observation. Writing $H$ for the group algebra $FG$, the linear map 
$\alpha: L\otimes H\to L\otimes H$ given by
$$\alpha(a\otimes h) = \sum_{g\in G} a_g\otimes gh $$ is a Lie automorphism of the Lie subalgebra $L\otimes H$ of $A\otimes H$. If 
$A$ is generated as an algebra by $L$ and
$\alpha$ can be extended to a homomorphism  $\overline{\alpha}$ from 
$A\otimes H$ to itself, then our question has an affirmative answer. Indeed, by defining  $$A_g=\{a\in A\,|\, \overline{\alpha}(a\otimes 1) = a\otimes g\}$$ we obtain a grading of $A$ such that $L_g = A_g\cap L$ for each $g\in G$.

As we saw above, the problem of extending Lie homomorphisms to homomorphism can be solved by means of FIs. The problem that occurs is that the algebra $A\otimes H$ may not be prime even when $A$ has the most favorable properties. One is therefore forced to use abstract results on Lie homomorphisms that involve $d$-free sets. More precisely, the notion of the fractional degree has turned out to be the key to solutions.

Let us state only one theorem which illustrates this line of investigation. We will avoid stating its  general version \cite[Theorem 7.1]{BB} which considers the case where $A$ is a centrally closed prime algebra, but confine ourselves to the corollary treating central simple algebras. Here, by a central simple algebra we mean a not necessarily finite-dimensional simple $F$-algebra such that its centroid is $F$. 

\begin{theorem}\label{thrad}
Let $F$ be a field with {\rm char}$(F)\ne 2$ and let
$A$ be a central simple $F$-algebra with involution such
that $\dim_F(A)\ge 441$.
 Let $K$ be the Lie subalgebra of skew elements in $A$. Suppose that the Lie subalgebra $L=[K,K]$
is graded by an Abelian group $G$, which is finite if $A$ is not unital. Then there exists 
a grading of $A$ by $G$ such that $L_g = A_g\cap L$ for each $g\in G$.
\end{theorem}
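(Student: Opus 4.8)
The plan is to realize the grading as the one induced by an algebra automorphism obtained by extending a suitable Lie map, following the strategy sketched just before the statement. Write $H=FG$ for the group algebra, which is commutative since $G$ is Abelian, form the algebra $A\otimes H$, and equip it with the involution $*\otimes\mathrm{id}$. Commutativity of $H$ gives $[a\otimes h,b\otimes h']=[a,b]\otimes hh'$, so the skew elements of $A\otimes H$ are $K\otimes H$ and $[K\otimes H,K\otimes H]=L\otimes H$; hence $L\otimes H$ is a Lie ideal of the skew elements of $A\otimes H$. Using the grading $a=\sum_g a_g$ of $L$, define $\alpha\colon L\otimes H\to L\otimes H$ by $\alpha(a\otimes h)=\sum_{g\in G}a_g\otimes gh$; since $[L_g,L_h]\subseteq L_{gh}$, a direct check shows $\alpha$ is a Lie automorphism, its inverse twisting by $g\mapsto g^{-1}$. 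The easy half of the argument is then that if $\alpha$ extends to an algebra endomorphism $\overline\alpha$ of $A\otimes H$ fixing $1\otimes H$, then $\rho\colon A\to A\otimes H$, $\rho(a)=\overline\alpha(a\otimes1)$, is an algebra homomorphism agreeing with the grading coaction on the generating set $L$; by multiplicativity it is a coaction on all of $A=\langle L\rangle$, which is the same as a $G$-grading, with $A_g=\{a\in A:\overline\alpha(a\otimes1)=a\otimes g\}$ and $L_g=A_g\cap L$.

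Thus everything reduces to extending $\alpha$, and this is exactly what the FI machinery is built for. First I would record two structural facts, both valid because $\deg A\ge 21>4$ and $\mathrm{char}(F)\ne2$: by Herstein's theory the associative subalgebra generated by $L=[K,K]$ is all of $A$, so $\langle L\otimes H\rangle=A\otimes H$; and $L\otimes H$ is a \emph{noncentral} Lie ideal of the skew elements. The core step is to prove that $L\otimes H$ is a $9$-free subset of $A\otimes H$. Here I would start from Corollary \ref{ctbei2}: since $A$ is central simple with $\dim_F A\ge441=21^2$, $A$ is a $21$-free ring. Because $21=2\cdot9+3$, the passage from $A$ through the involution to the skew elements and then to a noncentral Lie ideal costs exactly the two reductions supplied by Theorems \ref{tinv} and \ref{tza}; this is the abstract, $d$-free analogue of Corollary \ref{leserab}. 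What remains is to carry the freeness across the tensor factor $H$: when $G$ is finite, $H$ is finite-dimensional and Theorem \ref{ttp} gives directly that $A\otimes H$ is $21$-free, after which the involution and Lie-ideal reductions yield the $9$-freeness of $L\otimes H$.

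With $L\otimes H$ known to be $9$-free I would invoke Theorem \ref{thk} with $M=Q=A\otimes H$ and center $C=Z(A)\otimes H\cong H$. The hypotheses hold: $L\otimes H$ and $A\otimes H$ admit the operator $\tfrac12$ because $\mathrm{char}(F)\ne2$, and $C$ is a direct summand of $A\otimes H$ (split off a complement of $F\cdot1$ in $A$ and tensor with $H$). Composing $\alpha$ with the quotient map $Q\to\overline Q=Q/C$ gives a Lie homomorphism into $\overline Q$ with image $\overline{L\otimes H}$, so Theorem \ref{thk} produces an algebra homomorphism $\varphi\colon A\otimes H\to A\otimes H$ with $x^\varphi\equiv\alpha(x)\pmod C$ for all $x\in L\otimes H$. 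It then remains to remove the central ambiguity: arranging $\varphi$ to fix $1\otimes H$, hence to be $H$-linear, and using that $\alpha$ sends a homogeneous $a\in L_g$ to $a\otimes g$, one checks the central correction vanishes, so $\varphi=\overline\alpha$ genuinely extends $\alpha$ and the grading of the first paragraph is defined.

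The main obstacle is precisely that $A\otimes H$ need not be prime, so no prime-ring result (in particular Corollary \ref{leserab}) can be cited directly; $9$-freeness must be established abstractly, through the fractional degree together with the tensor-product and involution/Lie-ideal theorems. This is routine when $G$ is finite, but when $G$ is infinite—the case in which $A$ is required to be unital—$H$ is infinite-dimensional, Theorem \ref{ttp} no longer applies, and one must instead compute the symmetric fractional degree of a suitably chosen element of $A\otimes H$ and feed it into Theorem \ref{mt}. The remaining points, namely the lifting of $\varphi$ past the center and, in the non-unital case, the choice of a unital overring on which to run Theorem \ref{thk}, are comparatively minor.
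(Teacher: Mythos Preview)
Your proposal follows the paper's strategy closely: form the Lie automorphism $\alpha$ of $L\otimes H$, extend it to an algebra homomorphism via Theorem~\ref{thk}, and read off the grading. You also correctly identify the central obstacle, namely that $A\otimes H$ is not prime, so Corollary~\ref{leserab} cannot be quoted directly and $9$-freeness must be obtained abstractly.

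There is one technical slip in your pipeline for the finite-$G$ case. You propose to first invoke Theorem~\ref{ttp} to conclude that $A\otimes H$ is $21$-free, and \emph{then} apply the involution and Lie-ideal reductions (Theorems~\ref{tinv} and~\ref{tza}) to descend to $L\otimes H$. But those two reductions require $(t;\,\cdot\,)$-freeness, not mere $d$-freeness, and Theorem~\ref{ttp} only delivers $d$-freeness. The fix is simply to reverse the order: since $A$ itself is simple (hence prime and symmetrically closed) with $\deg(A)\ge 21$, Corollary~\ref{leserab} gives that $L=[K,K]$ is a $9$-free subset of $A$; then Theorem~\ref{ttp} (with $R=L$) yields that $\{x\otimes h: x\in L,\,h\in H\}$ is $9$-free in $A\otimes H$, and Theorem~\ref{t1} promotes this to $L\otimes H$. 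This is also closer to the paper's hint that the (symmetric) fractional degree is the key tool, since in the original argument in \cite{BB} one works with the fractional degree of a well-chosen element directly rather than via the later tensor-product theorem; your route via Theorem~\ref{ttp} is a legitimate shortcut for finite $G$, once the order is corrected. For infinite $G$ you correctly note that one must fall back on a fractional-degree computation in $A\otimes H$.

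The remaining points you flag as minor (eliminating the central discrepancy between $\varphi$ and $\alpha$, and, in the non-unital case, passing to a unital hull) are indeed handled in \cite{BB}; they require some care but no new ideas.
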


Under the assumption of the theorem, the condition  $\dim_F(A)\ge 441$ is equivalent to 
$\deg(A)\ge 21$. The latter condition also appears  in Corollary \ref{6Lie1}. However, as explained above, Corollary \ref{6Lie1} is not directly applicable in the proof of  Theorem \ref{thrad}.
 
A similar result does not hold for  gradings of  the Lie subalgebra $[A,A]$.
Besides gradings
that are induced from  gradings of 
$A$, there are also equally natural gradings that arise from an involution 
of $A$. See \cite{BB} for details.
We remark that as a byproduct of the main results from this paper, 
new proofs and generalizations of known results concerning classical Lie algebras were given.

Similar results on gradings of Jordan subalgebras of associative algebras were obtained in \cite{BBSh}. 

Finally, we mention the paper \cite{BBK} in which similar, but more complex methods are used for describing   gradings 
by  Abelian
groups on simple finitary Lie algebras of linear transformations  on infinite-dimensional vector spaces.

\subsection{Poisson algebras}

A vector space $A$ over a field $F$ is called a {\em
Poisson algebra} if it is endowed with two multiplications,
$\cdot$ and $\{\,\cdot\,,\,\cdot\,\}$, such that
\begin{enumerate}
    \item[(a)] $A$ is an associative algebra under $\cdot$\,,
    \item[(b)] $A$ is a Lie algebra under $\{\,\cdot\,,\,\cdot\,\}$,
    \item[(c)] $\{x\cdot y,z\} = x\cdot \{y,z\}+\{x,z\}\cdot y$ for all $x,y,z\in A$.
\end{enumerate}

A Lie homomorphism from a commutative Poisson algebra  
$A$ to the algebra of linear operators on a Hilbert space is called a 
 {\em Dirac map}.
 The {\em Dirac problem} asks for describing Dirac maps. It
 has been studied since a long time ago \cite{Jo, Sou, St}. 
 
 Using FIs, we can consider a more general version of  the Dirac problem where the Poisson algebra is not necessarily commutative and  the Lie homomorphism does not necessarily map to the algebra of linear operators on a Hilbert space. We already know from Subsection \ref{ss41} that a
 Lie homomorphism from
 a Poisson algebra, just as from any
  associative algebra,  to another associative algebra gives rise to certain FIs. It turns out that using other properties of Poisson algebras one can derive further FIs. Based on these FI, one can prove the following theorem which is due to Beidar and Chebotar and appears as Theorem 8.3 in the book \cite{FIbook}  (and was not published elsewhere).

\begin{theorem}\label{PAT1}
Let $F$ be a field with {\rm char}$(F) \ne  2$,  let $A$ be a
Poisson $F$-algebra over $F$, and let $Q$ be a unital $F$-algebra whose center is $F$. If $\alpha:A\to Q$ is a
linear map such that
$$
\{x,y\}^\alpha=[x^\alpha,y^\alpha]
$$
for all $x,y\in A$ and
$A^\alpha$ is a $4$-free subset of $Q$, then there exist a $\lambda
\in F$, linear maps $\mu_1, \mu_2:A\to F$, and a
bilinear map $\nu:A^2\to F$ such that either
$$
(xy)^\alpha=\lambda x^\alpha y^\alpha+
 \mu_1(x)y^\alpha+\mu_2(y)x^\alpha+\nu(x,y)
$$
or
$$
(xy)^\alpha=\lambda y^\alpha x^\alpha+
 \mu_1(x)y^\alpha+\mu_2(y)x^\alpha+\nu(x,y),
$$
and
$$
\lambda \nu(x,y)= \mu_1(x)\mu_1(y)-\mu_1(xy) = \mu_2(x)\mu_2(y)-\mu_2(xy)
$$
for all $x,y\in A$.
Moreover, if $A$ is a commutative algebra, then $\lambda=0$, $\mu_1=\mu_2$, and $\nu$ is symmetric.
\end{theorem}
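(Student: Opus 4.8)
The plan is to determine the bilinear map $\pi(x,y):=(xy)^\alpha$ by first showing it is a quasi-polynomial and then pinning down its coefficients, closely following the pattern of the proof of Theorem~\ref{wecan}. The two hypotheses enter as follows. Applying $\alpha$ to the Poisson Leibniz rule $\{xy,z\}=x\{y,z\}+\{x,z\}y$ and using $\{u,v\}^\alpha=[u^\alpha,v^\alpha]$ gives the basic relation
$$[\pi(x,y),z^\alpha]=\pi(x,\{y,z\})+\pi(\{x,z\},y)\qquad(\ast)$$
for all $x,y,z\in A$. Specializing, since $\{x^2,x\}=0$ we obtain $[(x^2)^\alpha,x^\alpha]=0$, so $F(x):=(x^2)^\alpha=\pi(x,x)$ is a commuting trace of the biadditive map $\pi$ relative to $\alpha$. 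By the commuting-trace theory (Example~\ref{ex8} together with the remark following Theorem~\ref{thcorquasi}, applicable since $A^\alpha$ is $4$-free, hence $3$-free, since $C=F$, and since $\mathrm{char}(F)\ne2$), $F$ is a quasi-polynomial, and polarizing yields the symmetric part
$$\pi_+(x,y):=(xy+yx)^\alpha=\lambda\,(x^\alpha y^\alpha+y^\alpha x^\alpha)+\mu(x)y^\alpha+\mu(y)x^\alpha+2\nu(x,y)$$
with $\lambda\in F$, $\mu\colon A\to F$ linear, and $\nu\colon A^2\to F$ symmetric bilinear.

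The heart of the argument is to upgrade this to a description of the full $\pi$, equivalently of the antisymmetric part $\pi_-(x,y):=(xy-yx)^\alpha=\alpha([x,y])$. Writing $\pi=\tfrac12(\pi_++\pi_-)$ in $(\ast)$ and cancelling the symmetric contribution, which satisfies the symmetrized form of $(\ast)$ identically, one sees that $\pi_-$ obeys the same covariance relation $[\pi_-(x,y),z^\alpha]=\pi_-(x,\{y,z\})+\pi_-(\{x,z\},y)$ while $\pi_-(x,x)=0$. I would then feed this, together with $(\ast)$ and the already-determined $\pi_+$, into the quasi-polynomial machinery of Theorem~\ref{thcorquasi} and Lemma~\ref{lfreq}: the aim is to produce a core-function identity whose middle function is $\pi_-$ and whose right-hand side is a quasi-polynomial, forcing $\pi_-$, and hence $\pi$, to be a quasi-polynomial
$$\pi(x,y)=\lambda_1 x^\alpha y^\alpha+\lambda_2 y^\alpha x^\alpha+\mu_1(x)y^\alpha+\mu_2(y)x^\alpha+\nu(x,y),\qquad\lambda_i\in F.$$

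With this form available, the remaining structure comes from associativity. Evaluating $((xy)z)^\alpha$ and $(x(yz))^\alpha$ from the quasi-polynomial expression, equating them, and applying Lemma~\ref{lfreq} (here $4$-freeness is what lets me read off coefficients of the degree-$3$ monomials) forces $\lambda_1\lambda_2=0$, exactly as the step yielding \eqref{ifitwas} does in Theorem~\ref{wecan}; since $C=F$ is a field this is the dichotomy ``$\lambda_2=0$ or $\lambda_1=0$'', producing the two displayed forms with $\lambda:=\lambda_1$ respectively $\lambda:=\lambda_2$. Reading off the coefficients of $x^\alpha$ and $y^\alpha$ in the same comparison yields the compatibility identities $\lambda\nu(x,y)=\mu_1(x)\mu_1(y)-\mu_1(xy)=\mu_2(x)\mu_2(y)-\mu_2(xy)$. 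Finally, if $A$ is commutative then $\pi(x,y)=\pi(y,x)$, and comparing coefficients through Lemma~\ref{lfreq} forces $\lambda=0$, $\mu_1=\mu_2$, and $\nu$ symmetric, which is the closing assertion.

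I expect the main obstacle to be the middle step, the extraction of $\pi_-$. Unlike the symmetric part it is invisible to a commuting trace, because its trace $\pi_-(x,x)$ vanishes identically, and the terms $\pi(x,\{y,z\})$ in $(\ast)$ are \emph{not} of core-function shape: the Poisson bracket $\{y,z\}$ is not an associative commutator and cannot be expanded as $yz-zy$, so it sits opaquely inside $\pi$. Converting $(\ast)$ into a genuine ``core function equals quasi-polynomial'' identity, presumably by iterating $(\ast)$ in a fourth variable and using the already-known $\pi_+$ to absorb the bracket terms, is the delicate part, and is precisely where the full strength of $4$-freeness rather than $3$-freeness will be required.
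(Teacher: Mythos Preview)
The paper does not prove Theorem~\ref{PAT1}; it merely states the result, attributes it to Beidar and Chebotar, and refers to \cite[Theorem~8.3]{FIbook} for the argument. The only hint offered is that ``using other properties of Poisson algebras one can derive further FIs'' beyond those coming from the Lie-homomorphism condition alone. So there is no detailed proof here to compare against, only that hint.

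Your outline is sound at both ends. The Leibniz identity $(\ast)$ is correct and does yield $[(x^2)^\alpha,x^\alpha]=0$, whence the commuting-trace machinery gives the symmetric part $\pi_+$ as a quasi-polynomial under $3$-freeness. Likewise, \emph{once one knows that the full $\pi$ is a quasi-polynomial}, the associativity comparison $((xy)z)^\alpha=(x(yz))^\alpha$ together with Lemma~\ref{lfreq} does force $\lambda_1\lambda_2=0$ and the stated relations among $\mu_1,\mu_2,\nu$, exactly as in the proof of Theorem~\ref{wecan}; and the commutative case then follows as you say, since $\pi_-=0$.

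The gap you flag, however, is genuine and is not closed by anything you have written. The relation $(\ast)$ is \emph{not} a core-function identity: its right-hand side contains $\pi(x,\{y,z\})$ and $\pi(\{x,z\},y)$, and since the Poisson bracket has no expression in terms of the associative structure of $Q$, these terms cannot be rewritten through $x^\alpha,y^\alpha,z^\alpha$ without already knowing $\pi$. Iterating $(\ast)$ in a fourth variable only produces more such opaque terms, now with nested Poisson brackets, and ``absorbing'' them via $\pi_+$ does not help: a short computation shows that $\pi_+$ and $\pi_-$ satisfy the \emph{same} covariance law $(\ast)$, so subtracting yields no new constraint on $\pi_-$. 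Note too that nothing in your argument actually uses $4$-freeness rather than $3$-freeness, whereas the hypothesis of the theorem is $4$-freeness; this strongly suggests that the intended proof invokes a genuinely four-variable functional identity which your proposal has not identified. As it stands, the middle step is a plan rather than an argument, and the proposal is not a proof in the noncommutative case.
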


\subsection{Lie superhomomorphisms}

Over the years, several authors extended
parts of Herstein's theory of Lie structures in associative rings 
 to superalgebras (see \cite{GS, Lal, LS, Mont, Z} and references therein). It is therefore natural to seek  extensions of the above results on Lie homomorphisms to  superalgebras. It is not immediately clear that FIs are applicable here too.  However, it has turned out that they are, although in a less straightforward manner \cite{BB2, BBS, WangJA, Wsuper2}. We will present only a result from our joint  paper with Bahturin and Špenko \cite{BBS}. The others are similar in nature, but have more complicated statements.

We start with the necessary definitions, notation, and terminology. By an algebra we will mean
a not necessarily associative algebra over a field $F$, for which we assume that char$(F)\ne 2$.
A {\em  superalgebra} is a $\mathbb Z_2$-graded  algebra $A$. This means that $A$ contains linear subspaces $A_0$ and $A_1$ such that $A= A_0\oplus A_1$ and $A_iA_j \subseteq A_{i+j}$
for all $i,j\in \mathbb Z_2$. Observe that the map $\sigma:A\to A$ given by 
$$(a_0+a_1)^\sigma = a_0-a_1$$
for all $a_0\in A_0$ and $a_1\in A_1$ is an automorphism of $A$ such that $\sigma^2 = {\rm id}_A$. 
 Conversely, if $\sigma$ is an automorphism of $A$
  such that $\sigma^2 = {\rm id}_A$, then 
  $A$ is a superalgebra with respect to
$A_0=\{a\in A\,|\, a^\sigma = a\}$ and $A_1=\{a\in A\,|\, a^\sigma = -a\}$. Indeed,  every $a\in A$ can be written as $a = a_0 + a_1$ with  $a_0 = \frac{1}{2}(a + a^\sigma)\in A_0$ and  $a_1 = \frac{1}{2}(a- a^\sigma)\in A_1$.

Elements from $A_0\cup A_1$ are called {\em homogeneous}. More precisely, elements
from $A_0$ are
 {\em homogeneous of degree $0$}
and elements
from $A_1$ are
 {\em homogeneous of degree $1$}.  We also say that elements in $A_0$ are {\em even} elements, and elements in $A_1$ are {\em odd} elements. For a homogeneous element
 $a$ we write $|a|=0$ if $a$ is even, and
  $|a|=1$ if $a$ is odd.
Further, we say that 
 a linear  subspace $V$ of $A$ is  {\em graded} if $V = V_0\oplus V_1$ where $V_i= A_i\cap V$. 
 A linear map $\alpha$ from a graded space $V$ to a graded space $W$ is a {\em graded map} if $V_0^\alpha \subseteq W_0$ and $V_1^\alpha \subseteq W_1$.

An {\em associative superalgebra} is just a superalgebra which is associative as an algebra.
The {\em supercommutator} of   
 homogeneous elements $a$ and $b$ in an associative superalgebra $ A$ is the element
  $$[a,b]_s = ab - (-1)^{|a||b|}ba.$$
 We extend
 $[\,\cdot\,,\,\cdot\,\,]_s$  to $A\times A$ 
 by bilinearity. One can check that
 $A$ endowed with the product  $[\,\cdot\,,\,\cdot\,\,]_s$ is a {\em Lie superalgebra},  which means that
 $$   [a,b]_s=-(-1)^{|a| |b|}[b,a]_s $$
 and
  $$
(-1)^{|a||c|}[[a,b]_s,c]_s + (-1)^{|c||b|}[[c,a]_s,b]_s + (-1)^{|b||a|}[[b,c]_s,a]_s = 0
$$
for all homogeneous elements $a,b,c\in A$.

A graded linear map $\alpha$ from 
an associative superalgebra $B$ to an associative superalgebra $A$ is called a  {\em Lie superhomomorphism} if    $$([a,b]_s)^\alpha = [a^\alpha,b^\alpha]_s$$
for all $a,b\in B$.
The obvious examples are {\em superhomomorphism}, i.e.,
graded linear maps that are algebra homomorphisms in the usual sense, and
{\em superantihomomorphisms}, i.e., graded linear maps $\beta$ satisfying $$ab^\beta = (-1)^{|a||b|}b^\beta a^\beta$$ for all homogeneous elements $a$ and $b$.

Let $A$ be an associative superalgebra and let $Z$ be its center (in the usual sense, that is, the set of all elements in $A$ that commute with any other element).
Observe that $Z$ is a graded subspace.
If $A$ is unital and $Z_0=F$, i.e.,
$Z_0$ consists
of scalar multiples of unity, then
we say that $A$ is a {\em central superalgebra}.

 We say that an
  associative superalgebra $A$ is   {\em simple} if $A^2\ne \{0\}$ and $A$ has no nonzero proper graded ideals.
  Similarly, $A$ is {\em prime} if the product of  two nonzero graded ideals of $A$ is  always nonzero.
  Obviously, 
  if $A$ is simple (resp. prime) as an algebra, then it is  simple (resp. prime) also as a superalgebra.
 The converse, however, is not true.

 The paper \cite{BBS} studies Lie superisomorphisms between prime associative superalgebras. We will present only the result for the simple ones (and for Lie superautomorphisms), since it is definite and easy to state.
 
\begin{theorem} \label{Tsimple}
Let $A$ be a central simple associative superalgebra over the field $F$. If  $\dim_F A \ne 2,4$, then every Lie superautomorphism $\alpha$ of $A$ is of the form $\alpha= \varphi  + \tau$ where $\varphi$ is either a superautomorphism   or the negative of a superantiautomorphism of  $A$ and $\tau:A\to F$ is a linear map which vanishes on supercommutators.
\end{theorem}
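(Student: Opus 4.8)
The plan is to turn the defining relation of a Lie superautomorphism into a functional identity and then run the argument behind Theorem \ref{wecan} in its $\mathbb Z_2$-graded form. Since $\alpha$ is bijective, its image is all of $A$, so $A$ itself will play the role of the free set. The starting point is the graded associative analogue of the Jacobi identity: a direct expansion shows that for homogeneous $x,y,z\in A$,
$$(-1)^{|x||z|}[xy,z]_s + (-1)^{|z||y|}[zx,y]_s + (-1)^{|y||x|}[yz,x]_s = 0.$$
Applying $\alpha$ and using $([a,b]_s)^\alpha=[a^\alpha,b^\alpha]_s$ converts this into a functional identity for the graded bilinear map $(x,y)\mapsto(xy)^\alpha$, every term of which is a supercommutator of an $(xy)^\alpha$-type quantity with a single image element.

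The next step is to linearize the supercommutators into ordinary commutators so that the quasi-polynomial machinery applies. I would pass to the Grassmann envelope $\mathcal G(A)=(A_0\otimes G_0)\oplus(A_1\otimes G_1)$, where $G=G_0\oplus G_1$ is the infinite-dimensional Grassmann algebra: inside $\mathcal G(A)$ the supercommutators of $A$ become ordinary commutators, and $\alpha\otimes\mathrm{id}$ is an honest Lie automorphism of $\mathcal G(A)$. Since $A$ is simple as a superalgebra, $\mathcal G(A)$ is a prime ring, and by Theorem \ref{tbei} it is a $3$-free subset of $Q_{ms}(\mathcal G(A))$ provided $\deg(\mathcal G(A))\ge 3$. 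Feeding the displayed identity into Theorem \ref{thcorquasi} (with $m=3$, $n=2$, $P=0$, each $c=\pm1$) then shows that $(xy)^\alpha$ is a super quasi-polynomial,
$$(xy)^\alpha=\lambda_1\,x^\alpha y^\alpha+\lambda_2(-1)^{|x||y|}y^\alpha x^\alpha+\mu_1(y)x^\alpha+\mu_2(x)y^\alpha+\nu(x,y),$$
with $\lambda_1,\lambda_2\in F$ (the center), $\mu_i:A\to F$ and $\nu:A^2\to F$; Lemma \ref{lfreq} then forces the usual relations ($\mu_1=\mu_2=:\mu$ additive, $\nu$ bilinear).

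From here the reasoning is formally the one in the proof of Theorem \ref{wecan}. Computing $(xyz)^\alpha$ as $((xy)z)^\alpha$ and as $(x(yz))^\alpha$ and comparing, Lemma \ref{lfreq} yields $\lambda_1\lambda_2=0$; applying the quasi-polynomial form to $(xy)^\alpha-(-1)^{|x||y|}(yx)^\alpha=[x^\alpha,y^\alpha]_s$ yields $\lambda_1-\lambda_2=1$. Because the center is the field $F$, the idempotent $\lambda_1$ must be $0$ or $1$, and the proof splits into exactly two cases: $(\lambda_1,\lambda_2)=(1,0)$, producing a superhomomorphism, and $(\lambda_1,\lambda_2)=(0,-1)$, producing the negative of a superantihomomorphism. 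Setting $\tau(x)=(1-2\lambda_1)\mu(x)$ and $\varphi=\alpha-\tau$ isolates the central, supercommutator-vanishing part $\tau:A\to F$ and leaves $\varphi$ of the required multiplicative type, mapping into $A$ since $F\subseteq A$.

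The main obstacle is the freeness input: verifying that the relevant set is $3$-free under the sole hypothesis $\dim_F A\ne 2,4$, which is weaker than the naive threshold $\dim\ge 9$ for ordinary $3$-freeness of a central simple algebra. Everything hinges on showing that $\deg(\mathcal G(A))\ge 3$ fails \emph{precisely} when $\dim_F A\in\{2,4\}$ (the super-analogues of the field and $M_2(F)$ cases), together with the sign bookkeeping in the super quasi-polynomial identities. The excluded dimensions $2$ and $4$ are exactly the low cases where nonstandard solutions of super-Cayley--Hamilton type survive and the conclusion genuinely fails; controlling the primeness and extended centroid of the infinite-dimensional envelope $\mathcal G(A)$ and confirming that no other dimensions are exceptional is the delicate part of the argument.
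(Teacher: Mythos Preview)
Your reduction via the Grassmann envelope is an attractive idea, but it fails at the step you flag only as ``delicate'': the ring $\mathcal G(A)$ is essentially never prime, so Theorem \ref{tbei} cannot be invoked. Take $A=M_{1,1}(F)$ (i.e.\ $M_2(F)$ with the checkerboard grading) and $x=\left(\begin{smallmatrix}0&e_1\\0&0\end{smallmatrix}\right)\in\mathcal G(A)$. For any $\left(\begin{smallmatrix}a&b\\c&d\end{smallmatrix}\right)\in\mathcal G(A)$ one computes $x\left(\begin{smallmatrix}a&b\\c&d\end{smallmatrix}\right)x=\left(\begin{smallmatrix}0&e_1ce_1\\0&0\end{smallmatrix}\right)$, and $e_1ce_1=0$ for every $c\in G_1$; hence $x\,\mathcal G(A)\,x=0$ with $x\ne 0$. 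More generally, $\mathcal G(A)$ inherits the nilpotent augmentation ideal of $G$, so it is not even semiprime, and no $d$-freeness result from the paper applies. A second obstruction is that even where one can force a quasi-polynomial form inside $\mathcal G(A)$, the coefficients live in the (large) centralizer coming from $G_0$, not in $F$, and descending back to $A$ is not automatic.

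The paper's route is quite different and avoids the envelope entirely. It works directly in $A$, using the grading automorphism $\sigma$ (defined by $(a_0+a_1)^\sigma=a_0-a_1$) as an extra piece of structure. The argument splits into three cases; the genuinely FI-theoretic one is when $\sigma$ is \emph{outer}, and there the relevant tool is not the basic $d$-free machinery but the theory of generalized functional identities with (anti)automorphisms from \cite{BBC}. In that framework the supercommutator $[x,y]_s=xy-y^\sigma x^\sigma$ (for $x,y$ homogeneous, after absorbing signs into $\sigma$) produces FIs that mix $\alpha$ with $\sigma$, and outerness of $\sigma$ is exactly what makes those identities tractable. The remaining cases (trivial grading, and $\sigma$ inner, which forces the $B\times B$ exchange-type structure when $A$ is not simple as an ordinary algebra) are handled by more classical means. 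Your dimension threshold intuition is right in spirit---the exceptions $\dim_FA=2,4$ are precisely the low cases---but the mechanism that isolates them is this case analysis on $\sigma$, not a degree computation in $\mathcal G(A)$.
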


The necessity of the assumption $\dim_F A \ne 2,4$ is explained in \cite[Examples 3.5 and 5.2]{BBS}.

The proof is divided into three parts. The one where FIs are applicable considers the situation where the automorphism $\sigma$
 inducing the 
 $\ZZ_2$-grading is outer. The results on FIs with automorphisms from \cite{BBC} then turn out to be applicable.

\subsection{Lie-admissible algebras}

\def\slb{{[\hskip-2pt[}}
\def\srb{{]\hskip-2pt]}}
\def\sb#1{\slb#1\srb}

Let $A$ be a nonassociative  algebra over a field $F$. Unlike so far, we will denote multiplication in $A$ by 
 $*$. We say
that $A$ is a {\em Lie-admissible algebra} if the vector space of $A$ is a Lie algebra under the product
$$\sb{x,y}=x*y-y*x.$$
The usual problem is to describe $*$ under some additional assumptions. This goes back to Albert
\cite{Al} 
who was particularly interested in Lie-admissible algebras that are also {\em flexible}, that is, they satisfy
 $$(x*y)*x=x*(y*x)$$ for all $x,y\in
A$.

By the Poincar\' e-Birkhoff-Witt Theorem, every Lie algebra is isomorphic to a Lie subalgebra of an associative algebra.
Let us therefore assume that the Lie-admissible algebra $A$ is a Lie subalgebra of an associative algebra $Q$ such that
\begin{equation}
    \label{err}
\sb{x,y}=x*y-y*x=xy-yx\end{equation}
for all $x,y\in A$, where $xy$ is the product of $x$ and $y$
   in $Q$. Rather than  assuming that
   $A$ is flexible, we only assume that
$A$ is
{\em third power-associative}.
This means that $$(x*x)*x=x*(x*x)$$ for all $x\in
A$. Substituting 
$x*x$ for $y$ in \eqref{err} it thus follows that
$$ x(x\ast x) = (x\ast x)x$$
for all $x\in A$. This means that
$x\mapsto x * x$ is a commuting trace of a biadditive function, so the methods of FIs are applicable.

The idea that we just described 
was used in the papers by Beidar, Chebotar et al. \cite{BClie, BCFK}. Their results are also surveyed in \cite{FIbook}.
Let us state a simplified version of \cite[Theorem 8.2]{FIbook}.

\begin{theorem}\label{bogen} Let $F$ be 
 a field with {\rm char}$(F)\ne 2$ and let
 $Q$ be
a unital   $F$-algebra whose center is $F$. Let $A$ be a linear
subspace of $Q$ which is a $3$-free subset of $Q$. 
Suppose that $A$ is endowed with another (nonassociative) multiplication $*$
such that $$x*y-y*x=xy-yx$$ for all $x,y\in A$.
Then the algebra
 $(A,\,+\,,\,*\,)$ is third power-associative
if and only if there exist a $\lambda\in F$, a
symmetric linear map $\mu:A\to F$, and a symmetric
bilinear map $\nu:A^2\to F$ such that
\begin{equation*}
    \label{LAA1}
x*y=\frac12 [x,y]+\lambda x\circ y+\mu(x)y+\mu(y)x+\nu(x,y)
\end{equation*}
for all $x,y\in A$.
Moreover,  if $(A,\,+\,,\,*\,)$ is flexible, then
$\mu([x,y])=0=\nu(x,[x,y])$ 
for all $x,y\in A$.
\end{theorem}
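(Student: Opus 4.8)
The plan is to prove the two implications of the equivalence and then the flexibility refinement, the only genuinely hard step being an appeal to Theorem~\ref{thcorquasi}. For the \emph{``if'' direction}, suppose $*$ has the stated form. Since $[x,x]=0$ and $x\circ x=2x^2$, we get $x*x=2\lambda x^2+2\mu(x)x+\nu(x,x)$, a linear combination of $x^2$, $x$ and the unity, which therefore commutes with $x$. Writing $a=x*x\in A$ and applying the formula once more, every term of $a*x$ and of $x*a$ agrees except for the commutator part, so
\[(x*x)*x-x*(x*x)=a*x-x*a=[a,x]=[x*x,x]=0,\]
and $(A,+,*)$ is third power-associative.

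For the \emph{``only if'' direction}, assume third power-associativity. Substituting $x*x$ for $y$ in $x*y-y*x=xy-yx$ and using $(x*x)*x=x*(x*x)$ gives $[x*x,x]=0$ for every $x$. Hence $F(x)=x*x=B(x,x)$, where $B(x,y)=x*y$ is bilinear, is a commuting trace of a biadditive map. Linearizing exactly as in the passage from \eqref{b22} to \eqref{b22b} yields
\[[B(x,y)+B(y,x),z]+[B(y,z)+B(z,y),x]+[B(z,x)+B(x,z),y]=0\]
for all $x,y,z\in A$. This is a core function of the type in Theorem~\ref{thcorquasi} with $m=3$, $n=2$ and scalars $c=\pm1$; since $A$ is a $3$-free subset of $Q$ and the right-hand side is $0$ (so its central coefficient vanishes), case (b) applies and shows that the middle function $S(x,y):=B(x,y)+B(y,x)=x*y+y*x$ is a quasi-polynomial. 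This is the crux of the argument and the point where $3$-freeness is essential.

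To extract the form, I use Example~\ref{exqp}(2), and that the center of $Q$ is $F$, to write $S(x,y)=\lambda_1xy+\lambda_2yx+\mu_1(y)x+\mu_2(x)y+\nu(x,y)$ with $\lambda_i\in F$ and $\mu_i,\nu$ central-valued. The symmetry $S(x,y)=S(y,x)$, combined with the uniqueness of the coefficients (Lemma~\ref{lfreq}, valid because $A$ is $3$-free), forces $\lambda_1=\lambda_2$, $\mu_1=\mu_2$ and $\nu$ symmetric; the usual substitution of sums and scalar multiples, together with the bilinearity of $B$ and Lemma~\ref{lfreq}, shows that $\mu_1$ is linear and $\nu$ bilinear. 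Finally I recover $x*y$ from $S(x,y)=x*y+y*x$ and $[x,y]=x*y-y*x$ by adding and dividing by $2$ (here char$(F)\ne2$); relabelling $\tfrac12\lambda_1,\tfrac12\mu_1,\tfrac12\nu$ as $\lambda,\mu,\nu$ produces the asserted formula.

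For the \emph{flexibility refinement}, write $x*y=\tfrac12[x,y]+s(x,y)$ with $s$ the symmetric bilinear part. Using $x*y+y*x=2s(x,y)$, $x*y-y*x=[x,y]$ and linearity of $s$, one gets the clean rearrangement $(x*y)*x-x*(y*x)=[s(x,y),x]+s([x,y],x)$. Substituting the explicit $s$, the $\lambda$-contributions and the terms $\pm\mu(x)[y,x]$ cancel, leaving
\[(x*y)*x-x*(y*x)=\mu([x,y])\,x+\nu(x,[x,y]),\]
where I use that $[x,y]=x*y-y*x\in A$ and that $\nu$ is symmetric. Flexibility therefore gives $\mu([x,y])\,x+\nu(x,[x,y])=0$ in $Q$. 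As $\nu(x,[x,y])\in F$, this puts $\mu([x,y])\,x\in F\cdot1$; if $\mu([x,y])\ne0$ then $x\in F\cdot1$, whence $[x,y]=0$ and $\mu([x,y])=\mu(0)=0$, a contradiction. Thus $\mu([x,y])=0$, and then $\nu(x,[x,y])=0$ as well. I expect the only fiddly point here to be the associator bookkeeping, since all the functional-identity input has already been spent in deriving the form of $*$.
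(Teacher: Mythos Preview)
Your proof is correct and follows exactly the approach the paper outlines: show that third power-associativity forces $x\mapsto x*x$ to be a commuting trace of a biadditive map, linearize to the FI \eqref{b22b}, and apply Theorem~\ref{thcorquasi} (case (b), $m=3$) together with Lemma~\ref{lfreq} to read off the quasi-polynomial form of $x*y+y*x$; the flexibility refinement via the associator identity $(x*y)*x-x*(y*x)=[s(x,y),x]+s([x,y],x)$ and the ensuing cancellation is also carried out correctly. This is precisely the argument the paper sketches and attributes to \cite{BClie,BCFK} and \cite[Theorem~8.2]{FIbook}.
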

 
 The special case of Theorem \ref{bogen}  where $A=Q=M_n(F)$
 was obtained earlier by Benkart and Osborn \cite{BO}. 
 \subsection{Jordan maps}
 The definition of a  Jordan homomorphism is analogous to that of a Lie homomorphism: 
if $M$ and $R$ are rings and $J$ is a Jordan subring
of $M$, then a {\em Jordan homomorphism} is an  additive map $\alpha:J\to R$  such that
\begin{equation}
    (x\circ y)^\alpha=x^\alpha\circ y^\alpha \label{ejorh}
\end{equation}
for all $x,y\in J$. Observe that if
$R$ is $2$-torsion free (i.e., $2x=0$ implies $x=0$), then \eqref{ejorh} is equivalent to $$(x^2)^\alpha =(x^\alpha)^2 $$ for all $x\in J$.

As for Lie homomorphisms, we can ask whether Jordan homomorphisms can be described by homomorphisms and antihomomorphisms. This question can also be approached by FIs. The results that were obtained are similar to those on 
 Lie homomorphisms.  However, 
 they do not bring essentially new information concerning the classical situation of simple and prime rings. This is simply because the answers to the most natural questions were obtained already before the introduction of  FIs. In 1956, Herstein  proved that, under mild characteristic assumptions, a Jordan homomorphisms from a ring onto a prime ring is either a homomorphism or an antihomomorphism \cite{Her0}. More than twenty years later,  Jordan homomorphisms on some proper Jordan subrings, such as the set of symmetric elements in a ring with involution, were described \cite{Mart9, McC} by using powerful methods discovered by Zelmanov \cite{Zel}.
 
 Still, the FI approach to Jordan homomorphisms is interesting since it provides  alternative proofs of known results and yields abstract theorems involving  $d$-free subsets  that may be applicable  in  different situations. Let us present two such theorems, both taken from \cite{FIbook}. 
 The first one is analogous to Theorem \ref{wecan}.

 \begin{theorem} \label{6TJ1}
 Let $\alpha$ be a Jordan homomorphism from a ring $M$ to a unital ring $Q$ which admits the operator $\frac{1}{2}$. If  $M^\alpha$ is a $4$-free  subset of $Q$, then $\alpha$ is the direct sum of a homomorphism and an  antihomomorphism.
\end{theorem}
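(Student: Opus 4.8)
The plan is to adapt the proof of Theorem~\ref{wecan} from the Lie to the Jordan setting: I will show that the two-variable function $(x,y)\mapsto(xy)^\alpha$ is a quasi-polynomial in $x^\alpha$ and $y^\alpha$, and then extract a central idempotent that splits $\alpha$ into a homomorphic and an antihomomorphic part.

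First I would collect the elementary consequences of the Jordan condition. Since $Q$ admits the operator $\tfrac12$, \eqref{ejorh} is equivalent to $(x^2)^\alpha=(x^\alpha)^2$. Using the identity $xyx=\tfrac12\big((x\circ y)\circ x-x^2\circ y\big)$, valid in any ring, together with the fact that $\alpha$ preserves $\circ$, one gets $(xyx)^\alpha=x^\alpha y^\alpha x^\alpha$; linearizing $x\mapsto x+z$ yields
\[
(xyz+zyx)^\alpha=x^\alpha y^\alpha z^\alpha+z^\alpha y^\alpha x^\alpha .
\]
Feeding this relation, the Jordan identity $(x\circ y)^\alpha=x^\alpha\circ y^\alpha$, and the associativity of $M$ (which lets me compute $(xyz)^\alpha$ as both $((xy)z)^\alpha$ and $(x(yz))^\alpha$) into the quasi-polynomial machinery, I would produce an FI whose middle functions are the maps $(x,y)\mapsto(xy)^\alpha$. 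Applying Theorem~\ref{thcorquasi} --- legitimate because $R=M^\alpha$ is $4$-free, so the required bound on $\max\{|I|,|J|\}$ holds --- I conclude that $(xy)^\alpha$ is a quasi-polynomial:
\[
(xy)^\alpha=\lambda_1 x^\alpha y^\alpha+\lambda_2 y^\alpha x^\alpha+\mu_1(y)x^\alpha+\mu_2(x)y^\alpha+\nu(x,y),
\]
with $\lambda_i\in C$, $\mu_i\colon M\to C$ and $\nu\colon M^2\to C$.

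The next step is to pin down the coefficients, which is pure bookkeeping once the quasi-polynomial form is in hand. Comparing coefficients (Lemma~\ref{lfreq}) in the Jordan relation $(xy)^\alpha+(yx)^\alpha=x^\alpha y^\alpha+y^\alpha x^\alpha$ forces $\lambda_1+\lambda_2=1$, $\mu_2=-\mu_1$, and $\nu$ antisymmetric. Next I expand both sides of $((xy)z)^\alpha=(x(yz))^\alpha$ using the displayed form and compare coefficients of the degree-$3$ monomials in $x^\alpha,y^\alpha,z^\alpha$; here the full strength of $4$-freeness is used, via Lemma~\ref{lfreq}(a) for $m=3$, and the cross terms yield $\lambda_1\lambda_2=0$. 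Together with $\lambda_1+\lambda_2=1$ this shows $e:=\lambda_1$ is an idempotent of $C$, i.e.\ a central idempotent of $Q$, with $\lambda_2=1-e$. Comparing the degree-$2$ terms of the same associativity relation gives $2\lambda_1\mu_1=2\lambda_2\mu_1=0$, whence $2\mu_1=2(\lambda_1+\lambda_2)\mu_1=0$; since $Q$ is $2$-torsion-free this forces $\mu_1=\mu_2=0$. The degree-$1$ terms then collapse to $\nu(x,y)z^\alpha=\nu(y,z)x^\alpha$, and another appeal to Lemma~\ref{lfreq} gives $\nu=0$.

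At this point $(xy)^\alpha=e\,x^\alpha y^\alpha+(1-e)\,y^\alpha x^\alpha$, and a direct check using the centrality and idempotency of $e$ shows that $x\mapsto e x^\alpha$ is a homomorphism and $x\mapsto (1-e)x^\alpha$ is an antihomomorphism, so $\alpha$ is their direct sum, as required. I expect the main obstacle to be the second paragraph: engineering the precise FI that exhibits $(xy)^\alpha$ as a middle function equal to a quasi-polynomial. Unlike the associative Jacobi identity available in the Lie case, there is no degree-$3$ Jordan-product identity that makes the images of products cancel outright, so one must carefully combine the reversal relation above with associativity to cast the problem in the form required by Theorem~\ref{thcorquasi}. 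The coefficient analysis, by contrast, is routine, the only subtlety being the systematic use of $2$-torsion-freeness to eliminate the central functions $\mu_i$ and $\nu$.
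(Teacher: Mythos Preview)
Your overall plan---show that $(xy)^\alpha$ (equivalently $[x,y]^\alpha$, since $(x\circ y)^\alpha$ is already known) is a quasi-polynomial and then read off a central idempotent---is exactly the strategy the paper follows, and your coefficient bookkeeping in the last two paragraphs is essentially correct. The gap you yourself flag is real, however: you never actually produce an FI in which $(xy)^\alpha$ appears as a middle function to which Theorem~\ref{thcorquasi} applies. The triple-product relation $(xyz+zyx)^\alpha=x^\alpha y^\alpha z^\alpha+z^\alpha y^\alpha x^\alpha$ together with $((xy)\circ z)^\alpha=(xy)^\alpha\circ z^\alpha$ gives you expressions like $(z[x,y])^\alpha$ in terms of $(xy)^\alpha$, but each such relation still contains an unknown $(\,\cdots\,)^\alpha$ of a triple product on one side; combining them does not obviously collapse to a core function equal to a quasi-polynomial. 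So as written, the second paragraph is a promissory note rather than an argument.

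The paper closes this gap by a different and cleaner route. The key is the purely associative identity
\[
[[u,v],t]=u\circ(v\circ t)-v\circ(u\circ t),
\]
which immediately yields $[[u,v],t]^\alpha=[[u^\alpha,v^\alpha],t^\alpha]$ for any Jordan homomorphism. Applying this with $t=[z,w]$, first with the outer bracket expanded and then with the inner one, gives the four-variable FI
\[
[[x^\alpha,y^\alpha],F(z,w)]=[F(x,y),[z^\alpha,w^\alpha]],\qquad F(x,y):=[x,y]^\alpha,
\]
which is precisely of the shape required by Theorem~\ref{thcorquasi} with $m=4$, $n=2$, $P=0$; here the $4$-freeness hypothesis is exactly what is needed (case (b)). This yields that $[x,y]^\alpha$, hence $(xy)^\alpha$, is a quasi-polynomial, and from there your coefficient analysis (with $\lambda_1+\lambda_2=1$, $\lambda_1\lambda_2=0$ from associativity, etc.) goes through. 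Note also that your reference to ``the required bound on $\max\{|I|,|J|\}$'' is not quite the hypothesis of Theorem~\ref{thcorquasi}; what matters there is $m$-freeness versus $(m+1)$-freeness.
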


Let us present the idea of proof. Since, by the very definition, we control the action of $\alpha$
on the Jordan product, it is enough to 
determine the action of $\alpha$ on the Lie product.
It is therefore natural to use the identity
$$[[u,v],t] = u\circ (v\circ t) - v\circ (u\circ t)$$
which connects the Jordan and Lie products. This identity implies that $\alpha$ satisfies
$$[[u,v],t]^\alpha = 
[[u^\alpha,v^\alpha],t^\alpha]$$
for all $u,v,t\in M$.
Hence, for all $x,y,z,w\in M$ we have, on the one hand,
$$
[[x,y],[z,w]]^\alpha = [[x^\alpha,y^\alpha],[z,w]^\alpha],
$$
 and on the other hand,
 $$
 [[x,y],[z,w]]^\alpha = [[x,y]^\alpha,[z^\alpha,w^\alpha]].
 $$
 Comparing we see that
 $F(x,y) = [x,y]^\alpha$ satisfies
\begin{equation} \label{6fijor}
 [[x^\alpha,y^\alpha], F(z,w)] = [F(x,y),[z^\alpha,w^\alpha]].
\end{equation}
This FI  can be solved  since $M^\alpha $ is assumed to be a  $4$-free subset of $Q$. In this way, one determines the action of $ \alpha$ on the Lie product.

The second result is analogous to Theorem \ref{thk}.

\begin{theorem}\label{64MT}
Let $S$ be the set of symmetric elements of a ring $M$ with involution,  let $Q$ be a unital ring with center $C$,  and let $\alpha:S\to Q$ be a Jordan homomorphism. Suppose that $S$ and $Q$ admit the operator $\frac{1}{2}$ and   that 
 $C$ is a direct summand of the additive group $Q$. If $ S^\alpha$ is a $7$-free  subset of $Q$, then $\alpha$ can be extended to  a homomorphism from  $\langle S\rangle $, the subring generated by $S$, to $Q$.
\end{theorem}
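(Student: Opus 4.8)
The plan is to build the extension explicitly by setting $\varphi(s_1\cdots s_n)=s_1^\alpha\cdots s_n^\alpha$ on products of symmetric elements and extending linearly. Since this formula is multiplicative on products by construction and reduces to $\alpha$ on single factors, everything comes down to proving that it is \emph{well defined} on $\langle S\rangle$, i.e.\ that any additive relation among products of symmetric elements is preserved after applying $\alpha$ to each factor. A first, purely algebraic reduction shows that the ``symmetric part'' of a product is already controlled by $\alpha$: using that $Q$ admits $\tfrac12$ together with the Jordan identity $xyx=\tfrac12\bigl((x\circ y)\circ x-x^2\circ y\bigr)$, one checks directly that $\alpha$ preserves the quadratic product, $(xyx)^\alpha=x^\alpha y^\alpha x^\alpha$, and, after linearization, the Jordan triple product $(xyz+zyx)^\alpha=x^\alpha y^\alpha z^\alpha+z^\alpha y^\alpha x^\alpha$. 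Thus the only genuinely undetermined data are the images of the \emph{skew} parts, i.e.\ of commutators of symmetric elements.

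To pin these down I would follow the pattern of the proof of Theorem \ref{6TJ1}. For $u,v,t\in S$ the element $[[u,v],t]$ is again symmetric, and the identity $[[u,v],t]=u\circ(v\circ t)-v\circ(u\circ t)$ together with the Jordan property of $\alpha$ yields
\begin{equation*}
\bigl([[u,v],t]\bigr)^\alpha=[[u^\alpha,v^\alpha],t^\alpha]\qquad(u,v,t\in S).
\end{equation*}
Declaring $F(u,v)$ to be the sought image of the skew element $[u,v]$, this forces $[F(u,v),t^\alpha]=[[u^\alpha,v^\alpha],t^\alpha]$ for all $t\in S$, so $F(u,v)-[u^\alpha,v^\alpha]$ centralizes $S^\alpha$. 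Since a $d$-free set satisfies $C_Q(S^\alpha)=C$, we obtain $F(u,v)=[u^\alpha,v^\alpha]+\gamma(u,v)$ with $\gamma:S\times S\to C$. An argument paralleling the one that produced the functional identity \eqref{6fijor}, now comparing the two evaluations of a triple bracket of symmetric elements, gives a functional identity for $F$ that exhibits a core function equal to a quasi-polynomial; by Theorem \ref{thcorquasi} and Lemma \ref{lfreq}, applied on the $7$-free set $S^\alpha$, one extracts the precise form of $\gamma$ along with its additivity and symmetry properties.

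With the images of commutators determined, $\varphi$ is assembled through $xy=\tfrac12\bigl(x\circ y+[x,y]\bigr)$ and its higher analogues, and the remaining task is to verify that the resulting product rule is consistent on all of $\langle S\rangle$. Here the hypothesis that $C$ is a direct summand of the additive group $Q$ is exactly what lets the central ambiguity $\gamma$ be split off and neutralized: one absorbs it into a central-valued correction and shows that the corrected map respects every relation among products of symmetric elements, each such check reducing, via Theorem \ref{tsal}, to an FI on the $7$-free set $S^\alpha$.

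I expect the main obstacle to be precisely this last well-definedness step. Preserving the Jordan and commutator data on two- and three-fold products is comparatively routine, but a product $s_1\cdots s_n$ of symmetric elements can be rewritten in many ways, and showing that $s_1^\alpha\cdots s_n^\alpha$ is independent of the presentation requires repeatedly solving functional identities whose number of variables grows because one must symmetrize every relation; this growth is the reason $7$-freeness (rather than the $4$-freeness of Theorem \ref{6TJ1}) is demanded, and keeping the central corrections mutually compatible throughout is the delicate point.
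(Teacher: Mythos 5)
Your plan founders on the step you yourself identify as the main obstacle: well-definedness of $\varphi(s_1\cdots s_n)=s_1^\alpha\cdots s_n^\alpha$. That well-definedness is not an auxiliary verification --- it \emph{is} the theorem (any homomorphism extending $\alpha$ must be given by this formula, and conversely), so a proof must supply a mechanism for it, and the one you propose cannot work. The tools you invoke (Theorem \ref{thcorquasi}, Lemma \ref{lfreq}, Theorem \ref{tsal}) apply to \emph{functional} identities, i.e.\ relations that hold for all tuples of elements of $S$; a particular additive relation $\sum_k \lambda_k\, s_{k,1}\cdots s_{k,n_k}=0$ among specific products of specific symmetric elements is not a functional identity and cannot be fed into this machinery. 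Moreover, your explanation of where the number $7$ comes from --- that it accommodates FIs ``whose number of variables grows'' --- inverts how $d$-freeness works: $7$-freeness controls only those FIs with at most $7$ (resp.\ $6$) unknown functions, so no fixed $d$ can absorb relations of unbounded length. The parts of your argument that do go through (preservation of $xyx$ and of the Jordan triple product, the identity $[[u,v],t]=u\circ(v\circ t)-v\circ(u\circ t)$ forcing the image of $[u,v]$ to be $[u^\alpha,v^\alpha]$ modulo $C_Q(S^\alpha)=C$) are the routine opening moves; they leave the actual content untouched.

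The paper's proof avoids this trap by never attempting to define the extension on products directly. Instead one defines $\beta:S+[S,S]\to \ov{Q}=Q/C$ by
$$\Bigl(s+\sum_i [s_i,t_i]\Bigr)^\beta=\ov{\,s^\alpha+\sum_i [s_i^\alpha,t_i^\alpha]\,},$$
and proves that $\beta$ is well defined \emph{as a map into $Q/C$}: here well-definedness is tractable, because the defect of any relation is shown (via the Jordan-Lie identities you also use) to commute with all of $S^\alpha$, hence to lie in $C$, hence to vanish in $\ov{Q}$. One then checks that $S+[S,S]$ is a Lie ideal of $\langle S\rangle$ and that $\beta$ is a Lie homomorphism, and invokes the already-established structure theory of Lie homomorphisms on Lie ideals of rings (the machinery behind Theorem \ref{thk}; this is where the $7$-freeness hypothesis is actually consumed) to produce an associative homomorphism $\varphi:\langle S\rangle\to Q$ with $x^\beta=\ov{x^\varphi}$. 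Finally, the hypothesis that $C$ is a direct summand of $Q$, together with the Jordan property of $\alpha$ and the operator $\frac{1}{2}$, is used to remove the central ambiguity and conclude that $\varphi$ restricts to $\alpha$ on $S$. In short: the correct route is a reduction to the (hard, previously proved) Lie-map theorem, with the quotient $Q/C$ doing the work that your relation-by-relation FI analysis cannot do.
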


The proof is based on defining the map
$\beta: S + [S,S]\to \overline{Q}=Q/C$ by
$$
\Bigl(s + \sum_i [s_i,t_i]\Bigr)^\beta = \ov{ s^\alpha +  \sum_i [s_i^\alpha,t_i^\alpha]}.
$$
It can be shown that $\beta $ is a well-defined Lie homomorphism and that 
$S+[S,S]$ is a Lie ideal of the ring $\langle S\rangle $. One can therefore apply  the description of Lie homomorphisms on Lie ideals of rings (the result that is needed was not explicitly stated in Subsection \ref{ss41}, but enough information was given to get the idea).

A {\em Jordan derivation} $\delta:J\to R$  is of course
an additive map satisfying
\begin{equation}
    \label{defjd}
(x\circ y)^\delta = x^\delta\circ y + x\circ y^\delta \end{equation}
for all $x,y\in J$. If $R$ is $2$-torsion free, then \eqref{defjd} is equivalent to
$$(x^2)^\delta= x^\delta x+ xx^\delta $$
for all $x\in J$. 
The FI methods can be applied to Jordan derivations and some related maps  in a similar fashion as to  
Jordan homomorphisms, see \cite{FIbook, Leejd, LeeLin} and references therein. We will therefore skip this topic and move to another type of seemingly similar maps, which, however,
must be treated in 
a different way.

Let $R$ be a ring with involution $\ast$. An additive map $\delta:R\to R$ is called a 
{\em Jordan $*$-derivation} if
$$(x^2)^\delta= x^\delta x^*+ xx^\delta $$
for all $x\in R$. For every $a\in R$,
\begin{equation}
    \label{ejdst}
    x\mapsto ax^*- xa
\end{equation}
is an example of a Jordan
$*$-derivation.
 The question whether every Jordan $*$-derivation of $R$ is of such a form is closely connected with the problem of describing quadratic functionals \cite{Sem1, Sem2}. This question was considered in several papers, but the deepest results were obtained by
 Lee, Wong, and Zhou \cite{LWZ, LZ}. They can be summarized as follows.
 
\begin{theorem}\label{jorsa}
Let R be a noncommutative prime ring with involution $*$. Then
every Jordan $*$-derivation $\delta$ of R is of the form
$x^\delta = ax^*- xa$ for some 
$a\in Q_{ms}(R)$, unless {\rm char}$(R)=2$ and $\deg(R)=2$.
\end{theorem}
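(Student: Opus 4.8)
The plan is to pass to the symmetric Martindale ring of quotients $Q=Q_{ms}(R)$ and exploit the $d$-freeness of $R$ supplied by Theorem \ref{tbei}. Since $R$ is a noncommutative prime ring, $\deg(R)\ge 2$; I would treat first the main case $\deg(R)\ge 3$ and $\mathrm{char}(R)\ne 2$, so that $R$ is a $3$-free (indeed $\deg(R)$-free) subset of $Q$. Linearizing the defining identity $(x^2)^\delta=x^\delta x^*+xx^\delta$ (replace $x$ by $x+y$ and cancel the pure terms) gives
\[
(x\circ y)^\delta=x^\delta y^*+y^\delta x^*+xy^\delta+yx^\delta
\]
for all $x,y\in R$. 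Using $x\circ(y\circ x)=2xyx+x^2\circ y$ and dividing by $2$ (this is where $\mathrm{char}(R)\ne2$ first enters), this upgrades to the Jordan triple $*$-identity
\[
(xyx)^\delta=x^\delta y^*x^*+xy^\delta x^*+xyx^\delta ,
\]
and one checks directly that $x^\delta=ax^*-xa$ satisfies it. Full linearization of the outer variable then yields
\[
(xyz+zyx)^\delta=x^\delta y^*z^*+z^\delta y^*x^*+xy^\delta z^*+zy^\delta x^*+xyz^\delta+zyx^\delta .
\]

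The obstruction to feeding this into the $d$-free theory is the composite term $(xyz+zyx)^\delta$: because $\delta$ is applied after multiplication, it is not of the admissible shape $E(x)(\cdots)$ or $(\cdots)F(z)$. The key step is therefore to eliminate it. I would do this by substituting composite arguments into the triple identity—for instance expanding $\bigl((xyx)\,z\,(xyx)\bigr)^\delta$ in two ways, once treating $xyx$ as the outer factor and once regrouping the product—and comparing the results. After cancelling the now-known values $(xyx)^\delta$, $(xzx)^\delta$, and so on, one is left with a relation in which $\delta$ occurs only at the independent variables $x,y,z$ together with their images under $*$. This is a generalized functional identity involving the involution.

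At this point the machinery applies. Regarding the left multiples of $x^*$ and of $x$ as the unknown functions, Theorems \ref{tbei} and \ref{tinv}, together with the quasi-polynomial results (Theorem \ref{thcorquasi} and Lemma \ref{lfreq}), force $\delta$ to be a quasi-polynomial of degree at most one in $x$ and $x^*$, say $x^\delta=ax^*+x^*b+cx+xe+\mu(x)$ with $a,b,c,e\in Q$ and $\mu$ central-valued. Substituting this back into the original quadratic identity and invoking $d$-freeness once more (Lemma \ref{lfreq}) pins down the coefficients: the central and the wrong-side terms vanish, $e=-a$, and one arrives at $x^\delta=ax^*-xa$. That $a$ lies in $Q_{ms}(R)$, rather than in some larger object, is exactly the content of the standard-solution conclusion for prime rings (Definitions \ref{def1} and \ref{def2}, and Theorem \ref{tbei}).

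The difficulty is concentrated precisely in the cases the theorem must exclude or handle apart, and this is where I expect the real work to lie. When $\deg(R)=2$ the ring is only $2$-free, so the reduction above collapses; here one embeds $R$ into $2\times2$ matrices and argues directly, which succeeds when $\mathrm{char}(R)\ne2$ but genuinely fails for $\mathrm{char}(R)=2$—this is the stated exception. When $\mathrm{char}(R)=2$ but $\deg(R)\ge3$, the passage to the triple identity is unavailable because one cannot divide by $2$, so the composite term must instead be controlled directly from the linearized relation by a finer functional-identity argument. Throughout, the most delicate bookkeeping is managing the involution—keeping careful track of which factors are starred so that the eliminated composite terms really do cancel—and this, rather than any single appeal to the $d$-free theory, is the main obstacle.
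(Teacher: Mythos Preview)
Your overall strategy---derive a functional identity with involution from the Jordan $*$-derivation condition and then invoke the $d$-free machinery---matches the paper's sketch. However, the paper's route to the key FI is different from yours, and your proposed derivation has a gap.

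The paper does \emph{not} try to eliminate all composite $\delta$-terms. Instead it exhibits a specific four-variable identity
\begin{align*}
&(xw)^\delta z^*y^* + (yz)^\delta w^*x^* + xw(yz)^\delta + yz(xw)^\delta\\
&\quad - x^\delta z^*y^*w^* - (wyz)^\delta x^* - x(wyz)^\delta - wyzx^\delta\\
&= (zxw)^\delta y^* + y^\delta w^*x^*z^* + zxwy^\delta + y(zxw)^\delta\\
&\quad - (zx)^\delta y^*w^* - (wy)^\delta x^*z^* - zx(wy)^\delta - wy(zx)^\delta,
\end{align*}
in which $\delta$ is still applied to products such as $(xw)^\delta$ and $(wyz)^\delta$. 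The point is that in this identity every such composite term is multiplied on one side by a variable (or a starred variable) that does \emph{not} occur inside it, so the expression fits the FI-with-involution framework with the composite $\delta$-values playing the role of the unknown middle functions $E_i$, $F_j$. The paper stresses that this identity is ``non-obvious,'' and it is the heart of the $\mathrm{char}(R)\ne 2$ argument; the $\mathrm{char}(R)=2$ case rests on yet another FI.

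By contrast, your plan is to push further and obtain an identity in which $\delta$ appears only at single variables. The mechanism you propose---expand $\bigl((xyx)z(xyx)\bigr)^\delta$ two ways and cancel the known triple values---is not spelled out, and it is not clear it yields such an identity: the substitutions you describe introduce higher-degree occurrences of $x$, and the ``cancellation'' step is asserted rather than shown. Since the paper's identity already suffices for the FI theory without this further reduction, your extra step is both unnecessary and, as stated, unjustified. Once a usable FI is in hand, your endgame (quasi-polynomial form for $\delta$, then Lemma~\ref{lfreq} to pin down coefficients) is essentially right, and your remarks on the exceptional cases $\deg(R)=2$ and $\mathrm{char}(R)=2$ are accurate.
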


The first step of the proof is  showing that Jordan $*$-derivations  satisfy certain (non-obvious!) FIs with involution. After that,  the general theory of such FIs is applied. One such identity is
\begin{align*}
&(xw)^\delta z^*y^* + (yz)^\delta w^*x^* +
xw (yz)^\delta + yz(xw)^\delta\\  &-
x^\delta z^* y^* w^*-(wyz)^\delta x^* - x(wyz)^\delta - wyzx^\delta\\
= & (zxw)^\delta y^*+y^\delta w^*x^*z^*
+zxwy^\delta + y(zxw)^\delta\\& -(zx)^\delta y^*w^* - (wy)^\delta x^* z^* - zx (wy)^\delta - wy (zx)^\delta.
\end{align*}
This is the key identity for handling the case where char$(R)\ne 2$. The char$(R)=2$ case is based on some other FI.

An example  showing that the theorem does not hold if  {\rm char}$(R)=2$ and $\deg(R)=2$ was also constructed.

 \subsection{$f$-homomorphisms}
 Let  $f=f(X_1,\dots,X_m)\in\ZZ\langle X_1,X_2,\dots\rangle$ be a multilinear polynomial of degree $m$, that is, a polynomial of the form 
  $$f = \sum_{\sigma \in S_m} \lambda_\sigma
  X_{\sigma(1)}  X_{\sigma(2)}\cdots X_{\sigma(m)} $$
 for some integers $ \lambda_\sigma$, not all zero.
 An additive map $\alpha$ from a ring $M$ to a ring $Q$ is called an {\em $f$-homomorphism} if
 $$
f(x_1,x_2,\ldots,x_m)^\alpha = f(x_1^{\alpha},x_2^{\alpha},\ldots,x_m^\alpha)$$
for all $x_1,\ldots,x_m\in M$.
If $f=X_1X_2-X_2X_1$ then an $f$-homomorphism is of course a Lie homomorphism, and if 
$f=X_1X_2+X_2X_1$ then an $f$-homomorphism is a Jordan homomorphism. 

Describing the form of an arbitrary $f$-homomorphism may seem a very ambitious goal. However, it has turned out to be  doable. This was first observed by Beidar and Fong \cite{BFon}, and then  also in 
\cite{Onher1, BBCMp}. We will follow the exposition from \cite{FIbook}.

The main point of the proof is to
find an FI involving an $f$-homomorphism. This is done as follows.  Since
$f$ is multilinear, we have
\begin{equation*} \label{65e0}
 [f(\ov{x}_m),y]
 = \sum_{i=1}^m f(x_1,\ldots,x_{i-1},[x_i,y],x_{i+1},\ldots,x_m)
 \end{equation*}
 for all $x_i,y\in M$.
In particular,
\begin{eqnarray*}
 [f(\overline{x}_m),f(\overline{y}_m)]
 = \sum_{i=1}^m f(x_1,\ldots,x_{i-1},[x_i,f(\overline{y}_m)], x_{i+1},\ldots,x_m)
\end{eqnarray*}
for all $x_i,y_j\in M$.
Since
$$
[x_i,f(\overline{y}_m)] =  \sum_{j=1}^m f(y_1,\ldots,y_{j-1},[x_i,y_j],y_{j+1},\ldots,y_m),
$$
this yields 
\begin{align*}
\label{65e1} &[f(\overline{x}_m),f(\overline{y}_m)]\\\nonumber
=& \sum_{i=1}^m\sum_{j=1}^m f(x_1,\ldots,x_{i-1},f(y_1,\ldots,y_{j-1},[x_i,y_j],y_{j+1},\ldots,y_m), x_{i+1},\ldots,x_m). 
\end{align*}
Now,
 since $$[f(\overline{x}_m),f(\overline{y}_m)] = - [f(\overline{y}_m),f(\overline{x}_m)],$$
 we can, after changing the sign, substitute $y_i$ for $x_i$  on the right-hand side of the above identity.
 Hence,
\begin{align*}
&\sum_{i=1}^m\sum_{j=1}^m f(x_1,\ldots,x_{i-1},f(y_1,\ldots,y_{j-1},[x_i,y_j],y_{j+1},\ldots,y_m), x_{i+1},\ldots,x_m)\\ 
 +& 
\sum_{i=1}^m\sum_{j=1}^m f(y_1,\ldots,y_{i-1},f(x_1,\ldots,x_{j-1},[y_i,x_j],x_{j+1},\ldots,x_m), y_{i+1},\ldots,y_m)\\ =&0.
\end{align*} 
Applying an $f$-homomorphism $\alpha$ to this identity we see that 
 $$F(x,y)= [x,y]^\alpha$$ satisfies \begin{align*}
&\sum_{i=1}^m\sum_{j=1}^m f(x_1^\alpha,\ldots,x_{i-1}^\alpha,f(y_1^\alpha,\ldots,y_{j-1}^\alpha,F(x_i,y_j),y_{j+1}^\alpha,\ldots,y_m^\alpha), x_{i+1}^\alpha,\ldots,x_m^\alpha)\\ 
 +& 
\sum_{i=1}^m\sum_{j=1}^m f(y_1^\alpha,\ldots,y_{i-1}^\alpha,f(x_1^\alpha,\ldots,x_{j-1}^\alpha,F(y_i,x_j),x_{j+1}^\alpha,\ldots,x_m^\alpha), y_{i+1}^\alpha,\ldots,y_m^\alpha)\\ = &0.
\end{align*} 
This is an FI   to 
which Theorem \ref{thcorquasi} is  applicable. Assuming that
$M^\alpha$ is an $(2m)$-free subset of $Q$, it follows that the middle function $F(x,y)$ is a quasi-polynomial. This makes the problem of describing $\alpha$ just slightly more difficult than the problem of describing a Lie homomorphism. Using a similar  approach as in the proof of Theorem \ref{wecan} one then proves the following theorem.

\begin{theorem}\label{65maint} Let
$f\in\ZZ\langle X_1,X_2,\dots\rangle$ be a multilinear polynomial of degree $m$ such that one of its coefficients is $1$. Let $\alpha$ be an $f$-homomorphism from a ring $M$ to a unital ring $Q$ such that  its center  $C$ is a field.  If $M^\alpha$ is a $(2m)$-free subset of $Q$, then
 $x^\alpha = \lambda x^\varphi + x^\mu$ for all $x\in M$, where $\lambda \in C$, $\varphi: M\to Q$ is a homomorphism or an antihomomorphism, and $\mu:M\to C$ is an additive map.  
\end{theorem}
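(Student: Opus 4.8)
The plan is to build on the fact, already derived in the discussion preceding the theorem, that the middle function $F(x,y)=[x,y]^\alpha$ is a quasi-polynomial — this is exactly where the $(2m)$-freeness of $M^\alpha$ enters, through Theorem \ref{thcorquasi} applied to the displayed double-sum FI (with $P=0$, so condition (b) suffices). Writing this quasi-polynomial out in two variables as
$$[x,y]^\alpha=\ell_1 x^\alpha y^\alpha+\ell_2 y^\alpha x^\alpha+\mu_1(y)x^\alpha+\mu_2(x)y^\alpha+\nu(x,y),$$
I would first exploit the antisymmetry $[x,y]^\alpha=-[y,x]^\alpha$ and the uniqueness of quasi-polynomial coefficients (Lemma \ref{lfreq}, valid since $M^\alpha$ is in particular $3$-free) to collapse this to
$$[x,y]^\alpha=\ell[x^\alpha,y^\alpha]+\mu(y)x^\alpha-\mu(x)y^\alpha+\nu(x,y),$$
where $\ell\in C$, and $\mu:M\to C$, $\nu:M^2\to C$ are obtained additive/biadditive (again by polarizing and applying Lemma \ref{lfreq}), with $\nu$ antisymmetric.

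Next I would substitute this expression into the Jacobi identity $[[x,y],z]+[[y,z],x]+[[z,x],y]=0$, apply $\alpha$, and expand the result as a quasi-polynomial in $x^\alpha,y^\alpha,z^\alpha$. The genuine degree-three part cancels by the Jacobi identity already holding in $Q$, and comparing the surviving degree-two coefficients through Lemma \ref{lfreq} (now using $4$-freeness, available since $2m\ge 4$ for $m\ge 2$) yields $\ell\mu=0$. Because $M^\alpha$ is $2m$-free it cannot be a central (hence commutative) set, which excludes the degenerate branch $\ell=0$; thus $\ell\neq0$ and $\mu=0$, leaving the clean relation
$$[x,y]^\alpha=\ell[x^\alpha,y^\alpha]+\nu(x,y).\qquad(\star)$$
This is precisely the sense in which the problem is ``slightly more difficult'' than for a Lie homomorphism: $\alpha$ preserves the commutator up to the central scalar $\ell$ and a central correction.

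From $(\star)$ I would then follow the proof of Theorem \ref{wecan}. Inserting $(\star)$ into the associative identity $[xy,z]+[zx,y]+[yz,x]=0$ and cancelling the nonzero scalar $\ell$ gives $[(xy)^\alpha,z^\alpha]+[(zx)^\alpha,y^\alpha]+[(yz)^\alpha,x^\alpha]\in C$, so Theorem \ref{thcorquasi} applies a second time (three variables, condition (a), needing $4$-freeness) to show that $G(x,y)=(xy)^\alpha$ is itself a quasi-polynomial
$$(xy)^\alpha=\rho_1 x^\alpha y^\alpha+\rho_2 y^\alpha x^\alpha+\sigma(y)x^\alpha+\sigma(x)y^\alpha+\tau(x,y).$$
Comparing $(xy)^\alpha-(yx)^\alpha$ with $(\star)$ forces the two linear coefficients to coincide and gives $\rho_1-\rho_2=\ell$; computing $(xyz)^\alpha$ as both $((xy)z)^\alpha$ and $(x(yz))^\alpha$ and comparing, exactly as in the passage around \eqref{ifitwas}, gives $\rho_1\rho_2=0$ (here no auxiliary $3$-freeness trick is needed, since $4$-freeness is available) together with the compatibility relation $\sigma(xy)=\sigma(x)\sigma(y)\mp\ell\,\tau(x,y)$. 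Hence $\{\rho_1,\rho_2\}=\{\ell,0\}$.

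Finally I set $\lambda=\ell^{-1}$. When $\rho_2=0$ one verifies that $\varphi(x)=\ell x^\alpha+\sigma(x)$ is a homomorphism — the product rule $x^\varphi y^\varphi=(xy)^\varphi$ reduces term by term to the compatibility relation just obtained — so $x^\alpha=\lambda x^\varphi+x^\mu$ with $x^\mu=-\lambda\sigma(x)\in C$; when $\rho_1=0$ the same construction with $\varphi(x)=-\ell x^\alpha+\sigma(x)$ delivers an antihomomorphism, and the sign is absorbed into $\lambda$. Since $\mu$ is an additive map into $C$, this is the asserted form. I expect the main obstacle to be the bookkeeping around the scalar $\ell$: unlike the Lie-homomorphism case, where the commutator is preserved on the nose, here $\ell$ threads through every comparison, so one must both prove $\ell\neq0$ from the non-degeneracy encoded in $(2m)$-freeness and carry $\ell$ carefully through the associativity computation so that $\varphi=\ell(\,\cdot\,)^\alpha+\sigma$ emerges as an honest (anti)homomorphism rather than merely one up to scalar.
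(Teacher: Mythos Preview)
Your overall strategy matches the paper's sketch: derive that $[x,y]^\alpha$ is a quasi-polynomial from the double-sum FI, then imitate the proof of Theorem~\ref{wecan}. Most of the bookkeeping you outline (the antisymmetry reduction, the Jacobi computation giving $\ell\mu=0$, the passage from $(\star)$ to an FI for $(xy)^\alpha$, and the final construction of $\varphi=\ell(\,\cdot\,)^\alpha+\sigma$) is correct in spirit.

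There is, however, a genuine gap at the step ``$\ell\ne 0$''. Your justification---that $(2m)$-freeness rules out $M^\alpha$ being central and hence rules out $\ell=0$---is a non sequitur: the condition $\ell=0$ says only that $[x,y]^\alpha$ is a quasi-polynomial of degree~$\le 1$ in $x^\alpha,y^\alpha$, which constrains $\alpha$, not the commutators $[x^\alpha,y^\alpha]$ inside $Q$. Nothing you have written forces $M^\alpha$ to be central when $\ell=0$, so the branch is not excluded, and your argument stalls (you cannot divide by $\ell$ to reach the FI for $(xy)^\alpha$). Relatedly, you never invoke the hypothesis that some coefficient of $f$ equals~$1$; this assumption is not decorative, and after the quasi-polynomial form of $[x,y]^\alpha$ is in hand, one must feed it back into the defining identity $f(x_1,\dots,x_m)^\alpha=f(x_1^\alpha,\dots,x_m^\alpha)$ (or a consequence thereof) and compare leading monomials to force $\ell\ne 0$. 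Without that second appeal to the $f$-homomorphism property, the argument cannot close.
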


A special case of particular interest is when $R=M^\alpha$ is a prime ring with $\deg(R)\ge 2m$. Note that some degree assumption is necessary to exclude polynomial identities.

One  similarly defines $f$-derivations. Theorem \ref{thomder} can be used to reduce the problem of their description to that of describing $f$-homomorphisms.
 
 \subsection{Near-derivations}
 
 Let $L$ be a Lie algebra.  By Der$(L)$ we denote the Lie algebra of all derivations of $L$.
For any $x\in L$, let $\Ad x\in {\rm Der}(L)$ denote the inner derivation $(\Ad x)(y) = [x,y]$.
A linear map $\delta:L\to L$ is called  a {\em near-derivation}   if  there exists 
a linear map $\gamma:L\to L$ such that 
\begin{equation}\label{eqnear}(\Ad x) \delta - \gamma(\Ad x)\in {\rm Der}(L)\end{equation} for every $x\in L$. There are two basic 
examples, both occurring for 
 $\gamma = \delta$. These are 
 derivations (which satisfy
 $(\Ad x) \delta - \delta(\Ad x) = - \Ad x^\delta $) and elements from the {\em centroid} of $L$
  (which satisfy
 $(\Ad x) \delta - \delta(\Ad x) = 0 $).
 Besides, every map from $L$ to the center of $L$ is also a near-derivation (for $\gamma=0$).

Near-derivations were introduced and studied
by the present author in \cite{Bnear}. The main motivation was the paper by Leger and Luks \cite{LegL} in which similar  maps, called
  {\em generalized derivations} by the authors,
  were treated. They are defined through the condition
  $$(\Ad x) \delta - \gamma(\Ad x) =  \Ad x^\sigma$$
  for all $x\in L$,
  where $\sigma:L\to L$ is another linear map. Of course,
  $\Ad x^\sigma\in {\rm Der}(L)$, so 
  the notion of a near-derivation is
  slightly more general than
  the notion of a 
  generalized derivation. This, however, was not the main reason for its introduction. The purpose of \cite{Bnear} was to show that results of the same type as those from
  \cite{LegL} can be obtained as a consequence  of the FI theory.
  
  How to obtain an FI involving a near-derivation $\delta$? Observe that \eqref{eqnear}
  means that
  $$
[x,[y,z]^\delta] - [x,[y,z]]^\gamma
 = [[x,y^\delta] - [x,y]^\gamma, z] + [y, [x,z^\delta] - [x,z]^\gamma]$$
 for all $x,y,z\in L$.
  Using this along with the Jacobi identity which tells us that
  $$[x,[y,z]]^\gamma +[z,[x,y]]^\gamma + [y,[z,x]]^\gamma =0,$$
  one quickly derives that the function
  $F:L^2\to L$,
  $$F(x,y)= [x,y]^{2\gamma - \delta} - [x^\delta,y] - [x,y^\delta], $$
  satisfies 
  $$ [F(x,y),z] + [F(z,x),y] + [F(y,z),x] =0
  $$
  for all $x,y,z\in L$. This brings us to a position where Theorem \ref{thcorquasi} can be used. Under suitable assumptions, $F(x,y)$ is thus a quasi-polynomial. This  is the starting point in the proof of the following theorem.

  \begin{theorem} \label{tale}
Let $L$ be a Lie subalgebra of a unital $F$-algebra $Q$ with center $C$, and let 
$\delta:L\to L$ be a near-derivation. If
$L$ is a $4$-free subset of $Q$, then there exists a $\lambda\in C$ such that
$$[x,y]^\delta  - 
[x^\delta,y] - [x,y^\delta] - \lambda [x,y]\in C$$
for all $x,y\in L$. Moreover,  if the second cohomology group $H^2 (L, F)$ is trivial, then
there exist a derivation $d:L\to Q$, an element $\gamma\in C$,  and a
linear map $\tau: L \to C$ such that $\delta= d+ \ell_\gamma  +\tau$ (where $\ell_\gamma$ denotes the function 
$x\mapsto \gamma x$). 
\end{theorem}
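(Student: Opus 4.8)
The plan is to start from the cyclic functional identity
$$[F(x,y),z] + [F(z,x),y] + [F(y,z),x] = 0$$
derived just before the statement, where $F(x,y) = [x,y]^{2\gamma-\delta} - [x^\delta,y] - [x,y^\delta]$, determine the shape of $F$ with the quasi-polynomial machinery, and only then convert this into information about $\delta$ itself. Observe first that $F$ is antisymmetric, $F(y,x)=-F(x,y)$. Expanding each commutator, the left-hand side becomes a core function in the sense of Theorem \ref{thcorquasi} with $m=3$, $n=2$ and right-hand side $0$; all middle functions are equal to $F$ up to sign and a permutation of variables, so the hypothesis of that theorem holds with each $c=\pm1$. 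Since $4$-freeness implies $3$-freeness (Remark \ref{remdd}) and the central coefficient is $0$, Theorem \ref{thcorquasi} shows that $F$ is a quasi-polynomial,
$$F(x,y) = \lambda_1 xy + \lambda_2 yx + \mu_1(y)x + \mu_2(x)y + \nu(x,y),$$
with $\lambda_1,\lambda_2\in C$, $\mu_1,\mu_2:L\to C$ and $\nu:L^2\to C$.

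Next I would exploit antisymmetry: comparing the coefficients of $F(x,y)+F(y,x)=0$ through Lemma \ref{lfreq} gives $\lambda_2=-\lambda_1=:-\lambda$, $\mu_2=-\mu_1=:-\mu$ and $\nu(y,x)=-\nu(x,y)$, so that $F(x,y)=\lambda[x,y]+\mu(y)x-\mu(x)y+\nu(x,y)$. Substituting this back into the cyclic identity, the $\lambda$- and $\nu$-terms cancel by the Jacobi identity and centrality of $\nu$, leaving $\mu(x)[y,z]-\mu(y)[x,z]+\mu(z)[x,y]=0$. This is a degree-three quasi-polynomial with vanishing central coefficient, so Lemma \ref{lfreq} forces $\mu=0$. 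Hence $F(x,y)-\lambda[x,y]=\nu(x,y)\in C$.

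It remains to pass from $F$ to $\delta$, and I expect this to be the main obstacle. A direct expansion gives
$$F(x,y) = T(x,y) + 2[x,y]^{\gamma-\delta}, \qquad T(x,y) := [x,y]^\delta - [x^\delta,y] - [x,y^\delta],$$
so the work so far controls only the combination $2\gamma-\delta$, and the real difficulty is to eliminate the auxiliary map $\gamma$ — precisely, to prove that $[x,y]^{\gamma-\delta}$ is a scalar multiple of $[x,y]$ modulo $C$. The derived identity for $F$ does not suffice; one must return to the defining relation \eqref{eqnear} and separate its $\delta$-part from its $\gamma$-part. Doing so and applying the Jacobi identity produces a further functional identity in which $\gamma-\delta$ is applied to the free variables, and feeding it to the same apparatus (Lemma \ref{lfreq} and Theorem \ref{tsal}) should pin down $[x,y]^{\gamma-\delta}-\kappa[x,y]\in C$ for some $\kappa\in C$. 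This finer step is where the full strength of $4$-freeness (beyond the $3$-freeness used above) is consumed. Combined with $F(x,y)-\lambda[x,y]\in C$ it yields $T(x,y)-(\lambda-2\kappa)[x,y]\in C$, which is the first assertion after renaming the scalar.

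For the second assertion, write $\omega(x,y)=T(x,y)-\lambda[x,y]\in C$ for the final scalar $\lambda$; it is bilinear and antisymmetric, hence a $2$-cochain of $L$ with values in the trivial module $C$. Its cocycle property is checked directly: in the cyclic sum $\omega([x,y],z)+\omega([z,x],y)+\omega([y,z],x)$ one substitutes $[x,y]^\delta=\lambda[x,y]+\omega(x,y)+[x^\delta,y]+[x,y^\delta]$, whereupon the scalar and central parts vanish by Jacobi and the remaining brackets fall into three Jacobi triples in $x^\delta,y^\delta,z^\delta$ and cancel. Since $H^2(L,F)=0$ and $C$ is a free $F$-module, $H^2(L,C)=0$, so $\omega$ is a coboundary: $\omega(x,y)=\tau([x,y])$ for some linear $\tau:L\to C$. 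Finally, the defect of $d:=\delta+\ell_\lambda-\tau$ equals $T(x,y)-\lambda[x,y]-\tau([x,y])=\omega(x,y)-\tau([x,y])=0$, so $d$ is a derivation and $\delta=d+\ell_{-\lambda}+\tau$, which is the asserted decomposition with the central scalar $-\lambda$ in the role of $\gamma$ and $\tau:L\to C$.
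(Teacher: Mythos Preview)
Your approach coincides with the paper's sketch: start from the cyclic identity for $F(x,y)=[x,y]^{2\gamma-\delta}-[x^\delta,y]-[x,y^\delta]$, apply Theorem~\ref{thcorquasi} to see that $F$ is a quasi-polynomial, and proceed from there. The paper explicitly calls this ``the starting point in the proof'' and gives no further details, so what you wrote already goes beyond the survey's exposition. Your determination of the exact shape $F(x,y)=\lambda[x,y]+\nu(x,y)$ via antisymmetry and Lemma~\ref{lfreq} is correct, as is the cocycle verification in the second part.

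The genuine gap is precisely where you say it is: passing from the conclusion about $F$ to the conclusion about $T(x,y)=[x,y]^\delta-[x^\delta,y]-[x,y^\delta]$. You write that one ``must return to the defining relation~\eqref{eqnear} and separate its $\delta$-part from its $\gamma$-part'' and that the resulting identity ``should pin down $[x,y]^{\gamma-\delta}-\kappa[x,y]\in C$'', but you neither produce that identity nor solve it. Since $F=T+2[x,y]^{\gamma-\delta}$, this is exactly the content of the first assertion, and your argument so far has used only $3$-freeness; the purported use of $4$-freeness remains a promise. You have correctly located the obstacle but not removed it, and this is the step where the actual work in \cite{Bnear} lies. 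A secondary technical point: the inference ``$H^2(L,F)=0$ and $C$ free over $F$ imply $H^2(L,C)=0$'' needs care when $C$ is infinite-dimensional over $F$, since a family of $1$-cochains $\tau_i:L\to F$ need not assemble to a map $L\to\bigoplus_i F$.
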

 
In \cite{Bnear},
  Theorem \ref{tale} plays the role of the  fundamental lemma, while the main results consider different situations in which
  it can be used to show that 
  $\delta$ is the sum of a derivation, an element from the centroid, and a central map. We will not go into  this here since our main purpose  is only to demonstrate the ways to apply FIs.

  \subsection{Linear preserver problems} The title refers to a variety of problems that concern linear maps between algebras that preserve some algebraic properties. The goal is to describe their form. 
 This is a vast research area, popular especially in operator theory and linear algebra, but also in pure algebra. 
 
 Of course, FIs are not applicable to all linear preserver problems. To those to which they are, however,
 they yield considerably more general results from those previously known. This is because of their formal
 nature which allows a unified treatment of very different objects.
 
 The most prominent example of a linear preserver problem  that can be solved by FIs  concerns {\em commutativity preserving} linear maps. We say that
 a linear map $\alpha:B\to A$, where  $A$ and $B$ are algebras over a field $F$, preserves commutativity if, for all $x,y\in B$,
 $$ [x,y] = 0\implies [x^\alpha,y^\alpha]=0.$$
 Such maps were first studied in algebras of matrices over a field \cite{W}, and after that in many other more general algebras. See \cite[pp.\ 218-219]{FIbook} for an overview of the early history.

 Lie homomorphisms obviously preserve commutativity, so the problem we are facing now is more difficult. However, we can take the same approach as presented at the beginning of Subsection 
 \ref{ss41}. We simply use the obvious fact that $x$ commutes with $x^2$ to obtain the FI \begin{equation*}
     \label{ecm}
[x^\alpha, (x^2)^\alpha] =0 \end{equation*} for all $x\in B$. As we know from Subsection 
 \ref{ss41}, under suitable assumptions this identity can be used to describe the action of $\alpha$ on squares of elements. The problem that remains is indeed more difficult than for Lie homomorphisms, but the breakthrough has been made.
 
 This idea was used already in the seminal paper \cite{B5} on applications of FIs. The following theorem is just a slight technical generalization
 of the result therein, which was noticed a bit later.  Before stating the theorem, we recall
 that a   prime unital $F$-algebra is said to be {\em centrally closed} if its extended centroid is equal to $F$. For example, a  simple unital ring is a centrally closed algebra over its center.
 
 \begin{theorem}\label{tcommp}
Let $A$ and $B$ be   centrally closed prime unital algebras over a field $F$ with {\rm char}$(F)\ne 2$, and let $\alpha:B\to A$ be a bijective linear map. If $\deg(B)\ge 3$ and
$\alpha $ satisfies $[x^\alpha, (x^2)^\alpha] =0$ for all $x\in B$  (in particular, if $\alpha$ 
 preserves commutativity), then $\alpha$ is of the form $x^\alpha = \lambda x^\varphi + \mu(x)$ for all $x\in B$, where $\lambda\in F$, $\varphi$ is an isomorphism or an antiisomorphism from $B$ onto $A$, and $\mu:B\to F$ is a linear map.
 \end{theorem}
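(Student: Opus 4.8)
The plan is to convert the single hypothesis into a commuting trace of a biadditive map on $A$, and then read off the structure of $\alpha$. Since $\alpha$ is bijective, putting $y=x^\alpha$ in $[x^\alpha,(x^2)^\alpha]=0$ gives $[y,G(y)]=0$ for all $y\in A$, where $G:A\to A$ is defined by $G(y)=\big((y^{\alpha^{-1}})^2\big)^\alpha$. Because $\alpha$ and $\alpha^{-1}$ are $F$-linear and multiplication is $F$-bilinear, $G$ is the trace of the biadditive map $(y,z)\mapsto \tfrac12\big((y^{\alpha^{-1}}z^{\alpha^{-1}})^\alpha+(z^{\alpha^{-1}}y^{\alpha^{-1}})^\alpha\big)$, where char$(F)\ne 2$ is used. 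Thus $G$ is a commuting trace of a biadditive function, and since $A$ is a centrally closed prime algebra of characteristic $\ne 2$, the result recalled in Example \ref{ex8} applies with extended centroid $C=F$, giving $G(y)=\lambda_0 y^2+\mu(y)y+\nu(y,y)$ with $\lambda_0\in F$, $\mu:A\to F$ linear, and $\nu:A\times A\to F$ symmetric bilinear. Substituting $y=z^\alpha$ yields the key relation
\[
(z^2)^\alpha=\lambda_0(z^\alpha)^2+p(z)z^\alpha+q(z,z),\qquad z\in B,
\]
where $p(z)=\mu(z^\alpha)$ is linear and $q(z,w)=\nu(z^\alpha,w^\alpha)$ is symmetric bilinear; hence $\alpha$ is determined on squares modulo $Fz^\alpha+F$.

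Next I would linearize this relation (again using char$(F)\ne 2$) to obtain $(z\circ w)^\alpha=\lambda_0(z^\alpha\circ w^\alpha)+p(z)w^\alpha+p(w)z^\alpha+2q(z,w)$, so that $\alpha$ controls the Jordan product up to the scalar $\lambda_0$ and central perturbations. The crux is to recover $\alpha$ on the Lie product: in the Lie-isomorphism argument sketched in Subsection \ref{ss41} this came for free, but here it does not. For this I would apply $\alpha$ to the identity $[[z,w],t]=z\circ(w\circ t)-w\circ(z\circ t)$ and substitute the linearized relation into the right-hand side; the outcome is a functional identity of core-function type for $F(z,w):=[z,w]^\alpha$. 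Invoking Theorem \ref{thcorquasi} together with Theorem \ref{tsal}, and using that $A=\alpha(B)$ is a $3$-free subset of $Q_{ms}(A)$ (which follows from $\deg(A)\ge 3$ via Theorem \ref{tbei}), one concludes that $[z,w]^\alpha$ is a quasi-polynomial. Since $zw=\tfrac12\big(z\circ w+[z,w]\big)$, this pins down $\alpha$ on the full associative product.

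With $\alpha$ controlled on all products, I would normalize. The map $\gamma:=\lambda_0\alpha+\tfrac12 p(\cdot)1_A$ turns out to be a homomorphism or an antihomomorphism in the spirit of Theorem \ref{6TJ1} and Theorem \ref{wecan}, once one verifies the required scalar identity $\tfrac12 p(z)p(w)=2\lambda_0 q(z,w)+\tfrac12 p(z\circ w)$, which is a consequence of associativity extracted by Lemma \ref{lfreq}. Because $A$ is prime, its only central idempotents are $0$ and $1$, so the ``direct sum'' dichotomy degenerates and $\gamma$ is an honest homomorphism or antihomomorphism; as $\gamma$ differs from the bijection $\alpha$ only by a central-valued linear map, primeness forces $\gamma$ to be an isomorphism or antiisomorphism $\varphi:B\to A$. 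Unwinding $\gamma=\lambda_0\alpha+\tfrac12 p$ then gives $x^\alpha=\lambda x^\varphi+\mu(x)$ with $\lambda=\lambda_0^{-1}$ and $\mu=-\tfrac1{2\lambda_0}p$, as claimed.

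The main obstacle is the recovery of the Lie product and the accompanying degree bookkeeping. First one must secure $\deg(A)\ge 3$, so that the range is $3$-free: if $\deg(A)\le 2$ then $A$ is a finite-dimensional central simple algebra of dimension at most $4$, whence $\dim_F B=\dim_F A\le 4$ and so $\deg(B)\le 2$, contradicting $\deg(B)\ge 3$; thus $\deg(A)\ge 3$ is automatic. One must also rule out the degenerate case $\lambda_0=0$, which would squeeze every Jordan product $(z\circ w)^\alpha$ into $Fz^\alpha+Fw^\alpha+F$ and force $A$ to satisfy a strong polynomial identity incompatible with $\deg(A)\ge 3$; this is where a short $3$-freeness argument is needed. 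A second delicate point is that the general Jordan-homomorphism theorem (Theorem \ref{6TJ1}) assumes $4$-freeness, whereas only $3$-freeness is available here; as in the proof of Theorem \ref{wecan}, the gap is bridged by an extra application of Theorem \ref{tsal} to the specific identity at hand rather than by brute force. The genuinely exceptional behaviour in degree $2$ is excluded at the outset by the hypothesis $\deg(B)\ge 3$.
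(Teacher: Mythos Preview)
Your overall strategy matches the paper's: translate the hypothesis into a commuting trace of a biadditive map on $A$, use Example~\ref{ex8} to obtain
\[
(z^2)^\alpha=\lambda_0(z^\alpha)^2+p(z)z^\alpha+q(z,z),
\]
linearize to control $(z\circ w)^\alpha$, and then normalize. This is exactly the route the paper sketches (the approach of \cite{B5}).

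There is, however, a genuine gap in your Step~5. Applying $\alpha$ to $[[z,w],t]=z\circ(w\circ t)-w\circ(z\circ t)$ and expanding the right-hand side via your Jordan formula produces
\[
([[z,w],t])^\alpha=\lambda_0^2\,[[z^\alpha,w^\alpha],t^\alpha]+r(w,t)z^\alpha-r(z,t)w^\alpha+(\text{central}),
\]
an expression purely in $z^\alpha,w^\alpha,t^\alpha$ and scalars. The unknown $F(z,w)=[z,w]^\alpha$ simply does not appear, so this is \emph{not} a core-function FI for $F$, and Theorem~\ref{thcorquasi} is not applicable as you state. To force $F$ into a core-function identity in this style one must pass to four variables exactly as in the proof of Theorem~\ref{6TJ1}, computing $[[x,y],[z,w]]^\alpha$ in two ways; but that identity lives at the level of $4$-freeness, and with only $\deg(B)\ge 3$ (hence only $3$-freeness of $A$) you cannot invoke Theorem~\ref{thcorquasi} there either. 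Your acknowledgment that ``the gap is bridged by an extra application of Theorem~\ref{tsal}'' is optimistic: the trick in Theorem~\ref{wecan} worked because the offending term was of the special form $\lambda_1\lambda_2[y^\alpha,[x^\alpha,z^\alpha]]$, which after fixing $y$ becomes a genuine $2$-variable basic FI; the four-variable identity \eqref{6fijor} with middle function $F$ does not reduce in the same way.

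The correct resolution, and the one that keeps you at $\deg\ge 3$, is the one you half-anticipate in Step~6: after linearizing, set $\gamma(z)=\lambda_0 z^\alpha+\tfrac12 p(z)$, so that $\gamma(z\circ w)-\gamma(z)\circ\gamma(w)=c(z,w)\in F$ with $c(z,w)=2\lambda_0 q(z,w)+\tfrac12 p(z\circ w)-\tfrac12 p(z)p(w)$. One then uses associativity in $B$ (e.g.\ comparing two expansions of $(z^2\circ w)^\alpha$ and of iterated Jordan products) together with Lemma~\ref{lfreq} under $3$-freeness to force first $\lambda_0\ne 0$ and then $c\equiv 0$, so that $\gamma$ is an honest Jordan homomorphism. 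At that point Herstein's classical theorem on Jordan homomorphisms onto prime rings finishes the job with no further freeness hypothesis; one does \emph{not} need to recover $[z,w]^\alpha$ separately via an FI. In short: drop your Step~5 entirely, and make Step~6 do the real work by proving $c=0$ directly from associativity and $3$-freeness.
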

 
 We remark that the assumption that $\deg(B)\ge 3$ is necessary. Indeed, if $A=B=M_2(F)$ (and so $\deg(B)=2$), then $x,y\in B$ commute if and only if $x$, $y$ and $1$ are linearly dependent. Therefore, every linear map that sends $1$ to a scalar multiple of $1$ preserves commutativity.
 
 Before publication of \cite{B5}, commutativity preservers were studied  in matrix algebras 
and algebras occurring in functional analysis. Theorem \ref{tcommp} unified and generalized many  of the existing results, and opened the doors to the consideration of commutativity and some related linear preservers in pure algebra. Using more advanced techniques of the FI theory, its various generalizations were extensively studied (see \cite{BanM, BBCF, BeiLin,BE, Bcomp, BQuar, BMiers, BrSe2, YFL}  and also \cite{FIbook}). However, we will not discuss them here.

Assume now that $A$ and $B$ are $F$-algebras with involution.  We say that a map $\alpha$ {\em preserves normality} if 
$x^\alpha$ is normal whenever $x$ is normal, that is, \begin{equation}
    \label{exnor}
[x,x^*]=0 \implies [x^\alpha,(x^\alpha)^*]=0.
\end{equation}
The problem of describing normality preserving linear maps is obviously related to but more complicated than the problem of describing commutativity preserving linear maps. It has also gained some 
 interest through  the years, and is another example of a linear preserver problem that can be handled by FIs. This was shown in \cite{BBCF} and is also surveyed in \cite[Section 7.2]{FIbook}. It is interesting to note that these algebraic results were later used for solving some seemingly unrelated preserver problems from functional analysis  \cite{BMN, Ham, Molnar}.
 
 We will state and comment on a sample theorem from \cite{BBCF}.  Let us start with some general remarks.

 Like in Theorem \ref{tcommp}, we will assume that
 $A$ and $B$ are centrally closed prime unital algebras over $F$. An involution $\ast$ on such an algebra is said to be of the {\em first kind} if it is $F$-linear. 
 Otherwise,  it is said to be of the {\em second kind}. It turns that if $A$ and $B$ have involutions of the second kind, the problem of normality preserving maps can be reduced to that of commutativity preserving maps. We will therefore only consider involutions of the first kind. 
 We also add a natural assumption  that our normality preserving linear map $\alpha:B\to A$ is $\ast$-linear, meaning that 
 \begin{equation}\label{ealstar}
     (x^*)^\alpha = (x^\alpha)^*
      \end{equation}
      for every $x\in B$
 (in the case of involution of the second kind this is unnecessary since  a slightly weaker version of  \eqref{ealstar}  is automatically fulfilled). If 
  char$(F)\ne 2$,
 then this assumption implies that \eqref{exnor} is equivalent to the condition that for every symmetric element $s$ in $B$ and every skew element $k$ in $B$,
\begin{equation}
    \label{esk}
[s,k]=0 \implies [s^\alpha, k^\alpha]=0. \end{equation}
 This is because every 
 $x\in B$ can be written as the sum of the symmetric element
 $s=\frac{1}{2}(x+x^*)$ and the skew element $k= \frac{1}{2}(x-x^*)$,   so $[x,x^*]=0$ is equivalent to 
 $[s,k]=0$, and, similarly,  $[x^\alpha,(x^*)^\alpha]=[x^\alpha,(x^\alpha)^*]=0$ is equivalent to
 $[s^\alpha, k^\alpha]=0$.
 Observe that $k^2$
 is symmetric and $k^3$ is skew if $k$ is skew. Therefore,
 \eqref{esk} implies
 that
 \begin{equation}
     \label{ekkk}
[(k^2)^\alpha, k^\alpha] =0\quad\mbox{and}\quad [(k^2)^\alpha, (k^3)^\alpha]=0  \end{equation}
 for every skew element $k$ in $B$. The first identity is a standard FI which we know how to handle. Using the result derived from this first FI, the second identity in \eqref{ekkk}  becomes an FI that fits into the general theory. This approach yields the following theorem.
 
 \begin{theorem}
 Let $A$ and $B$
be centrally closed prime unital algebras over a field $F$
with involution of the first kind. Denote by $A_0$ (resp.\ $B_0$) the subalgebra of $A$ (resp.\ $B$) generated by all skew elements in $A$ (resp.\ $B$). Let $\alpha:B\to A$ be  a bijective $*$-linear map that preserves normality. If {\rm char}$(F)\ne 2,3$, $\deg(B)\ge 7$, and $\deg(A)\ge 14$, then $\alpha$ is of the form 
$x^\alpha= (\lambda_1x + \lambda_2 x^*)^\varphi + \mu(x)$ for all $x\in B_0$, where
 $\lambda_1,\lambda_2\in F$, $\lambda_1\ne \pm\lambda_2$, 
$\varphi:B_0\to A_0$ is  a $*$-linear isomorphism, and $\mu:B_0\to F$ is a linear map that vanishes on skew elements from $B_0$.
 \end{theorem}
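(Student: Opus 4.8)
The plan is to build the entire argument on the two functional identities recorded in \eqref{ekkk}, namely $[(k^2)^\alpha,k^\alpha]=0$ and $[(k^2)^\alpha,(k^3)^\alpha]=0$, valid for every skew element $k$ of $B$. These are available because $\ast$-linearity turns normality preservation into the implication \eqref{esk}, while $k^2$ is symmetric and $k^3$ is skew and commutes with $k^2$. Since $\alpha$ is $\ast$-linear and bijective with {\rm char}$(F)\ne2$, it restricts to a linear bijection from the skew elements $K_B$ of $B$ onto the skew elements $K_A$ of $A$. The degree hypotheses are calibrated precisely so that, $A$ and $B$ being centrally closed prime with extended centroid $F$, the set $K_A$ is $6$-free (from $\deg(A)\ge14$, since $2\cdot6+1\le14$) and $K_B$ is $3$-free (from $\deg(B)\ge7$).

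The first identity is handled by the commuting-trace machinery. The map $k\mapsto(k^2)^\alpha$ is the trace of a biadditive function on the additive group $K_B$, and $[(k^2)^\alpha,k^\alpha]=0$ is exactly the situation discussed after Theorem \ref{thcorquasi} (with $S=K_B$, the given function $\alpha$, and $S^\alpha=K_A$). As in Example \ref{ex8}, Theorem \ref{thcorquasi} together with Lemma \ref{lfreq} then yield
$$(k^2)^\alpha=\lambda\,(k^\alpha)^2+\mu(k)\,k^\alpha+\nu(k)$$
for all skew $k$, where $\lambda\in F$, $\mu:K_B\to F$ is linear, and $\nu:K_B\to F$ is the trace of a symmetric bilinear form; all coefficients lie in $F$ because the extended centroid of $A$ is $F$.

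The crux is the second identity. Substituting the displayed form of $(k^2)^\alpha$ into $[(k^2)^\alpha,(k^3)^\alpha]=0$, the central term $\nu(k)$ drops out and one is left with $\lambda\,[(k^\alpha)^2,(k^3)^\alpha]+\mu(k)\,[k^\alpha,(k^3)^\alpha]=0$, where $k\mapsto(k^3)^\alpha$ is the trace of a triadditive function. Fully linearizing in $k$ (this is where {\rm char}$(F)\ne3$ enters, to recover the polarizations from the diagonal) converts this into a core-function identity on $K_A$ in at most five skew variables whose middle function is the polarization of $(k^3)^\alpha$. Since $K_A$ is $6$-free, Theorem \ref{thcorquasi} forces this middle function to be a quasi-polynomial, and matching its coefficients by Lemma \ref{lfreq} expresses $\alpha$ on products of two skew elements in terms of the products of their images taken in the two possible orders, up to central corrections. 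The coefficients of these two orderings are the scalars that will become $\lambda_1$ and $\lambda_2$: the first records the order-preserving (homomorphic) contribution, the second the order-reversing (antihomomorphic) one.

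It then remains to assemble a map on all of $B_0$. Since $B_0$ is generated by skew elements and $k_1k_2=\tfrac12\bigl(k_1\circ k_2+[k_1,k_2]\bigr)$, the description of $\alpha$ on the skew elements, on their Jordan products (symmetric) and on their commutators (skew) determines $\alpha$ on products, and the identities obtained above guarantee that the resulting data are consistent. One can therefore define $\varphi$ on the generators by normalizing out the scalars and extend it, following the pattern of Theorem \ref{64MT} (the skew analog of extending a Jordan-type map to the generated subring), to a $\ast$-linear homomorphism $\varphi:B_0\to A_0$; bijectivity of $\alpha$ and the $3$-freeness of $K_B$ make $\varphi$ an isomorphism onto $A_0$ and force $\mu$ to vanish on skew elements. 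Collecting the two order-coefficients into $\lambda_1,\lambda_2$ gives the asserted form $x^\alpha=(\lambda_1x+\lambda_2x^\ast)^\varphi+\mu(x)$, and the requirement that $\alpha$ map both the skew and the symmetric block of $B_0$ bijectively forces $\lambda_1-\lambda_2\ne0$ and $\lambda_1+\lambda_2\ne0$, i.e.\ $\lambda_1\ne\pm\lambda_2$. The main obstacle is the third paragraph: recasting the cubic identity in a form to which Theorem \ref{thcorquasi} applies and cleanly separating the homomorphic and antihomomorphic coefficients; once the correct linearized identity is in hand, the degree bounds make the extraction of coefficients routine.
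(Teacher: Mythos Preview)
Your proposal follows essentially the same route as the paper's sketch: derive the two FIs \eqref{ekkk} from normality preservation via \eqref{esk}, use the first (a commuting trace of a biadditive map on $K_B$) to express $(k^2)^\alpha$ as a quasi-polynomial in $k^\alpha$, then feed this into the second identity to produce an FI to which the general machinery (Theorem \ref{thcorquasi} and Lemma \ref{lfreq}) applies, yielding a quasi-polynomial form for the trilinearized $(k^3)^\alpha$; finally extend to $B_0$. The paper itself only sketches these steps and refers to \cite{BBCF} and \cite[Section 7.2]{FIbook} for the full argument, so your level of detail already exceeds what the paper provides.

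Two small points. First, your reference to Theorem \ref{64MT} as the pattern for the extension step is slightly off: that theorem is about symmetric elements and Jordan homomorphisms, whereas here one extends from the skew elements, closer in spirit to the Beidar--Martindale--Mikhalev argument \cite{BMM1} or Theorem \ref{thk}. You acknowledge this by calling it ``the skew analog,'' which is fine as a signpost but would need to be made precise. Second, your count of the $d$-freeness needed at each stage is roughly right, but the precise numerology (why $\deg(A)\ge 14$ rather than $13$, for instance) depends on exactly how the $\mu(k)$ term is absorbed when linearizing the second identity; this is where the detailed bookkeeping in \cite{BBCF} matters, and your third paragraph correctly flags it as the crux.
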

 
If $A$ and $B$ are simple algebras, then $A_0=A$ and $B_0=B$ \cite{Her2}. An example in which $A=B$ is the free algebra on two indeterminates shows that in general $A_0$ and $B_0$ may be proper subalgebras.
 
 After solving the problem of commutativity preserving  maps, one may wonder what can be said about  maps that {\em preserve anticomutativity}, that 
 is, about  maps $\alpha$ between algebras (or rings) that 
 satisfy
 $$ x\circ y = 0\implies x^\alpha\circ y^\alpha=0.$$
 In other words, these maps preserve zero Jordan products.

 It is not unusual that  a ring has no pairs of nonzero anticommuting elements. For example, this is true for the Weyl algebra \cite[Example 3.18]{zpdbook}, which is a basic example of a simple ring. One therefore cannot expect that results similar to 
 the preceding two theorems can proved for anticommutativity preservers. We have to consider other classes of rings to obtain some interesting result. 
 
 An example of a ring having many pairs of anticommuting elements is the  ring of square matrices over any ring.
 The following result was proved in \cite{CKLZ} (see also \cite[Section 7.3]{FIbook}).
 
 \begin{theorem}
 \label{tcklz} Let $S$ be a unital ring admitting  the operator $\frac{1}{2}$, let $R=M_n(S)$ with $n\ge 4$, and let $\alpha:R\to R$ be a surjective additive map that preserves anticommutativity. Then there exists an element $\lambda$ from the center of $R$ and a Jordan homomorphism $\varphi:R\to R$ such that $x^\alpha = \lambda x^\varphi$ for all $x\in R$.
 \end{theorem}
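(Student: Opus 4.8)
The plan is to turn the conditional ``zero Jordan product preserving'' hypothesis into an unconditional functional identity and then exploit the $d$-freeness of $R=M_n(S)$. Since $n\ge 4$, Theorem \ref{matfree} gives that $R$ is $n$-free, hence in particular $4$-free, which is exactly the level of freeness that controls Jordan homomorphisms (cf. Theorem \ref{6TJ1}). The source of functional identities will be the Peirce decomposition relative to the diagonal matrix units: writing $1=e+f$ for complementary diagonal idempotents (so $ef=fe=0$), any two elements of the off-diagonal corner $eRf$ mutually anticommute, because $(eRf)(eRf)=\{0\}$, and likewise for $fRe$. Applying $\alpha$ to $x\circ y=0$ therefore yields the biadditive identity $x^\alpha\circ y^\alpha=0$ for all $x,y\in eRf$, together with the analogous identity on $fRe$; varying the splitting $1=e+f$ (possible precisely because $n\ge 4$ leaves room for several disjoint index blocks) produces a rich supply of such relations.

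First I would use surjectivity of $\alpha$ together with these block identities to show that the biadditive map $B(x,y)=x^\alpha\circ y^\alpha$ not only vanishes whenever $x\circ y=0$ but in fact factors through the Jordan product: there is an additive map $\psi:R\to R$ with
$$x^\alpha\circ y^\alpha=\psi(x\circ y)\qquad(x,y\in R).$$
Establishing this factorization amounts to proving that $M_n(S)$ is zero Jordan product determined, which I would carry out by an explicit matrix-unit computation, using the operator $\tfrac12$ to split symmetric and skew parts and $n\ge 4$ to realize every element $x\circ y$ as a sum of Jordan products of pairs drawn from off-diagonal corners. Setting $y=x$ and again invoking the operator $\tfrac12$ gives the pointwise relation $(x^\alpha)^2=\psi(x^2)$, so that $\alpha$ Jordan-intertwines with $\psi$.

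From the identity $x^\alpha\circ y^\alpha=\psi(x\circ y)$ I would extract the Jordan-homomorphism structure by the same mechanism as in Theorem \ref{6TJ1}. Linearizing and comparing $\psi\bigl((x\circ y)\circ z\bigr)$ computed in two ways produces a core function whose middle function is $(x,y)\mapsto x^\alpha\circ y^\alpha$; since $R$ is $4$-free, Theorem \ref{thcorquasi} forces this middle function to be a quasi-polynomial, and together with surjectivity this pins down the form of $\alpha$ on products. Normalizing away the resulting central coefficient yields a Jordan homomorphism $\varphi$ and a central element $\lambda$ with $x^\alpha=\lambda x^\varphi$; the scalar $\lambda$ is unavoidable because the relation $(\lambda x^\varphi)^2=\lambda^2(x^2)^\varphi$ only determines $\alpha$ up to such a factor, and one recovers $\psi=\lambda^2\varphi$ on the span of Jordan products, which is all of $R$.

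The main obstacle will be the factorization step, that is, proving that $M_n(S)$ is zero Jordan product determined over an arbitrary unital base ring $S$: unlike the field case one cannot argue by dimension or by reduction to simple algebras, so the argument must be made by hand with the matrix units, and it is here that both the hypothesis $n\ge 4$ (to guarantee enough mutually anticommuting off-diagonal elements to represent arbitrary Jordan products) and the operator $\tfrac12$ are essential. A secondary difficulty is controlling the diagonal Peirce components, where anticommuting pairs are scarce, and gluing the corner-wise information into a single globally defined Jordan homomorphism.
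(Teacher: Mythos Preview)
Your overall architecture matches the paper's: first establish the factorization $x^\alpha\circ y^\alpha=\psi(x\circ y)$ via the zero Jordan product determined property of $M_n(S)$, then extract a functional identity and invoke $4$-freeness. Two points, however, deserve correction.

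First, the zJpd step does not require $n\ge 4$. The matrix ring $M_n(S)$ is generated by idempotents already for $n\ge 2$, and any unital ring generated by idempotents and admitting $\tfrac12$ is zJpd; this is the route the paper indicates (see the discussion following Theorem~\ref{tcklz} and \cite[Theorem~3.15]{zpdbook}). Your proposed direct matrix-unit computation would also succeed, but the hypothesis $n\ge4$ plays no role at this stage---it enters only through $4$-freeness.

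Second, and more substantively, your proposed FI derivation does not work as stated. You cannot ``compute $\psi((x\circ y)\circ z)$ in two ways'' to any effect: the Jordan product is not associative, and the factorization only gives $\psi((x\circ y)\circ z)=(x\circ y)^\alpha\circ z^\alpha$, which stalls because you have no control over $(x\circ y)^\alpha$. The mechanism of Theorem~\ref{6TJ1} does not transfer either, since there $\alpha$ is itself a Jordan homomorphism, whereas here it is not. The paper's key step is instead the associative-ring identity
\[
(xyx)\circ y=(yxy)\circ x,
\]
which after applying $\psi$ yields \eqref{eowto}, namely $(xyx)^\alpha\circ y^\alpha=(yxy)^\alpha\circ x^\alpha$. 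Linearizing in both $x$ and $y$ gives a four-variable FI whose middle function is $(a,b,c)\mapsto(abc+cba)^\alpha$, and Theorem~\ref{thcorquasi} combined with the $4$-freeness of $M_n(S)$ (Theorem~\ref{matfree}) then forces this to be a quasi-polynomial. From that point your outline is correct.
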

 
 The first step in the proof is showing that
 $\alpha$ satisfies 
 \begin{equation}\label{eowto}(xyx)^\alpha \circ y^\alpha = (yxy)^\alpha\circ x^\alpha\end{equation}
 for all $x,y\in R$. Linearizing in $x$ and $y$, we obtain an FI in four variables to which Theorem \ref{matfree} is applicable (since $n\ge 4$). From that point on one proceeds in a standard way. However, how to derive \eqref{eowto}? The answer is connected with the next, and  final, topic.

  \subsection{Zero product determined algebras}

An $F$-algebra $A$ is said to be  { \em zero product determined} (zpd for short)  if every bilinear  functional $\varphi:A^2\to F$  with the property that  $ \varphi(x,y)=0$ whenever $xy=0$
is of the form $\varphi(x,y)=\tau(xy)$ for some linear functional $\tau$. The theory of zpd algebras has been developing, first sporadically and later systematically, over the last 15 years. The main reason for this development  
was a variety of applications to different mathematical areas. The theory, together with its applications, is surveyed in the recent book \cite{zpdbook}.

The theory  has two branches, algebraic and analytic. The analytic branch deals with Banach algebras and in the definition we require that the functionals $\varphi$ and $\tau$ are continuous. This branch  has turned out to be
 richer, especially since the class of zpd Banach algebras is really large, while the class of ordinary algebras that are zpd is  narrower.

In the above definition of a zpd algebra, $A$ is not necessarily associative. Still, the case where $A$ is associative is the most studied one. The other two important cases are when $A$ is an associative algebra considered  either as a Lie algebra under the Lie product $[\,\cdot\,,\,\cdot\,]$ or as a Jordan algebra  under the Jordan product $\circ$. In the first case, we talk about the {\em zero Lie product determined} algebra (zLpd for short), and in second case, we talk about the {\em zero Jordan product determined} algebra (zJpd for short). 

What are basic examples of zpd, zLpd and zJpd (associative) algebras? If $A$ is unital and is, as an algebra, generated by idempotents,
then it is zpd \cite[Theorem 2.15]{zpdbook} and, if char$(F)\ne 2$, also zJpd \cite[Theorem 3.15]{zpdbook}. A simple example of a unital algebra that is generated by idempotents is the matrix algebra $A=M_n(S)$, where $n\ge 2$ and $S$ is any unital algebra \cite[Corollary 2.4]{zpdbook}. Such an algebra is therefore zpd and, if char$(F)\ne 2$, zJpd, but not necessarily zLpd \cite[Example 3.13]{zpdbook}. However, if $S$ is commutative, then $A$ is also zLpd \cite[Corollary 3.11]{zpdbook}.

There are many interactions between the FI theory  and the zpd theory, both philosophical and technical.
 Concerning the former, we mention that they both were originally influenced by the problem of commutativity preserving linear maps.
 As we mentioned in the preceding subsection, the description of bijective commutativity preservers appeared in \cite{B5} which is the first paper providing applications of FIs. The description  of not 
 necessarily bijective 
commutativity preservers
on finite-dimensional central simple algebras 
was obtained in \cite{BS06}, which is the first paper in which a version of the zpd condition was considered. The first and crucial step of proof was  showing that the matrix algebra $M_n(F)$ is, using the present terminology, zLpd. 
 
 In some papers, notably in \cite{BQuar} and \cite{Breproc}, more direct connections between FIs and zpd algebras are examined. However, they deal with Banach algebras, so we will not discuss them in this algebraic paper.
 Let us present only one, very simple (algebraic) result which can serve as 
 a model of how FIs and zpd algebras can be combined. It concerns {\em zero product preserving} linear maps, i.e.,
 linear maps 
 $\alpha$ between algebras  that 
 satisfy
 $$ x y = 0\implies x^\alpha y^\alpha=0.$$
 The study of such maps has a long history, especially in functional analysis (see  \cite[p.\ 114]{zpdbook} for more details and references).

 \begin{theorem}\label{thth}
 Let $A$ be a
 $3$-free algebra and let $B$ be a
 zpd algebra. If $\alpha:B\to A$ is a surjective zero product preserving linear map, then 
 there exists 
 an element $\lambda$
 from the center of $A$ such that 
 $(xy)^\alpha =\lambda x^\alpha y^\alpha$ for all $x,y\in B$ (and hence $x\mapsto \lambda x^\alpha$ is a homomorphism).
 \end{theorem}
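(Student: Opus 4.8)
The plan is to first convert the two hypotheses into a single associativity-type functional identity on $A$, and then simply read off its standard solution through the $d$-free machinery.

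\emph{Step 1 (using zero product preservation and zpd).} I would begin by showing that the value $x^\alpha y^\alpha$ depends only on the product $xy$. Fix a linear functional $f$ on $A$ and consider the bilinear functional $\varphi_f\colon B^2\to F$ given by $\varphi_f(x,y)=f(x^\alpha y^\alpha)$. If $xy=0$, then $x^\alpha y^\alpha=0$ because $\alpha$ preserves zero products, so $\varphi_f(x,y)=0$. Since $B$ is zpd, there is a linear functional $\tau_f$ on $B$ with $f(x^\alpha y^\alpha)=\tau_f(xy)$ for all $x,y\in B$. Hence, whenever $\sum_i a_ib_i=0$ in $B$, we get $\sum_i f(a_i^\alpha b_i^\alpha)=\tau_f\bigl(\sum_i a_ib_i\bigr)=0$ for every $f$; as functionals separate the points of the $F$-space $A$, it follows that $\sum_i a_i^\alpha b_i^\alpha=0$. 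Applying this to $xy-x'y'=0$ shows that $x^\alpha y^\alpha$ is a function of $xy$ alone, and feeding in the two bracketings of a triple product yields the key identity
$$(xy)^\alpha z^\alpha = x^\alpha (yz)^\alpha$$
for all $x,y,z\in B$.

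\emph{Step 2 (passing to the FI machinery).} Since $\alpha$ is surjective, $B^\alpha=A$; as $A$ is a $3$-free subset of itself, Theorem \ref{tsal} shows that the pair $(B;\alpha)$ is $3$-free with respect to $A$. The identity above is an instance of \eqref{3S1al} with $m=3$, $I=\{3\}$, $J=\{1\}$, upon setting $E_3(x_1,x_2)=(x_1x_2)^\alpha$ and $F_1(x_2,x_3)=-(x_2x_3)^\alpha$. Reading off the standard solution \eqref{3S3al} — in which all central functions vanish because $I\cap J=\emptyset$ — produces a function $p\colon B\to A$ with
$$(xy)^\alpha = x^\alpha p(y) = p(x)\,y^\alpha$$
for all $x,y\in B$.

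\emph{Step 3 (identifying the scalar).} The relation $p(x)\,y^\alpha=x^\alpha p(y)$ is again of the form \eqref{3S1al}, now with $m=2$, $I=\{2\}$, $J=\{1\}$, so its standard solution supplies a single element $q\in A$ with $p(x)=x^\alpha q$ and $p(x)=q\,x^\alpha$ for all $x$. Thus $x^\alpha q=q\,x^\alpha$, and since $\alpha$ is surjective $q$ commutes with every element of $A$, i.e.\ $q$ lies in the center of $A$. Setting $\lambda=q$ and using centrality gives $(xy)^\alpha=x^\alpha p(y)=x^\alpha q\,y^\alpha=\lambda x^\alpha y^\alpha$, and then $x\mapsto\lambda x^\alpha$ is a homomorphism because $\lambda(xy)^\alpha=\lambda^2 x^\alpha y^\alpha=(\lambda x^\alpha)(\lambda y^\alpha)$.

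The main obstacle is Step 1: the zpd hypothesis concerns \emph{scalar} bilinear functionals, whereas we must control the $A$-valued expression $x^\alpha y^\alpha$. The device of testing against an arbitrary $f\in A^{*}$ and then using that functionals separate points is exactly what bridges this gap; once the identity $(xy)^\alpha z^\alpha=x^\alpha(yz)^\alpha$ is in hand, the rest is a careful but routine application of Definition \ref{defdal} via Theorem \ref{tsal}. In fact only FIs with a single $E$ and a single $F$ intervene, so the full strength of $3$-freeness is not really needed here; we invoke it (and, through Remark \ref{remdd}, the weaker cases it entails) simply because it is the standing hypothesis.
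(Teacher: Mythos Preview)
Your proof is correct, and after the shared Step~1 it takes a genuinely more direct route than the paper's.

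Both arguments begin identically: the zpd hypothesis, upgraded from scalar to $A$-valued targets (you do this by testing against $f\in A^*$; the paper simply cites \cite[Proposition~1.3]{zpdbook}), yields the associativity-type identity $(xy)^\alpha z^\alpha = x^\alpha(yz)^\alpha$. From there the paper invokes Theorem~\ref{thcorquasi} to conclude that $(xy)^\alpha$ is a quasi-polynomial of the form $\lambda x^\alpha y^\alpha+\lambda' y^\alpha x^\alpha+\mu(y)x^\alpha+\mu'(x)y^\alpha+\nu(x,y)$, substitutes this back into the identity, and then uses Lemma~\ref{lfreq} on the resulting three-variable quasi-polynomial to kill $\lambda',\mu,\mu',\nu$. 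Both of these steps genuinely use $3$-freeness. You instead observe that $(xy)^\alpha z^\alpha = x^\alpha(yz)^\alpha$ is already a basic FI of type \eqref{3S1al} with $|I|=|J|=1$, read off its standard solution to get $(xy)^\alpha=x^\alpha p(y)=p(x)y^\alpha$, and then repeat the trick on $x^\alpha p(y)=p(x)y^\alpha$ to extract the central element~$\lambda$. Your closing remark is on point: since $\max\{|I|,|J|\}=1$ throughout, your argument actually shows that $1$-freeness of $A$ already suffices, so the theorem holds under a strictly weaker hypothesis than stated. The paper's approach, by contrast, is more in the spirit of illustrating how the quasi-polynomial machinery is deployed in applications.
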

 
 \begin{proof}
 Define $\Phi:B\times B\to A$ by
 $\Phi(x,y)=x^\alpha y^\alpha$.
 Since $B$ is zpd and $xy=0$ implies
 $\Phi(x,y)=0$, 
 there exists a linear map $T:B\to A$ such that
 \begin{equation}\label{seses}\Phi(x,y)=T(xy).\end{equation}
 Actually, the definition only tells us that this holds if $A=F$.
However, it is easy to see that the same is true if $A$ is any vector space \cite[Proposition 1.3]{zpdbook}. 

Using the associative law, we see 
 that \eqref{seses} implies that 
$\Phi(xy,z) = \Phi(x,yz)$
for all $x,y,z\in B$, that is,
\begin{equation}
  \label{sube6} (xy)^\alpha z^\alpha = x^\alpha (yz)^\alpha.\end{equation}
Since $\alpha$ is surjective and $A$ is $3$-free,  Theorem \ref{thcorquasi} tells us that  $(xy)^\alpha$ is a quasi-polynomial. Thus, denoting by $C$ the center of $A$,
there exist  $\lambda,\lambda'\in C$, $\mu,\mu':B\to C$, and $\nu:B^2\to C$ such that
\begin{equation}
  \label{sube22}  
(xy)^\alpha=\lambda x^\alpha y^\alpha +
\lambda' y^\alpha x^\alpha+\mu(y)x^\alpha + \mu'(x)y^\alpha +\nu(x,y) \end{equation}
 for all $x,y\in B$. Using \eqref{sube22} in \eqref{sube6}, we obtain
 \begin{align*}
     \lambda' y^\alpha x^\alpha  z^\alpha
     - \lambda' x^\alpha & z^\alpha  y^\alpha
     +(\mu-\mu')(y)x^\alpha z^\alpha\\+& \mu'(x)y^\alpha z^\alpha 
     - \mu(z)x^\alpha y^\alpha 
     +\nu(x,y)z^\alpha-\nu(y,z)x^\alpha =0
 \end{align*}
 for all $x,y,z\in B$. Lemma \ref{lfreq} shows that $\lambda'=\mu = \mu'=\nu =0$.  Therefore,  \eqref{sube22} reduces to $  
(xy)^\alpha=\lambda x^\alpha y^\alpha$.
 \end{proof}
 
 Theorem \ref{thth} probably has 
 not yet appeared in the literature. 
 However, it does 
 not bring anything essentially new, its idea can be in fact  traced back to the paper \cite{CKL} which was published before the introduction of zpd algebras. Our main purpose was to present the method of proof rather than the theorem as such which is much simpler than many other results on  zero product preservers.
 
 One can similarly consider zero Jordan product (i.e., anticommutativity) preserving linear maps $\alpha:B\to A$. If 
 $B$ is zJpd, then
 it follows that
 $$x^\alpha \circ y^\alpha = T(x\circ y)$$ for some linear map $T:B\to A$. 
 Since   $(xyx) \circ y = (yxy)\circ x$, \eqref{eowto} follows. This explains the concept behind the proof of Theorem \ref{tcklz}.
 
 \smallskip 

 {\bf Concluding remark.} As we saw through the numerous examples presented in this last section, there are many problems in different areas that give rise to certain FIs, and can consequently be solved by using the tools of the general theory. Once we derive some FIs, it is often just a routine matter to arrive at a solution. The main and everlasting problem of the FI theory is to discover FIs hidden behind some mathematical problems. In some sense, this problem is to find problems to which the theory is applicable. The author   is certain that  there are still many such problems occurring  throughout mathematics, and that the potential of the FI theory  is  far from  being exhausted.
 
 \smallskip
 {\bf Acknowledgment}.
 The author would like to thank the referee for useful comments.

\end{document}